\DeclareMathSymbol{*}{\mathbin}{symbols}{"03}
\def\Z{{\mathbb Z}}
\def\SL{{\rm SL}}
\def\GL{{\rm GL}}
\def\Gal{{\rm Gal}}
\def\ndist{{\rm ndist}}
\def\main{{\rm main}}
\def\scusp{{\rm scusp}}
\def\dcusp{{\rm dcusp}}
\def\Sym{{\rm Sym}}
\def\B{{\mathcal B}}
\def\SO{{\rm SO}}
\def\b{{\rm Sq}}
\def\P{{\mathbb P}}
\def\Disc{{\rm Disc}}
\def\diag{{\rm diag}}
\def\s{{\rm stab.}}
\def\irr{{\rm irr}}
\def\gen{{\rm gen}}
\def\dist{{\rm dist}}
\def\red{{\rm red}}
\def\Vol{{\rm Vol}}
\def\R{{\mathbb R}}
\def\F{{\mathbb F}}
\def\FF{{\mathcal F}}
\def\cI{{\mathcal I}}
\def\cZ{{\mathcal Z}}
\def\cM{{\mathcal M}}
\def\cK{{\mathcal K}}
\def\cS{{\mathcal S}}
\def\RR{{\mathcal R}}
\def\LLM{{\L(M)}}
\def\LL1{{\L(1)}}
\def\Q{{\mathbb Q}}
\def\G{{\mathbb G}}
\def\D{{\mathcal D}}
\def\C{{\mathcal C}}
\def\U{{\mathcal W}}
\def\W{{\mathcal W}}
\def\Z{{\mathbb Z}}
\def\P{{\mathbb P}}
\def\F{{\mathbb F}}
\def\Q{{\mathbb Q}}
\def\C{{\mathbb C}}
\def\L{{\mathcal L}}
\def\w{{\rm {(2)}}}
\def\s{{\rm {(1)}}}
\def\max{{\rm max}}
\def\top{{\rm top}}
\newtheorem{Theorem}{Theorem}
\newtheorem{theorem}{Theorem}[section]
\newtheorem{corollary}[theorem]{Corollary}
\newtheorem{lemma}[theorem]{Lemma}
\newtheorem{proposition}[theorem]{Proposition}
\newtheorem{remark}[theorem]{Remark}
\newenvironment{proof}{\noindent {\bf Proof:}}{$\Box$ \vspace{2 ex}}
\title{\vspace{-.5in}Squarefree values of polynomial discriminants II}
\author{Manjul Bhargava, Arul Shankar, and Xiaoheng Wang}
\begin{document}

\maketitle

\vspace{-.2in}
\begin{abstract}
We determine the density of integral binary forms of given degree that
have squarefree discriminant, proving for the first time that the
lower density is positive.
Furthermore, we determine the density of integral binary forms that cut out maximal orders in number fields. The latter proves, in particular, an ``arithmetic Bertini theorem''
conjectured by {Poonen}  for {$\P^1_\Z$}. 

Our methods also allow us to prove that there are $\gg X^{1/2+1/(n-1)}$
number fields of degree~$n$ having associated Galois group~$S_n$ and
absolute discriminant less than $X$, improving the best previously
known lower bound of $\gg X^{1/2+1/n}$.

Finally, our methods correct an error in and thus resurrect earlier (retracted) results of Nakagawa on lower bounds for
the number of totally unramified $A_n$-extensions of quadratic number fields of bounded discriminant. 
\end{abstract}

\setcounter{tocdepth}{2}


\vspace{-.1in}
\section{Introduction}

In the first article \cite{BSWsq} of this two-part series, we proved that when monic integer polynomials $f(x)=x^n+a_1x^{n-1}+\cdots+a_n$ of fixed degree $n$ are ordered by
$\max\{|a_1|,\ldots,|a_n|^{1/n}\}$, a positive proportion have squarefree discriminant. The purpose of this article is to prove the analogous result for integral binary $n$-ic
forms.  

Recall that the {\it discriminant} $\Delta(f)$ of a binary
$n$-ic form over a field $K$ is a homogeneous
polynomial of degree $2n-2$ in the coefficients of $f$, whose
nonvanishing is equivalent to $f$ having~$n$ distinct linear factors
over an algebraic closure $\overline{K}$ of $K$.  We order integral binary
$n$-ic forms $f(x,y)=a_0x^n+a_1x^{n-1}y+\cdots+a_ny^n$ by their {\it height} $H(f)$ given by
$H(f):=    
\max\{|a_0|,\ldots,|a_n|\},$
i.e., the maximum of the absolute values of the coefficients. 
Then a natural question is: when ordered by height, what is the density of integral binary $n$-ic forms whose
discriminant is squarefree? For $n=2$, classical methods in sieve
theory yield the answer. For $n=3$ and $n=4$, results of
Davenport--Heilbronn \cite{DH} and the first and second authors
\cite{BS2}, respectively, answer the question in the related setting in
which we consider $\GL_2(\Z)$-orbits on binary $n$-ic forms. However,
for $n\geq 5$, it has not previously been known whether this density
exists or even whether the lower density is positive.
In this paper, we prove:

\begin{Theorem}\label{polydisc2}
Let $n\geq 2$ be an integer. When integral binary $n$-ic forms $f(x,y)=a_0x^n+a_1x^{n-1}y+\cdots+a_n y^n$ are ordered by $H(f):=\max\{|a_0|,\ldots,|a_n|\}.
$, the density of forms having squarefree discriminant exists
and is equal to 
\begin{equation*}
\begin{array}{lll}
\displaystyle\mfrac12
\prod_{p>2}\Bigl(1-\frac{1}{p}\Bigr)
\Bigl(1+\frac{1}{p}-\frac{1}{p^3}\Bigr)
&\approx \;\;38.97\%&\mbox{ if }n=2;
\\[.175in] \displaystyle\mfrac38
\prod_{p>2}\Bigl(1-\frac{1}{p}\Bigr)^2\Bigl(1+\frac{1}{p}\Bigr)^2&\approx \;\;24.64\%
&\mbox{ if }n=3;
\\[.175in] \displaystyle\mfrac38
\prod_{p>2}\Bigl(1-\frac{1}{p}\Bigr)^2
\Bigl(1+\frac{2}{p}-\frac{2}{p^4}+\frac{1}{p^5}\Bigr)&\approx\;\;21.18\%
&\mbox{ if }n=4;
\\[.175in] \displaystyle\mfrac38
\prod_{p>2}\Bigl(1-\frac{1}{p}\Bigr)^2
\Bigl(1+\frac{1}{p}\Bigr)\Bigl(1+\frac{1}{p}-\frac{1}{p^2}\Bigr)&\approx \;\;20.83\%
&\mbox{ if }n\geq5.
\end{array}
\end{equation*}
\end{Theorem}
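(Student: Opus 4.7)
The plan is to realize the density via a squarefree sieve on the discriminant polynomial $\Delta$ on the lattice $V_\Z$ of integral binary $n$-ic forms. For each prime $p$, set $\W_p = \{f\in V_\Z : p^2\mid\Delta(f)\}$. Inclusion--exclusion gives
\begin{equation*}
\#\{f : H(f)\le X,\ \Delta(f)\text{ squarefree}\}
= \sum_{d\text{ sqfree}}\mu(d)\cdot\#\{f : H(f)\le X,\ d^2\mid\Delta(f)\},
\end{equation*}
and by the Chinese remainder theorem the inner count factors locally over $p\mid d$. The small-$d$ portion of this sum will assemble into a candidate Euler product for the density; the main work is then to show that the tail of the sieve is negligible, which is the principal obstacle.

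I would first handle $d\le M(X)$: a direct lattice-point count yields
\begin{equation*}
\#\{f : H(f)\le X,\ d^2\mid\Delta(f)\} = \beta_d\,(2X)^{n+1} + O_d(X^n),
\end{equation*}
where $\beta_d = \prod_{p\mid d}\beta_p$ and $\beta_p$ is the $p$-adic density of $\W_p$. Summing against $\mu(d)$ produces the candidate $\prod_p(1-\beta_p)$ up to a tail error.

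Following the strategy of the companion paper \cite{BSWsq}, I would split $\W_p$ into two pieces. The \emph{easy} piece consists of forms $f$ for which $\bar f := f\bmod p$ has exactly one double root and no worse singularity; here $p^2\mid\Delta(f)$ is a single modular condition beyond $p\mid\Delta(f)$, so a direct coset count modulo $p^2$ bounds this piece by $O(X^{n+1}/p^2)$, which is summable over $p$. The \emph{hard} piece consists of forms $f$ for which $\bar f$ has a root of multiplicity $\ge 3$, or two or more distinct double roots, or a more degenerate factorization. Here the naive coset bound is only $O(X^{n+1}/p^2)$ and is not summable; to save the necessary factor of $p^\delta$ one would use the correspondence between $\GL_2(\Z)$-orbits on integral binary $n$-ic forms and rings of rank $n$ (Birch--Merriman, Delone--Faddeev, Nakagawa, Wood), reinterpreting strong divisibility of $\Delta(f)$ at $p$ as the failure of $p$-maximality of an associated order, and then invoking bounds on the count of non-maximal such orders, as developed in Part I for the monic case. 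This uniformity estimate is the main technical obstacle.

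With uniformity in hand, letting $M\to\infty$ with $X$ gives both existence of the density and the infinite product $\prod_p(1-\beta_p)$. The final step is the explicit local computation of $\beta_p$: classify the reduction types of $\bar f$ that force $p\mid\Delta(f)$, determine for each type the proportion of lifts with $p^2\mid\Delta(f)$, and sum. For $p>2$ I expect the classification to stabilize once $n\ge 5$ (since ``extremal'' reduction types -- e.g.\ forms with a factor of multiplicity close to $n$ -- become negligibly rare), yielding the fourth formula in the theorem, while the smaller cases $n=2,3,4$ include or exclude certain degenerate types in ways that produce the three distinguished formulas. The prime $p=2$ must be handled separately since $\Delta$ is especially degenerate modulo $2$, and this is the source of the leading constants $\tfrac12$ and $\tfrac38$.
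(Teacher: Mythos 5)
There is a genuine gap, and it lies in your division of $\W_p$ into an ``easy'' and a ``hard'' piece: you have the difficulty exactly inverted. The piece you call easy --- forms whose reduction mod $p$ has a single double root and no worse degeneracy, so that $p^2\mid\Delta(f)$ is one extra condition modulo $p^2$ (this is the \emph{weakly divisible} case) --- cannot be bounded by ``a direct coset count modulo $p^2$.'' The set of residues in $V_n(\Z/p^2\Z)$ satisfying the condition has size about $p^{2(n+1)}/p^2$, and counting lattice points in each of these $\asymp p^{2n}$ cosets of $p^2\Z^{n+1}$ inside a box of side $X$ produces an error term that overwhelms the main term $X^{n+1}/p^2$ as soon as $p^2\gtrsim X$; since $|\Delta(f)|$ can be as large as $X^{2n-2}$, primes up to $X^{n-1}$ must be controlled, and the coset count gives nothing there. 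This failure of the naive count for large $p$ in the weakly divisible case is precisely the error in Nakagawa's retracted arguments that the paper corrects, and it is the reason Sections 3--6 exist: one must lift $f$ (or $xf$ when $n$ is even) to a pair of symmetric matrices via the map $\sigma_m$, introduce the $Q$- and $q$-invariants, and run a delicate geometry-of-numbers argument (including, for even $n$, counts of singular symmetric matrices in skewed domains) to prove the tail estimates of Theorem 5(b),(c). Nothing in your plan supplies a substitute for this.

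Conversely, the piece you call hard --- reductions with a factor of multiplicity $\ge 3$ or two or more double roots, i.e.\ the \emph{strongly divisible} case --- is the comparatively easy one: there the condition is a codimension-two condition modulo $p$ alone, so the quantitative Ekedahl geometric sieve applies directly and yields Theorem 5(a). Your proposed route for that piece via non-maximality of $R_f$ and the monic results of Part~I is both unnecessary and not obviously workable, since strong divisibility of $\Delta(f)$ at $p$ is not equivalent to non-maximality of $R_f$ at $p$, and the Part~I estimates concern a different parametrization. The surrounding sieve framework and your sketch of the local density computation (stabilization for $n\ge 5$, special treatment of $p=2$) do match the paper's Section 7 and Appendix, but without a correct treatment of the weakly divisible tail the proof does not go through.
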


To any nonzero integral binary $n$-ic form $f(x,y) = a_0x^n + \cdots +
a_ny^n$, we may naturally attach a rank-$n$ ring $R_f$ (see~Birch--Merriman~\cite{BM}, Nakagawa~\cite{Nakagawa1}, and Wood~\cite{Wood}), defined as follows when $a_0\neq 0$. Let $\theta$
denote the image of $x$ in $K_f:=\Q[x]/(f(x,1))$. Let $R_f$ be the free
rank-$n$ $\Z$-submodule of $K_f$ generated by $1, \,a_0\theta, \,a_0\theta^2+a_1\theta,\,\ldots,\,a_0\theta^{n-1}+\cdots+a_{n-1}\theta$.
Then $R_f$ is in fact closed under multiplication and forms a ring whose 
discriminant is equal to the discriminant of $f(x)$.  
Our next result determines the density of irreducible integral binary forms $f$ for which $R_f$ is the maximal order in its field of fractions.

\begin{Theorem}\label{polydiscmax2}
Let $n\geq 2$ be an integer. When irreducible integral binary $n$-ic forms $f(x,y)=a_0x^n+a_1x^{n-1}y+\cdots+a_ny^n$ are ordered by
$H(f):=\max\{|a_0|,\ldots,|a_n|\}$, the density of forms $f$ such that $R_f$ is the ring of integers in its field of fractions exists and is equal to 
\begin{equation*}
  \begin{array}{lll} \displaystyle\prod_p
    \Big(1-\frac{1}{p^2}-\frac{1}{p^3}+\frac{1}{p^4}\Big)
    &\approx \;\;53.59\%&\mbox{ if }n=2;\\[.1725in] \zeta(2)^{-1}\zeta(3)^{-1} &\approx \;\;50.57\%&\mbox{ if }n\geq3.\end{array}
\end{equation*}
\end{Theorem}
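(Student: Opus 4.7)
The plan is to prove Theorem \ref{polydiscmax2} via a local-global sieve: ``$R_f$ is the maximal order of $K_f$'' is the intersection, over all primes $p$, of the local conditions ``$R_f \otimes \Z_p$ is maximal at $p$'', each of which depends on $f$ only modulo a bounded power of $p$. Since $\mathrm{disc}(R_f)=\Delta(f)$, non-maximality at $p$ forces $p^2\mid \Delta(f)$, so the non-trivial local conditions are supported on the singular locus of the discriminant.

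First I would compute, for each prime $p$, the $p$-adic density
\[
\mu_p \;:=\; \mathrm{Vol}_{\Z_p}\bigl\{\, f \in \Z_p^{n+1} \,:\, R_f \text{ is maximal at } p \,\bigr\}.
\]
For $n\geq 3$, a standard $p$-adic mass computation---stratifying $f\bmod p$ by its factorization type, identifying the sub-loci where $R_f$ acquires a proper overorder, and summing the corresponding Haar measures---should yield $\mu_p = (1-p^{-2})(1-p^{-3})$. This is essentially the binary-form analog of the maximal-order density appearing for monic polynomials in \cite{BSWsq}, and the Euler product $\zeta(2)^{-1}\zeta(3)^{-1}$ matches the claimed density. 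For $n=2$, the explicit description $R_f \simeq \Z[(-a_1+\sqrt{\Delta(f)})/2]$ identifies maximality at $p$ with $\Delta(f)=a_1^2 - 4a_0 a_2$ being a fundamental discriminant at $p$; a direct calculation over $(a_0,a_1,a_2)\in\Z_p^3$ (splitting on $v_p(a_1)$ and summing the valuations of $a_0, a_2$) gives $\mu_p = 1-p^{-2}-p^{-3}+p^{-4}$, matching the stated formula.

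The second, harder step is the uniformity estimate needed to multiply the local densities. Explicitly, one must show
\[
\lim_{M\to\infty}\;\limsup_{X\to\infty}\;\frac{\#\bigl\{\,f\,:\,H(f)\leq X,\; R_f \text{ non-maximal at some } p>M\,\bigr\}}{X^{n+1}} \;=\; 0.
\]
Since non-maximality at $p$ implies $p^2\mid\Delta(f)$, this is a refinement of the tail estimate that already drives the proof of Theorem \ref{polydisc2}, and I would import it directly from that sieve analysis. The key saving is that inside $\{p^2\mid\Delta(f)\}$ the ``non-maximal at $p$'' sub-condition cuts the $p$-adic measure down from $O(p^{-1})$ to $O(p^{-2})$, which is more than enough to guarantee summability of the tail over large $p$.

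I expect the main obstacle to be precisely this uniformity step, for the reason already familiar from Theorem \ref{polydisc2}: at ``large'' primes $p$ one must control the contribution where $p^2\mid\Delta(f)$ comes from a ``weakly divisible'' square factor (arising from near-reducibility of $f$ modulo $p$, or from the extra stratum $\{p\mid a_0\}$ absent in the monic setting). This is the part of the argument requiring the delicate geometry-of-numbers input specific to binary forms; once it is in hand via the work for Theorem \ref{polydisc2}, the assembly of the global density as $\prod_p \mu_p$ is routine, and specialization to $n=2$ versus $n\geq 3$ gives the two cases of Theorem \ref{polydiscmax2}.
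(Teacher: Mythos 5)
Your proposal follows essentially the same route as the paper: a squarefree-type sieve where the local densities $\beta_n(p)$ are computed by stratifying $f\bmod p$ according to the power of $y$ dividing it (equivalently the factorization type, using Hensel to split off a monic part), and the tail over large primes is controlled by Theorem~\ref{thm:mainestimate} because non-maximality at $p$ implies $p^2\mid\Delta(f)$. The paper packages the sieve in Theorem~\ref{th:sqfreesieve} (with local conditions at finitely many primes and a power-saving error term) and carries out the $\beta_n(p)$ computation in Proposition~\ref{prop:betacomp} of the Appendix; for $n=2$ it uses the same stratification rather than the ``fundamental discriminant'' shortcut you mention, but the calculations are trivially equivalent.

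One sentence in your outline is misleading as stated: ``inside $\{p^2\mid\Delta(f)\}$ the non-maximal condition cuts the $p$-adic measure from $O(p^{-1})$ to $O(p^{-2})$'' misidentifies where the difficulty lies. Both the $p^2\mid\Delta$ locus and the non-maximality locus already have $p$-adic measure $O(p^{-2})$; the genuine obstacle, which you correctly flag in the next sentence, is that for $m$ comparable to (or larger than) $X^{2n-2}$ the naive bound $X^{n+1}/m^2$ is useless, and one needs the geometry-of-numbers tail estimate of Theorem~\ref{thm:mainestimate} to control forms with $m^2\mid\Delta(f)$ and $m>M$ uniformly. Since non-maximality at $p$ is contained in $\{p^2\mid\Delta\}$, that estimate imports directly, exactly as you propose, and the rest of the argument is routine inclusion-exclusion.
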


In particular, Theorem~\ref{polydiscmax2}  yields the first unconditional Bertini theorem for arithmetic schemes of dimension $\geq 2$ as conjectured by Poonen~\cite[\S5]{BPBertini}.  Indeed, for a quasiprojective subscheme $X$ of $\P^n_\Z$ that is regular of dimension $m$, Poonen
conjectured that the density of hyperplane sections of $X$ that are  regular of dimension $m-1$ should equal $\zeta_X(m+1)$, where $\zeta_X$  denotes the zeta function of $X$. 
Since the subscheme of $\P^1_\Z$ cut out by an integral binary $n$-ic form $f$ is regular if and only if $R_f$ is maximal,  and the zeta function of $\P^1_\Z$ is given by $\zeta_{\P^1_\Z}(s)=\zeta(s)\zeta(s-1)$, we have 
$\zeta_{\P^1_\Z}(\dim(\P^1_\Z)+1)^{-1}=\zeta(2)^{-1}\zeta(3)^{-1}$. Therefore, Theorem \ref{polydiscmax2} yields an unconditional proof of \cite[Theorem 5.1]{BPBertini} for the case $X=\P^1_\Z$ with the usual ``box ordering'' on the forms defining the hyperplane sections. In fact, we prove the stronger result that for every {\it fixed} $n\geq 3$, the density of regular binary $n$-ic forms is $\zeta(2)^{-1}\zeta(3)^{-1}$, while arithmetic Bertini only claims this in the limit as~$n\to\infty$.

As a further application of our
methods, we obtain the following theorem:

\begin{Theorem}\label{monogenic}
For each $n\geq 3$, the number of isomorphism classes of number fields
of degree~$n$ with associated Galois group $S_n$ and absolute  discriminant less than 
$X$ is $\gg X^{1/2+1/(n-1)}$.
\end{Theorem}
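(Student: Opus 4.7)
The plan is to deduce Theorem~\ref{monogenic} from Theorem~\ref{polydisc2} by counting binary $n$-ic forms with squarefree discriminant and bounding how many such forms can give rise to the same number field. By Theorem~\ref{polydisc2}, there are $\gg Y^{n+1}$ integral binary $n$-ic forms $f$ of height $H(f) \le Y$ with squarefree discriminant; since reducible binary $n$-ic forms of height $\le Y$ number $o(Y^{n+1})$, we may assume each $f$ is irreducible over $\Q$. For each such $f$, squarefreeness of $\Disc(f)$ implies: (i) $R_f$ is the maximal order $\O_{K_f}$ in its degree-$n$ field of fractions $K_f$, since $[\O_{K_f}:R_f]^2$ divides $\Disc(R_f) = \Disc(f)$; (ii) $|D_{K_f}| = |\Disc(f)| \le C_n Y^{2n-2}$; and (iii) the Galois group of the Galois closure of $K_f/\Q$ is $S_n$, because each prime $p \mid \Disc(f)$ is tamely ramified of order exactly $2$ (yielding a transposition in inertia), and any transitive subgroup of $S_n$ containing a transposition equals $S_n$. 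Setting $X := C_n Y^{2n-2}$, this produces $\gg Y^{n+1} = X^{(n+1)/(2n-2)} = X^{1/2 + 1/(n-1)}$ pairs $(f, K_f)$.

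To conclude $N_n(X) \gg X^{1/2 + 1/(n-1)}$, it suffices to show each $S_n$-field $K$ arises as $K_f$ for only $O_n(1)$ forms in the collection. I would first restrict further to forms $f$ additionally satisfying $|\Disc(f)| \ge \delta Y^{2n-2}$ for a fixed small $\delta > 0$. Since $\Disc$ is a nonzero homogeneous polynomial of degree $2n-2$, a positive proportion of real binary $n$-ic forms in the box $H(f) \le Y$ satisfy $|\Disc(f)| \ge \delta Y^{2n-2}$; intersecting with the squarefreeness condition still leaves $\gg Y^{n+1}$ elements. For such $f$, the height $H(f)$ is within a bounded factor of the minimum height $\asymp |\Disc(f)|^{1/(2n-2)}$ on its $\GL_2(\R)$-orbit (so $f$ is essentially reduced), and classical reduction theory for binary $n$-ic forms combined with the discreteness of $\GL_2(\Z)$ then bounds the number of integral forms in any single $\GL_2(\Z)$-orbit simultaneously satisfying $H(f) \le Y$ and $|\Disc(f)| \ge \delta Y^{2n-2}$ by $O_{n,\delta}(1)$.

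The main obstacle is then to bound, for a fixed $S_n$-field $K$, the number of $\GL_2(\Z)$-orbits of integral binary $n$-ic forms $f$ with $R_f = \O_K$. By the parametrization of such orbits due to Delone--Faddeev (for $n = 3$), Nakagawa, and Wood in general, these orbits are indexed by resolvent-ring or ideal-class data attached to $\O_K$, which a priori could be as large as the class number of $\O_K$. The plan is to exploit the $S_n$ Galois condition to show this data is canonically rigidified and of bounded size: for $n = 3$ the parametrization already gives exactly one orbit per cubic ring; for $n = 4$, the cubic resolvent of an $S_4$-quartic field $K$ is canonically the unique cubic subfield of the Galois closure of $K$ fixed by $V_4 \subset S_4$, giving $O(1)$ orbits per $K$; and analogous canonical resolvent structures inside the $S_n$ Galois closure handle general $n$. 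Combining these inputs yields $O_n(1)$ forms per $S_n$-field, and hence $N_n(X) \gg X^{1/2 + 1/(n-1)}$, as claimed.
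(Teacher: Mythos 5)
Your first two paragraphs are sound and parallel the paper's setup: using Theorem~\ref{polydisc2} (or more precisely Theorem~\ref{th:sqfreesieve} with archimedean restriction) to produce $\gg Y^{n+1}$ forms with squarefree discriminant, noting that such forms have $R_f$ maximal, $|\Disc K_f|\asymp Y^{2n-2}$, and $S_n$ Galois group, and then arguing via reduction theory that each $\GL_2(\Z)$-orbit contributes $O_n(1)$ forms to the box once $|\Disc(f)|\gg_\delta Y^{2n-2}$ is imposed. (A small side remark: ``transitive $+$ contains a transposition $\Rightarrow S_n$'' is false in general --- think of $D_4\subset S_4$; the correct route is that a squarefree discriminant forces primitivity via the tower formula for discriminants, and then Jordan's theorem applies.)

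The genuine gap is in the final step, where you assert that each $S_n$-field $K$ arises from only $O_n(1)$ distinct $\GL_2(\Z)$-orbits of binary $n$-ic forms $f$ with $R_f=\O_K$. This is false for $n\ne 3$. By the Nakagawa--Wood parametrization of $\GL_2(\Z)$-orbits of binary $n$-ic forms, the fiber over a fixed rank-$n$ ring $R$ is governed by fractional ideal class data of $R$ (the line bundle $\O(1)|_{Z_f}$ on the degree-$n$ subscheme $Z_f\subset\P^1_\Z$ cut out by $f$), not merely by $R$ itself. Already for $n=2$ the fiber has size the class number, and for $n\ge 4$ it is again of class-number order; the $S_n$ condition does nothing to collapse $\Cl(K)$, which is generically large. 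Your proposed fix via ``canonical resolvent structures'' conflates two different parametrizations: Bhargava's parametrization of \emph{quartic rings by pairs of ternary quadratic forms} (where a cubic resolvent appears and is essentially canonical for maximal $S_4$-orders) with Wood's parametrization of \emph{binary $n$-ic forms by pairs $(R,I)$}. In the latter, the extra datum is an ideal class of $R$, not a resolvent ring, and there is no analogous rigidification. Only for $n=3$, by Delone--Faddeev, is the fiber a singleton.

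The paper sidesteps this entirely: rather than trying to bound the number of orbits over a fixed ring, Lemma~\ref{lem:semireduced} shows that a ``strongly quasi-reduced'' form is the unique one (up to $\SL_2(\Z)$) whose attached ring equals $R_f$ --- the key observation being that a ring isomorphism between $R_f$ and $R_{f^*}$ is automatically an isometry for the Minkowski metric, hence carries the unique Minkowski-reduced basis of one to that of the other, forcing $f=f^*$ coefficient by coefficient after reduction. One then applies the sieve inside a small open set $\B_X$ of pairwise $\SL_2(\Z)$-inequivalent strongly quasi-reduced forms, and every form counted gives a distinct maximal order and hence a distinct field. This is the step your proposal is missing: you need an injectivity mechanism from forms to rings, not a cardinality bound on fibers, and the cardinality bound you invoke does not hold.
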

Our lower bound in Theorem~\ref{monogenic} on the number of degree-$n$ $S_n$-number
fields of absolute discriminant less than $X$  improves the previous  best-known lower
bound of $X^{1/2+1/n}$ obtained in \cite{BSWsq}.
We note that the number fields constructed in Theorem~\ref{monogenic} can all be taken to have squarefree discriminant.

Our results also correct an error in, and thus resurrect, all the results of Nakagawa~\cite{Nakagawa1} and~\cite{Nakagawa2} that had been subsequently retracted in \cite{Nakagawa1E} and \cite{Nakagawa2E}. Specifically, the retracted theorems \cite[Theorems~3--4]{Nakagawa1} and \cite[Theorem~2]{Nakagawa2} regarding binary forms and  $A_n$-extensions of quadratic fields can now be taken to be true. In particular, we obtain:

\begin{Theorem}\label{thm:unr}
For $n\geq 3$, the total number of unramified $A_n$-extensions of real $($resp., imaginary$)$ quadratic fields $F$, across all such $F$ such that $|\Disc(F)|<X$, is $\gg  X^{(n+1)/(2n-2)}$.
\end{Theorem}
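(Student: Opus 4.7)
The strategy is to reduce Theorem~\ref{thm:unr} to Theorem~\ref{monogenic} via the classical correspondence, originally exploited by Nakagawa, between degree-$n$ $S_n$-fields of squarefree discriminant and unramified $A_n$-extensions of their quadratic resolvents.

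First I would make the correspondence precise. Let $K$ be a degree-$n$ number field with $\Gal(\widetilde{K}/\Q)=S_n$ and squarefree discriminant $d_K$, where $\widetilde{K}$ denotes the Galois closure. The fixed field $F:=\widetilde{K}^{A_n}$ is the quadratic resolvent $\Q(\sqrt{d_K})$, and since $d_K$ is squarefree we have $|\Disc(F)|\asymp |d_K|$. To check that $\widetilde{K}/F$ is everywhere unramified, observe that squarefreeness forces each ramified prime $p\mid d_K$ to be tamely ramified in $\widetilde{K}/\Q$ with inertia generated by a single transposition $\tau\in S_n\setminus A_n$; since $\tau\notin \Gal(\widetilde{K}/F)=A_n$, the induced inertia subgroup in $\widetilde{K}/F$ is trivial. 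Thus $K\mapsto(F,\widetilde{K}/F)$ assigns to each such $K$ an unramified $A_n$-extension of a quadratic field of absolute discriminant $\ll |d_K|$. The map is injective on isomorphism classes, since $K$ is recovered from $\widetilde{K}$ as the fixed field $\widetilde{K}^{A_{n-1}}$ of any point stabilizer (all of which are conjugate inside $S_n$).

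Next I would invoke Theorem~\ref{monogenic} together with the accompanying remark that the $\gg X^{(n+1)/(2n-2)}$ degree-$n$ $S_n$-fields it constructs of absolute discriminant less than $X$ may be taken to have squarefree discriminant. To separate the real and imaginary resolvents, I would track the sign of $d_K$: the resolvent $F=\Q(\sqrt{d_K})$ is real precisely when $d_K>0$. Because the construction in Theorem~\ref{monogenic} proceeds through integral binary $n$-ic forms ordered by height and stratified by real signature, and because the squarefreeness and maximality density statements of Theorems~\ref{polydisc2} and~\ref{polydiscmax2} apply uniformly to each signature stratum, a positive proportion of the constructed fields have $d_K>0$ and a positive proportion have $d_K<0$. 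Applying the correspondence above in each case yields $\gg X^{(n+1)/(2n-2)}$ unramified $A_n$-extensions of real quadratic fields $F$ with $|\Disc(F)|<X$, and the same count for imaginary $F$.

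The main obstacle is precisely the sign-stratified version of Theorem~\ref{monogenic}: one must confirm that the lower bound $\gg X^{(n+1)/(2n-2)}$ persists when the signature of $K$ (equivalently the sign of $d_K$) is prescribed. This reduces to verifying that the positive-density conclusions in Theorems~\ref{polydisc2} and~\ref{polydiscmax2} restrict to each real-signature stratum of the space of integral binary $n$-ic forms of bounded height with positive density, which one expects from the archimedean/geometry-of-numbers averaging machinery underlying those proofs, but which requires explicit verification. A secondary, minor subtlety is the small-$n$ case: one should confirm for $n=3,4$ that the conjugation action of $\Gal(F/\Q)$ on $\Gal(\widetilde{K}/F)=A_n$ is by an outer automorphism (so that $\Gal(\widetilde{K}/\Q)=S_n$ as desired), which is automatic from our assumption that $K$ itself has Galois group $S_n$.
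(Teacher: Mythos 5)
Your route---reducing Theorem~\ref{thm:unr} to Theorem~\ref{monogenic} via the classical Nakagawa--Yamamoto correspondence between $S_n$-fields with squarefree discriminant and unramified $A_n$-extensions of their quadratic resolvents---is correct and is, in substance, what the paper does. The paper does not write out a self-contained proof of Theorem~\ref{thm:unr}; it attributes the statement to the now-validated results of Nakagawa~\cite{Nakagawa1,Nakagawa2}, whose content is precisely the correspondence you describe, supplied at last with the missing tail estimate (Theorem~\ref{thm:mainestimate}). The paper's remark just before Section~2, that all main results persist under local conditions imposed at finitely many places including infinity, is what justifies the signature-stratified form of Theorem~\ref{monogenic} you invoke, and your exponent check $1/2+1/(n-1)=(n+1)/(2n-2)$ is correct.

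One step of your stratification is inaccurate as stated. For the real-resolvent case you deduce from $d_K>0$ that $F=\Q(\sqrt{d_K})$ is real---true---and treat that as the needed condition. But $d_K>0$ is only equivalent to $r_2(K)$ even, whereas unramifiedness of $\widetilde K/F$ at the archimedean places (which the abstract's phrase ``totally unramified'' indicates is intended) requires $\widetilde K$, hence $K$, to be totally real, i.e.\ $r_2(K)=0$. For instance, a quintic field of signature $(1,2)$ has $d_K>0$ and real resolvent $F$, yet $\widetilde K/F$ is ramified at the infinite places of $F$. The correct archimedean condition to impose on the binary forms $f$ is that $f$ have $n$ real roots, not merely $\Delta(f)>0$. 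This is a nonempty open (positive-measure) condition in $V_n(\R)$, so the geometry-of-numbers and sieve machinery --- in particular the paper's remark on archimedean local conditions --- accommodates it directly, and the lower bound $\gg X^{(n+1)/(2n-2)}$ persists; but the condition you single out, $d_K>0$, is the wrong one. (For imaginary resolvents there is no archimedean ramification and your argument is complete.) The remark you make about small $n$ and outer automorphisms is, as you say, a non-issue.
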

Theorem~\ref{thm:unr} yields the best-known lower bounds on the number of unramified $A_n$-extensions of quadratic fields when $n>5$. For improved bounds in the cases $n\leq 5$, see~\cite[Theorem~1.4]{geosieve}. For the best-known bounds on the number of quadratic fields of bounded discriminant admitting an unramified $A_n$-extension, see~Kedlaya~\cite[Corollary~1.4]{Kedlaya}.  
Other related works include  Uchida~\cite{Uchida}, Yamamoto~\cite{Yamamoto}, and Yamamura~\cite{Yamamura}.

The main technical ingredient required to prove all the above results is a ``tail estimate'' which shows that not too many
discriminants of integral binary $n$-ic forms $f$ are divisible by
$p^2$ when $p$ is large relative to the discriminant of $f$ (here,
large means larger than $H(f)$, say). It is these tail estimates that were missing in Nakagawa's work.
For a prime $p$, and an integral binary $n$-ic form $f$ such that
$p^2\mid \Delta(f)$, we say that $p^2$ {\it strongly divides}
$\Delta(f)$ if $p^2\mid \Delta(f + pg)$ for every integral binary
$n$-ic form $g$; otherwise, we say $p^2$ {\it weakly divides}
$\Delta(f)$.  For any squarefree integer~$m>0$, let $\U_m^\s$
(resp., $\U_m^\w$) denote the set of integral binary $n$-ic forms
whose discriminants are strongly divisible (resp., weakly divisible)
by $p^2$ for every prime factor $p$ of $m$.  

We prove the following tail estimates:
\begin{Theorem}\label{thm:mainestimate} 
For an integer $n\geq 3$, a positive real number $M$ and any $\epsilon>0$, we have:
\begin{equation*}
\begin{array}{rlcl}
\displaystyle \rm{(a) } &
\displaystyle
\#\bigcup_{\substack{m>M\\ m\;\mathrm{ squarefree}}}
    \{f\in\U_m^\s:H(f)<X\}&=&
\displaystyle
    O_\epsilon\Bigl(\frac{X^{n+1+\epsilon}}{M}+X^{n}\Bigr);
\\[.2in]

\displaystyle \rm{(b) } &
\displaystyle
\#\bigcup_{\substack{m>M\\
m\;\mathrm{ squarefree} }}\{f\in\U_m^\w:H(f)<X\}&=&
\displaystyle
O_\epsilon\Bigl(\frac{X^{n+1+\epsilon}}{M}+X^{n+1-1/(5n)+\epsilon}\Bigr),\mbox{ if }2\nmid n;
\\[.2in]
\displaystyle \rm{(c) } &
\displaystyle
\#\bigcup_{\substack{m>M\\
m\;\mathrm{ squarefree} }}\{f\in\U_m^\w:H(f)<X\}&=&
\displaystyle
O\Bigl(\frac{X^{n+1+1/(88n^5)}}{\sqrt{M}}+X^{n+1-1/(88n^6)}\Bigr),\mbox{ if }2\mid n.
\end{array}
\end{equation*}
\end{Theorem}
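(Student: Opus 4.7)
The plan is to first establish local estimates at a single prime $p$ for the densities of $\U_p^\s$ and $\U_p^\w$ in $\Z^{n+1}$, and then aggregate these via a sieve over squarefree $m$. The condition $f\in\U_p^\s$ is equivalent to $p^2\mid\Delta(f)$ together with $p\mid\partial_{a_i}\Delta(f)$ for every coefficient $a_i$; geometrically this means $\bar f \bmod p$ lies in the singular locus of the discriminant hypersurface $\{\Delta=0\}\subset\A^{n+1}_{\F_p}$, a codimension-$2$ locus parametrizing forms with a triple linear factor or with two distinct double linear factors. By contrast $f\in\U_p^\w$ says $\bar f$ has exactly one double factor (and otherwise simple factors). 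A direct mod-$p^2$ count along the singular locus (for the strong case) and along the smooth part of the discriminant variety (for the weak case) shows that both $\U_p^\s$ and $\U_p^\w$ occupy density $O(p^{-2})$ in $\Z^{n+1}$, giving the naive sieve prediction $\#\{f\in\U_m^\bullet:H(f)<X\}\ll X^{n+1}/m^2$ for squarefree $m$ at most modestly large.

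For (a), strong divisibility is purely a congruence condition modulo $p^2$ for each $p$, so the Chinese Remainder Theorem combined with lattice-point counting in a box of side $X$ yields $\#\{f\in\U_m^\s:H(f)<X\}\ll X^{n+1+\epsilon}/m^2 + O(X^n)$ uniformly in squarefree $m$. Summing over $m>M$, and using that if $m^2\mid\Delta(f)\neq 0$ then $m\ll X^{n-1}$ (while the forms with $\Delta(f)=0$ contribute only $O(X^n)$), gives (a). For (b) and (c), weak divisibility at squarefree $m$ is still a union of residues modulo $m^2$, but the naive sieve bound collapses once $m$ grows past roughly $\sqrt{X}$, so a finer argument is required for large $m$. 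After a suitable $\SL_2(\Z_p)$-change of variables moving the double root of $\bar f$ to $\infty$ at each $p\mid m$, one can write $f(x,y)=x^2\widetilde Q(x,y)+p\,h(x,y)$ with $\widetilde Q$ of degree $n-2$ and $h$ of degree $n$, and the condition $p^2\mid\Delta(f)$ becomes an explicit polynomial relation between the coefficients of $\widetilde Q$ and $h$. I would parametrize $\U_m^\w$ by such triples $(L\bmod p,\widetilde Q,h)$ and apply the geometry-of-numbers framework of the companion paper \cite{BSWsq}---a cusp versus main-body decomposition on the associated $\SL_2$-representation space---to extract per-$m$ counts that remain sharp for large $m$. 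A dyadic summation over $m>M$ then yields the odd-$n$ estimate.

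The main obstacle is the even-$n$ case (c), where the denominator $\sqrt{M}$ (versus $M$ for odd $n$) signals that the sieve by itself does not close. This reflects a structural parity phenomenon analogous to the one seen in $2$-descent on hyperelliptic curves of even genus: for even $n$ the binary form $f$ carries an extra $\SL_2$-invariant beyond $\Delta$, and forms sharing many weakly-divisible primes can concentrate in a lower-dimensional stratum along which this extra invariant is unusually small. Overcoming this will require a Cauchy--Schwarz / second-moment argument on the auxiliary representation space to effectively replace the missing factor of $\sqrt{M}$, and the unusual exponents $1/(88n^5)$ and $1/(88n^6)$ arise from optimizing the parameter balancing the height of this auxiliary form against the saving extracted per weakly-divisible prime.
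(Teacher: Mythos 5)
There are genuine gaps in all three parts. For (a), your claim that CRT plus lattice-point counting in a box gives $\#\{f\in\U_m^\s:H(f)<X\}\ll X^{n+1+\epsilon}/m^2+O(X^n)$ \emph{uniformly} in $m$ is false: the set $\U_m^\s$ is a union of roughly $m^{2n}$ residue classes modulo $m^2$, so the error in counting lattice points of height $<X$ in each class is $O(\max(X^n/m^{2n},1))$ per class, i.e.\ $O(X^n+m^{2n})$ in total, which swamps the main term once $m\gg\sqrt X$; and summing even your claimed per-$m$ bound over the full range $m\ll X^{n-1}$ diverges ($\sum_m X^n$ alone exceeds $X^{n+1}$). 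Handling the range $m\gg X$ is exactly the content of the quantitative Ekedahl geometric sieve, which is what the paper invokes and which your sketch does not supply. For (b), the paper's mechanism is not a cusp decomposition "on the associated $\SL_2$-representation space": it is a lift $\sigma_m$ of each $f\in\U_m^\w$ to a pair of integral symmetric matrices $(A,B)\in W_0(\Z)\subset 2\otimes\Sym_2(n)$ with $\det(Ax-By)=\pm f$, together with a relative invariant $Q$ of the parabolic preserving $W_0$ satisfying $Q^2\mid\Delta$ and $|Q|(\sigma_m(f))=m$. That identity is what converts "weak divisibility by $m^2$" into the archimedean condition $|Q|>M$ that yields the $1/M$ saving in the deep cusp (with a Selberg sieve controlling distinguished elements in the main body). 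Your normalization $f=x^2\widetilde Q+ph$ reproduces the first step of the construction but gives no substitute for the lift or the invariant, so there is no mechanism in your argument that actually produces the factor $1/M$.

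For (c), you have misdiagnosed the obstruction. It is not a parity phenomenon involving "an extra $\SL_2$-invariant beyond $\Delta$"; it is that for even $n$ an integral binary $n$-ic form need not be expressible as $\pm\det(Ax-By)$ for \emph{any} integral symmetric $A,B$, so the lift $\sigma_m$ simply fails to exist. The paper's repair is to lift $xf(x,y)$ into $W_{n+1}$ instead, which forces $B$ to be singular; this necessitates (i) a new invariant $q=Q/\det(B')$ (the minimum of the several values of $Q$, since a distinguished pair with singular $B$ has at least two common isotropic subspaces), (ii) upper bounds for singular symmetric matrices in arbitrarily skewed boxes in the style of Eskin--Katznelson, and (iii) the containment \eqref{eq:evencontainment} splitting $m>M$ according to whether the part of $m$ dividing $f(0,1)$ exceeds $\sqrt M$ (Ekedahl) or the coprime part does (geometry of numbers) --- which is where the $\sqrt M$ in the denominator comes from, not from a Cauchy--Schwarz or second-moment argument. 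None of these ingredients appears in your proposal, and a second-moment argument on an unspecified auxiliary space is not a workable replacement for them.
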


The estimate in the strongly divisible Case (a) of
Theorem~\ref{thm:mainestimate} follows from geometric techniques,
namely, the quantitative version of the Ekedahl geometric sieve as developed by the first author \cite{geosieve}.  The estimates in the weakly divisible Cases
(b) and (c) of Theorem~\ref{thm:mainestimate} are considerably more
difficult (particularly (c)), and we describe their proofs in the next section. 
Our tail estimate in fact allows us to prove Theorems~\ref{polydisc2} and \ref{polydiscmax2} with
power-saving error terms: 
\begin{Theorem}\label{thm:powersave}
Let $V_n=\Sym^n(2)$ denote the space of binary $n$-ic forms.
Define $\eta_n$ to be $1/(5n)$ when $n$ is odd and $1/(88n^6)$ when $n$ is even.
Then 
\begin{equation*}
\begin{array}{ccl} \displaystyle \#\{f\in V_n(\Z) : H(f)<X \mbox{ and
      $\Delta(f)$ squarefree}\}&\!\!=\!\!& \alpha_n\cdot (2X)^{n+1} +
    O_\epsilon(X^{n+1-\eta_n+\epsilon});\\[.125in] \displaystyle \#\{f\in
    V_n(\Z) : H(f)<X \mbox{ and $R_f$ maximal}\}&\!\!=\!\!&
    \beta_n\cdot (2X)^{n+1} + O_\epsilon(X^{n+1-\eta_n+\epsilon}).
\end{array}
\end{equation*}
\end{Theorem}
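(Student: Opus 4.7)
The strategy for both assertions is a truncated M\"obius inversion over squarefull divisors of $\Delta(f)$, where the small divisors contribute the main term (via routine lattice-point counting) and the large divisors contribute a tail controlled by Theorem~\ref{thm:mainestimate}. For the first estimate, write
\[
\mathbf{1}_{\Delta(f)\text{ squarefree}} \;=\; \sum_{d\geq 1}\mu(d)\,\mathbf{1}_{d^2\mid\Delta(f)},
\]
fix a truncation parameter $M=M(X)$ to be chosen at the end, and split the sum at $d=M$.

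For squarefree $d\leq M$, the condition $d^2\mid\Delta(f)$ cuts out a union of congruence classes on the coefficients of $f$ modulo $d^2$; by the Chinese Remainder Theorem the associated $p$-adic density $\rho(d)=\prod_{p\mid d}\rho(p)$ is multiplicative, and since $\Delta$ defines an integral hypersurface one checks that $\rho(p)=O(p^{-2})$. A standard box count gives
\[
\#\{f\in V_n(\Z):H(f)<X,\;d^2\mid\Delta(f)\}\;=\;\rho(d)(2X)^{n+1}+O_\epsilon(X^n d^{\epsilon}),
\]
and summing against $\mu(d)$ produces the main term $\alpha_n(2X)^{n+1}+O_\epsilon(X^{n+1+\epsilon}/M+M^{1+\epsilon}X^n)$, the first error coming from the tail of the convergent series $\sum_{d}\mu(d)\rho(d)$ and the second from boundary accumulation.

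For $d>M$, decompose $d=d_1 d_2$, where $d_1$ (resp.\ $d_2$) is the product of primes $p\mid d$ at which $p^2$ strongly (resp.\ weakly) divides $\Delta(f)$, so that $\max(d_1,d_2)>\sqrt{M}$. Hence $f$ lies either in $\bigcup_{m>\sqrt{M}}\U_{m}^\s$ or in $\bigcup_{m>\sqrt{M}}\U_{m}^\w$, and Theorem~\ref{thm:mainestimate}(a) combined with (b) (odd $n$) or (c) (even $n$), applied at threshold $\sqrt{M}$, bounds the tail by $O_\epsilon(X^{n+1+\epsilon}/\sqrt{M}+X^{n+1-\eta_n+\epsilon})$ (with the suitably adjusted power of $M$ in the even case). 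Choosing $M\asymp X^{2/(5n)}$ for odd $n$, and correspondingly for even $n$, balances all errors against $X^{n+1-\eta_n+\epsilon}$ and proves the first assertion.

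The second estimate follows by the same sieve, using that $R_f$ is maximal if and only if it is maximal at every prime $p$, and that non-maximality at $p$ forces $p^2\mid\Delta(f)$ through the identity $\disc(R_f)=[\O_{K_f}:R_f]^2\,\disc(\O_{K_f})$. Replacing $\mathbf{1}_{d^2\mid\Delta(f)}$ by the corresponding product of local non-maximality indicators and invoking the local densities already computed for Theorem~\ref{polydiscmax2} (again $O(p^{-2})$ per prime), the identical small/large split produces main term $\beta_n(2X)^{n+1}$ with the same tail bound. The main-term analysis and the optimization of $M$ are thus formal; the entire content of the power-saving exponent $\eta_n$ is packaged inside Theorem~\ref{thm:mainestimate}(b) and (c)—particularly (c), whose proof is the technical heart of the paper—which constitute the principal obstacle.
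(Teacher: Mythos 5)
Your proposal is correct and follows essentially the same route as the paper: a truncated squarefree sieve whose main term comes from counting lattice translates modulo $m^2$ and whose tail is split into strongly and weakly divisible parts (with thresholds $M^{1/3}$, $M^{2/3}$ in the even case rather than $\sqrt{M}$) and bounded by Theorem~\ref{thm:mainestimate}. The paper merely proves the slightly more general Theorem~\ref{th:sqfreesieve}, allowing congruence conditions modulo $p^2$ at finitely many primes, and obtains Theorem~\ref{thm:powersave} as the case $N=1$.
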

These power saving bounds have applications towards
level-of-distribution questions when counting integral binary $n$-ic
forms $f$ of bounded height with $\Delta(f)$ squarefree (resp., $R_f$ maximal)
satisfying splitting conditions at finitely many primes. Such
level-of-distribution results in turn have applications towards a host
of problems in analytic number theory, such as studying statistics of
Artin $L$-functions attached to binary $n$-ic forms and proving lower
bounds on the number of degree-$n$ number fields which are ramified
only at a bounded number of primes, among many others.

We remark that our methods imply that the analogues of all of the
above results also hold when local conditions are imposed at
finitely many places (including at infinity); the orders of magnitudes
in these theorems remain the same, provided
that no local conditions are imposed that force the sets being counted
in Theorems~\ref{polydisc2} and~\ref{polydiscmax2} to be empty.

Finally, the methods introduced in \cite{BSWsq} and in the current article have applications beyond just squarefree values of polynomial discriminants.  They have been recently adapted in~\cite{BhargavaHo2} to determine the density of squarefree discriminants of elliptic curves over $\Q$ having two marked rational points. Other applications include determining the density of conductors in some families of elliptic curves \cite{ECC} and the density of squarefree values taken by $a^4+b^3$ (\cite{SanW}).

\section{Outline of proof}

As mentioned in the introduction, the uniformity estimate in Theorem \ref{thm:mainestimate} is the key to deducing Theorems \ref{polydisc2}, \ref{polydiscmax2}, and \ref{thm:powersave} via a  squarefree sieve. Case (a) of Theorem \ref{thm:mainestimate} follows directly from the results in \cite{geosieve}. Case (b), which pertains to odd degrees $n$, can be proven using methods similar to those developed in our previous work \cite{BSWsq}. However, these methods fail to work for Case (c), which pertains to even degrees $n$, and a number of new ideas are required to handle this case. It is the proof of this case to which the bulk of our paper is devoted; it requires, in particular, the introduction of a new technique in the geometry of numbers that we believe may also be useful in other contexts. 

In this section, we give a detailed outline of the proof of Case (b) pertaining to odd $n$. We then explain why this strategy breaks down (quite spectacularly!) when $n$ is even, and finally we describe the new techniques required to complete the proof of Theorem \ref{thm:mainestimate}(c).

\subsubsection*{Sketch of the proof of the tail estimate for odd $n$}

Our proof of Theorem~\ref{thm:mainestimate}(b) makes use of the representation of $G=\SL_n$
on the space $W=2\otimes\Sym_2(n)$ of pairs $(A,B)$ of symmetric $n\times n$ matrices, studied
in detail in \cite{Wood1,pointless,BGW,BGW2}.  The group $G$ acts on $W$ via 
$\gamma\cdot (A,B)=(\gamma A\gamma^t,\gamma B\gamma^t$) for $\gamma\in
G$ and $(A,B)\in W$.  We define the {\it invariant binary form} of an
element $(A,B)\in W$ by $$\smash{f_{A,B}(x,y) = (-1)^{n(n-1)/2}\det(Ax -
By).}$$ Then $f_{A,B}$ is a binary $n$-ic form satisfying
$f_{\gamma(A,B)}=f_{A,B}$. Moreover, the ring of polynomial
invariants for the action of $G$ on $W$ is freely generated by the
coefficients of the invariant binary form (reference? it is in some classification of coregular spaces). Define the {\it
  discriminant} $\Delta(A,B)$ and {\it height} $H(A,B)$ of an element
$(A,B)\in W$ by $\Delta(A,B)=\Delta(f_{A,B})$ and
$H(A,B)=H(f_{A,B})$.

The first step of our proof is the construction, for every 
squarefree integer $m>0$, of a map
\begin{equation*}
\sigma_m:\U_m^\w\to W(\Z),
\end{equation*}
such that $f_{\sigma_m(f)}(x,y)=f(x,y)$ for every $f\in \U_m^\w$.  In
our construction, the image of $\sigma_m$ in fact lies in $W_0(\Z)$,
where $W_0$ is the subspace of $W$ consisting of pairs of matrices
whose top left $g\times g$ blocks are $0$, where $n=2g+1$. The action of the group $G$
does not preserve $W_0$, and we take $G_0$ to be the maximal parabolic
subgroup of $G$ that does preserve $W_0$. When the discriminant
polynomial $\Delta\in\Z[W]$ is restricted to $W_0$, it is no longer
irreducible but rather is divisible by the square of a polynomial
$Q\in\Z[W_0]$. This polynomial $Q$ is a relative invariant for the
action of $G_0$ on $W_0$. Its significance is that,
by construction of $\sigma_m$, every element in the the image of
$\sigma_m$ has $Q$-invariant equal to $m$. To prove Part (b) of
Theorem \ref{thm:mainestimate}, it therefore suffices to estimate the
number of $G_0(\Z)$-orbits on $W_0(\Z)$ having height less than $X$
and $Q$-invariant greater than $M$.

Bounding the number of these orbits is complicated by the fact that
$G_0$ is not reductive. We are rescued by using the full action of
$G(\Z)$ on $W(\Z)$.  This necessitates expanding the definition of the
$Q$-invariant from $W_0(\Z)$ to all ``distinguished'' elements of $W(\Z)$.  An element $(A,B)\in W(\Z)$ is {\it distinguished}
if $A$ and $B$ have a common isotropic $g$-dimensional subspace
defined over $\Q$. Thus every
element in $W_0(\Z)$ (and thus every element in the image of
$\sigma_m$) is distinguished. The $Q$-invariant, though defined
initially on $W_0$, can be extended as a function on the set of all
triples $(A,B,\Lambda)$, where $(A,B)\in W(\Z)$ is distinguished, and
$\Lambda$ is a common isotropic subspace of $A$ and~$B$. For all but a
negligible number of distinguished elements $(A,B)\in W(\Z)$, $A$ and
$B$ have exactly one common isotropic subspace $\Lambda$ defined over $\Q$. Thus
we may define a $G(\Z)$-invariant function~$Q$ on the set of
distinguished pairs $(A,B)\in W(\Z)$ outside a negligible number of
them. It then  \pagebreak suffices to bound the number of $G(\Z)$-orbits on
distinguished elements in $W(\Z)$ having bounded height and large
$Q$-invariant.

To obtain such a bound, we construct fundamental domains 
for the action of $G(\Z)$ on elements in $W(\R)$ with height
less than $X$. Such a fundamental domain has a natural partition into
three parts that we term the {\it main body}, the {\it shallow cusp}, and the {\it
  deep cusp}. We have little control over the
$Q$-invariants of elements in the main body and the shallow cusp. However, it
is known~\cite[Proposition 4.3]{HSV} 
that there are a negligible number of integral
elements in the shallow cusp. Meanwhile, distinguished elements occur rarely in
the main body, a fact we prove via the Selberg sieve.

Finally, the deep cusp lies in $W_0$, where an upper
bound for the $Q$-invariant can be obtained. Imposing the condition
that this upper bound is greater than $M$, and counting the number of such points in the deep cusp using the averaging method of \cite{quintic}, gives the desired saving for the number of 
elements in the deep cusp having $Q$-invariant larger than $M$. Combining the estimates for the main
body, the shallow cusp, and the deep cusp yields Part (b) of Theorem
\ref{thm:mainestimate}.

\subsubsection*{Sketch of the proof of the tail estimate for even $n$}

With $W$ again denoting the space of pairs of symmetric $n\times n$ matrices, we
may attempt to proceed in the same manner as in the case of odd $n$, by 
constructing a map \begin{equation*}
\smash{\sigma_m:\U_m^\w\to W(\Z)}
\end{equation*}
such that \smash{$f_{\sigma_m(f)}(x,y)=f(x,y)$ for every $f\in \U_m^\w$}.  However, such a map does not exist in the case that $n$ is even!  Indeed, there exist integral binary $n$-ic forms $f(x,y)$ that cannot be expressed as $\det(Ax -
By)$---even up to sign---for any integral $n\times n$ symmetric matrices $A$ and $B$.  This phenomenon was extensively studied in \cite{pointless,BGW,BGW2}. 
It is in this sense that the strategy to prove Theorem~\ref{thm:mainestimate}(b) for odd $n$ fails spectacularly for even $n$---and at the very first step.

We address this issue by replacing
\smash{$f(x,y)\in\W_m^{(2)}$} by $xf(x,y)$, which is a reducible binary
$(n+1)$-ic form whose discriminant, at least generically, remains weakly divisible by
$m^2$. For these forms $xf(x,y)$, we can use the lift $\sigma_m$
constructed in the odd case. However, since $xf(x,y)$ has vanishing
$y^{n+1}$ term, the image of $\sigma_m$ lies within the set of pairs
$(A,B)$ where $B$ is singular.

The singularity of $B$ introduces additional difficulties with respect to both the
algebraic and the analytic aspects of the proof. On the algebraic side,
the main new problem is that distinguished elements $(A,B)$ with $B$
singular have at least two values for the $Q$-invariant, since they
share at least two different common isotropic $(g+1)$-dimensional
subspaces, where $n=2g+2$. So it is no longer well-defined to impose the condition that
$Q$ is large. Imposing the condition that the maximum value of $Q$ is
large does not yield sufficient savings to prove an analogue of Theorem~\ref{thm:mainestimate}(b). We
thus instead construct a new invariant, termed $q$, such that for all but a negligible number of elements $(A,B)$ in the image of our map $\sigma_m$, the invariant $q$ is the
minimum value taken by $Q$, and it satisfies $q(\sigma_m(xf(x,y)))=\pm m$.

As in the odd degree case, we once again construct  fundamental
domains $\FF_X$ for the action of $G(\Z)$ on $W(\R)$ with height less
than $X$, and partition such a domain into three parts: the main body,
the shallow cusp, and the deep cusp. However, we must now only count integer elements $(A,B)$ where $B$ is singular. The beautiful work
of Eskin and Katznelson \cite{EK} provides asymptotics for the number of
singular symmetric matrices in homogenously expanding domains. But this
work is not directly applicable to our case since we need to estimate
the number of singular symmetric matrices $B$ in {\it skewed}
domains. To achieve this, we provide a simplification of the proof of the
upper bounds in \cite{EK}, at the cost of some extra $\log$ factors,
which gives us a flexible method by which to obtain upper bounds on
the number of singular symmetric matrices in arbitrarily skewed
domains.

\pagebreak 

Accounting for the singularity of the $B$'s introduces complications
in each region of the fundamental domain. In the main body, the lack
of an exact count with a power-saving error term means we cannot
directly apply a Selberg sieve to bound the number of distinguished
elements. Instead, we fiber over the singular matrices $B$ and apply
the Selberg sieve to bound the number of possible $A$'s. This requires
us to prove new density estimates on the number of distinguished
elements $(A,B)$ over $\F_p$, when  $B$ is fixed.

Furthermore, unlike in the odd degree case, we no longer have an automatic
power-saving on the number of pairs $(A,B)\in W(\Z)$ lying in the shallow cusp
of the fundamental domain and where $B$ is singular. As we go closer to
the deep cusp, there are regions in which imposing the
condition that $B$ is singular yields no saving whatsoever. To obtain the
required bounds, we isolate this region of the shallow cusp and prove that
integral elements $(A,B)$ in them either satisfy $\Delta(A,B)=0$ or
$|q(A,B)|$ is small.

Finally, for the deep cusp of $\FF$, we once again use the condition
that the $q$-invariant is large to obtain a power saving. Unlike
the situation with the $Q$-invariant in the odd-degree case, the invariant $q$ in the even degree case behaves more wildly and is much
harder to control. This is because $q$ is not a polynomial in the
coefficients of $W_0$ but rather is a minimum of the different
possible values of $Q$. In fact, there are regions within the
deep cusp where the $q$-invariant of elements $(A,B)$ are not 
small. However, we show that these regions correspond to an archimedean condition on the invariant binary form $f$ of $(A,B)$,
namely, that the discriminant of $f$ is much smaller than is typical
for the height bound on $f$. Separately bounding the number of such
binary forms yields the desired result.

\subsubsection*{Organization of the paper}

This paper is organized as follows. We begin in \S3 by
recalling the arithmetic invariant theory for the representations
$W_n:=2\times\Sym_2(n)$ of $\SL_n$ and $2\otimes g\otimes(g+1)$ of
$\SL_2\times\SL_g\times\SL_{g+1}$. In particular, we define the
fundamental invariants $Q$ and $q$. We then construct our maps from \smash{$\W_{m}^{(2)}$} into $W_n(\Z)$ when $n$ is odd and into $W_{n+1}(\Z)$ when $n$ is even.

The analytic parts of the paper are carried out in \S4--6. In \S4, we prove the tail estimates of Theorem
\ref{thm:mainestimate} for odd degrees $n$ using geometry-of-numbers
techniques. In \S5, we carry out the necessary groundwork to count the
number of singular symmetric matrices that lie in skewed
domains. Using these results, we prove the tail
estimates for even degrees $n$ in \S6, completing the proof of Theorem
\ref{thm:mainestimate}.  In \S7, we deduce the main results, Theorems~1--4, from the tail  estimates using a squarefree sieve, although the exact constants occurring in Theorems~1 and 2 remain conditional upon certain local density computations. Finally, in the Appendix, we compute the local densities of integral binary
$n$-ic forms whose discriminants are indivisible by~$p^2$
(resp., whose associated rings are maximal at $p$), thereby completing the proofs of
Theorems \ref{polydisc2} and~\ref{polydiscmax2}.

\section{Invariant theory on spaces associated to binary $n$-ic forms}

Fix a positive integer $n$ and consider the space
$V_n=\Sym^n(2)$ of binary $n$-ic forms of degree $n$. The group
$\SL_2$ acts on $V_n$ via linear change of variables: we have
$\gamma\cdot f(x,y):=f((x,y)\cdot\gamma)$ for $\gamma\in\SL_2$ and
$f\in V_n$. 

Let $W_n=2\otimes\Sym_2(n)$ denote the space of pairs of
$n\times n$ symmetric matrices $(A,B)$. The group
$\SL_2\times\SL_n$ acts on $(A,B)$ via
\begin{equation*}
(\gamma_2,\gamma_n)\cdot (A,B)=(\gamma_n A\gamma_n^t,\gamma_n B\gamma_n^t)\cdot\gamma_2^t.
\end{equation*}
There is a natural map  $W_n\to V_n$ given by
\begin{equation}\label{eqres}
  \begin{array}{rcl}
    (A,B)&\mapsto & f_{A,B}:= (-1)^{n(n-1)/2}\det(Ax-By),
  \end{array}
\end{equation}
sending an element of $W_n$ to its {\it invariant binary $n$-ic form}.
The ring of $\SL_n(\C)$-invariant polynomials on $W_n(\C)$ is freely
generated by the coefficients of the invariant binary $n$-ic form.

\subsection{Arithmetic invariant theory for the representation $2\otimes\Sym_2(n)$ of $\SL_n$}\label{sec:2.1}

First, let $n=2g+1$ be an odd integer with $g\geq 1$. We recall some of the arithmetic invariant theory of the representation $W:=W_n$ of
 $\SL_n$ and its map (\ref{eqres}) to $V:=V_n;$ see \cite{BGW} for
more details. 

Let $k$ be a field of characteristic not $2$.  For a binary $n$-ic form $f(x,y)=a_0x^n + \cdots + a_ny^n\in V(k)$ with
$\Delta(f)\neq0$ and $a_0\neq 0$, let $C_f$ denote the smooth
hyperelliptic curve $z^2=f(x,y)y$ of genus $g$ viewed as a curve in
the weighted projective space $\P(1,1,g+1)$. Let $J_f$ denote the
Jacobian of $C_f$.  Then the stabilizer of an element $(A,B)\in W(k)$
with invariant binary form $f(x,y)$ is isomorphic to $J_f[2](k)$. 
The
set of $\SL_n(k)$-orbits on $W(k)$ with invariant binary form
$f(x,y)$ maps injectively into $H^1(k,J_f[2])$. An element $(A,B)$ (or an
$\SL_n(k)$-orbit) is {\it distinguished} if
$\Delta(A,B)\neq 0$ and there exists a $g$-dimensional subspace defined
over $k$ that is isotropic with respect to both $A$ and $B$. If
$(A,B)$ is distinguished, then its $\SL_n(k)$-orbit corresponds to the identity element
of $H^1(k,J_f[2])$, and the set of these $g$-dimensional subspaces is
in bijection with $J_f[2](k)$.

Let $W_{0}\subset W$ be the subspace of pairs of matrices
whose top left $g\times g$ blocks are zero. Then elements $(A,B)$ in
$W_{0}(k)$ with nonzero discriminant are all distinguished since the
$g$-dimensional subspace $Y_g$ spanned by the first $g$ basis vectors
is isotropic with respect to both $A$ and $B$. Moreover, every
distinguished element of $W(k)$ is $\SL_n(k)$-equivalent to some
element in $W_{0}(k)$ since $\SL_n(k)$ acts transitively on the set
of $g$-dimensional subspaces of $\P^{n-1}(k)$. Let $G_0$ be the
maximal parabolic subgroup of $\SL_n$ consisting of elements $\gamma$
that preserve $Y_g$.  Elements of $W_{0}$ have block matrix form
\begin{equation}\label{eq:G_01}
  (A,B)=\left(\Bigl(\begin{array}{cc}0 & A^\top\\ (A^\top)^t & A_1\end{array}\Bigr),
    \Bigl(\begin{array}{cc}0 & B^\top\\ (B^\top)^t & B_1\end{array}\Bigr)\right),
\end{equation}
where $A^\top$, $B^\top$ are $g\times (g+1)$ matrices and $A_1$, $B_1$
are $(g+1)\times(g+1)$-symmetric matrices.  Meanwhile, elements of
$G_0$ have the block matrix form
\begin{equation}\label{eq:G_0}
\gamma=\Bigl(\begin{array}{cc}\gamma_1 & 0\\ n & \gamma_2
\end{array}\Bigr)\in\Bigl(\begin{array}{cc}\GL_g & 0\\ M_{(g+1)\times g} & \GL_{g+1}
\end{array}\Bigr).
\end{equation}
An element $\gamma\in G_0$ acts on the top right $g\times (g+1)$ block
of elements of $W_{0}$ by
\begin{equation*}\gamma(A^\top,B^\top) =
  (\gamma_1A^\top\gamma_2^t,\gamma_1B^\top\gamma_2^t)
\end{equation*}
where we use the superscript ``top'' to denote the top right $g\times
(g+1)$ block of an $n\times n$ symmetric matrix. The action of $G_0$ on $W_{0}$ restricts to an action on
the space $U_g:=2\otimes g\otimes(g+1)$ of pairs of $g\times
(g+1)$-matrices, Moreover, the unipotent radical
$M_{(g+1)\times g}$ of $G_0$ acts trivially on $U_g$. We study the
invariant theory for this action more closely in the next subsection.

We will also need some results in the case when $n=2g+2$ is even in
Section \ref{sec:moniceven} (specifically in the proof of Lemma
\ref{lemndist}). Let $f(x,y) = a_0x^n + \cdots + a_ny^n\in V(k)$
with $\Delta(f)\neq 0$ and $a_0\neq 0$. Let $L = k[x]/(f(x))$. Let
$V_f(k)$ denote the set of $(A,B)\in W_n(k)$ with $f_{A,B} =
f(x,y)$. Then $V_f(k)$ is nonempty if and only if $f\in
k^{\times2}N_{L/k}(L^\times)$. Note in particular that if $f(x,y)\in
V(\R)$ is negative definite, so that $L = \R[x]/(f(x))\simeq
\C^{n/2}$ and $a_0<0$, then $V_f(\R)$ is empty. 
On the other hand, if
$k$ is a finite field of characteristic not $2$, then $V_f(k)$ is always nonempty and the number of $\SL_n(k)$-orbits equals the number of even degree factorizations of $f(x,y)$ over $k$.

\subsection{The representation $2\otimes g\otimes (g+1)$
  of $\SL_2\times\SL_g\times\SL_{g+1}$ and the $Q$-invariant}\label{sQ}

In this section, we collect some algebraic facts about the
representation $U_g;=2\otimes g\otimes(g+1)$ of the group
$H_g:=\SL_2\times\SL_g\times\SL_{g+1}$. We start with the following proposition.
\begin{proposition}
The representation $U_g$ of $\G_m\times H_g$ is {\it prehomogeneous}, i.e.,
the action of $\G_m\times H_g$ on $U_g$ has a single Zariski open
orbit. Furthermore, the stabilizer in $H_g(\C)$ of an element in the
open orbit of $U_g(\C)$ is isomorphic to $\SL_2(\C)$.
\end{proposition}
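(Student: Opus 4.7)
The plan is to prove prehomogeneity by a dimension count and to exhibit a specific element whose stabilizer is an explicit copy of $\SL_2$. Computing
\begin{equation*}
\dim(\G_m\times H_g)=1+3+(g^2-1)+((g+1)^2-1)=2g^2+2g+3,
\end{equation*}
while $\dim U_g=2g(g+1)=2g^2+2g$, the codimension of any open orbit must equal $3=\dim\SL_2$. This tells me that exhibiting a single element whose stabilizer in $H_g(\C)$ contains a $3$-dimensional subgroup will simultaneously force the orbit to be open and the stabilizer to be at most $3$-dimensional.

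The representative I would take is the Kronecker pair $(A_0,B_0)\in U_g(\C)$, for which the pencil $M_0(x,y):=A_0x-B_0y$ is the $g\times(g+1)$ matrix having $x$ on the main diagonal, $-y$ on the superdiagonal, and zeros elsewhere. At every $(x:y)\in\P^1$ the kernel of $M_0(x,y)$ is a line spanned by the vector whose entries are the $(g+1)$ degree-$g$ monomials in $x$ and $y$, so $M_0$ realizes the rational normal curve $\nu_g:\P^1\hookrightarrow\P^g$. Equivalently, in standard monomial bases, $M_0$ is the matrix of the natural $\SL_2$-equivariant Clebsch--Gordan multiplication $\Sym^{g-1}(S)\otimes S\to\Sym^g(S)$, where $S$ denotes the defining $2$-dimensional representation of $\SL_2$.

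Next, the symmetric-power representations $\rho_k:\SL_2\to\GL_{k+1}$ have image in $\SL_{k+1}$, since $\SL_2$ has no nontrivial characters. They therefore assemble into a homomorphism
\begin{equation*}
\iota:\SL_2\to H_g,\qquad s\mapsto\bigl(s,\,\rho_{g-1}(s),\,\rho_g(s)\bigr),
\end{equation*}
possibly composed with the transposes or inverses required by the action conventions of \S\ref{sec:2.1}. The $\SL_2$-equivariance of the Clebsch--Gordan map then forces $\iota(\SL_2)$ to fix $(A_0,B_0)$, so the stabilizer of $(A_0,B_0)$ in $H_g(\C)$ contains a $3$-dimensional subgroup isomorphic to $\SL_2$. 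The dimension count now shows that the $(\G_m\times H_g)$-orbit of $(A_0,B_0)$ has dimension at least $(2g^2+2g+3)-3=\dim U_g$, hence is Zariski-open in $U_g$; this establishes prehomogeneity, and simultaneously forces the stabilizer to have dimension exactly $3$ with identity component $\iota(\SL_2)\cong\SL_2(\C)$.

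The main obstacle I anticipate is the careful bookkeeping of transposes, inverses, and duality conventions required to verify that $\iota(\SL_2)$ fixes $(A_0,B_0)$ on the nose rather than only up to a scalar or up to the projective equivalence of the Veronese curve, together with upgrading the conclusion from the identity component to the full stabilizer. For the latter, any additional component of $\Stab_{H_g(\C)}(A_0,B_0)$ must map to the finite center of $H_g$; one then verifies that no nontrivial central element outside $\iota(\SL_2)$ simultaneously satisfies all three $\SL$-determinant constraints while fixing $(A_0,B_0)$, yielding the full stabilizer isomorphic to $\SL_2(\C)$.
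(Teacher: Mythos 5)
Your dimension count is correct, and the Kronecker pair realization of the rational normal curve is indeed a natural candidate for an element of the open orbit. However, the central logical step runs in the wrong direction. You show that $\Stab_{H_g(\C)}(A_0,B_0)$ \emph{contains} a copy of $\SL_2$, so has dimension $\geq 3$. By the orbit–stabilizer formula, this gives $\dim\bigl((\G_m\times H_g)\cdot(A_0,B_0)\bigr) = \dim(\G_m\times H_g) - \dim\Stab \leq 2g^2+2g+3-3 = \dim U_g$, i.e.\ an \emph{upper} bound on the orbit dimension. You write ``the dimension count now shows that the orbit has dimension at least $\dim U_g$,'' but that conclusion would require $\dim\Stab\leq 3$, which you have not established. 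Exhibiting a $3$-dimensional subgroup inside the stabilizer is consistent with the stabilizer being much larger and the orbit being far from open (e.g., the stabilizer of $0$ contains every subgroup you like, and its orbit is a point). So prehomogeneity does not follow from what you have shown.

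To repair this, you would need the reverse inequality: that $\Stab_{H_g(\C)}(A_0,B_0)$ has dimension \emph{at most} $3$. Concretely, one would argue that any $(\gamma_2,\gamma_g,\gamma_{g+1})\in H_g$ fixing $(A_0,B_0)$ forces $\gamma_{g+1}$ to stabilize the rational normal curve $\nu_g(\P^1)\subset\P^g$ cut out by the maximal minors of $M_0(x,y)$ — that stabilizer in $\PGL_{g+1}$ is $\PGL_2$ — and then that $\gamma_2$ and $\gamma_g$ are determined by $\gamma_{g+1}$ up to finitely many choices, via the induced automorphism of $\P^1$ and the Clebsch–Gordan structure. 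This is doable but is precisely the hard half of the computation; your writeup treats it as automatic. By contrast, the paper bypasses any explicit stabilizer computation for general $g$ by using castling transforms: it checks the claim directly only in the base case $g=1$ (where $U_1$ is $2\times 2$ matrices and the stabilizer of a nonsingular matrix is visibly $\SL_2$), and then invokes the fact that castling preserves both prehomogeneity and generic stabilizers to propagate the conclusion to all $g$. Your approach, if completed, would have the virtue of producing an explicit open-orbit representative, but as written it does not establish the proposition.
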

\begin{proof}
We prove this by induction on $g$.  The assertion is clear for $g=1$,
where the representation is that of $\G_m\times \SL_2\times \SL_2$ on
$2\times 2$ matrices; the single relative invariant in this case is
the determinant, and the open orbit consists of nonsingular matrices.
For higher $g$, we note that $U_g$ is a {\it castling transform} of
$U_{g-1}$ in the sense of Sato and Kimura~\cite{SK}. As a result, the
orbits of $\G_m\times \SL_2\times\SL_g\times\SL_{g-1}$ on $2\otimes g
\otimes (g-1)$ are in natural one-to-one correspondence with the orbits
of $\G_m\times\SL_2\times\SL_g\times\SL_{g+1}$ on $2\otimes g\otimes
(2g-(g-1))=2\otimes g\otimes(g+1)$, and under this correspondence, the
open orbit in $U_{g-1}$ maps to an open orbit in $U_g$
(cf. \cite{SK}).  Thus all the representations $U_g$ for the action of
$\G_m\times H_g$ are prehomogeneous.

Note that castling transforms preserve stabilizers over $\C$. Since
the generic stabilizer for the action of $H_1(\C)$ on $U_1(\C)$ is
clearly isomorphic to $\SL_2(\C)$, it follows that this remains the
generic stabilizer for the action of $H_g(\C)$ on $U_g(\C)$ for all
$g\geq 1$.
\end{proof}

Since castling transforms also preserve polynomial invariants and their irreducibility (\cite[Proposition 18]{SK}), it
follows that the ring of polynomial invariants for this action of $H_g$ on $U_g$ is generated by an irreducible polynomial. We now give an explicit description of
this invariant.

Write an element in $U_g=2\times g\times
(g+1)$ as a pair $(A^\top,B^\top)$ of $g\times(g+1)$ matrices.  For
$1\leq i\leq g+1$, let $A_i$ and $B_i$ denote the $g\times g$-matrices
obtained from $A^\top$ and $B^\top$, respectively, by deleting the
$i$th column.  Define the binary $(g+1)$-ic form $f_i(x,y)$ to be
$(-1)^{i+1}\det(A_ix-B_iy)$. Consider the $(g+1)\times(g+1)$ matrix
$C$ whose $(i,j)$-entry is the $j$th-coefficient of $f_i(x,y)$. Taking
the determinant of $C$ yields a polynomial $Q=Q(A^\top,B^\top)$ in the
coordinates of $U_g$. The polynomial~$Q$ is the {\it hyperdeterminant} of the $2\times g \times (g+1)$ matrix $(A^\top,B^\top)$ (cf.\ \cite[Theorem 3.18]{GKZ}).  As a
consequence, it is irreducible and invariant under the action of $H_g$
on $U_g$ and thus generates the ring of polynomials for the action of $H_g$ on $U_g$.

Let $n=2g+1$ again be an odd integer.
We return to the representation $W_{0}$ of $G_0$. Given an element
$(A,B)\in W_{0}$, recall that we obtain an element $(A^\top,B^\top)\in
U_g$ by taking the top right $g\times(g+1)$ blocks of $A$ and $B$.  We
define the $Q$-{\it invariant} of $(A,B)\in W_{0}$ as the
$Q$-invariant of~$(A^\top,B^\top)$:
\begin{equation}\label{eqQB}
Q(A,B):=Q(A^\top,B^\top).
\end{equation}
Then the $Q$-invariant is a relative invariant for $G_0$. More
precisely, for any $\gamma\in G_0$ in the block matrix form
\eqref{eq:G_0}, we have
\begin{equation}\label{eq:weightG_0}
Q(\gamma\cdot (A,B)) = \det(\gamma_1)^{g+1}\det(\gamma_2)^gQ(A,B)=\det(\gamma_1)Q(A,B),
\end{equation}
since $\det(\gamma_1)\det(\gamma_2)=1$.  If $\gamma\in G_0(\Z)$, then
we have $\det(\gamma_1)=\det(\gamma_2)=\pm1$. Hence the absolute value
$|Q|$ of $Q$ is an invariant for the action of $G_0(\Z)$ on
$W_{0}(\Z)$.

\subsection{Divisibility properties of $\Delta$ when restricted to $W_{0}$}\label{sec:2.3}

Let $n=2g+1$ be an odd integer. Write the coordinates on 
$W_{0}$ as   $a_{ij},b_{ij}$ with $i,j$ in the appropriate ranges. Let~$R$ denote the ring of regular functions of $W_0$ over
$\Z$, i.e., $R=\Z[W_0]=\Z[a_{ij},b_{ij}]$. 
Consider the discriminant polynomial $\Delta\in R$
given by $\Delta(A,B):=\Delta(f_{A,B})$. In this section, we
prove that $Q^2\mid\Delta$ as polynomials in $R$, along with another
useful divisibility result.

Let $Z$ be the closed subvariety of $W_{0}$ consisting of elements
$(A,B)$ with $\Delta(A,B)=0$, and let $Y\subset Z$ denote the closed
subvariety of $W_{0}$ consisting of elements $(A,B)$ such that
$f_{A,B}$ is either divisible by the cube of a binary form with
degree $\geq 1$ or the square of a binary form with degree $\geq
2$. Both of these varieties $Y$ and $Z$ are defined over $\Z$ and are
clearly $\SL_2\times G_0$-invariant.  

Our first
result states that the variety in $W_{0}$ cut out by $Q=0$ does not
lie in $Y$.
\begin{proposition}\label{prop:notstrong}
 Let $(A,B) = ((a_{ij})_{ij},(b_{ij})_{ij}) \in W_0(R)$ be the generic element.
 Then
\begin{equation*}
  (A,B){\rm{ \;mod\; }}Q\not\in
  Y(R/(Q)).
\end{equation*}
\end{proposition}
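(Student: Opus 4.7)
Since the polynomial $Q$ is irreducible in $R$ (it generates the ring of polynomial invariants of the prehomogeneous space $U_g$ under $H_g$, as established earlier), the closed subscheme $\{Q=0\}\subset W_0$ is irreducible of codimension one, and the assertion $(A,B)\bmod Q\notin Y(R/(Q))$ is equivalent to showing that $\{Q=0\}\not\subset Y$. I plan to prove this by exhibiting a single $k$-valued point $(A_0,B_0)\in W_0(k)$ (over some field $k$) with $Q(A_0,B_0)=0$ and $(A_0,B_0)\notin Y(k)$; by specialization, this forces the generic element of $R/(Q)$ to lie outside $Y(R/(Q))$. The concrete aim is to arrange $f_{A_0,B_0}$ to have exactly one simple double root and all other roots simple, for then $(A_0,B_0)\notin Y(k)$ by the definition of $Y$ in terms of triple roots or two distinct double roots.

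To ensure $Q(A_0,B_0)=0$, I take $A^\top$ of rank at most $g-1$. Then every $g\times g$ submatrix $A_i$ of $A^\top$ (obtained by deleting one column) has $\det(A_i)=0$, so the $x^g$-coefficient of each $f_i(x,y)=(-1)^{i+1}\det(A_ix-B_iy)$ vanishes. The coefficient matrix $C$ of the $f_i$ therefore has a zero column and $Q=\det C=0$. To arrange additionally that $f_{A,B}$ has exactly one simple double root, I take $B^\top=(I_g\mid v)$ for a generic vector $v\in k^g$, take $A^\top$ to be a rank-$(g-1)$ matrix with sparse support (concretely, with entries $1$ at positions $(g,1)$ and $(j,g+2-j)$ for $j=1,\ldots,g-1$, omitting one such entry so that the rank drops to $g-1$), and take $A_1,B_1$ to be generic symmetric $(g+1)\times(g+1)$ matrices.

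Computing $\det(Ax-By)$ via a Laplace expansion along the first $g$ rows (whose nonzero entries lie in columns $g+1,\ldots,2g+1$), and evaluating each resulting $(g+1)\times(g+1)$ cofactor by a Schur complement using the $g\times g$ block $P$ obtained from the bottom-left of $M=Ax-By$ (equal to $-yI_g$ plus sparse off-diagonal $x$-entries inherited from $A^\top$'s support), one finds $\det(Ax-By)=y^2\cdot H(x,y)$ for a polynomial $H$ of degree $2g-1$ with $H(x,0)=cx^{2g-1}$ for some nonzero $c$ depending on entries of $A_1$. Hence $y\nmid H$, and $H$ is squarefree for generic choice of $A_1,B_1$; so $f_{A_0,B_0}=\pm y^2H(x,y)$ has exactly one simple double root at $[1:0]$ together with $2g-1$ other simple roots from $H$, as required. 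The main obstacle is verifying that the Laplace--Schur calculation produces exactly $y^2$ (rather than a higher power of $y$); this follows from tracking the cascading $y^{-k}$ entries in $P^{-1}$ that arise from the sparse off-diagonal $x$-entries of $P$, which combine with the $y^g$ factor of $\det P$ to leave behind exactly $y^2$ after assembling all Laplace contributions. The calculation can be carried out directly for small $g$, and the same structural pattern persists for all $g$.
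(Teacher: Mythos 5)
Your high-level reduction is correct and is essentially the same one the paper uses: to show $(A,B)\bmod Q\notin Y(R/(Q))$ it suffices to produce a point $(A_0,B_0)$ over some field $k$ with $Q(A_0,B_0)=0$ and $(A_0,B_0)\notin Y(k)$, since some $g\in I_Y$ then has $g(A_0,B_0)\neq 0$ while $Q(A_0,B_0)=0$, so $g\notin(Q)$. (Irreducibility of $Q$, which you invoke, is not actually needed for this step.) Your observation that a rank-$(g-1)$ choice of $A^\top$ kills the $x^g$-column of $C$ and hence forces $Q=\det C=0$ is also correct. Where you diverge from the paper is in how you produce the witness: the paper takes an integral form $f$ with $\bar f\equiv x^2h\pmod p$, $h$ irreducible, lying in $\W_p^\w$, lifts it to $(A_f,B_f)=\sigma_p(f)\in W_0(\Z)$ with $Q=p$ via the explicit construction of \S3.4, and specializes $R\to\Z\to\F_p$; the point over $\F_p$ then has $Q=0$ and lies outside $Y$ because $x^2h$ has no cube factor and no square of a degree-$\geq 2$ factor. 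You instead try to write down the $k$-point directly.

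The gap is in that direct construction. The crucial claim --- that for $B^\top=(I_g\mid v)$, a sparse rank-$(g-1)$ matrix $A^\top$, and generic symmetric $A_1,B_1$, one has $\det(Ax-By)=\pm y^2H(x,y)$ with $H$ squarefree, $\deg H=2g-1$, and $y\nmid H$ --- is asserted, not proved. The description of the Laplace expansion and the Schur complement, the appeal to "cascading $y^{-k}$ entries in $P^{-1}$," and the closing sentence that "the same structural pattern persists for all $g$" are a plan for a computation, not a computation. You need to actually establish, for every $g\geq 1$, that the $y$-adic valuation of $\det(Ax-By)$ is exactly $2$ (not higher), that $H(x,0)\neq 0$, and that $H$ is squarefree for some admissible choice of $A_1,B_1$; none of these is verified. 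Until that is done, the proof is incomplete. A cleaner way to finish in your framework would be to write the witness in the shape of the normal form \eqref{eq:normal} (with, say, $a_i=b_i=1$, $*=0$, and an explicit choice of $A_1,B_1$), where Lemma~\ref{lemQcomp} immediately gives $Q=0$, and then verify by direct computation that the resulting $f_{A,B}$ equals $x^2$ times a squarefree form prime to $x$; alternatively, adopt the paper's route and use the lift $\sigma_p$, which sidesteps the explicit determinant computation entirely.
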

\begin{proof}
Fix an odd prime $p$. Let $f(x,y)$ be an element of $V(\Z)$, such
that the reduction of $f(x,y)$ modulo $p$ factors as $x^2h(x,y)$, where
$h$ is irreducible. In particular, $f(x,y)$ mod $p$ is not divisible
by either the cube of a binary form with degree $\geq 1$, or the
square of a binary form with degree $\geq 2$. Let $(A_f,B_f)\in
W_{0}(\Z)$ be an element with invariant binary $n$-ic form equal to $f$ and $Q(A_f,B_f)
= p$. Such an element $(A_f,B_f)$ is constructed in the next
subsection.

Let $\pi:R\rightarrow \Z$ denote the specialization map assigning
integer values to $a_{ij},b_{ij}$ such that
\begin{equation*}
  \pi(A,B) = (A_f,B_f).
\end{equation*}
Then $\pi(Q) = p$ and so $\pi$ induces a map $R/(Q) \rightarrow
\F_p$. Since $(A_f,B_f)\mbox{ mod }p\notin Y(\F_p)$, we see that
$(A,B)\mbox{ mod }Q\notin Y(R/(Q))$.
\end{proof}

The next lemma, which follows from a direct computation, gives the
$Q$-invariant for elements in $W_{0}$ having a specific form.
\begin{lemma}\label{lemQcomp}
Let $k$ be a field and let $(A,B)\in W_{0}(k)$ be an element such
that the top right $g\times(g+1)$ blocks of $(A,B)$ are of the
following form:
\begin{equation}\label{eqQcomp}
(A^\top,B^\top)=\left(\left(\begin{array}{cccccc}
0&0&\cdots&0&0&a_1\\0&&&&a_2&*\\0&&&a_3&*&*\\ \vdots&&
\reflectbox{$\ddots$}&&\vdots&\vdots\\0&a_g&\cdots&*&*&*\end{array}
\right),\left(\begin{array}{cccccc}0&
\cdots&0&0&b_1&0\\&&&b_2&0&0\\&&b_3&&0&0\\&
\reflectbox{$\ddots$}&&&\vdots&\vdots\\b_g&&&&0&0\end{array}\right)\right).
\end{equation}
Then
\begin{equation*}
Q(A,B)=\pm a_1^ga_2^{g-1}\cdots a_gb_1b_2^2\cdots b_g^g.
\end{equation*}
\end{lemma}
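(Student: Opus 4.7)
The plan is to Laplace-expand $\det(A_i x - B_i y)$ along a clean block decomposition for each $i = 1, \ldots, g+1$, read off the diagonal entries of the matrix $C$, and verify that $C$ is upper triangular so that $Q(A,B) = \det(C)$ is simply the product of those diagonal entries.

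Set $m := g+1-i$. From the prescribed form of $(A^\top, B^\top)$ it is immediate that the first $m$ rows of $A_i x - B_i y$ are supported only in the new columns $\{i, i+1, \ldots, g\}$: row $k$ of $A^\top$ has support $\{g+2-k, \ldots, g+1\}$, which lies strictly to the right of column $i$ whenever $k \leq g+1-i = m$, and the sole $B^\top$ entry in row $k$ sits at old column $g+1-k$, which is either deleted (when $k = m$) or maps to a new column $\geq i$. This forces a Laplace expansion
$$\det(A_i x - B_i y) = \pm \det(T_i) \cdot \det(B_i^{\rm bot}),$$
where $T_i$ is the $m \times m$ top-right block (rows $1,\ldots,m$ and new columns $i, \ldots, g$) and $B_i^{\rm bot}$ is the $(i-1)\times(i-1)$ bottom-left block (rows $m+1,\ldots,g$ and new columns $1, \ldots, i-1$).

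Next I would show that $B_i^{\rm bot}$ is lower anti-triangular with anti-diagonal entries $-b_{g+2-i}y,\, -b_{g+3-i}y,\, \ldots,\, -b_g y$: row $\ell$ of this block (the original row $k = g+1-i+\ell$) has its $B$-contribution $-b_k y$ at new column $i - \ell$ and all surviving $A$-entries in new columns $\geq i - \ell + 1$, so every entry in a new column $< i - \ell$ vanishes. Consequently $\det(B_i^{\rm bot}) = \pm\, b_{g+2-i}\cdots b_g \cdot y^{i-1}$, a pure power of $y$. This already gives $y^{i-1} \mid f_i$, so $C_{i,j} = 0$ for $j < i$ and $C$ is upper triangular. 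To get the diagonal entry $C_{i,i}$ I would extract $[x^m]\det(T_i)$ by setting $y = 0$: the specialization $T_i|_{y=0}$ equals $x$ times the submatrix of $A_i$ on rows $1,\ldots,m$ and new columns $i,\ldots,g$, which after re-indexing is itself anti-triangular with anti-diagonal $a_1,\ldots,a_m$. Thus $[x^m]\det(T_i) = \pm\, a_1 \cdots a_m$, and combining with $\det(B_i^{\rm bot})$ and the $(-1)^{i+1}$ prefactor of $f_i$ yields
$$C_{i,i} \;=\; \pm\, a_1 a_2 \cdots a_{g+1-i} \cdot b_{g+2-i} b_{g+3-i} \cdots b_g.$$

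Finally $Q(A,B) = \det(C) = \prod_{i=1}^{g+1} C_{i,i}$; counting multiplicities, $a_k$ appears in exactly those $g+1-k$ factors indexed by $i \leq g+1-k$, while $b_k$ appears in exactly those $k$ factors indexed by $i \geq g+2-k$, yielding $Q(A,B) = \pm\, a_1^g a_2^{g-1} \cdots a_g \cdot b_1 b_2^2 \cdots b_g^g$ as claimed. The only mild obstacle is the careful bookkeeping needed to verify the anti-triangular structure of $B_i^{\rm bot}$ and the column-support claim for the top $m$ rows; both reduce to elementary checks on the support patterns of $A^\top$ and $B^\top$, after which the rest of the computation is mechanical.
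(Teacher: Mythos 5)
Your proof is correct, and the paper in fact gives no argument for this lemma at all --- it is stated as ``follows from a direct computation'' --- so you are supplying exactly the intended verification. The Laplace block decomposition $\det(A_i x - B_i y) = \pm \det(T_i)\det(B_i^{\mathrm{bot}})$, the observation that $y^{i-1}$ divides $f_i$ (hence $C$ is upper triangular so $Q=\det(C)=\prod_i C_{ii}$), the anti-triangular structure of both blocks giving $C_{ii}=\pm\, a_1\cdots a_{g+1-i}\, b_{g+2-i}\cdots b_g$, and the final multiplicity count all check out, including the boundary cases $i=1$ and $i=g+1$ where one of the two blocks is empty.
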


Next, we have the following proposition that gives a normal form
for elements $(A,B)\not\in Y$ whose $Q$-invariant is $0$.
\begin{proposition}\label{prop:normal}
Let $k$ be a field. Let $(A,B)$ be an element of $W_{0}(k)\backslash
Y(k)$ such that $Q(A,B)=0$. Then $(A,B)$ is $\SL_2(k)\times
G_0(k)$-equivalent to an element of the form $(A',B')$ where the top
right $g\times(g+1)$ blocks of $A'$ and $B'$ are given by
\begin{equation}\label{eq:normal}
(A'^\top,B'^\top) =
\left(\left(\begin{array}{cccccc}0&0&
\cdots&0&0&a_1\\0&&&&a_2&*\\0&&&a_3&*&*\\ \vdots&&
\reflectbox{$\ddots$}&&\vdots&\vdots\\0&a_g&\cdots&*&*&*
\end{array}\right),\left(\begin{array}{cccccc}
0&\cdots&0&0&0&0\\&&&b_2&0&0\\&&b_3&&0&0\\&
\reflectbox{$\ddots$}&&&\vdots&\vdots\\b_g&&&&0&0\end{array}
\right)\right),
\end{equation}
where $a_1,\ldots,a_g,b_2,\ldots,b_g\in k^\times.$ In the displayed
matrices above, any empty entry is $0$.
\end{proposition}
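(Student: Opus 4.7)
The plan is to reduce the problem to the prehomogeneous action of $H_g = \SL_2 \times \SL_g \times \SL_{g+1}$ on $U_g = 2 \otimes g \otimes (g+1)$, and then to invoke the Kronecker classification of rectangular pencils. The normal form~(\ref{eq:normal}) constrains only the top-right $g \times (g+1)$ blocks of $(A',B')$, and the Levi $\SL_g \times \SL_{g+1}$ of $G_0$, together with the ambient $\SL_2$, acts on these blocks exactly by the standard $H_g$-action on $U_g$. Since the unipotent radical of $G_0$ acts trivially on the top-right blocks, it suffices to place the pair $(A^\top, B^\top) \in U_g(k)$ into the displayed shape using $H_g(k)$.

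View $(A^\top, B^\top)$ as a pencil $P(x,y) := A^\top x - B^\top y$ of $g \times (g+1)$ matrices. By Kronecker's theorem, over $\overline{k}$ any such pencil is a direct sum of minimal-index blocks $L_j$ and regular blocks corresponding to elementary divisors. The dense $H_g$-orbit on $U_g$ has Kronecker type $L_g$ and satisfies $Q \neq 0$; this is precisely the content of Lemma~\ref{lemQcomp} when all $a_i,b_i$ are nonzero. The next-generic type is $L_{g-1}$ together with a single $1 \times 1$ linear elementary divisor; this stratum lies in $\{Q = 0\}$ and forms a single $H_g(\overline{k})$-orbit. Using $\SL_2$ to normalize the elementary divisor to the $y$-axis, and applying to the $L_{g-1}$ subpencil the same explicit row-and-column reduction that yields~(\ref{eqQcomp}) in the generic case, the orbit has a representative of exactly the top-right shape displayed in~(\ref{eq:normal}), with $a_1,\ldots,a_g,b_2,\ldots,b_g \in k^\times$. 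The reduction descends from $\overline{k}$ to $k$ because the singular line of $P$ (the unique direction in $\P^1$ along which the pencil drops rank) is $k$-rational, and the residual $L_{g-1}$ block is then brought to standard form by rational row and column operations in $\SL_g(k) \times \SL_{g+1}(k)$.

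The main obstacle is the case check that every other Kronecker type forces $(A,B) \in Y$. For each deeper degeneration---an elementary divisor of degree $\geq 2$, a block $L_j$ with $j \leq g-2$, or a direct sum of two or more minimal-index blocks---I would compute the invariant binary form $f_{A,B} = (-1)^{n(n-1)/2}\det(Ax - By)$ using the block structure~(\ref{eq:G_01}) and the isotropy of $Y_g$, and verify that the resulting factorization of $f_{A,B}$ always contains either a cubed linear factor or a squared quadratic factor, so that $(A,B) \in Y$. A potentially cleaner alternative would be to combine the divisibility $Q^2 \mid \Delta$ (established separately in this subsection) with a dimension count on $\{Q = 0\} \setminus Y$ to single out the $H_g$-orbit of~(\ref{eq:normal}), but an explicit argument is in any case required to pin down the nonvanishing of the parameters $a_i$ and $b_i$.
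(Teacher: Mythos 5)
Your route through the Kronecker canonical form of the pencil $A^\top x - B^\top y$ is a genuinely different argument from the paper's. The paper never invokes the Kronecker classification. Instead it notices that $Q = \det(C)$, where $C$ is the matrix of coefficients of the $g\times g$ minors, and observes that column operations on $(A^\top,B^\top)$ act as row operations on $C$. This allows it to kill the last row of $C$ using $\det(C)=0$, to pass to a square $g\times g$ subpencil $Mx-Ny$ with $\det(Mx-Ny)=0$, to induct on $g$, and only at the very last step to invoke $(A,B)\notin Y$ in order to force the $a_i$ and $b_i$ nonzero. No stratification by pencil type is ever needed: the reduction handles all cases uniformly, and $Y$ is consulted once.

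Your approach is sound in outline, and two of its ingredients are correct as you state them: the paper's normal form \eqref{eq:normal} does indeed have column minimal index $g-1$ (so its Kronecker type over $\overline{k}$ is $L_{g-1}\oplus J_1(\lambda)$), and the rationality of the descent step does work, because $\lambda$ is the unique point of $\P^1$ where the pencil drops rank and is hence Galois-stable and $k$-rational, after which $\SL_2(k)$ can place it at $\{y=0\}$. But the case check is the real content of the argument and you do not carry it out. Notice moreover that it is not a statement about pencils alone: $Y$ is a condition on the \emph{full} $(A,B)\in W_0$, not on the top-right block, so for each deeper Kronecker type you must show that \emph{every} extension to a full element of $W_0(k)$ has invariant binary form divisible by a cube of a linear form or a square of a quadratic. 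This does hold---for example, for $L_{g-2}\oplus(\text{degree-}2\text{ regular part})$ a Laplace expansion of $\det(Ax-By)$ across the block structure \eqref{eq:G_01} exhibits a squared quadratic factor---but establishing it for all types with two or more minimal-index blocks, higher-degree elementary divisors, or row minimal indices is precisely the block computation the Kronecker plan was meant to avoid. Your proposed shortcut of using $Q^2\mid\Delta$ together with a dimension count is circular in the paper's logical order: Theorem~\ref{thQdivDelta} is deduced \emph{from} Proposition~\ref{prop:normal}.
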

\begin{proof} 
The action of $G_0(k)$ allows us to perform simultaneous row
operations and simultaneous column operations on $(A^\top,B^\top)$. As
a first step, we perform column operations to ensure that the
rightmost column of $B^\top$ is $0$. Next, recall that the
$Q$-invariant of $(A,B)$ is the determinant of the $(g+1)\times (g+1)$
matrix $C$, whose rows come from the coefficients of the $g\times g$
minors of $A^\top x-B^\top y$. It follows that row operations on
$(A^\top,B^\top)$ leave $C$ unchanged, while adding $\alpha$ times the
$i$-th columns of $A^\top,B^\top$ to the $j$-th column has the effect
of adding $\alpha$ times the $j$-th row of $C$ to the $i$-th row of
$C$ and leaving the rest unchanged. Since
$\det(C)=Q(A^\top,B^\top)=0$, it follows that by adding multiples of
the last columns of $A^\top,B^\top$ to the other columns, we may
assume that the last row of $C$ is $0$. Denoting the $g\times g$
matrices obtained by removing the last columns of $A^\top$ and
$B^\top$ by $M$ and $N$, respectively, we have $\det(Mx - Ny) = 0$.

We next claim that by performing simultaneous row and column
operations on $(M,N)$, we may bring $M$ and $N$ in the form of the
first $g$ columns of $A'^{\,\top}$ and $B'^{\,\top}$, respectively, for
$(A'^{\,\top},B'^{\,\top})$ as given in \eqref{eq:normal} with $b_i\neq 0$
for all $2\leq i\leq g$.  Since $\det(M)=0$, after
appropriate column operations we may assume that the first column of
$M$ is~$0$. Now the first column of $N$ cannot be identically $0$ for
otherwise, the invariant binary form of $(A,B)$ has a factor of the
form $h(x,y)^2$ with $\deg h = g$, contradicting $(A,B)\notin
Y(k)$. By applying row operations, we may ensure that the bottom left
entry of $N$ is $b_g\neq 0$ and the rest of the first column of $N$ is~$0$. We then use this nonzero cofficient $b_g$ to clear out the rest
of the bottom row of $N$ (without changing~$M$).

Let $M_1$ and $N_1$
denote the top right $(g-1)\times (g-1)$ block of $M$ and $N$. Then
$\det(Mx - Ny) = (-1)^{g}b_g y\det(M_1x - N_1y).$ Hence $\det(M_1x -
N_1y) = 0$ and the first column of $M_1$ can be made $0$. As in the
previous case, all the coefficients of the first column of $N_1$ can
be made $0$ except for the bottom left entry, which is $b_{g-1}\neq
0$. We then clear out the bottom row of $N_1$ as before. Proceeding in
this way, we transform the first $g-1$ columns of $M$ and $N$ to be in
the required form. Since the $b_i$'s are nonzero for $2\leq i\leq g$,
and since $\det(Mx-Ny)=0$, it follows that the top right coefficients
of $M$ and $N$ are $0$, completing the proof of the claim.

Note that this transformation of $M$ and $N$ did not change the last
column of $B^\top$, which remains $0$.  Thus to complete the
proof of Proposition \ref{prop:normal}, it remains to show that
$a_i\neq 0$ for $1\leq i\leq g$. Since the first row and column of
$B'$ are $0$, we see that $x^2a_1^2\mid f_{A',B'}$. Hence $a_1\neq 0$. Suppose for  contradiction that $i=2,\ldots,g$ is
the smallest index such that $a_i = 0$. Then we may clear out the
$i$-th row of $A'$ using the second up to the $(i-1)$-th rows of
$A'$. That is, $(A',B')$ is $\SL_n(k)$-equivalent to some
$(A'',B'')$ where the only nonzero entries in the $i$-th row and the
$i$-th column of $A''$ appear in the last entry. This allows us to
factor out an extra factor of $y^2$ in $\det(A''x -
B''y)=\pm f_{A,B}$, contradicting the assumption that
$(A,B)\notin Y(k)$ since we already had $x^2\mid f_{A,B}$.
\end{proof}

We are now ready to prove that $Q^2\mid \Delta$:
\begin{theorem}\label{thQdivDelta}
We have $Q^2\mid \Delta$ in $\Z[W_{0}]$.
\end{theorem}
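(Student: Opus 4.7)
The plan is to show that $\Delta$ vanishes to order at least $2$ along the hypersurface $V(Q) \subset W_0$. Since $Q$ is irreducible in $\C[W_0]$ (as a hyperdeterminant) and, by Lemma~\ref{lemQcomp}, has a monomial with coefficient $\pm 1$, it is prime in the UFD $\Z[W_0]$; hence $Q^2 \mid \Delta$ can be checked in the local ring at the generic point of $V(Q)$, namely by exhibiting one smooth $\bar\Q$-point $(A_0, B_0) \in V(Q)$ and one transverse line $(A_t, B_t) := (A_0 + tA_1, B_0 + tB_1)$ (with $(A_1, B_1) \in W_0$ and $dQ_{(A_0, B_0)}(A_1, B_1) \neq 0$) along which $\Delta(A_t, B_t) = O(t^2)$.

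By Proposition~\ref{prop:notstrong} the generic point of $V(Q)$ avoids $Y$, so by Proposition~\ref{prop:normal} I may take $(A_0, B_0)$ to be in the normal form~\eqref{eq:normal}. Two features are essential: the entire first row and column of $B_0$ vanish, and the first row of $A_0$ has $a_1$ as its unique nonzero entry, in position $(1, 2g+1)$. Expanding $\det(A_0 x - B_0 y)$ successively along row $1$ and along column $1$ of the resulting minor extracts two factors of $a_1 x$, giving $\det(A_0 x - B_0 y) = \pm a_1^2 x^2 m(x, y)$ for some polynomial $m$. In particular $\Delta(A_0, B_0) = 0$, which already yields $Q \mid \Delta$.

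To promote this to $Q^2 \mid \Delta$, I will analyze $\Delta(A_t, B_t)$ as $t \to 0$. Generically $m(0, 1) \neq 0$, so Hensel's lemma factors $f_t(x, 1) = u(x, t)\, v(x, t)$ with $u = x^2 + \alpha(t)x + \beta(t)$ monic quadratic and $v(0, 0) \neq 0$. Since $\Delta(v)$ and $\Res(u, v)$ are both nonzero at $t = 0$, the factorization $\Delta(f_t) = \Delta(u) \cdot \Delta(v) \cdot \Res(u, v)^2$ (up to leading coefficients) reduces the claim to $\Delta(u) = \alpha^2 - 4\beta = O(t^2)$. Letting $c_i(t)$ denote the coefficient of $x^i$ in $f_t(x, 1)$, the equality $c_1(0) = 0$ yields $\alpha(t) = O(t)$ for free, while $\beta(t) \propto c_0(t) = \pm\det(B_t)$. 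The crux is the identity
\[
\partial_t \det(B_t)|_{t=0} \;=\; \mathrm{tr}\bigl(\mathrm{adj}(B_0)\, B_1\bigr) \;=\; 0,
\]
valid for every $(A_1, B_1) \in W_0$: the vanishing first row and column of $B_0$ force $\mathrm{adj}(B_0)$ to be supported only at its $(1,1)$-entry, and the defining $W_0$-condition (together with $g \geq 1$) forces $(B_1)_{1,1} = 0$. Hence $\beta(t) = O(t^2)$, so $\Delta(u) = O(t^2)$ and therefore $\Delta(A_t, B_t) = O(t^2)$, as required.

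The main obstacle is conceptual rather than computational: a generic one-parameter deformation of a double root splits it on the scale of $\sqrt{t}$, which would give only $Q \mid \Delta$. The essential input that upgrades the divisibility to $Q^2 \mid \Delta$ is that $W_0$ pins the $(1,1)$-entry of $B_t$ to zero for all $t$, producing a \emph{linear} (rather than square-root) splitting of the double root and contributing the extra factor of $Q$ to the discriminant.
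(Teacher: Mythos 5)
Your core mechanism — pass to the normal form of Proposition~\ref{prop:normal} and then track a first-order deformation — is the same as the paper's, and so is the key observation: the $W_0$-constraint forces the $(1,1)$-entry of $B_t$ to vanish identically, which is exactly what upgrades $\det B_t$ (the $y^n$-coefficient of the invariant binary form) from $O(t)$ to $O(t^2)$. Your Hensel-plus-Jacobi packaging is heavier than the paper's direct coefficient bookkeeping (the paper deforms only the $(1,n-1)$-entry of $B$, reads off $\epsilon^2 \mid a_n$, $\epsilon\mid a_{n-1}$, and uses that $\Delta$ lies in the ideal $(a_{n-1}^2, a_n)$), but it establishes the same thing.

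The genuine gap is in your localization step. Exhibiting one smooth $\bar{\Q}$-point $(A_0,B_0)\in V(Q)$ and one transverse line $\ell$ with $\Delta|_\ell = O(t^2)$ does \emph{not} prove $\mathrm{ord}_Q(\Delta)\geq 2$: writing $\Delta = Q\delta$, transversality gives $Q|_\ell \sim t$, so your hypothesis only yields $\delta(A_0,B_0)=0$, which is perfectly compatible with $Q\nmid\delta$ — you could simply be sitting on $V(\delta)\cap V(Q)$. The same problem undercuts the inference that $\Delta(A_0,B_0)=0$ ``already yields $Q\mid\Delta$'': the normal-form slice is a lower-dimensional subvariety of $V(Q)$, not a dense one, so vanishing there does not by itself control the generic point. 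The paper closes this gap by applying Proposition~\ref{prop:normal} over the function field $F=\mathrm{Frac}(\Z[W_0]/(Q))$, i.e.\ at the actual generic point, and (implicitly) using that $\Delta$ is $\SL_2\times G_0$-invariant while $Q$, and hence $\delta=\Delta/Q$, is a $G_0$-relative invariant by~\eqref{eq:weightG_0}; thus $V(\delta)$ is $\SL_2\times G_0$-stable, and the $\SL_2\times G_0$-orbit of the normal-form slice is dense in $V(Q)$. Your computation actually delivers more than you claim — it shows $d\Delta=0$ along \emph{every} $W_0$-direction at a generic normal-form point, not just a transverse one — so with the orbit-density / relative-invariance bridge supplied, the argument closes; as written, that bridge is missing.
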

\begin{proof}
Let $(A,B)\in W_{0}(R)$ be the generic element. We begin by proving
that $(A,B)\in Z(R/(Q))$, or equivalently that $Q\mid\Delta$ in
$R$. Let $(\bar{A},\bar{B})\in W_{0}(R/(Q))$ denote the
reduction of $(A,B)$ mod $Q$ and let $F$ denote the field of
fractions of $R/(Q)$. By Proposition~\ref{prop:notstrong}, we know
$(\bar{A},\bar{B})\notin Y(F)$. Since $Q(\bar{A},\bar{B}) = 0$, by
Proposition \ref{prop:normal}, there exists $\gamma\in \SL_2(F)\times
G_0(F)$ such that $\gamma(\bar{A},\bar{B})=(A',B')$ where
$(A'^{\,\top},B'^{\,\top})$ is of the form \eqref{eq:normal}. The invariant
binary form of $(A',B')$ has a factor of $x^2$, and
so $(A',B')\in Z(F)$.  Since $Z$ is $\SL_2\times
G_0$-invariant, we see that $(\bar{A},\bar{B})\in Z(F)$. 

Since $Q\mid\Delta$ in $R$, there exists
an element $\delta\in R$ such that $\Delta=Q\delta$. Let $Z_1$ denote
the closed subvariety of $W_{0}$ cut out by $\delta$. It now
suffices to prove that $Q\mid\delta$ or, equivalently, that the generic
element $(A,B)$ belongs to $Z_1(R/Q)$.  We claim that for any field
$k$, and every element $(A,B)\in W_{0}(k)$ such that
$(A^\top,B^\top)$ has the form \eqref{eq:normal}, we have
$\delta(A,B)=0$. Indeed, let $(A,B)$ be such an element. Let
$(A^{(\epsilon)},B^{(\epsilon)})\in W_{0}(k[\epsilon])$ be such that
$A^{(\epsilon)}=A$, the $(1,n-1)$-entry and the $(n-1,1)$-entry of
$B^{(\epsilon)}$ equal $\epsilon$, and the other coefficients of
$B^{(\epsilon)}$ are the same as those of $B$.  By Lemma
\ref{lemQcomp}, we have  
\begin{equation*}
  Q(A^{(\epsilon)},B^{(\epsilon)})=\pm
  \epsilon\, a_1^ga_2^{g-1}\cdots a_gb_2^2\cdots b_g^g.
\end{equation*}
Moreover,  $\epsilon^2$ divides the
$y^n$-coefficient of $f_{A^{(\epsilon)},B^{(\epsilon)}}$ and 
$\epsilon$ divides the $xy^{n-1}$-coefficient of
$f_{A^{(\epsilon)},B^{(\epsilon)}}$. Hence 
$\epsilon^2\mid\Delta(A^{(\epsilon)},B^{(\epsilon)})$, which implies
(since $\epsilon^2\nmid Q(A^{(\epsilon)},B^{(\epsilon)})$) that
$\epsilon\mid\delta(A^{(\epsilon)},B^{(\epsilon)})$. Since $(A,B)$ is
obtained from $(A^{(\epsilon)},B^{(\epsilon)})$ by setting
$\epsilon=0$, we have $\delta(A,B)=0$.  We have proven
that the generic element $(A,B)\in W_{0}(R)$ belongs to
$Z_1(R/(Q))$. Therefore, $Q\mid\delta$.
\end{proof}

We end this section with another divisibility result for $\Delta$,
which will be used in \S\ref{sec:moniceven}.

\begin{proposition}\label{propBsubdiv}
We have $\det(A^\top (A^\top)^t)\det(B^\top (B^\top)^t) \mid \Delta$ as elements in $\Z[W_{0}]$. 
\end{proposition}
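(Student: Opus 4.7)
The plan is to reduce the claim to two separate divisibilities in $\Z[W_0]$, namely $P := \det(A^\top(A^\top)^t) \mid \Delta$ and $P' := \det(B^\top(B^\top)^t) \mid \Delta$, and then conclude $PP' \mid \Delta$ by coprimality. The polynomials $P$ and $P'$ involve disjoint sets of variables (the entries of $A^\top$ versus the entries of $B^\top$), so as elements of the UFD $\Z[W_0]$ they share no common irreducible factor; hence once each divides $\Delta$, so does their product. Moreover, the symmetry exchanging $A$ and $B$ changes $\Delta$ only by a sign depending on $n$, so the two individual divisibilities are equivalent, and I focus on proving $P \mid \Delta$.

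To establish $P \mid \Delta$, I would follow the template of Theorem~\ref{thQdivDelta}. Setting $R = \Z[W_0]$ and letting $F$ be the field of fractions of $R/(P)$, it suffices to show $\Delta(\bar A, \bar B) = 0$ in $F$ for the image $(\bar A, \bar B) \in W_0(F)$ of the generic element. I would produce the required repeated factor of $f_{\bar A, \bar B}$ by a normal-form argument: using an element of $G_0(F)$—specifically a $\GL_g(F)$-action on $A^\top$ by row operations, combined with a compatible $\GL_{g+1}(F)$ change of basis on the complementary block—to turn the degeneracy recorded by $P = 0$ into a zero first row, and hence a zero first column, of $\bar A$. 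Since the top-left $g\times g$ blocks of both $\bar A$ and $\bar B$ already vanish in $W_0$, the first row of $\bar A x - \bar B y$ then has its initial $g$ entries equal to zero and its last $g+1$ entries proportional to $y$. Cofactor expansion along the first row, combined with the analogous $y$-divisibility of the first column of each $(1,j)$-minor (by symmetry), extracts a factor of $y^2$ from $\det(\bar A x - \bar B y)$; hence $y^2 \mid f_{\bar A, \bar B}$, which gives $\Delta(\bar A, \bar B) = 0$ and therefore $P \mid \Delta$.

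The main obstacle will be carrying out the normal-form reduction carefully. Over the algebraically closed field $F$, the vanishing of $P$ is strictly weaker than rank-deficiency of $A^\top$: there may exist a nonzero $v \in F^g$ with $v^t A^\top(A^\top)^t v = 0$ but $v^t A^\top \neq 0$, so the required row vanishing cannot be produced by a single row-reduction step over $F$. The correct normalization should mirror the combinatorics of Proposition~\ref{prop:normal}: trivialize the relevant quadratic form on $F^{g+1}$ after an auxiliary scalar extension, then use the resulting null direction to engineer the vanishing row while tracking that the associated perturbation only changes $f_{\bar A, \bar B}$ by a factor that still vanishes to the correct order. Once this normalization is in hand, the cofactor argument above gives $P \mid \Delta$; the symmetric argument yields $P' \mid \Delta$; and coprimality of $P$ and $P'$ in $\Z[W_0]$ concludes $PP' \mid \Delta$, as required.
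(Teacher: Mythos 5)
Your opening reduction---prove $P:=\det(A^\top(A^\top)^t)\mid\Delta$ and $P':=\det(B^\top(B^\top)^t)\mid\Delta$ separately, then multiply using coprimality of polynomials in disjoint variables---matches the paper's (implicit) first step. The fatal problem is the core claim that $\Delta$ vanishes in the fraction field of $R/(P)$, equivalently that $\Delta$ vanishes on the complex zero locus of $P$. You correctly flag the danger: over an algebraically closed field, $\det(A^\top(A^\top)^t)=0$ (by Cauchy--Binet, the sum of squares of the maximal minors of $A^\top$) does not force $A^\top$ to drop rank. But this is not a wrinkle that a cleverer normalization can smooth over, because at such points $\Delta$ genuinely need not vanish. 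Take $n=3$, $g=1$, and
\begin{equation*}
A=\begin{pmatrix}0&1&i\\ 1&0&0\\ i&0&1\end{pmatrix},\qquad
B=\begin{pmatrix}0&1&0\\ 1&1&0\\ 0&0&0\end{pmatrix},
\end{equation*}
so that $A^\top=(1,i)$, $\det(A^\top(A^\top)^t)=1+i^2=0$, and $\det(B^\top(B^\top)^t)=1$. One computes $f_{A,B}=-\det(Ax-By)=x(x^2-xy+y^2)$, which has three distinct linear factors, so $\Delta(A,B)=-3\neq 0$. (Symbolically: specializing $a_{22}=a_{23}=0$, $a_{33}=1$ and $B$ as above gives $\Delta=a_{13}^2(4a_{12}+a_{13}^2)$, which is divisible by $Q^2=a_{13}^2$ as Theorem~\ref{thQdivDelta} demands, but not by $a_{12}^2+a_{13}^2$.) Hence $V(P)\not\subseteq V(\Delta)$ over $\overline{\Q}$, the image of $\Delta$ in ${\rm Frac}(R/(P))$ is nonzero, and your ``engineer a vanishing row after an auxiliary scalar extension'' step has nothing to engineer: when $A^\top$ has full rank there is no $G_0(F)$-transformation producing a zero row, and no repeated factor of $f_{\bar A,\bar B}$ to extract.

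For comparison, the paper's proof asserts that $\det(B^\top(B^\top)^t)=0$ over $\C$ forces $B^\top$ to be rank-deficient (true over $\R$, false over $\C$, as the example shows), deduces $V(P')\subseteq V(\Delta)$ and hence $P'\mid c\Delta^d$ by the Nullstellensatz, and then proves by induction that $P_g(M)=\det(MM^t)$ is squarefree in $\Z[M_{g\times(g+1)}]$ in order to upgrade $P'\mid c\Delta^d$ to $P'\mid\Delta$. That squarefreeness lemma is an ingredient entirely absent from your sketch; note also that your route through ${\rm Frac}(R/(P))$ additionally requires $P$ to be \emph{prime} in $R$, which you never address and which is stronger than what the paper establishes. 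But the more important observation is that you have located the true crux: the complex rank step is exactly where the paper's own argument for the stated polynomial divisibility breaks down, and the counterexample above shows the divisibility cannot be repaired as a statement in $\Z[W_0]$. What the downstream application (Lemma~\ref{lem:usingkappa}) actually uses is a bound at \emph{real} points on a compact set, where the rank implication is valid, $V_\R(P')\subseteq V_\R(\Delta)$ does hold, and a Lojasiewicz-type inequality $|\Delta|\ll\det(B^\top(B^\top)^t)^{\alpha}$ would suffice in place of divisibility in $\Z[W_0]$.
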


\begin{proof}
It suffices to prove that $\det(B^\top (B^\top)^t)$ divides $\Delta$ in
$\Z[W_{0}]$. Suppose $(A,B)\in W_{0}(\C)$ with $\det(B^\top
(B^\top)^t) = 0$. Then $B^\top$ does not have full rank. Hence there
exists some nonzero $v\in\text{Span}_\C\{e_1,\ldots,e_g\}$ such that
$Bv = 0$. However, any such $v$ is isotropic with respect to $A$. As a
result, $\Delta(A,B) = 0$. Thus, by the  Nullstellensatz, 
$\det(B^\top (B^\top)^t) \mid c \Delta^{d}$ in $\Z[W_{0}]$ for some
nonzero integer $c$ and positive integer $d$.

Define $P_g\in \Z[M_{g\times(g+1)}]$ by $P_g(M) = \det(MM^t)$. For the
purpose of proving Proposition~\ref{propBsubdiv}, it suffices to prove
that $P_g$ is squarefree in $\Z[M_{g\times(g+1)}]$. We proceed by
induction on $g$. Denote the $(i,j)$-entry of any $M\in
M_{g\times(g+1)}$ by $u_{ij}$. When $g = 1$, we have $P_1 = u_{11}^2 +
u_{12}^2$, which is squarefree in $\Z[u_{11},u_{12}]$. For general
$g\geq2$, consider
$$M = \begin{pmatrix} 
u_{11} & \cdots & u_{1\,g-1} & u_{1\,g} & 0\\
\vdots & \ddots & \vdots & \vdots & \vdots \\
u_{g-1\, 1} & \cdots & u_{g-1\,g-1} & u_{g-1\,g} & 0\\
0&\cdots&0&\alpha&\beta
\end{pmatrix}.$$
Then 
$$\det(MM^t) = \beta^2 P_{g-1} + \alpha^2 D_{g-1}^2$$ where $D_{g-1}$
is the determinant of the top left $(g-1)\times(g-1)$ block of
$M$. Any square factor of $\det(MM^t)$ must be a common square
factor of $P_{g-1}$ and $D_{g-1}^2$, which can only be $\pm 1$ since
$P_{g-1}$ is squarefree by induction. We have shown that $P_g$ is
squarefree even after setting certain variables to $0$. Therefore,
$P_g$ is squarefree in $\Z[M_{g\times(g+1)}]$.
\end{proof}

\vspace{-.1in}

\subsection{Embedding $\W_{m,n}^\w$ into $W_n(\Z)$, for $n$ odd}\label{sembedodd}

Let $n=2g+1$ be an odd integer, and set $W=W_n$.
For an odd squarefree integer $m>0$, let $\W_m^\w=\W_{m,n}^\w$ denote the set of 
integer binary forms whose discriminants are weakly divisible by $p^2$
for every prime factor $p$ of $m$. Fix an element $f(x,y)\in\W_m^\w$. Then just as shown in
\cite[\S3.2]{BSWsq}, there exists an $\SL_2(\Z)$-change of variable
such that $f((x,y)\gamma)$ has the form
\begin{equation}\label{eqflift}
  f((x,y)\gamma) = m^2b_0x^n + mb_1x^{n-1}y + \cdots + b_ny^n
\end{equation}
for some integers $b_0,\ldots,b_n$ and where $m$ and $b_0$ are coprime.

Consider the following pair of matrices:
\begin{equation*}
 A =
 \left(\begin{array}{ccccccc}&&&&&&1\\&&&&&\iddots&\\ &&&&m&&\\ &&&c_0&&&
   \\ &&m&&c_{2}&&\\ &\iddots&&&&\ddots&\\ 1&&&&&&c_{n-1} \end{array}\right),\;\;
 B =
 \left(\begin{array}{ccccccc}&&&&&1&0\\&&&&\iddots&\iddots&\\ &&&1&r&&\\ &&1&c_1&&&
   \\ &\iddots&r&&c_3&&\\ 1&\iddots&&&&\ddots&\\ 0&&&&&&c_n \end{array}\right).
\end{equation*}
Here the dots on the antidiagonal of $A$ are all $1$. We claim that
$c_i,r$ can be chosen to be integers so that $(-1)^g\det(xA - yB) =
f((x,y)\gamma)$. It is clear that $c_0 = b_0$ and $2mrc_0 +
m^2c_1 = -mb_1$. Choose $r\in\Z$ such that $m\mid
2rc_0+b_{1}$; this then determines $c_{1}$. It is then not hard to check
that the coefficient of $x^{n-i}y^i$ in $(-1)^g\det(xA - yB)$ is of
the form $(-1)^ic_i + L(c_0,\ldots,c_{i-1})$ where $L$ is a linear
form with coefficients in $\Z[r]$. The existence of integers $c_2,\ldots,c_n$ now follows by induction.

Set $\sigma_m(f)=\sigma_{m,n}(f)$ to be the element $(A_f,B_f)$ such
that $$\begin{pmatrix} A_f \\ -B_f \end{pmatrix} =
\gamma^{-1} \begin{pmatrix} A \\ -B \end{pmatrix}.$$ Then
$f_{\sigma_{m}(f)} = f.$ Next, we note that $(A,B)$ and thus
$(A_f,B_f)$ are in $W_{0}(\Z)$ and from Lemma \ref{lemQcomp}, we
obtain that $|Q|(A,B) = m$. Since $Q$ is $\SL_2$-invariant, we
conclude that $$|Q|({\sigma_{m}(f)})=m.$$ We have proven
the following theorem.
\begin{theorem}\label{keymap}
Let $m>0$ be a  squarefree integer. There exists a map
$\sigma_{m}:\U_{m}^\w\to W_{0}(\Z)$ such~that $$f_{\sigma_{m}(f)}=f,\qquad |Q|(\sigma_{m}(f)) = m$$
for every $f\in \U_{m}^\w$.
\end{theorem}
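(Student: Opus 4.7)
The plan is to build $\sigma_m$ by (i) first putting $f$ into a convenient $\SL_2(\Z)$-normal form dictated by the weak divisibility hypothesis, (ii) writing down an explicit pair $(A,B)$ with controlled top-right block whose characteristic binary form matches the normalized $f$, and (iii) reading off $|Q|=m$ from Lemma \ref{lemQcomp}.

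First, I would invoke the $\SL_2(\Z)$-normalization already established in \cite[\S3.2]{BSWsq}: since $f\in\U_m^\w$ has $p^2\mid \Delta(f)$ weakly for every prime $p\mid m$ and since $m$ is squarefree, there exists $\gamma\in\SL_2(\Z)$ such that $f((x,y)\gamma)=m^2b_0x^n+mb_1x^{n-1}y+b_2x^{n-2}y^2+\cdots+b_ny^n$ with $\gcd(m,b_0)=1$. This is the step that converts the analytic input (weak $p^2$-divisibility) into the algebraic normal form that makes a rank-$n$ ring lift possible.

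Next, I would construct $(A,B)$ directly. Using the antidiagonal $1$'s in $A$ (resp.\ $B$) together with the $m$'s in the $(g,g+2),(g+2,g)$-slots of $A$ and the $1$'s in the $(g,g+2),(g+2,g)$-slots of $B$, and with a parameter $r\in\Z$ together with diagonal parameters $c_0,c_1,c_2,\ldots,c_n$, the expansion $(-1)^g\det(xA-yB)$ yields a polynomial whose $x^n$-coefficient is $m^2c_0$ and whose $x^{n-1}y$-coefficient is $-m(2rc_0+mc_1)$. Matching the first forces $c_0=b_0$; then choosing $r\in\Z$ so that $m\mid 2rc_0+b_1$ (possible since $\gcd(m,2c_0)$ is $1$ or $2$ and by adjusting $r$ mod $m$) determines $c_1\in\Z$. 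For the remaining coefficients I would argue by induction: a standard cofactor expansion shows that the coefficient of $x^{n-i}y^i$ in $(-1)^g\det(xA-yB)$ equals $(-1)^ic_i+L_i(c_0,\ldots,c_{i-1})$ where $L_i$ is a polynomial with coefficients in $\Z[r]$, so each $c_i$ can be solved for uniquely over $\Z$. This yields $(A,B)\in W_0(\Z)$ with $f_{A,B}=f((x,y)\gamma)$. Setting $\binom{A_f}{-B_f}=\gamma^{-1}\binom{A}{-B}$ (i.e.\ acting by $\gamma^{-1}$ on the binary-form factor) then produces $\sigma_m(f):=(A_f,B_f)\in W_0(\Z)$ with $f_{\sigma_m(f)}=f$.

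Finally, to compute $|Q|$, I would observe that the top-right $g\times(g+1)$ blocks $(A^\top,B^\top)$ have exactly the antidiagonal shape \eqref{eqQcomp} from Lemma \ref{lemQcomp}, with the antidiagonal entries of $A^\top$ equal to $1,1,\ldots,1,m$ (so their product is $m$) and the antidiagonal entries of $B^\top$ equal to $1,1,\ldots,1,0\cdot r$-style pattern giving product of nonzero antidiagonal entries equal to $1$. Plugging in, Lemma \ref{lemQcomp} gives $|Q(A,B)|=m$. Since $Q$ is $\SL_2$-invariant (the $\SL_2$-factor acts on the pair $(A,B)$ only via the outer index, leaving each $g\times(g+1)$ minor rearranged in a way that preserves the hyperdeterminant up to sign), we conclude $|Q(\sigma_m(f))|=m$, completing the construction.

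The main obstacle I expect is the inductive integrality of the $c_i$: one must verify that the linear recursion $c_i=(-1)^{i+1}(\text{coeff of } x^{n-i}y^i-L_i(c_0,\ldots,c_{i-1}))$ stays in $\Z$, which relies both on $r$ being chosen integrally in the first step and on the fact that all mixed monomials arising from the antidiagonal/sub-antidiagonal structure pair up to produce $\Z$-coefficients; a clean way to handle this is to expand $\det(xA-yB)$ along the row and column containing the isolated $m$, reducing to a smaller analogous determinant and invoking induction on $g$.
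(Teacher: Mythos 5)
Your proposal is correct and follows essentially the same route as the paper: the same $\SL_2(\Z)$-normalization to $m^2b_0x^n+mb_1x^{n-1}y+\cdots+b_ny^n$ with $\gcd(m,b_0)=1$, the same explicit antidiagonal pair $(A,B)$ with the choice of $r$ modulo $m$ and the inductive determination of $c_2,\ldots,c_n$ from the triangular shape of the coefficient relations, followed by Lemma \ref{lemQcomp} and the $\SL_2$-invariance of $Q$. The only cosmetic imprecision is your phrase ``so their product is $m$'': Lemma \ref{lemQcomp} gives the weighted product $a_1^g a_2^{g-1}\cdots a_g\, b_1 b_2^2\cdots b_g^g$, but since the entry equal to $m$ sits in the slot $a_g$ with exponent $1$ and all other entries are $1$, the conclusion $|Q|=m$ stands.
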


We will later use the image of $\sigma_{1}$ as a fundamental set for the action of $\SL_n(\R)$ on the set of distinguished elements of $W(\R)$.
We now extend the function $|Q|$ to the set of distinguished elements
of $W(\Z)$ having irreducible invariant binary form. Suppose that
$(A,B)$ is a distinguished element of $W(\Z)$. Then there is a
$g$-dimensional subspace $X$ isotropic with respect to $A$ and
$B$. Let $\Lambda = X\cap \Z^n$ be the primitive lattice in $X$.
There exists an element $\gamma$ in $\SL_n(\Z)$, unique up to left
multiplication by an element in $G_0(\Z)$, such that $\Lambda =
\gamma^t\cdot\,\text{Span}_\Z\{e_1,\ldots,e_g\}$, where $e_1,\ldots,e_n$ is  the standard basis of $\Z^n$. Then 
$\gamma\cdot(A,B)\in W_{0}(\Z)$, and we can thus define the
$|Q|$-invariant on the triple $(A,B,\Lambda)$ by
$$|Q|(A,B,\Lambda):=|Q|(\gamma\cdot(A,B)).$$ That is, we
complete an integral basis of $\Lambda$ to an integral basis of
$\Z^n$ with respect to which the pair $(A',B')$ of Gram matrices for
the quadratic forms defined by $A$ and $B$ lies inside $W_0(\Z)$,
and we define $|Q|(A,B,\Lambda)$ to be $|Q|(A',B').$ 

We end with the
following result that will be crucial in Section 4.
\begin{proposition}\label{propuniqueQodd}
Let $n=2g+1$ be an odd integer with $n\geq 3$. Let $m$ be an
odd positive squarefree integer. Let $f(x,y)\in\U_{m}^{(2)}$ be an
irreducible integral binary $n$-ic form. Let $(A,B)$ be any element in
$\SL_n(\Z)\cdot \sigma_{m}(f)$. Then there is a unique primitive
$g$-dimensional lattice $\Lambda$ that is isotropic with respect to
both $A$ and $B$. Moreover, $|Q|(A,B):=|Q|(A,B,\Lambda)=m$. In
particular, if $f(x,y)\in \U_{m}^{(2)}\cap \U_{m'}^{(2)}$ is
irreducible where $m$ and $m'$ are distinct odd positive squarefree
integers, then $\sigma_{m}(f(x,y))$ and $\sigma_{m'}(f(x,y))$ are
not $\SL_{n}(\Z)$-equivalent.
\end{proposition}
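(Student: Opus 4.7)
The plan is to pass from the rigid combinatorial setup on $W_0(\Z)$ to the more flexible $\SL_n(\Z)$-orbit on $W(\Z)$ by exploiting the bijection, recalled in \S\ref{sec:2.1}, between common isotropic $g$-dimensional $k$-subspaces of a distinguished pair $(A,B)$ with invariant binary form $f$ and the group $J_f[2](k)$. Since $\sigma_m(f)\in W_0(\Z)$ is distinguished, so is every $(A,B)\in\SL_n(\Z)\cdot\sigma_m(f)$. The key step is to establish that $J_f[2](\Q)=0$ whenever $f$ is irreducible of odd degree $n=2g+1$. This follows from the standard description of $J_f[2]$ for the hyperelliptic curve $C_f\colon z^2=y\,f(x,y)$ as the Galois module of even-cardinality subsets of the $n+1=2g+2$ roots of $y\,f(x,y)$, modulo $\{\emptyset,\text{all}\}$. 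Since $f$ is irreducible of odd degree, the only Galois-stable subsets of these roots are $\emptyset$, $\{(1{:}0)\}$, the full orbit of roots of $f$ (of odd size $n$), and all $n+1$ roots; of these, only the first and last have even size, so their classes coincide in the quotient and $J_f[2](\Q)=0$.

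Once uniqueness of the common isotropic $g$-dimensional $\Q$-subspace is established, uniqueness of the primitive lattice $\Lambda$ is automatic because a primitive rank-$g$ sublattice of $\Z^n$ is determined by its $\Q$-span. To compute $|Q|(A,B)=|Q|(A,B,\Lambda)$, I would write $(A,B)=\gamma\cdot\sigma_m(f)$ for some $\gamma\in\SL_n(\Z)$ and track where the isotropic subspace goes under the $\SL_n$-action. Using the transformation rule $v\mapsto \gamma^{-t}v$ on isotropic subspaces, the unique common isotropic $\Q$-subspace of $(A,B)$ is $\gamma^{-t}\cdot\tSpan_\Q(e_1,\ldots,e_g)$, and hence $\Lambda=\gamma^{-t}\cdot\tSpan_\Z(e_1,\ldots,e_g)$. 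Taking $\delta:=\gamma^{-1}\in\SL_n(\Z)$ in the definition of $|Q|(A,B,\Lambda)$, we have simultaneously $\delta^t\cdot\tSpan_\Z(e_1,\ldots,e_g)=\Lambda$ and $\delta\cdot(A,B)=\sigma_m(f)\in W_0(\Z)$, so that $|Q|(A,B,\Lambda)=|Q|(\sigma_m(f))=m$ by Theorem \ref{keymap}.

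For the ``in particular'' statement, suppose $m\ne m'$ are distinct odd positive squarefree integers and $\sigma_m(f)$ and $\sigma_{m'}(f)$ lie in the same $\SL_n(\Z)$-orbit. Applying the main assertion of the proposition to the common orbit element $(A,B)=\sigma_{m'}(f)$, which lies in both $\SL_n(\Z)\cdot\sigma_m(f)$ and in $W_0(\Z)$ itself, yields $|Q|(\sigma_{m'}(f),\Lambda)=m$ from the former membership while the $W_0$-definition gives $|Q|(\sigma_{m'}(f),\Lambda)=|Q|(\sigma_{m'}(f))=m'$ from the latter; the contradiction $m=m'$ completes the argument.

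The main obstacle I would expect to address carefully is the first step: verifying the triviality of $J_f[2](\Q)$ via even-cardinality Galois-stable subsets of the roots of $y\,f(x,y)$, and in particular checking that the point $(1{:}0)$ contributes the ``extra'' root that makes the count $2g+2$ rather than $n=2g+1$. The remaining computations are essentially bookkeeping: tracking the transpose-inverse in the action of $\SL_n$ on symmetric matrix pairs, and invoking the $G_0(\Z)$-invariance of $|Q|$ (equation \eqref{eq:weightG_0}) to see that the definition of $|Q|(A,B,\Lambda)$ does not depend on the choice of $\gamma$.
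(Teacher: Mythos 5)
Your proposal is correct and follows essentially the same route as the paper: both arguments identify the common isotropic $g$-planes with $J_f[2](\Q)$, use irreducibility of $f$ to conclude this group is trivial, and then transport $\tSpan_\Z(e_1,\ldots,e_g)$ by the (transpose of the) group element relating $(A,B)$ to $\sigma_m(f)$ to evaluate $|Q|(A,B,\Lambda)=m$ via Theorem~\ref{keymap}. The only difference is that you supply the explicit even-subsets-of-Weierstrass-points justification for $J_f[2](\Q)=0$, which the paper simply asserts.
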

\begin{proof}
Let $C_f$ denote the smooth hyperelliptic curve $z^2=f(x,y)y$ of genus
$g$ viewed as a curve in the weighted projective space $\P(1,1,g+1)$, 
and let $J_f$ denote its Jacobian. Since $(A,B)$ is
$\SL_n(\Z)$-equivalent to $\sigma_{m}(f)$, it follows that $(A,B)$
is distinguished. Thus the set of common isotropic $g$-dimensional
subspaces of $A$ and $B$ over $\Q$ is in bijection with
$J_f[2](\Q)$. Since $f$ is irreducible, we have
$J_f[2](\Q)=1$. Therefore, there is a unique primitive $g$-dimensional
lattice $\Lambda$ which is isotropic with respect to both $A$ and $B$.

Let $\gamma\in\SL_n(\Z)$ be an element such that $\gamma(A,B) = \sigma_{m}(f)
=: (A_f, B_f)\in W_{0}(\Z)$. Since we know that
$\text{Span}_\Z\{e_1,\ldots,e_g\}$ is a primitive $g$-dimensional
lattice isotropic with respect to $A_f$ and $B_f$, we see that
$\gamma^t\cdot\,\text{Span}_\Z\{e_1,\ldots,e_g\}$ is a primitive
$g$-dimensional lattice isotropic with respect to $A$ and $B$. By
uniqueness, it follows that $\Lambda =
\gamma^t\cdot\,\text{Span}_\Z\{e_1,\ldots,e_g\}$, and so by
definition,  $|Q|(A,B,\Lambda)=|Q|(\sigma_{m}(f))=m$, where
the final equality is Theorem \ref{keymap}.
\end{proof}

\vspace{-.1in}

\subsection{Embedding $\U_{m,n}^{(2),\,\gen}$ into $W_{n+1}(\Z)$, for $n$ even}\label{sembedeven}

Suppose now that $n = 2g+2$ is even with $g\geq1$. For an odd squarefree integer $m>0$, let $\W_{m,n}^\w$ denote the set of 
integer binary forms having discriminant weakly divisible by $p^2$
for every prime factor $p$ of $m$. Let
$\W_{m,n}^{(2),\,\gen}\subset \W_{m,n}^{(2)}$
consist of those  $f(x,y)$ with $f(0,1)$ coprime to $m$. Since
$\Delta(xf(x,y)) = \Delta(f(x,y))f(0,1)^2$, we see that if $f(x,y)\in
\W_{m,n}^{(2),\,\gen}$, then $xf(x,y)\in \W_{m,n+1}^{(2)}$. We 
define $\sigma_{m,n}:\W_{m,n}^{(2),\,\gen}\rightarrow W_{n+1}(\Z)$ via
$\sigma_{m,n}(f) = \sigma_{m,n+1}(xf)$. For the rest of this subsection, we drop subscripts and denote $\sigma_{m,n}$ by $\sigma_m$, $\W_{m,n}^{\w,\,\gen}$ by $\W_m^{\w\,\gen}$, and $W_{n+1}$ by $W$.

We now define the finer $q$-invariant. Let $f\in
\W_{m}^{(2),\,\gen}$ and suppose $(A,B) = \sigma_{m}(f)$. Then
$B$ is singular since $xf(x,y)$ has vanishing $y^n$-term. Moreover,
since $\Delta(xf)\neq 0$, the kernel of $B$ has dimension exactly $1$
and is not isotropic with respect to $A$ (see Lemma \ref{lem:disc}).
Fix an integral domain~$D$. Let $W_{1}(D)$ be the subset of
$W(D)$ consisting of pairs $(A,B)$ of symmetric $(n+1)\times
(n+1)$ matrices satisfying the following conditions:
\begin{itemize}
\item[{\rm (a)}] The top left $(g+1)\times (g+1)$ block of $A$ is $0$.
\item[{\rm (b)}] The top left $(g+2)\times (g+2)$ block of $B$ is $0$ (implying that $B$ is singular).
\item[{\rm (c)}] The kernel of $B$ has dimension exactly $1$ (over the fraction field of $D$) and is not isotropic with respect to $A$.
\end{itemize}

Take any $(A,B)\in W_{1}(D)$. Conditions (a) and (c) imply that
the first $g+1$ columns of $B$ are linearly independent over the
fraction field of $D$. Let $B'$ denote the top right $(g+1)\times
(g+1)$ block of $B$. Since $B$ is symmetric, we see that $B'$ is
nonsingular. It is now easy to see from the definition of the
$Q$-invariant that as polynomials in the coordinates of
$W_{1}(D)$, we have
$$\det(B') \mid Q(A^\top, B^\top).$$
We define the quotient to be the $q$-invariant of $(A,B)$:
\begin{equation}\label{eq:qinv}
q(A,B) := Q(A^\top, B^\top)/\det(B').
\end{equation}

Let $G_1(D)$ denote the subgroup of $\SL_{n+1}(D)$ preserving
$W_{1}(D)$. Then elements of $G_1(D)$ have the following block
matrix form
\begin{equation}\label{eq:G_1}
\gamma=\left(\begin{array}{ccc}\gamma_1 & 0 & 0\\ n_1 & \gamma_2 & 0\\ n_2 & n_3 & \gamma_3
\end{array}\right)\in\left(\begin{array}{ccc}\GL_{g+1} & &\\ M_{1\times (g+1)} & \GL_1 &\\ M_{(g+1)\times (g+1)} & M_{(g+1)\times 1} & \GL_{g+1}
\end{array}\right).
\end{equation}
It is easy to check that for any $(A,B)\in W_{1}(D)$,
\begin{equation}\label{eqqweight1}
q(\gamma(A,B)) = \det(\gamma_1)(\det(\gamma_1)\det(\gamma_3))^{-1}q(A,B) = \det(\gamma_1)\gamma_2\,q(A,B).
\end{equation}

We now consider the situation over $\Z$. Let $(A,B)\in W(\Z)$
be a distinguished element having nonzero discriminant such that $B$
is singular. Let $X$ denote a common isotropic $(g+1)$-dimiensional
subspace of $A$ and $B$. We know that the kernel $\langle v\rangle$ of
$B$ has trivial intersection with~$X$. Denote the span of $X$ and $v$
by $X'$, which is a $(g+2)$-dimensional subspace containing $X$ that is 
isotropic with respect to $B$. Let $\Lambda = X\cap\Z^{n+1}$ and
$\Lambda' = X'\cap\Z^{n+1}$ be the primitive lattices in $X$ and~$X'$, respectively.  There exists an element $\gamma$ in
$\SL_{n+1}(\Z)$, unique up to left multiplication by an element in
$G_1(\Z)$, such that $\Lambda =
\gamma^t.\,\text{Span}_\Z\{e_1,\ldots,e_{g+1}\}$ and $\Lambda' =
\gamma^t.\,\text{Span}_\Z\{e_1,\ldots,e_{g+2}\}$. Then 
$\gamma(A,B)\in W_{1}(\Z)$, and we can thus define the
$|q|$-invariant for the quadruple $(A,B,\Lambda,\Lambda')$~by $$|q|(A,B,\Lambda,\Lambda'):= |q(\gamma(A,B))|.$$ In other words,
we complete an integral basis $\{v_1,\ldots,v_{g+1}\}$ of $\Lambda$
to an integral basis $\{v_1,\ldots,v_{n+1}\}$ of $\Z^{n+1}$ such
that $\{v_1,\ldots,v_{g+2}\}$ forms an integral basis of
$\Lambda'$. When expressed in this basis, the pair $(A',B')$ of Gram
matrices for the quadratic forms defined by $A$ and $B$ lies in
$W_{1}(\Z)$ and we define $|q|(A,B,\Lambda,\Lambda') :=
|q|(A',B').$

Finally, we compute the $|Q|$- and $|q|$-invariants of
$\sigma_{m}(f(x,y))$, where $f(x,y)\in\W_{m}^{(2),\,\gen}$ is~irreducible.

\begin{proposition}\label{propuniqueQeven}
Let $n=2g+2$ with $g\geq 1$. Let $m$ be an odd positive squarefree
integer. Let $f(x,y)\in\U_{m}^{(2),\,\gen}$ be irreducible.  Let
$(A,B)$ be any element in $\SL_{n+1}(\Z)\cdot
\sigma_{m}(f(x,y))$. Let $\Lambda$ be a $(g+1)$-dimensional
primitive lattice contained in a $(g+2)$-dimensional primitive lattice
$\Lambda'$ such that $\Lambda$ is isotropic with respect to $A$ and
$\Lambda'$ is isotropic with respect to $B$. Then $|Q|(A,B,\Lambda)$
is either $m$ or $|f(0,1)|m$, and $|q|(A,B) :=
|q|(A,B,\Lambda,\Lambda') = m$, independent of
$(\Lambda,\Lambda')$. In particular, if $f(x,y)\in
\U_{m}^{(2),\,\gen}\cap \U_{m'}^{(2),\,\gen}$ is irreducible where
$m$ and $m'$ are distinct odd positive squarefree integers, then
$\sigma_{m}(f(x,y))$ and $\sigma_{m'}(f(x,y))$ are not
$\SL_{n+1}(\Z)$-equivalent.
\end{proposition}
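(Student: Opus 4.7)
My plan is to mirror the proof of Proposition~\ref{propuniqueQodd}, enumerating the common isotropic subspaces via Jacobian 2-torsion and computing the invariants on a standard representative. First, since $\Lambda\subset\Lambda'$ and $\Lambda'$ is isotropic for $B$, the lattice $\Lambda$ is itself isotropic for $B$, so $(A,B,\Lambda)$ is a distinguished triple in the sense of \S\ref{sembedodd} (applied to the odd dimension $n+1=2g+3$, with ``$g$'' there equal to $g+1$ here), and $|Q|(A,B,\Lambda)$ is well-defined; likewise $|q|(A,B,\Lambda,\Lambda')$ is well-defined via the $W_{1}(\Z)$-identification of \S\ref{sembedeven}.

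Next I enumerate the valid pairs $(\Lambda,\Lambda')$. The common isotropic $(g+1)$-dimensional $\Q$-subspaces of $A$ and $B$ are in bijection with $J_F[2](\Q)$, where $F(x,y):=xf(x,y)$ is the invariant binary $(n+1)$-ic form of $\sigma_m(f)$. Since $F$ factors as $x\cdot f$ with $f$ irreducible of even degree $n$, the Weierstrass points of $C_F\colon z^2=F(x,y)y$ partition into three Galois orbits, namely $\{[1\!:\!0]\}$, $\{[0\!:\!1]\}$, and the orbit of the $n$ roots of $f$; a direct count of Galois-invariant even subsets modulo complementation yields $|J_F[2](\Q)|=2$. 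So there are exactly two primitive lattices $\Lambda_1,\Lambda_2$ arising as common isotropic subspaces, and the trivial class corresponds to $\Lambda_1=\tSpan_\Z(e_1,\ldots,e_{g+1})$ in the $\sigma_m(f)$-model. For each $\Lambda_i$ the companion $\Lambda'_i$ is unique: since $B$ has rank $2g+2$, its maximal isotropic $\Q$-subspaces have dimension $g+2$, and because $\ker B$ is one-dimensional and not isotropic for $A$ while $\Lambda_i$ is, we have $\ker B\not\subset\Lambda_i$, forcing $\Lambda'_i$ to be the primitive lattice spanning $\Lambda_i+\ker B$.

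It then suffices to compute the two invariants on the standard representative $(A_0,B_0):=\sigma_{m,n+1}(xf)$. Theorem~\ref{keymap} directly yields $|Q|(A_0,B_0,\Lambda_1)=m$, and the explicit shape of $B$ constructed in \S\ref{sembedodd} makes its top-right $(g+1)\times(g+1)$ block $B'_1$ an antidiagonal of $1$'s with zeros above, so $|\det B'_1|=1$; by \eqref{eq:qinv}, $|q|(A_0,B_0,\Lambda_1,\Lambda'_1)=m$. For the other class $\Lambda_2$, one implements the nontrivial element of $J_F[2](\Q)$ (the 2-torsion class separating the root of $x$ from the orbit of the roots of $f$) by an explicit $\SL_{n+1}(\Q)$-element $\delta$, and tracks the effect on $|Q|$ and $|\det B'|$ through \eqref{eq:weightG_0}; the scaling factor picked up is, in both cases, the resultant $\Res(x,f)=f(0,1)$. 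This gives $|Q|(A_0,B_0,\Lambda_2)=|f(0,1)|\,m$ together with $|\det B'_2|=|f(0,1)|$, so that $|q|(A_0,B_0,\Lambda_2,\Lambda'_2)=m$ as well. Extending along the $\SL_{n+1}(\Z)$-orbit then gives the stated values for all $(A,B)\in\SL_{n+1}(\Z)\cdot\sigma_m(f)$, and the final non-equivalence assertion is immediate: a hypothetical equivalence $\sigma_m(f)\sim\sigma_{m'}(f)$ would force $m=|q|=m'$.

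The main obstacle I anticipate is showing that $|Q|(A_0,B_0,\Lambda_2)$ and $|\det B'_2|$ both scale by exactly $|f(0,1)|$, so that their quotient $|q|=m$ persists. This requires either exhibiting a rational transformation $\delta\in\SL_{n+1}(\Q)$ realizing the nontrivial 2-torsion class together with a careful bookkeeping of its effect on both quantities, or invoking an invariant-theoretic transformation formula in the spirit of \cite{BGW}.
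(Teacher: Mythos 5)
Your outline mirrors the paper's strategy: enumerate the two common isotropic $(g+1)$-planes via $J_{xf}[2](\Q)\cong\Z/2\Z$, observe that each $\Lambda$ uniquely determines its companion $\Lambda'$ as the saturation of $\Lambda+\ker B$, and compute the invariants on the model representative $\sigma_m(f)$. The Jacobian 2-torsion count is correct, and your $\Lambda_1$-computation ($|Q|=m$, $\det B'_1=\pm1$, hence $|q|=m$) agrees with the paper. However, the $\Lambda_2$-computation --- the part carrying all the content of ``$|Q|\in\{m,|f(0,1)|m\}$ and $|q|=m$ independent of the choice'' --- is exactly what you defer as ``the main obstacle'' and do not carry out; this is a genuine gap.

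The substitutes you propose are also unlikely to close it as stated. The invariants $|Q|(A,B,\Lambda)$ and $|q|(A,B,\Lambda,\Lambda')$ are defined by completing an integral basis of the primitive lattice $\Lambda\subset\Lambda'\subset\Z^{n+1}$ to a $\Z$-basis of $\Z^{n+1}$, i.e., via an element of $\SL_{n+1}(\Z)$; a rational $\delta\in\SL_{n+1}(\Q)$ realizing the nontrivial 2-torsion, together with the weight formula \eqref{eq:weightG_0} or an invariant-theoretic transformation law, gives rational scaling factors but cannot by itself produce the integer answer, which depends on the exact primitive lattice $\Lambda_2 = X_2\cap\Z^{n+1}$ (and $\Lambda_2'$). (There is also the small wrinkle that the natural candidate for $\delta$ --- the reflection of $X_1$ in the hyperplane $A$-orthogonal to $e_{n+1}$ --- has determinant $-1$ and so is not in $\SL_{n+1}$.) The paper resolves this by direct arithmetic: after normalizing so that $xf(x,y)=m^2b_0x^{n+1}+mb_1x^ny+\cdots+b_nxy^n$ with $b_n=f(0,1)$, it identifies $X_2=\tSpan_\Q\{e_1-\tfrac{2}{b_n}e_{n+1},e_2,\ldots,e_{g+1}\}$, then splits into cases according to the parity of $b_n$ to exhibit an explicit integral basis of $\Z^{n+1}$ adapted to $(\Lambda_2,\Lambda_2')$, and finally reads off $|Q|=|b_n|m$ and $|q|=m$ from the resulting Gram matrices \eqref{eq:generaloddlift2}. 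That parity case split is precisely the arithmetic bookkeeping that a purely rational transformation argument obscures; to complete your proof you would need to reproduce it.
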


\begin{proof}
The size of $J_f[2](\Q)$ is $2$ since $xf(x,y)$ has a unique even
degree factor (namely, $f(x,y)$) over $\Q$. Therefore, the pair 
$(A,B)$ has two $(g+1)$-dimensional common isotropic subspaces $X_1$
and $X_2$ over $\Q$. Let $\Lambda_1$ and $\Lambda_2$ denote the
corresponding primitive lattices contained in $X_1$ and $X_2$. The
unique $(g+2)$-dimensional subspace $X_1'$ (resp., $X_2'$) isotropic
with respect to $B$ and containing $X_1$ (resp., $X_2$) is the span of
$X_1$ (resp., $X_2$) with the kernel of $B$. Let $\Lambda'_1$ and
$\Lambda_2'$ denote the primitive lattices contained in $X_1'$ and
$X_2'$. We compute the $|Q|$- and $|q|$-invariants associated to these
lattices.

We may assume that  $(A,B)=\sigma_{m}(f)$ since the action
of $\SL_{n+1}(\Z)$ does not change the $|Q|$- or $|q|$
invariants. Since $|Q|$ is $\SL_2$-invariant, and $|q|$
remains unchanged when we add a multiple of $B$ to $A$, we may also assume
that
$$  xf(x,y) = m^2b_0x^{n+1} + mb_1x^{n}y + \cdots + b_nxy^n$$
and so
\begin{equation*}
 A =
 \left(\begin{array}{ccccccc}&&&&&&1\\&&&&&\iddots&\\ &&&&m&&\\ &&&c_0&&&
   \\ &&m&&c_{2}&&\\ &\iddots&&&&\ddots&\\ 1&&&&&&c_n \end{array}\right),\;\;
 B =
 \left(\begin{array}{ccccccc}&&&&&1&0\\&&&&\iddots&\iddots&\\ &&&1&r&&\\ &&1&c_1&&&
   \\ &\iddots&r&&c_3&&\\ 1&\iddots&&&&\ddots&\\ 0&&&&&&c_{n+1} \end{array}\right).
\end{equation*}
Comparing the $y^{n+1}$- and the $xy^{n}$-coefficients, we have $c_{n+1} = 0$ and $c_{n}=b_{n}$.

Let $\langle\,,\rangle_A$ and $\langle\,,\rangle_B$ denote the quadratic
forms corresponding to $A$ and $B$. Let $e_1,\ldots,e_{n+1}$ be the
standard basis on $\Z^{n+1}$. Since $c_{n+1}=0$, the vector $e_{n+1}$
spans the kernel of $B$. We may take the subspace spanned by
$e_1,\ldots,e_{g+1}$ as $X_1$. Then by construction 
$|Q|(A,B,\Lambda_1) = m$. When expressed in terms of the ordered integral basis
$\{e_1,\ldots,e_{g+1},e_n,e_{g+2},\ldots,e_{n-1}\}$, the top right
$(g+1)\times (g+1)$ block of $B$ has $1$'s on the antidiagonal and $0$'s
above the antidiagonal, and so has determinant $\pm 1$. Hence
$|q|(A,B,\Lambda_1,\Lambda_1') = m$.

The second common isotropic $(g+1)$-dimensional subspace $X_2$ is the
reflection of $X_1$ 
in the hyperplane perpendicular to $e_{n+1}$ with respect to $\langle\,,\rangle_A$. That is,
\begin{eqnarray*}
X_2 &=& \displaystyle {\rm Span}_\Q\Bigl\{e_1 - \frac{2\langle e_1,e_{n+1}\rangle_A}{\langle
    e_{n+1},e_{n+1}\rangle_A}e_{n+1},\ldots,e_{g+1} - \frac{2\langle
  e_{g+1},e_{n+1}\rangle_A}{\langle e_{n+1},e_{n+1}\rangle_A}e_{n+1}\Bigr\}\\
  &=&\displaystyle {\rm Span}_\Q\Bigl\{e_1 - \frac{2}{b_n}e_{n+1},e_2,e_3,\ldots,e_{g+1}\Bigr\}.
\end{eqnarray*}
Suppose first that $b_n$ is odd. Then we have the following integral
basis for $\Z^{n+1}$:
\begin{equation*}
  \Big\{b_{n}e_1-2e_{n+1},e_2,e_3,\ldots,e_{g+1},\frac{b_{n}+1}{2}e_1-e_{n+1},e_{g+2},\ldots,e_{n}\Big\}.
\end{equation*}
In terms of this basis, the top right $(g+1)\times(g+2)$ blocks
of $A$ and $B$ have the following form:
\begin{equation}\label{eq:generaloddlift2}
  A^\top = \left(\begin{array}{cccccc}
    -1&0&0&\cdots&\cdots&0\\0&0&0&&\iddots&1\\
    \vdots&\vdots&\vdots&\iddots&\iddots&\vdots\\
    \vdots&\vdots&0&1&&\vdots
   \\ 0&0&m&\cdots&\cdots&0\end{array}\right),\;\;
 B^\top =
 \left(\begin{array}{cccccc}0&&&&&b_{n}\\0&&&&1&\\
   \vdots&&&\iddots&&\\ \vdots&&\iddots&&&
   \\ 0&1&&&&\end{array}\right). 
\end{equation}
It is then easy to check that $|Q|(A,B,\Lambda_2) = |b_{n}|m$ and $|q|(A,B,\Lambda_2,\Lambda_2') = m$. 

When $b_{n}$ is even, we have the following integral basis
\begin{equation*}
  \Big\{\frac{b_{n}}{2}e_1-e_n,e_2,e_3,\ldots,e_{g+1},(b_{n}+1)e_1-2e_{n+1},e_{g+2},\ldots,e_{n}\Big\}.
\end{equation*}
In terms of this basis, the top right $(g+1)\times(g+2)$ blocks
of $A$ and $B$ have the same form as in~\eqref{eq:generaloddlift2}. Hence the $|Q|$- and $|q|$-invariants are
as stated in the proposition.
\end{proof}

\vspace{-.1in}

\section{A uniformity estimate for odd degree polynomials}\label{sec:monicodd}

Throughout this section, we fix an odd integer $n=2g+1$ with $g\geq
1$.
Our goal is to prove Theorem~\ref{thm:mainestimate}(b) by obtaining a
bound on the number of integral binary $n$-ic forms having bounded
height and discriminant  weakly divisible by the square of a
large squarefree integer.  

Let $m>0$ be an odd squarefree
integer. Recall that we defined a map \smash{$\sigma_m:\U^{(2)}_m\to W_0(\Z)$}
in Theorem~\ref{keymap} with the following two properties:
\smash{$f_{\sigma_m(f)}=f$} for every \smash{$f\in \U^{(2)}_m$},  and
\smash{$|Q|(\sigma_m(f))=m$}. Moreover, in Proposition
\ref{propuniqueQodd}, we proved that when \smash{$f\in \W_m^{(2)}$} is
irreducible, it is possible to naturally extend the definition of the
$|Q|$-invariant to the set \smash{$\SL_n(\Z)\cdot \sigma_m(f)$}.

Let
$W(\Z)^\dist$ denote the set of distinguished elements in $W(\Z)$, and
for any set $L\subset W(\Z)$, let $L^\irr$ denote the set of elements
$w\in L$ such that $f_w$ is irreducible. There is a natural
extension of the $|Q|$-invariant to the set $W(\Z)^{\dist,\irr}$. 
For a positive real number $M$ and any set $S\subset W(\Z)^{\dist,\irr}$, let $S_{|Q|>M}$ denote the set of elements $w\in S$ with $|Q(w)|>M$.
By~\cite{vdw}, the number of reducible
elements $f\in V(\Z)$ with~$H(f)<X$ is~$O(X^n)$. Hence we have the bound \medskip

\begin{equation}\label{eq:odddegreeprelim}
\#\bigcup_{\substack{m>M\\{\rm squarefree}}}\bigl\{f\in\W_m^{(2)}:H(f)<X\bigr\}
\ll \#\bigl(\SL_n(\Z)\backslash \bigl\{w\in W(\Z)^{\dist,\irr}_{|Q|>M}:H(w)<X\}\bigr)+O(X^{n}).
\end{equation}

In this section, we obtain an upper bound on the number of
$\SL_n(\Z)$-orbits on $W(\Z)^{\dist,\irr}_{|Q|>M}$ with height bounded by $X$. First, in
\S\ref{sec:redtheory}, we lay out the reduction theory necessary to
express the number of such orbits in terms of the
counts of lattice points in certain bounded regions. Then in \S4.2, we
partition these regions into three parts, the {\it main body}, the
{\it shallow cusp}, and the {\it deep cusp}. We prove the desired 
estimate for each of these parts, thereby obtaining Theorem
\ref{thm:mainestimate}(b).

\subsection{Reduction theory and averaging over fundamental domains}\label{sec:redtheory}

Recall that the Iwasawa decomposition of $\SL_n(\R)$ is given by 
$ \SL_n(\R)=NTK,
$ where $N$ is the group of unipotent lower triangular matrices in
$\SL_n(\R)$, $K=\SO(n)$ is a maximal compact subgroup of $\SL_n(\R)$,
and $T$ is the split torus of $\SL_n(\R)$ consisting of $n\times n$
diagonal matrices with positive diagonal entries and determinant $1$. We
denote elements in $T$ by
$s=\diag(t_1^{-1},t_2^{-1},\ldots,t_n^{-1})$, where $t_i>0$ for $1\leq
1\leq n$ and $t_1t_2\cdots t_n=1$.  It will be convenient to make the
following change of variables. For $1\leq i\leq n-1$, set $s_i$ to be
\begin{equation*}
s_i=(t_i/t_{i+1})^{1/n},\mbox{  which implies  }\;
t_i=\prod_{k=1}^{i-1}s_k^{-k}\prod_{k=i}^{n-1}s_k^{n-k}
\end{equation*}
for $1\leq i<n$. The Haar measure of $G(\R)$ in these coordinates is then 
given by
$$ dg=dn\delta(s)d^\times s dk, \;\;\;\;\;\mbox{where}\;\;\;\;\;
\delta(s)=
\prod_{1\leq i < j\leq n}\frac{t_j}{t_i} =
\prod_{i=1}^{n-1}s_k^{-nk(n-k)},$$ $dn$ and $dk$ are Haar measures on
$N$ and $K$, respectively, and $d^\times
s=\prod_{i=1}^{n-1}s_i^{-1}ds_i$.  

We denote the coordinates on $W$ by $a_{ij},b_{ij}$  for $1\leq i\leq
j\leq n$. These coordinates are eigenvectors for the action of $T$ on
the dual $W^*$ of $W$. Denote the $T$-weight of a coordinate $\alpha$
on $W$, or more generally a product $\alpha$ of powers of such
coordinates, by $w(\alpha)$. Then $w(a_{ij}) = w(b_{ij}) =
t_i^{-1}t_j^{-1}.$ It will be useful in what follows to compute the
weight of the $Q$-invariant, which is a homogeneous polynomial of
degree $g(g+1)$ in the coordinates of $W_0$. We view the torus~$T$~as~sitting inside $G_0$. Then by \eqref{eq:weightG_0}, we have
\begin{equation}\label{eqQweight}
w(Q)=\prod_{k=1}^gt_k^{-1}.
\end{equation}

Let $\FF$ be a fundamental set for the action of $\SL_n(\Z)$ on
$\SL_n(\R)$ that is contained in a {\it Siegel set}, i.e., contained
in $N'T'K$, where $N'$ is a set consisting of elements in $N$ whose
coefficients are absolutely bounded and $T'\subset T$ consists of
elements in $s\in T$ with $s_i\geq c$ for some positive constant
$c$. Let $\W(1)$ denote the subset of real binary $n$-ic forms of
height bounded by $1$ and let $R' = \sigma_1(\W(1))$, where 
$\sigma_1$ is as in \S3.4. Set $R:=\R_{>0}\cdot R'$,
and note that every distinguished element of $W(\R)$ is
$\SL_n(\R)$-equivalent to some element in $R$. 

Let $H_0$ be a nonempty open bounded left $K$-invariant set in
$\SL_n(\R)$. Denote the set $H_0\cdot R'$ by $\B_1$. Then $\B_1$ is
an absolutely bounded set in $W(\R)$. 
Let $\L$ be
any $\SL_n(\Z)$-invariant subset of $W(\Z)$ consisting of elements
that are distinguished over $\R$, and denote the set of elements in
$\L$ with height less than $X$ by $\L_X$. Throughout this section, let $Y=X^{1/n}$. Then the averaging method as described in
\cite[\S2.3]{BS2} yields the bound
\begin{equation}\label{eq:avg}
\displaystyle\#\bigl(\SL_n(\Z)\backslash\L_X\bigr)
\;\ll\; \displaystyle
\int_{\gamma\in\FF} \#\bigl(\gamma (Y\B_1)\cap
\L\bigr)d\gamma
\;\ll\;\displaystyle
\int_{\substack{s=(s_i)_i\\ s_i\geq c}}
\#\bigl(s (Y\B)\cap\L\bigr)\delta(s)d^\times s 
\end{equation}
for some absolutely bounded open set $\B$ containing $\B_1$.

We denote the second integral on the right hand side of
\eqref{eq:avg} by $\cI_X(\L)$, and break it up into an integral over the main body, the shallow cusp, and the deep cusp. 
We define the {\it main body} to be the range 
of the integral where
$|a_{11}|\geq 1$ for some element in $s(Y\B)$, and denote the main-body portion of $\cI_X(\L)$ by $\cI_X^\main(\L)$.
We define the {\it shallow cusp} to be the range 
of the integral where $|a_{11}|< 1$ for all elements in $s(Y\B)$ but $|a_{ij}|\geq 1$ for some $i,j\leq g$, and denote the shallow-cusp portion of $\cI_X(\L)$ by $\cI_X^\scusp(\L)$. We define the {\it deep cusp} to be the range 
of the integral where $|a_{ij}|<1$ for all $i, j\leq g$ and all elements in $s(Y\B)$, and denote the deep-cusp portion of $\cI_X(\L)$ by $\cI_X^\dcusp(\L)$.
Then 
\begin{equation}\label{eq:intpartition}
\cI_X(\L)=\cI_X^\main(\L)+\cI_X^\scusp(\L)+\cI_X^\dcusp(\L). 
\end{equation}

In the next subsection, we prove bounds for
the main body, the shallow cusp, and the deep cusp when
$\L=W(\Z)_{|Q|>M}^{\dist,\irr}$.

\subsection{The number of orbits of distinguished elements with large $Q$-invariant}\label{sgomodd}

In this subsection, we obtain the following upper bound on
{$\cI_X(W(\Z)^{\dist,\irr}_{|Q|>M})$}, thus yielding the same bound on
the quantity {$\#\bigl(\SL_n(\Z)\backslash
\{w\in W(\Z)^{\dist,\irr}_{|Q|>M}:H(w)<X\}\bigr)$} by \eqref{eq:avg}.

\begin{theorem}\label{thm:distirrodd}
We have
{$\cI_X(W(\Z)^{\dist,\irr}_{|Q|>M}) \ll_\epsilon X^{n+1-1/{5n}+\epsilon}
+ {X^{n+1+\epsilon}}/{M}.$}
\end{theorem}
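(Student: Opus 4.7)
The plan is to apply the partition \eqref{eq:intpartition} with $\L := W(\Z)^{\dist,\irr}_{|Q|>M}$ and bound each of $\cI_X^\main(\L)$, $\cI_X^\scusp(\L)$, and $\cI_X^\dcusp(\L)$ separately. The targets are $\cI_X^\main(\L) = O_\epsilon(X^{n+1-1/(5n)+\epsilon})$, $\cI_X^\scusp(\L) = O_\epsilon(X^{n+1-1/(5n)+\epsilon})$, and $\cI_X^\dcusp(\L) = O_\epsilon(X^{n+1+\epsilon}/M)$; summing the three contributions then gives the theorem.

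The shallow-cusp bound I would obtain by directly invoking \cite[Proposition~4.3]{HSV}, which, as stated in the introduction, shows that the total number of integral elements of $W(\Z)$ of height less than $X$ lying in the shallow cusp of $\FF$ is already $O_\epsilon(X^{n+1-1/(5n)+\epsilon})$, without needing the distinguishedness or $|Q|>M$ hypotheses.

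For the deep cusp, the starting observation is that if $s$ lies in this range, then every integer point $w \in s(Y\B)$ is forced to satisfy $a_{ij}=b_{ij}=0$ for all $1 \leq i,j \leq g$, so that $w \in W_0(\Z)$ and $Q(w)$ is well defined. Combining \eqref{eqQweight} with the standard bound $|\alpha(w)| \ll Y \cdot w(\alpha)$ for each coordinate $\alpha$ shows that $|Q(w)| \ll Y^{g(g+1)} \prod_{k=1}^g t_k^{-1}$, so the condition $|Q(w)|>M$ forces the torus variable into the region $\prod_{k=1}^g t_k \ll Y^{g(g+1)}/M$. Imposing this restriction on the Haar integral and evaluating using the averaging method of \cite{quintic} produces a saving of $M^{-1}$ (up to $X^\epsilon$) over the unrestricted deep-cusp count of order $X^{n+1+\epsilon}$, yielding the desired bound.

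For the main body, the plan is to apply a Selberg sieve along the lines of~\cite{BSWsq}. In this range the total count $\cI_X^\main(W(\Z))$ is of order $X^{n+1}$, and I would restrict to distinguished elements by exploiting the fact that having a common isotropic $g$-dimensional subspace is $\SL_n$-invariant and Zariski-closed of positive codimension on $W$ over $\overline{\Q}$. A uniform mod-$p$ density computation for the corresponding locus feeds into a Selberg sieve with squarefree moduli truncated at a power of $X$ chosen to balance the main term against the sieve error; optimizing produces the exponent $1/(5n)$ and hence $\cI_X^\main(\L) = O_\epsilon(X^{n+1-1/(5n)+\epsilon})$. The main obstacle will lie here: one must verify the mod-$p$ density input uniformly in $p$, and, more subtly, control the sieve errors across all skewed boxes $s(Y\B)$ in the main body simultaneously rather than only over a fixed fundamental domain, so that the resulting power saving survives the integration in \eqref{eq:avg}.
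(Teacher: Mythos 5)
Your decomposition into main body, shallow cusp, and deep cusp matches the paper's proof (Propositions~\ref{prop:oddMB}, \ref{prop:oddC}, \ref{proplargeQbound}), and the shallow-cusp and deep-cusp sketches are at the right level: for the shallow cusp the paper quotes the proof of \cite[Proposition~4.3]{HSV} to get $O(X^{n+1-1/n})$ without any use of $|Q|>M$, and for the deep cusp the paper does indeed exploit that integral points are forced into $W_0(\Z)$ and that $|Q(w)|\ll Y^{g(g+1)}w(Q)$, though turning that observation into the $M^{-1}$ saving over the Haar integral requires a more delicate weight-cancellation argument (the map $\pi$ in the proof of Proposition~\ref{proplargeQbound}) than ``imposing the restriction on the torus and evaluating.''

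There is a genuine conceptual error in your main-body argument. The locus of distinguished pairs is \emph{not} Zariski-closed of positive codimension on $W$ over $\overline{\Q}$: over an algebraically closed field every pair $(A,B)$ with $\Delta(A,B)\neq 0$ has a common isotropic $g$-dimensional subspace (in fact $2^{2g}$ of them, parametrized by $J_f[2]$), so ``distinguished'' is generic over $\overline{\Q}$, not special. Being distinguished over $\Q$ is a rationality/Galois-cohomological condition, not a geometric one. If the condition really were a positive-codimension closed subvariety, the mod-$p$ densities would decay like $O(1/p)$ and the correct tool would be the Ekedahl/geometric sieve, not Selberg. What actually makes the Selberg sieve applicable — and what the paper invokes — is that the density of elements of $W(\F_p)$ that are $\F_p$-distinguished is bounded \emph{away from} $1$, uniformly in $p$ (equivalently, a positive proportion are not $\F_p$-distinguished). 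That bounded-away-from-$1$ mod-$p$ input, together with main-body lattice counts with power-saving error (from \cite{HSV}), is what the sieve of \cite{ST} requires. Your closing concern about controlling sieve errors uniformly over the skewed boxes $s(Y\B)$ is the right worry, and the resolution is exactly those power-saving main-body lattice estimates; but the stated geometric justification for the sieve's density input must be replaced by the mod-$p$ density statement above.
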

Note that \eqref{eq:odddegreeprelim}, \eqref{eq:avg}, and Theorem
\ref{thm:distirrodd} immediately imply Part (b) of Theorem
\ref{thm:mainestimate}.

We bound $\cI_X(W(\Z)^{\dist,\irr}_{|Q|>M})$ by obtaining bounds
for the main body, the shallow cusp, and the deep cusp.  We consider first the main body.

\begin{proposition}\label{prop:oddMB}
We have
\smash{$
\cI_X^\main(W(\Z)^\dist)\ll_\epsilon X^{n+1-1/{(5n)}+\epsilon}.
$}
\end{proposition}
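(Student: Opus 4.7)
The plan is to bound the main-body integral by combining the averaging framework of \S\ref{sec:redtheory} with a Selberg upper-bound sieve that exploits the rarity of distinguished reductions modulo small primes.

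First, I would pin down the range of torus parameters $s$ contributing to $\cI_X^\main$. Since the coordinate $a_{11}$ has $T$-weight $w(a_{11})=t_1^{-2}$ and $\B$ is absolutely bounded, the defining condition $|a_{11}|\geq 1$ at some point of $s(Y\B)$ forces $Y t_1^{-2} \gg 1$, hence $t_1 \ll Y^{1/2}$; combined with the Siegel condition $s_i \geq c$, the parameter $s$ lies in a region over which the crude volume bound $\int \#(s(Y\B)\cap W(\Z))\,\delta(s)\,d^\times s \ll X^{n+1}$ recovers only the trivial estimate, so no saving is available from the torus integration alone.

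To extract a power saving, I would impose the distinguished condition via a Selberg upper-bound sieve on the coordinates of $(A,B)\in W(\Z)$ modulo a sieve parameter $D$. The key local input is the uniform estimate that, for every sufficiently large prime $p$, the proportion of pairs $(A,B)\in W(\F_p)$ with $\Delta(A,B)\neq 0$ that are distinguished is at most $1-\delta_0$ for some absolute $\delta_0>0$. This follows from the parametrization recalled in \S\ref{sec:2.1}: the $\SL_n(\F_p)$-orbits on the discriminant-$f$ fiber inject into $H^1(\F_p,J_f[2])$, and for a positive proportion of $f\in V(\F_p)$ the group $J_f[2](\F_p)$ is nontrivial, producing orbits distinct from (and of comparable size to) the distinguished one. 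Feeding this into the Selberg sieve gives, for each fixed $s$,
\[
\#\bigl\{(A,B)\in s(Y\B)\cap W(\Z)^\dist\bigr\} \;\ll\; \#\bigl(s(Y\B)\cap W(\Z)\bigr)\cdot (1-\delta_0)^{\pi(D)} + D^{O(1)}.
\]
Choosing $D$ so that $(1-\delta_0)^{\pi(D)} \leq X^{-1/(5n)}$, which requires only $D$ polylogarithmic in $X$, makes the first term contribute $\ll X^{n+1-1/(5n)}$ after integration over $s$, while the second term remains $O((\log X)^{O(1)})$ and is absorbed into the $\epsilon$.

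The principal obstacle I anticipate is making the local density bound uniform in $p$, in particular handling primes of bad reduction for the hyperelliptic curve $C_f$ and invariant forms $f$ for which $|J_f[2](\F_p)|$ degenerates. I would treat the exceptional stratum by stratifying over $f$ and applying the quantitative Ekedahl-type bounds of \cite{geosieve}, analogously to Case~(a) of Theorem~\ref{thm:mainestimate}; one needs that this stratum is of codimension at least~$2$ in $V$ to avoid spoiling the sieve. A secondary technical difficulty is tracking the Selberg error term as $s(Y\B)$ becomes skewed deep in its allowed range: because different coordinates $a_{ij},b_{ij}$ acquire different effective lengths as $s$ varies, the level $D$ must be chosen carefully in each direction, via a coordinate-by-coordinate accounting analogous to the one in \cite{BSWsq}.
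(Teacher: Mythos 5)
Your overall strategy---bounding the main body with a Selberg upper-bound sieve driven by the fact that the density of non-distinguished elements of $W(\F_p)$ is bounded below independently of $p$---is the same route the paper takes (it invokes \cite{HSV} together with the sieve argument of \cite{ST}). However, there is a genuine quantitative gap in how you run the sieve. The saving produced by the Selberg sieve is \emph{not} $(1-\delta_0)^{\pi(D)}$; that expression is the naive ``independence at each prime'' heuristic. With sifting density bounded below by a constant $\delta_0$ at each prime $p\leq D$, the Selberg sieve main term is $\ll \#\bigl(s(Y\B)\cap W(\Z)\bigr)/D^{1-o(1)}$, i.e.\ the saving is a single power of the level $D$, while the remainder involves the errors in counting points of $s(Y\B)\cap\L$ for congruence sublattices $\L\subset W(\Z)$ of index $d^{O(1)}$ for all $d\leq D^2$. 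Controlling these remainders forces $D$ to be at most a small power of the smallest effective coordinate range of the skewed box $s(Y\B)$---this is exactly the first ingredient the paper cites, namely power-saving estimates for $\cI_X^\main(\L)$ for lattices $\L\subset W(\Z)$, which your proposal treats only as a ``secondary technical difficulty.'' A polylogarithmic level $D$, as you propose, therefore yields only a polylogarithmic saving, nowhere near $X^{-1/(5n)}$; and if the saving really were $(1-\delta_0)^{\pi(D)}$ with $D$ free to grow polylogarithmically, one could manufacture an arbitrarily large power saving, which is absurd since the distinguished locus is Zariski dense in $W$.

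The exponent $1/(5n)$ is precisely the output of balancing the admissible sieve level against the coordinate ranges and the Haar measure: in the main body one only knows $Yw(a_{11})\gg 1$, and near the interface with the shallow cusp this range is $\asymp 1$, so sieving in that direction gives no saving at all; one must sieve in directions whose ranges are provably large and then integrate the resulting power saving over the torus. The paper carries out exactly this accounting in the even-degree analogue (Proposition \ref{propMBSel1}), where fibering over $a_{11}$ and saving $(Yw(a_{12}))^{-1/5}$, then integrating over $s_1\ll Y^{1/(2n)}$, produces the stated exponent. Your local density input (non-distinguished elements have density $\geq\delta_0$ in $W(\F_p)$, uniformly in $p$) and your treatment of bad primes are fine in spirit, but as written the quantitative core of the argument does not establish the bound.
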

\begin{proof}
In \cite[Proposition 4.6]{HSV}, an upper bound of  $o(X^{n+1})$ is
obtained on $\cI_X^\main(W(\Z)^\dist)$. This is proved using the
following two ingredients: estimates with a power saving error tern on
$\cI_X^\main(\L)$ for lattices $\L\subset W(\Z)$, and a proof that the
density of \pagebreak  elements in $W(\F_p)$ that are not $\F_p$-distinguished is
bounded away from $1$, independent of $p$. Proposition
\ref{prop:oddMB} follows from these two ingredients along with an
application of the Selberg sieve used exactly as in \cite{ST}.
\end{proof}

Next, a bound on the shallow cusp follows directly from the proof of \cite[Proposition 4.3]{HSV}:

\begin{proposition}\label{prop:oddC}
We have
\smash{$
\cI_X^\scusp(W(\Z))\ll X^{n+1-1/{n}}.
$}
\end{proposition}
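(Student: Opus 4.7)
The plan is to follow the proof of \cite[Proposition 4.3]{HSV} verbatim; no structural modification is needed. The key ingredient is the standard lattice-point count: for any $s\in T'$ and any absolutely bounded open set $\B\subset W(\R)$,
\begin{equation*}
\#\bigl(s(Y\B)\cap W(\Z)\bigr)\;\ll\;\prod_{c}\max\{1,Yw(c)\},
\end{equation*}
where the product runs over the coordinate functions $c\in\{a_{ij},b_{ij}\}_{1\leq i\leq j\leq n}$ on $W$. For any coordinate $c$ with $Yw(c)<1$, every integer point of $s(Y\B)$ satisfies $c=0$, so $c$ contributes the factor $1$ rather than $Yw(c)$.

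First, I would partition the shallow-cusp region of $T'$ into finitely many pieces $\cS_\tau$, indexed by the subset $\tau$ of coordinate functions $c$ with $Yw(c)<1$ on the piece. Because $t_1\geq t_2\geq\cdots\geq t_n$ up to bounded factors on the Siegel set, the coordinates $a_{11}$ and $b_{11}$ have the smallest weight $t_1^{-2}$ and are therefore forced to lie in $\tau$ throughout the shallow cusp; on the other hand, by the definition of the shallow cusp there must be some coordinate with indices $i,j\leq g$ that lies outside $\tau$. On each $\cS_\tau$, the lattice-point count above reduces to $O\bigl(\prod_{c\notin\tau}Yw(c)\bigr)$, so the integrand of $\cI_X^\scusp(W(\Z))$ becomes an explicit monomial in $s_1,\ldots,s_{n-1}$ and $Y$.

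Second, I would evaluate the resulting elementary power integrals in $s_1,\ldots,s_{n-1}$ on each $\cS_\tau$. The dominant contribution comes from the ``shallowest'' piece $\tau=\{a_{11},b_{11}\}$: relative to the main-body bound $O(X^{n+1})$, the extension of the range $t_1\leq Y^{1/2}$ to $t_1>Y^{1/2}$, combined with the loss of the two lattice-point factors $Yw(a_{11})=Yw(b_{11})=Yt_1^{-2}$ (each now bounded by $1$), produces, after integration of $\delta(s)=\prod_{i<j}t_j/t_i$ against $d^\times s$, exactly one net factor of $Y^{-1}=X^{-1/n}$. Every deeper choice of $\tau$ (with strictly more coordinates forced to vanish) gives a strictly smaller contribution by the same type of calculation.

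The main technical obstacle is the combinatorial case analysis confirming that $\tau=\{a_{11},b_{11}\}$ indeed saturates the bound: one must check that on every $\cS_\tau$ the combined exponents of $s_1,\ldots,s_{n-1}$ in the integrand $\delta(s)\prod_{c\notin\tau}Yw(c)$ have signs compatible with convergence, and that no admissible $\tau$ beats $X^{n+1-1/n}$. This purely combinatorial verification is carried out identically to the corresponding step in \cite[Proposition~4.3]{HSV}.
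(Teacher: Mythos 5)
Structurally, your proposal does match what the paper itself does here: the paper offers no new argument and simply observes that the bound follows from the proof of \cite[Proposition~4.3]{HSV}, whose restriction to nondistinguished elements was only used to discard the deep cusp. To that extent you and the paper agree.

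However, the concrete picture you sketch of that computation is not accurate, in two ways. First, the piece $\tau=\{a_{11},b_{11}\}$ does not saturate the bound. On that piece the Davenport count is $\ll Y^{n(n+1)-2}t_1^4$, and one checks that the exponent of $s_k$ in $t_1^4\,\delta(s)$ equals $(4-nk)(n-k)$, which is strictly negative for every $1\le k\le n-1$ once $n\ge5$ (and for $n=3$ the shallow cusp is essentially vacuous). The $s$-integral therefore converges absolutely over all of $T'$, and this piece contributes only $O(Y^{n(n+1)-2})=O(X^{n+1-2/n})$, a strictly better saving than $X^{n+1-1/n}$. Second, and more importantly, it is \emph{not} the case that every admissible $\tau$ gives an integrand with nonpositive exponents. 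For instance, take $\tau\supseteq\{a_{1j},b_{1j}:1\le j\le g\}$ — legitimate in the shallow cusp since $a_{gg}$ may still lie outside $\tau$. Then $\prod_{c\in\tau}(Yw(c))^{-1}\delta(s)$ contains the factor $t_1^{2g+2}t_2^2\cdots t_g^2\,\delta(s)$, whose exponent of $s_1$ is $(2g+2)(n-1)-2(g-1)-n(n-1)=+2$ using $n=2g+1$. So the naive $s_1$-integral diverges. The missing ingredient is that one must actively use the truncations defining $T_\tau$ — in particular the shallow-cusp inequality $Yw(a_{gg})\gtrsim 1$ — to control these pieces, for example via a weight-reassignment of the type employed in Proposition~\ref{proplargeQbound}. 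Citing \cite{HSV} for this is perfectly acceptable, but the verification you describe — sign-checking each $\cS_\tau$, with $\{a_{11},b_{11}\}$ as the bottleneck — is not what the proof does, and as stated the sign-check would fail.
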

In \cite[Proposition 4.3]{HSV}, the shallow and deep cusps were treated simultaneously, but the points in the deep cusp
were ruled out since only nondistinguished elements were counted there. Hence the proof of \cite[Proposition~4.3]{HSV}
yields the claimed bound in Proposition~\ref{prop:oddC}.

Finally, to treat the deep cusp, let $U=\{a_{ij},b_{ij}:1\leq i\leq j\leq n\}$
denote the set of coordinates on $W$, and let $U_0=\{a_{ij},b_{ij}\mid
i\leq j, \:j\geq g+1\}$ denote the set of coordinates on~$W_0$. We define
a partial order $\lesssim$ on $U$ by setting $\alpha\lesssim\beta$ if
all the powers of $s_i$ in $w(\alpha)^{-1}w(\beta)$ are
nonnegative. Explicitly, $a_{ij}\leq a_{i'j'}$ if and only if
$i\leq i'$ and $j\leq j'$ (and similarly for $b_{ij}$,  as $a_{ij}$ and
$b_{ij}$ have the same weight). A subset $\cZ$ of 
$U_0$
is 
{\it 
saturated} 
if for any
$\beta\in \cZ$ and any $\alpha\in U_0$ with $\alpha\lesssim\beta$, 
the coordinate $\alpha$ also lies in
$\cZ$. We pick positive constants $c_{ij}$ for $1\leq i\leq j\leq n$ such that:
\begin{enumerate}[(a)]
\item If $|Yw(a_{ij})|<c_{ij}$, then $|a_{ij}|<1$ and $|b_{ij}|<1$ for every $(A,B)\in s(Y\B)$.
\item For all $s\in T'$ and $a_{ij}\lesssim a_{i'j'}$, we have
$w(a_{ij})/c_{ij}\leq w(a_{i'j'})/c_{i'j'}$.
\end{enumerate}
More explicitly, we may choose $c_{nn}$ to be sufficiently small and take
$$ c_{ij} = \Bigl(\sup_{s\in T'} \frac{w(a_{ij})}{w(a_{nn})}\Bigr) c_{nn},\quad\mbox{for }i\leq j\leq n.
$$
The significance of these constants $c_{ij}$ is the following: for every $Y>1$, first, if $Yw(a_{ij})<c_{ij}$, then every integral element in $s(Y\B)$ has $a_{ij}$- and $b_{ij}$-coordinates equal to $0$; and second, if $a_{ij}\lesssim a_{i'j'}$, then $Yw(a_{i'j'})<c_{i'j'}$ implies $Yw(a_{ij})<c_{ij}$.

We have the following result due to Davenport which we
use to estimate the number of lattice points in bounded regions.

\begin{proposition}[\cite{DavenportLemma}] \label{prop-davenport}
Let $\RR$ be a bounded, semi-algebraic multiset in $\R^n$ having
maximum multiplicity $m$ that is defined by at most $k$ polynomial
inequalities, each having degree at most $\ell$. Let $\RR'$ denote the
image of $\RR$ under any $($upper or lower$)$ triangular, unipotent
transformation of $\R^n$. Then the number of lattice points $($counted
with multiplicity$)$ contained in the region $\RR'$ is given by
\begin{equation*}
  \Vol(\RR) + O (\max\{\overline{\Vol}(\overline{\RR}),1\}),
\end{equation*}
where
$\overline\Vol(\overline{\RR})$ denotes the greatest $d$-dimensional
volume of any projection of $\RR$ onto a coordinate subspace obtained
by equating $n-d$ coordinates to zero, as $d$ ranges over all values
in $\{1, \dots, n-1\}$. The implied constant in the second summand
depends only on $n$, $m$, $k$, and $\ell$.
\end{proposition}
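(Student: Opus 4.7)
The plan is to prove this classical result by induction on the dimension $n$, following Davenport's original argument. For the base case $n=1$, a bounded subset of $\R$ defined by at most $k$ polynomial inequalities each of degree at most $\ell$ is a disjoint union of $O_{k,\ell}(1)$ intervals (each polynomial contributes at most $\ell$ sign changes), and the number of integers in an interval of length $L$ is $L + O(1)$. Since the group of $1 \times 1$ unipotent matrices is trivial, $\RR' = \RR$ and the conclusion $\Vol(\RR) + O(1)$ follows. The multiplicity parameter $m$ is handled from the outset by decomposing $\RR$ into its underlying set taken with multiplicity, multiplying the constants by $m$.

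For the inductive step, suppose $g$ is upper triangular unipotent (the lower triangular case is symmetric). Since $\det g = 1$, we rewrite $\#(g\RR \cap \Z^n) = \#(\RR \cap L)$ where $L := g^{-1} \Z^n$, and volumes are preserved. Because $g^{-1}$ is also upper triangular unipotent, it fixes $e_n$, so every point of $L$ has integer last coordinate; the slice $L_t := L \cap \{x_n = t\}$ for $t \in \Z$ is a translate of an $(n-1)$-dimensional lattice $L'$ arising from the top-left $(n-1) \times (n-1)$ upper triangular unipotent block of $g^{-1}$. Thus the inductive hypothesis applies to each slice $\RR_t := \{y \in \R^{n-1} : (y,t) \in \RR\}$, which is semi-algebraic with at most the same combinatorial parameters $k$, $\ell$ as $\RR$ (imposing the extra equation $x_n = t$ only restricts the variety, not its defining data).

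Applying induction to each slice gives $\#(\RR_t \cap L_t) = \Vol_{n-1}(\RR_t) + O(\max\{\overline{\Vol}(\overline{\RR_t}), 1\})$. Summing over integer $t$ in the projection $\pi_n(\RR)$ of $\RR$ onto the $x_n$-axis, Fubini converts the main term $\sum_t \Vol_{n-1}(\RR_t)$ into $\Vol(\RR)$ up to an error $O(\overline{\Vol}(\overline{\RR}))$, and the number of relevant integers $t$ is at most $\mathrm{length}(\pi_n(\RR)) + 1 \leq \overline{\Vol}(\overline{\RR}) + 1$. Every $d$-dimensional coordinate projection of a slice $\RR_t$ corresponds to a $d$-dimensional coordinate projection of $\RR$ onto a subspace not involving $x_n$, and the slice-wise error terms sum to $O(\max\{\overline{\Vol}(\overline{\RR}), 1\})$ when compared against the corresponding $(d+1)$-dimensional coordinate projection of $\RR$ obtained by also projecting onto $x_n$.

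The main obstacle is the careful accounting of slice-wise error terms: we need the Riemann sum $\sum_t \overline{\Vol}(\overline{\RR_t})$ to be bounded by an absolute constant times $\overline{\Vol}(\overline{\RR})$, with the constant depending only on $n$, $m$, $k$, $\ell$. The bounded semi-algebraic complexity of $\RR$ ensures that each function $t \mapsto \Vol_d(\pi(\RR_t))$ is piecewise of bounded variation with $O_{k,\ell}(1)$ monotone pieces, so the Riemann sum is comparable to the integral, which by Fubini equals a $(d+1)$-dimensional coordinate projection of $\RR$. All such lifted projections of $\RR$ have dimension at most $n-1$ and are thus absorbed into the $\overline{\Vol}(\overline{\RR})$ error term. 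Uniformity of the implied constant in $n, m, k, \ell$ propagates through the induction since slicing preserves these parameters and the number of inductive steps is $n$.
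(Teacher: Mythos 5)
The paper does not prove this proposition; it cites Davenport's 1951 paper, with the semi-algebraic/multiset/unipotent extensions being folklore refinements. Your overall framework is the correct one and matches Davenport's: induct on $n$, rewrite $\#(g\RR\cap\Z^n)=\#(\RR\cap g^{-1}\Z^n)$, note that an upper-triangular unipotent $g^{-1}$ preserves the last coordinate so that the slice of $g^{-1}\Z^n$ over each integer height $t$ is a translate of a fixed $(n-1)$-dimensional lattice of the same unipotent shape (translating $\RR_t$ back makes it an honest lattice count), and sum the inductive estimates over $t$. The multiset reduction and the observation that slicing and translating preserve the defining complexity are also fine.

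The gap is in how you control the Riemann-sum error. You assert that each $t\mapsto\Vol_d(\pi(\RR_t))$ is piecewise monotone with $O_{k,\ell}(1)$ pieces "by bounded semi-algebraic complexity." For $d=1$ this is defensible (the slice length is an algebraic function of $t$), but for $d\ge 2$ the slice volume of a semi-algebraic set is generally \emph{not} semi-algebraic (it can pick up logarithms; e.g.\ slice areas of $\{(x,y,t): t<x,y<1,\ xy>t\}$ involve $t\ln t$), so bounded monotonicity complexity is a genuine theorem in o-minimal uniformity, not a formal consequence of the defining data -- and it is not how Davenport argues. The clean repair, which needs only the one-dimensional bounded-intersection property you already have, is to interchange the sum over $t\in\Z$ with the $d$-dimensional integral. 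Writing $S=\pi'(\RR)\subset\R^{d+1}$ for the lifted projection and $\pi''$ for the further projection forgetting $t$,
\begin{equation*}
\sum_{t\in\Z}\Vol_d(S_t)\;=\;\int_{\R^d}\#\{t\in\Z:(y,t)\in S\}\,dy\;\le\;\int_{\R^d}\Bigl(\,\bigl|\{s:(y,s)\in S\}\bigr|+O(1)\cdot 1_{\pi''(S)}(y)\Bigr)dy\;=\;\Vol_{d+1}(S)+O\bigl(\Vol_d(\pi''(S))\bigr),
\end{equation*}
since each vertical fiber of the bounded-complexity semi-algebraic set $S$ is a union of $O_{n,k,\ell}(1)$ intervals. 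The same interchange handles the main-term comparison $\sum_t\Vol_{n-1}(\RR_t)$ versus $\Vol(\RR)$, so no monotonicity claim about higher-dimensional slice volumes is ever needed.
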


The following lemma gives conditions that ensure an element in $W(\R)$
has discriminant $0$. 
\begin{lemma}\label{lemD0}
Suppose that $(A,B)\in W(\R)$ satisfies $a_{ij}=b_{ij}=0$ for all
$i\leq k$ and $j\leq n-k$ for some  $k\in\{1,\ldots,g\}$. Then
the discriminant of $(A,B)$ is $0$.
\end{lemma}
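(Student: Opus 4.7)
The plan is to exploit the hypothesized zero pattern on $(A,B)$ together with the symmetry of $A$ and $B$ to factor $\det(Ax-By)$ explicitly and expose a square divisor of positive degree, which forces the discriminant to vanish.

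Setting $M=Ax-By$, the hypothesis that $a_{ij}=b_{ij}=0$ for all $i\leq k$ and $j\leq n-k$ says that the top-left $k\times(n-k)$ block of $M$ vanishes; since $A$ and $B$ are symmetric we have $M_{ji}=M_{ij}$, which equivalently forces the top-left $(n-k)\times k$ block of $M$ to vanish as well. Because $k\leq g<g+1\leq n-k$, these two rectangular zero blocks overlap in a $k\times k$ corner. Partitioning $M$ into $2\times 2$ blocks with row sizes $k,n-k$ and column sizes $n-k,k$, the standard block-determinant identity yields
\begin{equation*}
\det(M)=(-1)^{k(n-k)}\det(E)\det(F),
\end{equation*}
where $E$ is the top-right $k\times k$ block of $M$ and $F$ is the bottom-left $(n-k)\times(n-k)$ block. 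Next I would use the symmetry again: it forces the top-left $(n-2k)\times k$ sub-block of $F$ to vanish and the bottom-left $k\times k$ sub-block of $F$ to equal $E^t$. Applying the block-determinant identity a second time to $F$ gives
\begin{equation*}
\det(F)=(-1)^{k(n-2k)}\det(E^t)\det(H)=(-1)^{k(n-2k)}\det(E)\det(H),
\end{equation*}
where $H$ is the central $(n-2k)\times(n-2k)$ block of $M$. Combining these identities yields $f_{A,B}(x,y)=\pm\det(E)^2\det(H)$.

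To conclude, observe that $\det(E)$ is a binary form of degree $k\geq 1$ in $(x,y)$. If $\det(E)\equiv 0$, then $f_{A,B}\equiv 0$ and $\Delta(f_{A,B})=0$ by convention; otherwise $\det(E)^2$ is a nontrivial square factor of $f_{A,B}$ in $\overline{\R}[x,y]$, so $f_{A,B}$ has a repeated root and again $\Delta(f_{A,B})=0$. There is no real obstacle here; the only care required is in verifying that the symmetry of $A$ and $B$ puts $E^t$ in the claimed corner of $F$, which is immediate from $M_{ij}=M_{ji}$ together with the range inequality $k\leq n-k$.
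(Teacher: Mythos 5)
Your proof is correct and follows the same route as the paper, whose entire proof is the one-line remark that ``one checks that $f_{A,B}$ has a square factor of degree $k$''; your double application of the anti-block-triangular determinant identity, giving $f_{A,B}=\pm\det(E)^2\det(H)$ with $\deg\det(E)=k\geq 1$, is precisely the check being left to the reader.
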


\begin{proof}
One checks that $f_{A,B}$ has a
square factor of degree $k$ and so has discriminant $0$.
\end{proof}

The next lemma states that when $\L\subset W(\Z)$ consists of elements
with nonzero discriminant, the integral defining $\cI_X(\L)$ can be
cut off by  conditions of the form $s_i\ll X^{\Theta}$ for some absolute
constant $\Theta$ depending only on $n$.

\begin{lemma}\label{lemsbound}
There exists an absolute constant $\Theta$ depending only on $n$ such
that if $s\in T'$ with $s_i\gg X^\Theta$ for some $i$, then
$s(Y\B)\cap W(\Z)$ contains only points with discriminant $0$.
\end{lemma}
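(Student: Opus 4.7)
The plan is to reduce the statement to Lemma \ref{lemD0}: the aim is to show that if $s_\ell \gg X^\Theta$ for some index $\ell \in \{1, \ldots, n-1\}$ and a suitable absolute constant $\Theta$, then there exists $k \in \{1, \ldots, g\}$ such that every integral point $(A,B) \in s(Y\B) \cap W(\Z)$ satisfies $a_{ij} = b_{ij} = 0$ for all $i \leq k$ and $j \leq n-k$. By the defining properties (a) and (b) of the constants $c_{ij}$, together with the fact that $a_{ij} \lesssim a_{k,n-k}$ whenever $i \leq k$ and $j \leq n-k$, this vanishing of an entire block is guaranteed by the single inequality $Y w(a_{k,n-k}) < c_{k,n-k}$.

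Thus it suffices to locate one $k \in \{1,\ldots,g\}$ for which $Y w(a_{k,n-k}) < c_{k,n-k}$. The key algebraic identity, valid since $n = 2g+1$, is
\begin{equation*}
\prod_{k=1}^{g} w(a_{k,n-k}) \;=\; \prod_{k=1}^{g} t_k^{-1} t_{n-k}^{-1} \;=\; \prod_{j=1}^{n-1} t_j^{-1} \;=\; t_n,
\end{equation*}
where the middle equality uses that the pairs $\{k, n-k\}$ for $k = 1, \ldots, g$ partition $\{1, 2, \ldots, n-1\}$, and the last uses $t_1 t_2 \cdots t_n = 1$. Applying the geometric-mean inequality gives
\begin{equation*}
\min_{1 \leq k \leq g} Y w(a_{k,n-k}) \;\leq\; \bigl(Y^{g}\, t_n\bigr)^{1/g},
\end{equation*}
so it is enough to force $Y^g t_n < C_0^{g}$, where $C_0 := \min_{1 \leq k \leq g} c_{k,n-k}$.

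The final step is to bound $t_n$ in terms of $s_\ell$. The explicit formula $t_n = \prod_{k=1}^{n-1} s_k^{-k}$, together with the Siegel bound $s_k \geq c$ for $k \neq \ell$, yields $t_n \leq C_n s_\ell^{-\ell}$ for a constant $C_n$ depending only on $n$. If $s_\ell \geq X^\Theta$, then $t_n \leq C_n X^{-\ell\Theta} \leq C_n X^{-\Theta}$. Since $Y^g = X^{g/n}$, any fixed $\Theta > g/n$ (for instance $\Theta = 1$, noting $g/n < 1/2$) makes $Y^g t_n = O(X^{g/n - \Theta}) = o(1)$ as $X \to \infty$. The sufficient condition $Y^g t_n < C_0^g$ then holds once the implicit constant in $s_\ell \gg X^\Theta$ is chosen large enough in terms of $C_n$, $C_0$, and $n$, completing the proof.

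The argument is essentially bookkeeping; the one substantive observation is the identity $\prod_{k=1}^{g} w(a_{k,n-k}) = t_n$, which converts a question about forcing many coordinates to vanish simultaneously into a single scalar inequality controlled directly by the torus parameter $s_\ell$.
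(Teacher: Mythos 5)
Your proof is correct and is essentially the paper's argument in contrapositive form: both rest on Lemma \ref{lemD0} together with the identity $\prod_{k=1}^{g} t_k t_{n-k} = t_n^{-1}$ (the paper assumes a nonzero-discriminant point exists, deduces $t_kt_{n-k}\ll Y$ for all $k$, multiplies to get $t_n\gg Y^{-g}$ and hence $s_i\ll X^{\Theta}$; you assume some $s_\ell$ is large, deduce $t_n$ is small, and pigeonhole via the geometric mean to make one antidiagonal block vanish). The bookkeeping with the constants $c_{ij}$ and the reduction of the whole block's vanishing to the single inequality $Yw(a_{k,n-k})<c_{k,n-k}$ is handled correctly.
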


\begin{proof}
Let $s=\diag(t_1^{-1},\cdots,t_n^{-1})\in T'$; then 
$t_1\gg t_2\gg \cdots \gg t_n$ and $t_1t_2\cdots t_n=1$. Because of the
relation between the $t_j$'s and the $s_i$'s, it suffices to prove that if
$s(Y\B)$ contains an integral element with nonzero discriminant,
then $t_1$ is bounded from above by some power of $X$ or, equivalently, $t_n$ is
bounded from below by some power of $X$. By Lemma \ref{lemD0}, for $s(Y\B)$ to contain an integral element with
nonzero discriminant, we must have $Yw(a_{k,n-k})\gg 1$ for every
$k\in\{1,\ldots,g\}$. That is, $t_kt_{n-k}\ll Y$ for every
$k\in\{1,\ldots,g\}$. Multiplying these conditions together, we obtain
$t_n\gg Y^{-g}$. 
The lemma follows.
\end{proof}

We now estimate the contribution to
{$\cI_X(W(\Z)_{|Q|>M}^{\dist,\irr})$} coming from the deep cusp.

\begin{proposition}\label{proplargeQbound}
We have
\smash{$
\cI_X^\dcusp(W(\Z)_{|Q|>M}^\irr)\ll_\epsilon {X^{n+1+\epsilon}}/{M}.
$}
\end{proposition}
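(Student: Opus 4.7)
The plan is to trade the condition $|Q|>M$ for a factor of $1/M$ via the trivial bound $|Q(w)|\ll Y^{g(g+1)}\prod_{k=1}^g t_k^{-1}$ on $s(Y\B)$, and then to check that the resulting cusp integral contributes $O_\epsilon(X^{n+1+\epsilon}/M)$.

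By property (a) of the $c_{ij}$, every integer element of $s(Y\B)$ in the deep cusp satisfies $a_{ij}=b_{ij}=0$ for $i,j\le g$ and so lies in $W_0(\Z)$; thus it suffices to bound $\int\#(s(Y\B)\cap W_0(\Z)_{|Q|>M})\delta(s)\,d^\times s$ over the deep-cusp range of $s$. Since $Q$ is a homogeneous polynomial of degree $g(g+1)$ on $W_0$ whose $T$-weight equals $\prod_{k=1}^g t_k^{-1}$ by \eqref{eqQweight}, the trivial bound just stated holds. Combining the inequality $\#\{w:|Q(w)|>M\}\le(|Q|_{\max}/M)\cdot\#(\text{total})$ with Davenport's lemma (Proposition~\ref{prop-davenport}) then gives
$$\#(s(Y\B)\cap W_0(\Z)_{|Q|>M})\ll\frac{Y^{g(g+1)}\prod_{k=1}^g t_k^{-1}}{M}\cdot\Vol\bigl(s(Y\B)\cap W_0(\R)\bigr)+(\text{boundary terms}).$$

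A straightforward weight calculation on $W_0$ yields $\Vol(s(Y\B)\cap W_0(\R))\asymp Y^{|U_0|}\prod_{k=1}^g t_k^{2(g+1)}$ (after using $\prod_k t_k=1$). Since $|U_0|+g(g+1)=n(n+1)$ and $Y^{n(n+1)}=X^{n+1}$, multiplying by $\delta(s)=\prod_k t_k^{2k-n-1}$ produces an integrand equal to $(X^{n+1}/M)\cdot\prod_k t_k^{\nu_k}$ with $\nu_k=2k-1$ for $k\le g$, $\nu_{g+1}=0$, and $\nu_k=2k-n-1$ for $g+2\le k\le n$. In the $s_i$-coordinates via $t_i=\prod_{k<i}s_k^{-k}\prod_{k\ge i}s_k^{n-k}$, the exponent of $s_i$ in the resulting monomial is $nN_{\le i}-iN$, where $N=\sum_k\nu_k=gn$. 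An elementary computation gives this exponent as $ni(i-g)$ for $i\le g$ and $n(i-g)(i-2g-1)$ for $g<i\le n-1$, so it is strictly negative except at $i=g$, where it equals zero. Applying Lemma~\ref{lemsbound} to cut off $s_i$ at $X^\Theta$ then makes the marginal $s_g$-integral contribute only $O_\epsilon(X^\epsilon)$, while the other $s_i$-integrals converge to constants, yielding the main-term bound $X^{n+1+\epsilon}/M$. The boundary terms from Davenport each drop at least one coordinate of $U_0$, which modifies these exponents strictly favorably and gives contributions of lower order.

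The key obstacle is checking that the marginal zero exponent arises precisely at $s_g$ (and nowhere else), which amounts to the identity $N_{\le g}/g=N/n=g$. This reflects the structural role of $Q$ as the hyperdeterminant of the $g\times(g+1)$ block and makes the deep cusp critical for this bound.
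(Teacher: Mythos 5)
Your mechanism for extracting the factor $1/M$ (Markov's inequality with the pointwise bound $|Q|\ll Y^{g(g+1)}w(Q)$ on $s(Y\B)$) is equivalent to the paper's (which instead restricts the torus to the range $Y^{g(g+1)}w(Q)\gg M$ and then bounds all points), and your main-term weight computation is correct: $\prod_{\alpha\in U_0}Yw(\alpha)\cdot Y^{g(g+1)}w(Q)=Y^{n(n+1)}$ up to the character $\prod t_k^{\nu_k}$, whose $s_i$-exponents are indeed nonpositive with a single zero at $i=g$. The gap is in the sentence dismissing the Davenport error terms. In the deep cusp many coordinates $\alpha\in U_0$ (e.g.\ $a_{1,g+1}$, whose range is $Yt_1^{-1}t_{g+1}^{-1}$) satisfy $Yw(\alpha)\ll 1$, and dropping such a coordinate \emph{multiplies} the projected volume by $(Yw(\alpha))^{-1}>1$. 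So the largest coordinate projection — which is what Proposition~\ref{prop-davenport} charges you — is not of lower order than the volume; the correct bound on $\#\bigl(s(Y\B)\cap W_0(\Z)\bigr)$ is $\prod_{\alpha\in U_0\setminus\cZ}Yw(\alpha)$, where $\cZ$ is the set of coordinates whose ranges fall below $1$, and this exceeds your volume term by the factor $\prod_{\alpha\in\cZ}(Yw(\alpha))^{-1}\gg 1$.

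This is not a removable loss: the paper's proof shows (via the injection $\pi:\cZ\to U_0\setminus U'$ and Lemma~\ref{lemD0}, which kills the case $\cZ\not\subset U'$ because all integer points there have discriminant zero) that
\begin{equation*}
\prod_{\alpha\in\cZ}(Yw(\alpha))^{-1}\;\ll\;\Bigl(\prod_{g+2\leq i<j\leq n}\frac{t_i}{t_j}\Bigr)\Bigl(\prod_{1\leq i<j\leq g+1}\frac{t_i}{t_j}\Bigr),
\end{equation*}
and this correction factor exactly cancels the decay $\prod_i s_i^{e_i}$ you computed: the corrected integrand is $\asymp X^{n+1}/M$ uniformly in $s$, with no negative exponents left over. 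In other words, the worst region of the deep cusp saturates the bound, the contribution there comes entirely from what you labeled ``boundary terms,'' and establishing the proposition requires the saturated-set decomposition of the torus and the combinatorial comparison $w(\pi(\alpha))/w(\alpha)\gg 1$ — precisely the content of the paper's proof that your argument omits.
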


\begin{proof}
For a subset $\cZ$ of $U_0$, let $T'_\cZ$ denote the subset of $s\in
T'$ with $Y^{g(g+1)}w(Q)\gg M$, and $|Yw(a_{ij})|< c_{ij}$ precisely for those $(i,j)$ where $a_{ij}\in\cZ$ or $b_{ij}\in\cZ$. Note that $T'_\cZ$ is empty if $\cZ$ is not saturated. Define
\begin{equation*}
\begin{array}{rcl}
N(\cZ,X) &:=& 
\displaystyle\int_{s\in T'_\cZ}\#\bigl(s(Y\B)\cap W_0(\Z)\bigr)\,\delta(s)d^\times s
\\[.1in]&\ll &
\displaystyle\int_{s\in T'_\cZ}
\big(\prod_{\alpha\in U_0\backslash \cZ}Yw(\alpha)\big) \,\delta(s) d^\times s
\\[.1in]&= &
\displaystyle \int_{s\in T'_\cZ}
\big(\prod_{\alpha\in U_0}Yw(\alpha)\big)
\big(\prod_{\alpha\in \cZ}Y^{-1}w(\alpha)^{-1}\big) \,\delta(s) d^\times s,
\end{array}
\end{equation*}
where the bound on the second line follows from Proposition \ref{prop-davenport}.
Let $U' := \{a_{ij},b_{ij}\mid i+j<n\}$. If $\cZ$ is saturated and not
contained in $U'$, then $\cZ$ contains $a_{k,n-k}$ for some
$k=1,\ldots,g$. Hence, for any $s\in T'_\cZ$, every
integral element in $s(Y \B)\cap W_0(\Z)$ satisfies  $a_{ij}=b_{ij}=0$,
for $i\leq k$ and $j\leq n-k$, and so has zero discriminant by
Lemma \ref{lemD0}. Therefore, 
$$\cI_X^\dcusp(W(\Z)^\irr_{|Q|>M})\ll \sum_\cZ N(\cZ,X)$$ where the sum
is over saturated subsets $\cZ$ of $U_0$ contained in $U'$.

Now
\begin{eqnarray}
\nonumber\prod_{\alpha\in U_0} Yw(\alpha) &=& Y^{n(n+1)-g(g+1)} (t_1\cdots t_{g+1})^{2g+4}\\[-.1in]
\nonumber&=& \frac{Y^{n(n+1)}}{Y^{g(g+1)}w(Q)} (t_1\cdots t_{g+1})^{g+2}(t_{g+2}\cdots t_n)^{-(g+1)}\\
\label{eq:oddwow}&=& \frac{Y^{n(n+1)}}{Y^{g(g+1)}w(Q)} \prod_{i=1}^{g+1}\prod_{j=g+2}^n \frac{t_i}{t_j}.
\end{eqnarray}
Fix a saturated subset $\cZ$ of $U_0$ contained in $U'$. We define a
map $\pi:\cZ\rightarrow U_0\backslash U'$ by
\begin{equation*}
  \pi(a_{ij}) = a_{i,n-i},\qquad \pi(b_{ij}) = b_{n-j,j}.
\end{equation*}
Note that for any $\alpha\in \cZ$, we have $\pi(\alpha)\notin U'$ and
so $Yw(\pi(\alpha))\gg 1$. Furthermore, for every $\alpha\in U'$, we
have $\alpha\lesssim\pi(\alpha)$ and so $w(\pi(\alpha))/w(\alpha)\gg
1$.  Hence for any $s\in T'_\cZ$,
\begin{equation}
\prod_{\alpha\in \cZ} (Yw(\alpha))^{-1} \ll \prod_{\alpha\in \cZ}\frac{Yw(\pi(\alpha))}{Yw(\alpha)}
\ll \prod_{\alpha\in U'}\frac{Yw(\pi(\alpha))}{Yw(\alpha)}
\label{eq:oddwow2}= \Big(\prod_{g+2\leq i< j\leq n} \frac{t_i}{t_j}\Big) \Big(\prod_{1\leq i< j\leq g+1} \frac{t_i}{t_j}\Big).
\end{equation}
Here the first product on the right hand side is the
contribution from all $a_{ij}\in U_1$, and the second product is the contribution from all $b_{ij}\in
U_1$. Combining \eqref{eq:oddwow} and \eqref{eq:oddwow2} gives, for
any $s\in T'_\cZ$,
$$\prod_{\alpha\in U_0\backslash \cZ}Yw(\alpha) \ll \frac{Y^{n(n+1)}}{Y^{g(g+1)}w(Q)} \prod_{1\leq i < j\leq n} \frac{t_i}{t_j}
= \frac{Y^{n(n+1)}}{Y^{g(g+1)}w(Q)}\,\delta(s)^{-1}
\ll \frac{Y^{n(n+1)}}{M}\,\delta(s)^{-1}.
$$
Since each $s_i$ is bounded below by an absolute constant and bounded
above by a power of $X$ by Lemma~\ref{lemsbound}, we obtain 
$$N(\cZ,X) = O_\epsilon\Big(\frac{X^{n+1+\epsilon}}{M}\Big).$$ The proof is completed by summing over all saturated subsets $\cZ$
contained in $U_1$.
\end{proof}

Theorem \ref{thm:distirrodd} now follows from Propositions
\ref{prop:oddMB}, \ref{prop:oddC} and \ref{proplargeQbound}.

\section{A bound on the number of singular symmetric matrices in skewed boxes}\label{secSEK}

Let $n\geq 2$ be a positive integer and let $S=\Sym_2(n)$ denote the
space of symmetric $n\times n$ matrices.  Let $|\cdot|$ denote
Euclidean length on $S(\R)$ obtained by identifying $S(\R)$ with
$\R^{\dim S}=\R^{n(n+1)/2}$. Let $\D\subset S(\R)$ be a bounded open
set. For an integer $r$ with $1\leq r<n$, let $S(\Z)_{(r)}$ denote the
set of elements in $S(\Z)$ having rank $r$. In \cite{EK}, Eskin and
Katznelson obtained  asymptotics for the number of elements in $Y\D\cap
S(\Z)_{(r)}$ for $r\in\{1,\ldots,n-1\}$. 

In this paper, we will not
need exact asymptotics; upper bounds will suffice. 
In this section, our goal
is to obtain upper bounds on the number of elements of $S(\Z)_{(r)}$
in {\it skew} balls.

The group $\SL_n$ acts on $S$ via $\gamma(A)=\gamma A\gamma^t$ for
$\gamma\in\SL_n$ and $A\in S$. Let $T\subset \SL_n(\R)$ denote the
subgroup of diagonal matrices with positive coefficients. We denote
elements in $T$ by $s=\diag(t_1^{-1},\ldots,t_n^{-1})$. We are
interested in studying the skew ball $s(Y\D)$.  By symmetry, we may
assume that $s\in T'$, i.e., we have $t_1\gg t_2\gg \ldots \gg
t_n$. Moreover, in light of Lemma \ref{lemsbound}, we will assume that
$t_1\ll Y^\Theta$ and $t_n\gg Y^{-\Theta}$ for some absolute constant
$\Theta$ depending only on $n$.

For $s\in T$ and $r\in\{1,\ldots,n-1\}$, we define the constants 
$C(r,s)$ by
\begin{equation}\label{eq:Crs}
  C(r,s) = \prod_{i=1}^r\prod_{j=1}^{n-i}\frac{t_j}{t_{n-i+1}}= \prod_{\substack{1\leq i < j \leq n\\ j> n-r}}\frac{t_i}{t_j}.
  \end{equation}
When $s\in T'$, these constants satisfy 
$$C(r,s) \,\ll\,
C(n-1,s) = \prod_{1 \leq i < j \leq n} \frac{t_i}{t_j} =
\delta(s)^{-1},$$ where as before $\delta(s)$ is the character of the torus appearing in the Haar
measure of $\SL_n(\R)$.

Finally, for $1\leq r<n$, a positive real
number $Y$, and $s\in T$, let $N_r(Y,s)$ denote the number of elements
in $s(Y\D)\cap S(\Z)_{(r)}$.  We prove the following result.

\begin{theorem}\label{thm:mainB}
  Let $n\geq 2$ and $1\leq r<n$ be positive integers. Let $\Theta>0$
  be a real number. Let $Y>1$ be a real number, and let $s\in T'$ with
  $t_1\ll Y^\Theta$ and $t_n\gg Y^{-\Theta}$. Then
  $$N_r(Y,s) = O\bigl(C(r,s)Y^{nr/2}\log^r Y\bigr)$$
  where the implied constants depend only on $n$ and $\Theta$.
\end{theorem}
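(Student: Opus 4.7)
Plan. The strategy parametrizes each rank-$r$ integer symmetric matrix $A\in S(\Z)_{(r)}$ by its kernel lattice $\Lambda_A := \ker(A)\cap\Z^n$, a primitive sublattice of $\Z^n$ of rank $n-r$. For each such $\Lambda$, the matrices $A$ with $\ker(A)\cap\Z^n=\Lambda$ are in bijection with non-degenerate integer symmetric bilinear forms on $\Z^n/\Lambda\simeq\Z^r$, so
\[
N_r(Y,s) \;=\; \sum_{\Lambda} \#\bigl\{A\in s(Y\D)\cap S(\Z)_{(r)} : \ker(A)\cap\Z^n=\Lambda\bigr\}.
\]
This mirrors the general framework of Eskin--Katznelson in \cite{EK}, but I would use Davenport's lemma (Proposition~\ref{prop-davenport}) in place of their more delicate exact counts, trading precision for flexibility under skewed norms (at the cost of the extra logarithmic factors).

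For a fixed $\Lambda$, choose an ordered basis $v_1,\ldots,v_{n-r}$ of $\Lambda$ that is Minkowski-reduced with respect to the skewed inner product $\langle u,v\rangle_s := \langle s^{-1}u, s^{-1}v\rangle$, and extend it to a $\Z$-basis $v_1,\ldots,v_n$ of $\Z^n$. Writing $M=(v_1\mid\cdots\mid v_n)\in\GL_n(\Z)$, every $A$ with $\ker(A)\cap\Z^n=\Lambda$ has the form $A = M^{-t}\widetilde{Q}M^{-1}$ where $\widetilde{Q}$ is block-diagonal with a non-degenerate $r\times r$ integer symmetric block $Q$. The condition $A\in s(Y\D)$ translates into coordinate bounds on the entries of $Q$ whose sizes are controlled by the skewed successive minima $\mu_1\leq\cdots\leq\mu_{n-r}$ of $\Lambda$. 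Proposition~\ref{prop-davenport} then bounds the per-$\Lambda$ count by the volume of the admissible region in $\Sym_r(\R)$ plus $O$-terms from lower-dimensional projections.

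The final step is the summation over primitive $\Lambda$'s, organized by the dyadic intervals $\mu_i\in[2^{k_i},2^{k_i+1})$ of the skewed successive minima. Standard reduction-theoretic estimates bound the number of primitive rank-$(n-r)$ sublattices with prescribed skewed successive minima, and each $k_i$ ranges over $O(\log Y)$ values thanks to the constraints $t_1\ll Y^\Theta$ and $t_n\gg Y^{-\Theta}$ (which together with $t_1\cdots t_n=1$ bound the shape of $\Lambda$ polynomially in $Y$).

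The principal obstacle is the bookkeeping in this dyadic summation. One must show that the sum of per-$\Lambda$ counts produces exactly $C(r,s)Y^{nr/2}$ up to polylogarithmic factors, and identify the power of $\log Y$ as precisely $r$. I expect the $r$ log factors to arise from the Davenport lower-volume corrections in the $r(r+1)/2$-dimensional space of $Q$ (roughly one log from each of the $r$ coordinate directions of $\Sym_r(\R)$ collapsing at the boundary of its dyadic range), while the sum over $\Lambda$ produces the factor $C(r,s)Y^{nr/2}$ via an averaging over the Grassmannian-shaped moduli of kernel lattices. Verifying this split and matching the constant $C(r,s)$ exactly — parallel to but cruder than the analogous step in \cite{EK} — is the technical heart of the proof.
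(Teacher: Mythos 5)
Your overall architecture coincides with the paper's (and Eskin--Katznelson's): stratify $S(\Z)_{(r)}$ by a distinguished primitive sublattice, bound each stratum by a Schmidt/Davenport-type lattice-point estimate, and sum dyadically over strata. You stratify by the kernel lattice (rank $n-r$) where the paper stratifies by the row-space lattice $\Lambda$ (rank $r$); these are equivalent via $\Lambda\mapsto\Lambda^\perp\cap\Z^n$, and your per-stratum lattice $\{M^{-t}\widetilde{Q}M^{-1}\}$ is exactly the paper's $S(\Lambda^\perp\cap\Z^n)$. The problem is that the theorem has no content beyond the quantitative bound, and everything that produces that bound is precisely what you defer as ``the technical heart.'' The paper's proof consists of: a non-emptiness constraint $\mu_i\mu_j\ll Y$ for $i+j\leq r+1$ (Proposition~\ref{prop:lilj}); a per-lattice bound $\ll Y^{r(r+1)/2}d(\Lambda)^{-(r+1)}\prod_{i<j,\;i+j\leq r+1}\mu_j/\mu_i$ obtained from Proposition~\ref{prop:sch} together with an injection $(i,j)\mapsto(r+1-j,i)$ trading the unconstrained minima for the constrained ones (Proposition~\ref{prop:Brefined}); a count $\#\Sigma(L,s)\ll (L_1\cdots L_r)^n\bigl(\prod_{i<j}L_i/L_j\bigr)C(r,s)$ of lattices per dyadic box (Proposition~\ref{prop:numL}); and the observation that the two ratio products cancel, leaving $C(r,s)Y^{nr/2}$ per box with $O(\log^r Y)$ boxes. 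None of this cancellation is argued in your proposal, so the stated bound is not established.

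Two concrete points in your setup are also off. First, the entries $Q_{ij}=v_{n-r+i}^tAv_{n-r+j}$ are controlled by the skewed lengths of the \emph{complementary} basis vectors, i.e.\ by the successive minima of $\Lambda^\perp\cap\Z^n$, not by the minima $\mu_1\leq\cdots\leq\mu_{n-r}$ of $\Lambda$ as you assert; moreover the kernel lattice of $A\in s(Y\D)$ is naturally small in the $s$-skewed metric, not the $s^{-1}$-skewed one you propose (already for $n=2$, $r=1$: a rank-one $A$ with row vector $v$ satisfying $|s^{-1}v|\ll Y^{1/2}$ has kernel vector $w=(-v_2,v_1)$ with $|sw|\ll Y^{1/2}$, whereas $|s^{-1}w|$ can be as large as $Y^{1/2}t_1/t_2$). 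Dual lattices transform contragradiently under $s$, and your plan does not account for this. Second, the factor $\log^rY$ does not arise from Davenport lower-dimensional corrections in $\Sym_r(\R)$; it is simply the number of dyadic boxes for the $r$ successive minima of the rank-$r$ row-space lattice. In your kernel parametrization the dyadic sum runs over $n-r$ minima and would naively yield $\log^{n-r}Y$, which is weaker than the claimed bound whenever $r<n/2$.
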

The case $s = 1$ of Theorem \ref{thm:mainB} follows from the work of 
Eskin-Katznelson \cite{EK}. Their strategy is to express the set of
singular symmetric matrices of rank $r$ as a union of lattices, each
of which consists of elements having a fixed row span. 
They count the number of elements in each
such lattice having bounded norm, and then sum over all possible
row spans.  We follow this strategy, explaining the
modifications necessary to bound integer points in skew balls.

Fix positive integers $k$ and $m$ with $k\leq m$, and a lattice
$\Lambda$ in $\R^n$ of rank $r$. A basis
$\{\ell_1,\ldots,\ell_k\}$ of $\Lambda$ is {\it reduced} if
the product $|\ell_1||\ell_2|\cdots |\ell_k|$ is minimal among all
integral bases of $\Lambda$. It is {\it almost reduced}
if $$|\ell_1||\ell_2|\cdots |\ell_k|\ll d(\Lambda),$$ where
$d(\Lambda)$ denotes the covolume of $\Lambda$ in $\Lambda\otimes\R$,
and the implied constant in the inequality depends only on $n$. If we
order an almost reduced basis $\{\ell_1,\ldots,\ell_k\}$ by length, then the $i$-th successive minimum of $\Lambda$
is within a constant multiple (depending only on $n$) of $|\ell_i|$
for every $i = 1,\ldots,k$.  To bound the number elements of $\Lambda$
in a ball, we use the following result of Schmidt \cite{Schmidt}.

\begin{proposition}\label{prop:sch}
Let $\Lambda$ be a rank $k$ lattice in $\R^m$ and $\D$ a
bounded open domain in $\R^m$. Let $\mu_1,\ldots,\mu_k$ be the
successive minima of $\Lambda$. Then for $Y>0$, we have
\begin{equation}\label{eq:sch}
\# (Y\D\cap \Lambda) = O\Big(\underset{1\leq j\leq
  k}{\max}\frac{Y^j}{\mu_1\cdots\mu_j}\Big).
\end{equation}
\end{proposition}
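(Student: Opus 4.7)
The plan is to reduce counting lattice points in $Y\D$ to counting integer tuples by expanding points in a Minkowski-reduced basis of $\Lambda$. First, since $\D$ is bounded, fix $R>0$ so that $\D\subset B(0,R)$; then $Y\D\cap\Lambda$ sits inside $B(0,RY)\cap\Lambda$, and it suffices to bound the latter. Next, choose a Minkowski-reduced basis $v_1,\dots,v_k$ of $\Lambda$. By the standard theory of reduced bases, we have $|v_i|\asymp\mu_i$, with implied constants depending only on $k$.

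The main technical input, and what I expect to be the crux of the argument, is the following quantitative bound on coefficients: there is a constant $c_k$ depending only on $k$ such that whenever $x=\sum_{i=1}^k n_iv_i\in\Lambda$, one has $|n_i|\le c_k\,|x|/|v_i|$ for each $i$. This is a classical feature of Minkowski-reduced bases and is precisely what distinguishes an almost reduced basis from an arbitrary one; for an arbitrary basis the individual coefficients could be enormous even for small $x$. Applied to $x\in B(0,RY)\cap\Lambda$, this yields $|n_i|\ll Y/\mu_i$ with an absolute implied constant.

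Now I count tuples. Let $j_0\in\{0,1,\dots,k\}$ be the largest index for which $\mu_{j_0}\le c Y$ (with $c$ chosen so that $|n_i|<1$, hence $n_i=0$, whenever $i>j_0$). Then any lattice point in $Y\D$ lies in the rank-$j_0$ sublattice spanned by $v_1,\dots,v_{j_0}$, and the number of admissible tuples $(n_1,\dots,n_{j_0})$ is at most
\[
\prod_{i=1}^{j_0}\Bigl(1+O\bigl(Y/\mu_i\bigr)\Bigr)\;\ll\;\frac{Y^{j_0}}{\mu_1\cdots\mu_{j_0}}\;+\;1.
\]
To finish, I verify that $Y^{j_0}/(\mu_1\cdots\mu_{j_0})$ realizes the maximum appearing in \eqref{eq:sch}. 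Indeed, for $j\le j_0$ the ratio $Y^{j_0-j}/(\mu_{j+1}\cdots\mu_{j_0})\ge 1$ since $\mu_i\le cY$ for $i\le j_0$, while for $j>j_0$ the ratio $(\mu_{j_0+1}\cdots\mu_j)/Y^{j-j_0}\ge 1$ since $\mu_i>cY$ for $i>j_0$. Hence the bound on the number of tuples coincides with $\max_{1\le j\le k}Y^j/(\mu_1\cdots\mu_j)$ up to an absorbable constant, completing the proof. The only nontrivial ingredient is the coefficient bound in the second paragraph, and beyond invoking the standard properties of Minkowski-reduced bases the argument is essentially bookkeeping.
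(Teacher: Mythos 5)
The paper does not actually prove this proposition: it is quoted as a result of Schmidt \cite{Schmidt}, so there is no internal argument to compare against. Your proof is a correct, self-contained rendition of the standard argument behind such counting bounds. The one step you defer to ``standard properties of Minkowski-reduced bases''---that $x=\sum_i n_iv_i$ forces $|n_i|\le c_k|x|/|v_i|$---is indeed the crux, and it does hold: by Cramer's rule $n_i$ is a ratio of determinants, the numerator is at most $|x|\prod_{j\ne i}|v_j|$ by Hadamard's inequality, and the denominator $d(\Lambda)$ satisfies $d(\Lambda)\gg_k\prod_j|v_j|$ because $|v_j|\asymp_k\mu_j$ for a Minkowski-reduced basis while $\prod_j\mu_j\asymp_k d(\Lambda)$ by Minkowski's second theorem. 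With that supplied, the bookkeeping is right: each factor $1+O(Y/\mu_i)$ is $\ll Y/\mu_i$ precisely when $\mu_i\ll Y$, the coordinates with $\mu_i\gg Y$ are forced to vanish, and the resulting product $Y^{j_0}/(\mu_1\cdots\mu_{j_0})$ is comparable to the maximum in \eqref{eq:sch}. The only caveat is the degenerate case $\mu_1\gg Y$, where $Y\D\cap\Lambda$ may still contain the origin while every term $Y^j/(\mu_1\cdots\mu_j)$ is small; the bound should then be read with a $j=0$ term equal to $1$ included in the maximum. That imprecision lies in the statement as quoted, not in your argument, and is harmless in all of the paper's applications.
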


We use the notation of Theorem \ref{thm:mainB}.  Given a lattice
$\Lambda\subset\Z^n$ of rank $r$, let $S(\Lambda)$ denote the lattice
of symmetric matrices $B\in S(\Z)$ such that the row space
(equivalently, the column space) of $B$ is contained in
$\Lambda\otimes\R$. For two vectors $v_1$ and $v_2$ in $\R^n$, we
define
\begin{equation}\label{stardef}
  v_1\ast v_2:=\left\{
  \begin{array}{ll}
    v_1\cdot v_2^t+v_2\cdot v_1^t &\;{\rm if}\; v_1\;{\rm and}\;v_2
    \mbox{ are linearly independent};\\
    v_1\cdot v_2^t &\;{\rm otherwise}.
  \end{array}\right.
\end{equation}
Then $v_1\ast v_2=v_2\ast v_1\in S(\text{Span}\{v_1,v_2\})$, and 
\begin{equation}\label{eq:v*w}
|v||w|\leq |v\ast w|\leq 2|v||w|.
\end{equation}

Fix $\gamma\in\SL_n(\R)$. (For our applications, we
will take $\gamma\in T$.) Let $\Lambda\in\Z^n$ be a primitive lattice
of rank $r$. We bound the number of elements in $\gamma^{-1}(Y\D)\cap
S(\Lambda)$ using the bijection
\begin{equation*}
\begin{array}{rcl}
\gamma^{-1}(Y\D)\cap S(\Lambda)&\to&Y\D\cap\gamma(S(\Lambda))
\\[.05in]
A&\mapsto&\gamma A\gamma^{-1}
\end{array}
\end{equation*}
and instead bounding the number of elements in
$Y\D\cap\gamma(S(\Lambda))$.  We thus 
study the lattice $\gamma(S(\Lambda))\subset S(\R)$. The next result, which gives an almost reduced basis for
$\gamma(S(\Lambda))$ in terms of an almost reduced basis of
$\gamma\Lambda$, follows from the proofs of \cite[Proposition
  3.3]{EK} and \cite[Lemma 3.5]{EK}.

\begin{theorem}\label{thm:EK1}
Fix $\gamma\in\SL_n(\R)$. Let $\Lambda\subset\Z^n$ be a
primitive lattice of rank $r$, and let $\{\ell_1,\ldots,\ell_r\}$ be a
basis for $\gamma\Lambda$. Then $\{\ell_i\ast\ell_j\colon
1\leq i\leq j\leq r\}$ is a basis for
$\gamma(S(\Lambda))$. Furthermore,
$$d(\gamma(S(\Lambda)))=2^{r(r-1)/4}d(\gamma\Lambda)^{r+1}=2^{r(r-1)/4}d(\Lambda)^{r+1}.$$
In particular, if $\{\ell_1,\ldots,\ell_r\}$ is almost reduced, then
so is $\{\ell_i\ast\ell_j\colon 1\leq i\leq j\leq r\}$.
\end{theorem}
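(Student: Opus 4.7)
The plan is to decompose the argument into three steps: establishing the basis claim for $\gamma(S(\Lambda))$, computing its covolume, and transferring the almost-reducedness property.

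First, I would prove the basis claim at the level of $\Lambda$ itself: for any $\Z$-basis $\{v_1,\ldots,v_r\}$ of $\Lambda$, the family $\{v_i \ast v_j : 1\le i\le j\le r\}$ is a $\Z$-basis of $S(\Lambda)$. Given $A\in S(\Lambda)$, primitivity of $\Lambda$ in $\Z^n$ (so that $\Lambda=(\Lambda\otimes\R)\cap\Z^n$) forces every row, hence every column, of $A$ to lie in $\Lambda$; writing $V$ for the $r\times n$ matrix with rows $v_i^t$ we therefore have $A=V^tC$ for some integer $r\times n$ matrix $C$. Primitivity furnishes an integer left inverse $\pi$ of $V^t$; applying $\pi$ to the symmetry relation $V^tC=A=A^t=C^tV$ yields $C=\pi V^tC=\pi C^tV=:MV$ with $M$ integer. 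The injectivity of $N\mapsto V^tNV$ on $r\times r$ matrices (a consequence of $VV^t$ being invertible) together with $V^tMV=V^tM^tV$ then forces $M=M^t$. Expanding $A=V^tMV=\sum_{i\le j}m_{ij}(v_i\ast v_j)$ completes spanning, and linear independence is immediate from the dimension count. The basis claim for $\gamma(S(\Lambda))$ follows by pushing forward under $A\mapsto\gamma A\gamma^t$, which sends $v_i\ast v_j$ to $(\gamma v_i)\ast(\gamma v_j)$; and since any two $\Z$-bases of $\gamma\Lambda$ differ by an element of $\GL_r(\Z)$, whose induced action on $\Sym^2$ has determinant $\det^{r+1}=\pm 1$, the conclusion holds for the given basis $\{\ell_i\}$.

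Next, I would compute the covolume using the Frobenius inner product $(A,B)_F=\mathrm{tr}(AB)$ on $S(\R)$, first choosing an orthogonal real basis $\{\ell_1,\ldots,\ell_r\}$ of $\gamma\Lambda\otimes\R$. A direct trace expansion yields
\begin{equation*}
(\ell_i\ast\ell_j,\,\ell_k\ast\ell_l)_F=(\ell_i\cdot\ell_k)(\ell_j\cdot\ell_l)+(\ell_i\cdot\ell_l)(\ell_j\cdot\ell_k),
\end{equation*}
with the obvious adjustments in the diagonal cases $i=j$ or $k=l$. On the orthogonal basis this diagonalizes the Gram matrix of $\{\ell_i\ast\ell_j\}_{i\le j}$, with entries $|\ell_i|^4$ when $i=j$ and $2|\ell_i|^2|\ell_j|^2$ when $i<j$. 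Taking the square root of the determinant and collecting exponents (each $|\ell_i|$ appears with exponent $4+2(r-1)=2(r+1)$, halved by the square root) gives
\begin{equation*}
d(\gamma(S(\Lambda)))=2^{r(r-1)/4}\prod_i|\ell_i|^{r+1}=2^{r(r-1)/4}d(\gamma\Lambda)^{r+1}.
\end{equation*}
The formula is basis-independent because any $\GL_r(\R)$-change of basis rescales both sides by the same factor (namely $\det^{r+1}$, from the $\Sym^2$ representation), and the final equality $d(\gamma\Lambda)^{r+1}=d(\Lambda)^{r+1}$ invokes $\gamma\in\SL_n(\R)$ and the convention on lower-rank lattice covolumes used in \cite{EK}.

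For almost-reducedness, suppose $\{\ell_1,\ldots,\ell_r\}$ is almost reduced in $\gamma\Lambda$, i.e.\ $\prod_i|\ell_i|\ll d(\gamma\Lambda)$. The upper bound $|v\ast w|\le 2|v||w|$ from \eqref{eq:v*w} combined with the covolume formula yields
\begin{equation*}
\prod_{i\le j}|\ell_i\ast\ell_j|\,\le\,\prod_i|\ell_i|^2\prod_{i<j}2|\ell_i||\ell_j|\,=\,2^{r(r-1)/2}\prod_i|\ell_i|^{r+1}\,\ll\,2^{r(r-1)/2}d(\gamma\Lambda)^{r+1}\,\ll\,d(\gamma(S(\Lambda))),
\end{equation*}
which is exactly the definition of $\{\ell_i\ast\ell_j\}$ being almost reduced in $\gamma(S(\Lambda))$. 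The main obstacle is the first step: the integer-structure argument genuinely needs primitivity of $\Lambda$ in $\Z^n$, since without an integer left inverse of the embedding one cannot force $M$ to be integer. The covolume step is a mechanical Gram-matrix computation, though one must pin down conventions carefully, as switching between the Frobenius norm on $S(\R)$ and the Euclidean norm on $\R^{n(n+1)/2}$ rescales the constant $2^{r(r-1)/4}$ by a bounded factor that is absorbed into the order-of-magnitude nature of the almost-reduced condition.
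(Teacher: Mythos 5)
Your proof is correct and complete. The paper itself does not prove Theorem~\ref{thm:EK1}: it is quoted as following from the proofs of Proposition~3.3 and Lemma~3.5 of \cite{EK}, so you have supplied a self-contained argument where the paper defers to a reference. Your argument is essentially the natural one and matches what Eskin--Katznelson do: primitivity of $\Lambda$ gives an integer left inverse of the inclusion $\Lambda\hookrightarrow\Z^n$, which forces any $A\in S(\Lambda)$ to be $V^tMV$ with $M$ integral and symmetric, yielding the basis claim; the covolume then follows from a Gram computation reduced to an orthogonal basis via the $\GL_r(\R)$-equivariance of both sides (determinant $(\det g)^{r+1}$ on $\Sym^2$), and the almost-reducedness transfer is immediate from \eqref{eq:v*w}. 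Two small points. First, your displayed formula for $(\ell_i\ast\ell_j,\ell_k\ast\ell_l)_F$ is off by an overall factor of $2$ in the off-diagonal case (the trace of the product of two symmetrized rank-one matrices gives $2[(\ell_i\cdot\ell_k)(\ell_j\cdot\ell_l)+(\ell_i\cdot\ell_l)(\ell_j\cdot\ell_k)]$); your stated Gram entries $|\ell_i|^4$ and $2|\ell_i|^2|\ell_j|^2$ are nevertheless the correct ones, and they are what produce the constant $2^{r(r-1)/4}$, confirming that the Frobenius normalization is the intended one. Second, your closing caveat about the discrepancy between the Frobenius metric and the coordinate-Euclidean metric on $\R^{n(n+1)/2}$ is well taken but harmless, exactly as you say: the constant only enters through the order-of-magnitude notion of almost-reducedness and through upper bounds such as Proposition~\ref{prop:Brefined}, so a bounded rescaling is immaterial.
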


Next, by the proof of
\cite[Lemma 4.1]{EK}, we have the following result giving a necessary condition for the set
$Y\D\cap \gamma(S(\Lambda))$ to be nonempty.
\begin{proposition}\label{prop:lilj}
Let $\gamma\in\SL_n(\R)$ and let $\Lambda\subset\Z^n$ be a primitive
lattice of rank $r$ such that the successive minima of $\gamma\Lambda$
are $\mu_1\leq\ldots\leq\mu_r$. If $\#(Y\D\cap \gamma(S(\Lambda))>0$,
then $\mu_i\mu_j\leq c_1 Y$ for every pair~$(i,j)$ with $i+j\leq r+1$, for some constant $c_1$ depending only on $n$.
\end{proposition}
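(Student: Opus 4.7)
My plan is to exploit the structure provided by Theorem \ref{thm:EK1}: given an almost reduced basis $\ell_1,\ldots,\ell_r$ of $\gamma\Lambda$ with $|\ell_i|\asymp\mu_i$, the theorem tells us that $\{\ell_i\ast\ell_j : 1\leq i\leq j\leq r\}$ forms an almost reduced basis of $\gamma(S(\Lambda))$ with $|\ell_i\ast\ell_j|\asymp\mu_i\mu_j$ by \eqref{eq:v*w}. For $B\in Y\D\cap\gamma(S(\Lambda))$, I will write $B = LCL^t$ with $L=(\ell_1\mid\cdots\mid\ell_r)$ and $C$ a symmetric $r\times r$ integer matrix. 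Since the proposition is ultimately applied to bound the count of rank-$r$ symmetric matrices in each lattice $\gamma(S(\Lambda))$, I may assume $B$ has rank $r$, in which case $C$ is invertible.

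The analytic heart of the argument will be to show, for each $i$, that $\mu_i\mu_{j^*(i)}\ll Y$, where $j^*(i):=\max\{j:C_{ij}\neq 0\}$. Let $\tilde{\ell}_1,\ldots,\tilde{\ell}_r$ denote the basis of $\gamma\Lambda\otimes\R$ dual to $\ell_1,\ldots,\ell_r$; by a standard property of almost reduced bases, $|\tilde{\ell}_i|\asymp 1/\mu_i$. Since $L^t\tilde{\ell}_i = e_i$, a direct computation gives $B\tilde{\ell}_i = \sum_j C_{ij}\ell_j\in\gamma\Lambda$. On one hand $|B\tilde{\ell}_i|\leq|B|\,|\tilde{\ell}_i|\ll Y/\mu_i$; on the other, the almost-reducedness of the $\ell_j$ (equivalently, the bound $|\ell_j^*|\asymp\mu_j$ on their Gram--Schmidt norms) guarantees that any nonzero integer combination has norm at least a constant multiple of $\mu_{j^*}$, where $j^*$ is the largest index with nonzero coefficient, so $|B\tilde{\ell}_i|\gtrsim\mu_{j^*(i)}$. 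Multiplying yields the claimed bound $\mu_i\mu_{j^*(i)}\ll Y$.

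It remains to turn these per-column bounds into the pairwise bound in the statement. For any $(i_0,j_0)$ with $i_0+j_0\leq r+1$ (WLOG $i_0\leq j_0$), I claim some column $i\geq i_0$ of $C$ has $j^*(i)\geq j_0$. Otherwise the columns $i_0,\ldots,r$ of $C$ would all be supported on rows $1,\ldots,j_0-1$, placing $r-i_0+1\geq j_0$ vectors in a $(j_0-1)$-dimensional coordinate subspace and contradicting the invertibility of $C$. For such an $i$, monotonicity of the successive minima gives $\mu_{i_0}\mu_{j_0}\leq\mu_i\mu_{j^*(i)}\ll Y$. The main delicacy in writing out the full argument will be the verification of the two almost-reducedness facts used above---that $|\tilde{\ell}_i|\asymp 1/\mu_i$, and that integer combinations of the $\ell_j$ admit a lower bound of the form $\gtrsim\mu_{j^*}$---but both follow from the standard reduction theory used in \cite{EK}.
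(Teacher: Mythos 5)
Your proof is correct and gives a complete, self-contained argument where the paper simply delegates to the proof of \cite[Lemma 4.1]{EK}. The key steps---writing $B = LCL^t$ with $C$ integral and symmetric, pairing against the dual basis to get $B\tilde\ell_i = \sum_j C_{ij}\ell_j$, sandwiching $\mu_{j^*(i)} \ll |B\tilde\ell_i| \ll Y/\mu_i$, and then using the linear-algebra pigeonhole on the columns of $C$ to pass from the diagonal-type bound $\mu_i\mu_{j^*(i)}\ll Y$ to the full range $i+j\leq r+1$---are all sound, and the two ``almost-reducedness'' facts you flag as delicate (namely $|\tilde\ell_i|\ll 1/\mu_i$ and the lower bound $|\sum_j c_j\ell_j|\gg \mu_{j^*}$) do indeed follow directly from $|\ell_j^*|\asymp\mu_j$, which in turn follows from $|\ell_j|\asymp\mu_j$ together with $\prod_j|\ell_j^*| = d(\gamma\Lambda) \asymp \prod_j\mu_j$. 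This is essentially the Eskin--Katznelson argument that the paper cites, so your route is the intended one rather than a genuinely different approach.

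One point worth making explicit: the rank-$r$ (equivalently, $\det C\neq 0$) assumption is not merely a convenience---it is essential, since a counterexample such as $r=2$, $C=\bigl(\begin{smallmatrix}1&0\\0&0\end{smallmatrix}\bigr)$, $B=\ell_1\ell_1^t$ only forces $\mu_1^2\ll Y$ and places no constraint on $\mu_1\mu_2$. As literally stated, the proposition's hypothesis ``$\#(Y\D\cap\gamma(S(\Lambda)))>0$'' does not rule this out (and is vacuous if $0\in\D$). Your interpretive move---that one may restrict to rank-$r$ elements because that is all the proposition is ever applied to when counting $S(\Z)_{(r)}$---is the right reading, and is consistent with the statement in \cite{EK} that the paper invokes, but it is a hypothesis the paper's formulation leaves implicit and that your writeup should state.
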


We now prove an upper bound on $\#(Y\D\cap
\gamma(S(\Lambda))$.
\begin{proposition}\label{prop:Brefined}
Let $\gamma\in\SL_n(\R)$ and let $\Lambda\subset\Z^n$ be a primitive
lattice of rank $r$ such that the successive minima of $\gamma\Lambda$
are $\mu_1\leq\ldots\leq\mu_r$. Then 
\begin{equation}\label{eq:Bcountrefine}
  \#(Y\D\cap \gamma(S(\Lambda))=
  O\Big(\frac{Y^{r(r+1)/2}}{d(\Lambda)^{r+1}}
  \prod_{\substack{1\leq i< j\leq r\\ i+j\leq r+1}}\frac{\mu_j}{\mu_i}\Big).
\end{equation}
\end{proposition}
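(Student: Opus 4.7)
The plan is to apply Schmidt's bound (Proposition \ref{prop:sch}) to the lattice $\gamma(S(\Lambda))$, using Theorem \ref{thm:EK1} to identify its successive minima. If $\{\ell_1,\ldots,\ell_r\}$ is an almost reduced basis of $\gamma\Lambda$ with $|\ell_i|\asymp\mu_i$, then Theorem \ref{thm:EK1} provides $\{\ell_i\ast\ell_j:1\leq i\leq j\leq r\}$ as an almost reduced basis of $\gamma(S(\Lambda))$. Combined with (\ref{eq:v*w}), which gives $|\ell_i\ast\ell_j|\asymp|\ell_i||\ell_j|$, this shows that the successive minima of $\gamma(S(\Lambda))$ are, up to absolute constants depending only on $n$, a reordering of the multiset $\{\mu_i\mu_j:1\leq i\leq j\leq r\}$.

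Invoking Proposition \ref{prop:sch} and using that the maximum $\max_k Y^k/(\nu_1\cdots\nu_k)$ is attained at $k=|S|$, where $S=\{(i,j):i\leq j,\ \mu_i\mu_j\leq Y\}$, yields
$$\#(Y\D\cap\gamma(S(\Lambda))) \;\ll\; \prod_{(i,j)\in S}\frac{Y}{\mu_i\mu_j} \;=\; \frac{Y^{r(r+1)/2}}{d(\Lambda)^{r+1}}\prod_{(i,j)\notin S}\frac{\mu_i\mu_j}{Y},$$
where the final equality uses $\prod_{i\leq j}\mu_i\mu_j=\prod_k\mu_k^{r+1}\asymp d(\Lambda)^{r+1}$ (by Minkowski applied to $\gamma\Lambda$, whose covolume equals $d(\Lambda)$ since $\gamma\in\SL_n(\R)$). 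By Proposition \ref{prop:lilj}, the nonemptiness of $Y\D\cap\gamma(S(\Lambda))$ forces $\mu_i\mu_j\leq c_1 Y$ whenever $i+j\leq r+1$, so every pair $(i,j)\notin S$ must satisfy $i+j>r+1$.

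It thus remains to establish the combinatorial inequality
$$\prod_{\substack{i\leq j,\;i+j>r+1\\ \mu_i\mu_j>Y}}\frac{\mu_i\mu_j}{Y} \;\ll\; \prod_{\substack{1\leq i<j\leq r\\ i+j\leq r+1}}\frac{\mu_j}{\mu_i}.$$
My approach is, for each pair $(i,j)$ contributing to the left-hand side, to select a companion pair $(a,b)$ with $a+b\leq r+1$ and invoke the bound $\mu_a\mu_b\leq c_1 Y$ from Proposition \ref{prop:lilj} so as to rewrite $\mu_i\mu_j/Y$ as $c_1$ times a single ratio $\mu_x/\mu_y$ with $x>y$ and $x+y\leq r+1$. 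The bookkeeping is organized by the reflection $(i,j)\leftrightarrow(r+1-j,r+1-i)$, which sets up a bijection between $\{(i,j):i\leq j,\,i+j>r+1\}$ and $\{(i,j):i\leq j,\,i+j<r+1\}$; in the extremal case where the left-hand product ranges over all pairs with $i+j>r+1$, the exponents $f_k$ of $\mu_k$ in the resulting monomial $\prod_k\mu_k^{f_k}$ exhibit the reflection symmetry $f_k=f_{r+1-k}$, allowing us to group factors as $\prod_{a\leq b,\,a+b=r+1}(\mu_a\mu_b)^{f_a}$ and bound each such factor by $c_1 Y$.

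The main obstacle is exactly this combinatorial matching step: one must verify that a consistent assignment of companion pairs exists for every configuration of the $\mu_i$'s that may arise, and that the resulting multiset of ratios $\mu_x/\mu_y$ is dominated (up to absolute constants) by the product over $\{(i,j):i<j,\,i+j\leq r+1\}$ appearing on the right. The reflection bijection above, together with the ordering $\mu_1\leq\cdots\leq\mu_r$, furnishes the organizing principle for this combinatorial accounting.
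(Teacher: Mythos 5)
Your setup coincides with the paper's: Schmidt's bound (Proposition \ref{prop:sch}) applied to the almost reduced basis $\{\ell_i\ast\ell_j\}$ of $\gamma(S(\Lambda))$ supplied by Theorem \ref{thm:EK1} and \eqref{eq:v*w}, the covolume identity $d(\gamma(S(\Lambda)))\asymp d(\Lambda)^{r+1}$, and Proposition \ref{prop:lilj} to ensure that every pair surviving in the complementary product satisfies $i+j>r+1$. The gap is precisely the step you flag as ``the main obstacle'': you never exhibit the assignment of companion pairs, and the organizing principle you propose is not sound. Dividing $\mu_i\mu_j$ by the reflected pair $\mu_{r+1-j}\mu_{r+1-i}\ll Y$ yields the \emph{two} ratios $(\mu_i/\mu_{r+1-i})(\mu_j/\mu_{r+1-j})$, each with $k+\ell=r+1$, and these repeat across different $(i,j)$, whereas the target product contains each such ratio only once. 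Already for $r=3$ this breaks: the pairs $(2,3)$ and $(3,3)$ reflect to $(1,2)$ and $(1,1)$, producing $(\mu_3/\mu_1)^3$, which is not $O(\mu_2\mu_3/\mu_1^2)$ when $\mu_1=\mu_2$ and $\mu_3$ is large. So the ``exponent symmetry'' bookkeeping you sketch does not close the argument.

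The paper's resolution is a single clean choice that needs no case analysis. For each $(i,j)$ with $i\leq j$ and $i+j>r+1$, take the companion pair $(r+1-j,\,j)$; Proposition \ref{prop:lilj} gives $\mu_{r+1-j}\mu_j\leq c_1Y$, hence
\begin{equation*}
\frac{\mu_i\mu_j}{Y}\ \ll\ \frac{\mu_i\mu_j}{\mu_{r+1-j}\mu_j}\ =\ \frac{\mu_i}{\mu_{r+1-j}},
\end{equation*}
one ratio $\mu_\ell/\mu_k$ with $(k,\ell)=(r+1-j,i)$ satisfying $k<\ell$ and $k+\ell\leq r+1$. The map $(i,j)\mapsto(r+1-j,i)$ is injective from $\{(i,j):i\leq j,\ i+j>r+1\}$ into $\{(k,\ell):k<\ell,\ k+\ell\leq r+1\}$, and since every ratio $\mu_\ell/\mu_k$ with $k<\ell$ is at least $1$, the product over the image is dominated by the full product on the right of \eqref{eq:Bcountrefine}. (Restricting, as you do, to the pairs with $\mu_i\mu_j>Y$ only removes factors $\leq 1$ from the left-hand side, so it changes nothing.) Replacing your reflection bookkeeping by this companion choice and injection completes your proof and makes it identical to the paper's.
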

\begin{proof}
Let $U(r)$ denote the set of pairs $(i,j)$ of positive integers such that $i\leq j\leq r$ and $i+j>r+1$. In other words,
elements in $U(r)$ correspond to the successive minima of the lattice
$\gamma(S(\Lambda))$ that are $\gg Y$. By Proposition \ref{prop:sch},
Theorem \ref{thm:EK1}, and \eqref{eq:v*w}, we have
\begin{equation*}
\#(Y\D\cap \gamma(S(\Lambda))\ll 
\frac{Y^{r(r+1)/2}}{d(\Lambda)^{r+1}}
\prod_{(i,j)\in U(r)}\Big(\frac{Y}{\mu_i\mu_j}\Big)^{-1}.
\end{equation*}
Assume that $\#(Y\D\cap \gamma(S(\Lambda))>0$. Then  $\mu_{r+1-j}\mu_j\ll Y$ for
all~$1\leq j\leq r$
by
Proposition \ref{prop:lilj}.   Thus
\begin{equation}\label{eq:Bcount}
\begin{array}{rcl}
\#(Y\D\cap \gamma(S(\Lambda))
&\ll&\displaystyle 
\frac{Y^{r(r+1)/2}}{d(\Lambda)^{r+1}}\prod_{(i,j)\in U(r)}
\Big(\frac{Y}{\mu_i\mu_j}\Big)^{-1}\frac{Y}{\mu_{r+1-j}\mu_j}
\\[.2in]&\ll&\displaystyle
\frac{Y^{r(r+1)/2}}{d(\Lambda)^{r+1}}\prod_{(i,j)\in U(r)}
\frac{\mu_i}{\mu_{r+1-j}}.
\end{array}
\end{equation}
Since $i\leq j$ and
$i+j>r+1$ for $(i,j)\in U(r)$, we have the following injection:
\begin{equation}\label{eq:inj}
\begin{array}{rcl}
U(r)&\to& \{(k,\ell):1\leq k< \ell\leq r:k+\ell\leq r+1\}
\\[.05in]
(i,j)&\mapsto& (r+1-j,i).
\end{array}
\end{equation}
Since $\frac{\mu_j}{\mu_i}\geq 1$ for $j>i$, the injection (\ref{eq:inj}) implies that the product of the ratios $\mu_j/\mu_i$ in \eqref{eq:Bcountrefine} is at least as large as the product of the ratios $\mu_i/\mu_{r+1-j}$ in \eqref{eq:Bcount}. The result follows.
\end{proof}

We now sum over the
appropriate lattices $\Lambda\subset\Z^n$ having rank $r$. To this
end, we fix an element  $s=\diag(t_1^{-1},t_2^{-1},\dots,t_n^{-1})\in
T'$. We will apply the previous results with $\gamma=s^{-1}$.
Set $L=(L_1,\ldots,L_r)$ with
$0<L_1\leq L_2\leq\cdots\leq L_r$. Let $\Sigma(L,s)$ denote the set of
primitive lattices $\Lambda\subset \Z^n$ of rank $r$ whose successive minima $\mu_1,\ldots,\mu_r$ of $s^{-1}\Lambda$ satisfy
$L_i\leq \mu_i<2L_i$ for each~$i$. 

\begin{lemma}\label{lem:litj}
Let $L=(L_1,\ldots,L_r)$ and $s=\diag(t_1^{-1},\ldots,t_n^{-1})\in
T'$. Then there is a constant $c'>0$ depending only on $n$ such that if $\#\Sigma(L,s) > 0$, then  $L_it_j^{-1} > c'$ for all
$(i,j)$ with $i+j\geq n+1$.
\end{lemma}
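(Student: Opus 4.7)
The plan is to argue by contradiction, exploiting two ingredients: the integrality of the coordinates of vectors in $\Lambda \subset \Z^n$, and the decreasing order $t_1 \geq t_2 \geq \cdots \geq t_n$ that holds on $T'$. Fix any $\Lambda \in \Sigma(L,s)$ and pick vectors $v_1,\ldots,v_i \in \Lambda$ realizing the first $i$ successive minima of $s^{-1}\Lambda$, so that $s^{-1}v_1,\ldots,s^{-1}v_i$ are linearly independent in $\R^n$ and $|s^{-1}v_k| = \mu_k \leq \mu_i$ for each $k \leq i$. Suppose for contradiction that $L_i \leq t_j/4$; then $\mu_i < 2L_i \leq t_j/2$, and writing $s^{-1}v_k = (t_1 v_{k,1},\ldots,t_n v_{k,n})$ gives the componentwise bound
\begin{equation*}
|v_{k,\ell}| \;\leq\; \frac{|s^{-1}v_k|}{t_\ell} \;<\; \frac{t_j}{2t_\ell}
\end{equation*}
for every $\ell$ and every $k \leq i$.

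The key observation is that for each $\ell \leq j$ we have $t_\ell \geq t_j$, hence $|v_{k,\ell}| < 1/2$; since $v_{k,\ell} \in \Z$ this forces $v_{k,\ell} = 0$. Consequently every $v_k$ is supported on the $n-j$ coordinates $\{j+1,\ldots,n\}$, and linear independence of $v_1,\ldots,v_i$ then requires $i \leq n-j$, contradicting the hypothesis $i + j \geq n+1$. Therefore $L_i > t_j/4$, and the lemma holds with $c' = 1/4$.

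The argument is essentially forced by the setup, so there is no real obstacle; the one subtlety to verify is that the successive minima of $s^{-1}\Lambda$ are actually attained by lattice vectors, so that integrality of the coordinates $v_{k,\ell}$ can be invoked. This holds because $s^{-1}\Lambda$ is a discrete subgroup of $\R^n$, so the infimum defining each $\mu_k$ is achieved.
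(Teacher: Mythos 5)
Your proof is correct and follows essentially the same argument as the paper: assume $L_i t_j^{-1}$ is small, use integrality and the ordering of the $t_\ell$'s to force the short minima vectors to vanish in their first $j$ coordinates, and derive a contradiction with linear independence. One small caveat: on $T'$ one only has $t_\ell \gg t_j$ for $\ell \leq j$ (with an implied constant depending on $n$, since the Siegel condition is $s_i \geq c$ with possibly $c<1$), not $t_\ell \geq t_j$ exactly, so your constant $c'=1/4$ should be replaced by a constant depending on $n$ — which is all the lemma asks for.
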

\begin{proof} 
Since $\#\Sigma(L,s) > 0$, there exists an integral lattice
$\Lambda\subset\Z^n$ of rank $r$ with basis
$\{\ell_1,\ldots,\ell_r\}$ such that $|s^{-1}\ell_i|<2L_i$
for $i\in\{1,\ldots,r\}$. For $1\leq j\leq n$, let $u_{ij}$ denote the
(integral) $j$-th entry of $\ell_i$. Then $|u_{ij}|\leq
2L_it_j^{-1}$ for every $1\leq i\leq r$ and $1\leq j\leq n$. The assumption that $s\in T'$ implies that $L_it_j^{-1}\ll L_{i'}t_{j'}^{-1}$ whenever $i\leq i'$ and $j\leq j'$. 

Suppose
that there is an  integer $k$ with $1\leq k\leq r$ such that
$L_kt_{n+1-k}^{-1}<c''$ for some sufficiently small constant
$c''>0$. Then $|u_{ij}|<1$, and thus $u_{ij}=0$ for all 
$(i,j)$ with $1\leq i\leq k$ and $1\leq j\leq n+1-k$. However, this
implies that the vectors $\ell_1,\ldots,\ell_k$ are not
linearly independent, a contradiction. Hence such a $k$ does not
exist and $L_kt_{n+1-k}^{-1}\gg 1$ for all~$k$, implying the
result.
\end{proof}

We now determine an upper bound for $\#\Sigma(L,s)$.
\begin{proposition}\label{prop:numL}
Let $L=(L_1,\ldots,L_r)$ and $s=\diag(t_1^{-1},\ldots,t_n^{-1})\in T'$.
Then 
\begin{equation}\label{eq:lsbound2}
  \#\Sigma(L,s) = O\Bigl((L_1\cdots L_r)^n
  \Bigl(\prod_{1\leq i < j\leq r}\frac{L_i}{L_j}\Bigr) C(r,s)\Bigr)
\end{equation}
where $C(r,s)$ is defined as in \eqref{eq:Crs}.
\end{proposition}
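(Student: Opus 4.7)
The plan is a two-sided counting argument. For each $\Lambda \in \Sigma(L, s)$ I will exhibit many distinct $\Z$-bases $(\ell_1, \ldots, \ell_r)$ of $\Lambda$ satisfying the uniform bound $|s^{-1}\ell_i| \ll L_i$, and then compare the total count of such tuples (summed over all $\Lambda$) against the straightforward upper bound obtained by counting lattice points of bounded $s^{-1}$-length in $\Z^n$.

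For the lower bound on tuples per lattice, I would fix a reduced basis $\{\ell_1^*, \ldots, \ell_r^*\}$ of $\Lambda$ with $|s^{-1}\ell_i^*| \asymp L_i$, and independently for each $i \geq 2$ replace $\ell_i^*$ by $\ell_i^* + \lambda_i$, where $\lambda_i$ is any integer combination of $\ell_1^*, \ldots, \ell_{i-1}^*$ with $|s^{-1}\lambda_i| \leq L_i$. Such replacements preserve the property of being a $\Z$-basis of $\Lambda$. Moreover, the rank-$(i-1)$ sublattice $\Lambda_{i-1} := \Z\ell_1^* + \cdots + \Z\ell_{i-1}^*$ inherits an almost reduced basis from the first $i-1$ of these vectors, so $s^{-1}\Lambda_{i-1}$ has successive minima $\asymp L_1, \ldots, L_{i-1}$ and covolume $\asymp L_1 \cdots L_{i-1}$; since $L_i \geq L_{i-1}$, Minkowski's theorem yields $\gg L_i^{i-1}/(L_1 \cdots L_{i-1})$ choices for $\lambda_i$. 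Taking the product over $i = 1, \ldots, r$ produces at least
\[
\prod_{i=1}^{r} \frac{L_i^{i-1}}{L_1 \cdots L_{i-1}} \;=\; \prod_{1 \leq i < j \leq r} \frac{L_j}{L_i}
\]
distinct $\Z$-bases of $\Lambda$ of the required shape.

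For the upper bound on the total number of tuples, I would use that the ellipsoid $\{|s^{-1}\ell| \leq C L_i\}$ sits inside the coordinate box $\{|u_j| \leq C L_i/t_j\}$, so Proposition \ref{prop-davenport} (or a direct count) gives
\[
N_i \;:=\; \#\{\ell \in \Z^n : |s^{-1}\ell| \ll L_i\} \;\ll\; \prod_{j=1}^n \max(1, L_i/t_j).
\]
Since tuples arising from distinct lattices are themselves distinct (each tuple uniquely determines its $\Z$-span), combining with the lower bound above yields
\[
\#\Sigma(L, s) \;\ll\; \Bigl(\prod_{i=1}^r N_i\Bigr) \cdot \prod_{1 \leq i < j \leq r} \frac{L_i}{L_j}.
\]

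The remaining step---and the place where Lemma~\ref{lem:litj} is essential---is the identification $\prod_i N_i \ll (\prod_i L_i^n) \cdot C(r, s)$. Using $\prod_{j=1}^n t_j = 1$, I would split
\[
\prod_{i=1}^r \prod_{j=1}^n \max(1, L_i/t_j) \;=\; \Bigl(\prod_{i=1}^r L_i^n\Bigr) \prod_{\substack{1 \leq i \leq r,\, 1 \leq j \leq n \\ L_i < t_j}} \frac{t_j}{L_i}.
\]
Lemma~\ref{lem:litj} asserts $L_i \gg t_{n-i+1}$, so the inequality $L_i < t_j$ can occur only when $j \leq n-i$, and for such $j$ we get $t_j/L_i \ll t_j/t_{n-i+1}$. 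The exceptional product is therefore bounded by $\prod_{i=1}^r \prod_{j=1}^{n-i} t_j/t_{n-i+1} = C(r, s)$, which finishes the proof.
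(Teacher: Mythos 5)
Your argument is correct and follows essentially the same route as the paper: the compensating factor $\prod_{1\leq i<j\leq r} L_j/L_i$ arises in both proofs from shifting $\ell_i$ by integer combinations of $\ell_1,\ldots,\ell_{i-1}$, the count of individual vectors is $\prod_j\max(1,L_i/t_j)$ in both, and Lemma~\ref{lem:litj} is invoked identically to absorb the excess weights into $C(r,s)$. The only difference is bookkeeping: the paper restricts directly to reduced tuples (one per lattice, via the "at most two of $\ell_j - k\ell_i$ can be reduced" step), while you count all size-bounded tuples and divide out the $\gg\prod_{i<j}L_j/L_i$ multiplicity with which each lattice is hit; your version is a clean two-sided restatement of the same idea. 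One small imprecision worth fixing: the claim that $L_i < t_j$ can occur only for $j\leq n-i$ is not literal, since Lemma~\ref{lem:litj} only gives $L_i > c' t_j$ for $j\geq n+1-i$ and $c'$ may be less than $1$; however, for such $(i,j)$ each offending factor satisfies $t_j/L_i < 1/c' = O(1)$ and there are at most $rn$ of them, so they only change the implied constant and the bound stands.
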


\begin{proof}
We count lattices $\Lambda$ by counting $r$-tuples of vectors
$(\ell_1,\ldots,\ell_r)$ such that each $\ell_i\in s^{-1}\Z^n$ satisfies
$L_i\leq |\ell_i| < 2L_i$ and such that 
$\{\ell_1,\ldots,\ell_r\}$ is a reduced basis of the lattice
it generates. For each $i = 1,\ldots,r$, let $\alpha(i)$ be the
largest integer such that $L_it_{\alpha(i)}^{-1}\leq c'$, where $c'$
is as in Lemma \ref{lem:litj}, or let $\alpha(i)=0$ if no such integer exists. By Proposition \ref{prop:sch}, the number of possibilities for $\ell_i$ is
\begin{equation*}
\ll\; \prod_{j=1}^n \max\bigl(L_it_j^{-1},1\bigr)\;\ll\; 
L_i^n\prod_{j=1}^{\alpha(i)}\bigl(L_i^{-1}t_j\bigr).
\end{equation*}
However, once $\ell_1$ is fixed, and given a vector $\ell_2$, at
most two of $\ell_2-k\ell_1$ can be part of a reduced basis for
$k\in\Z$. Since $\gg L_2/L_1$ vectors $\ell_2-k\ell_1$ satisfy the
same size bound as $\ell_2$ (namely, those with $k\ll L_2/L_1$), the number of choices
for the pair $(\ell_1,\ell_2)$ that are part of a reduced
basis is
\begin{equation*}
\frac{L_1}{L_2}L_1^nL_2^n
\prod_{j=1}^{\alpha(1)}\bigl(L_1^{-1}t_j\bigr)
\prod_{j=1}^{\alpha(2)}\bigl(L_2^{-1}t_j\bigr).
\end{equation*}
Continuing in this way, we obtain the bound 
\begin{equation}\label{eq:lbound}
\#\Sigma(L,s)\ll (L_1L_2\cdots L_r)^n
\Bigl(\prod_{1\leq i < j\leq r}\frac{L_i}{L_j}\Bigr)
\Bigl(\prod_{i=1}^r\prod_{j=1}^{\alpha(i)}L_i^{-1}t_j\Bigr).
\end{equation}
By Lemma \ref{lem:litj}, we have $\alpha(i)\leq n-i$ for
$i\in\{1,\ldots,r\}$. Therefore, 
\begin{equation}\label{eq:lsbound}
\prod_{i=1}^r\prod_{j=1}^{\alpha(i)}L_i^{-1}t_j
\ll\displaystyle 
\prod_{i=1}^r\prod_{j=1}^{\alpha(i)}L_i^{-1}t_j L_it_{\alpha(i)+1}^{-1}
\ll\displaystyle 
\prod_{i=1}^r\prod_{j=1}^{\alpha(i)}\frac{t_j}{t_{n-i+1}}
\ll\displaystyle 
\prod_{i=1}^r\prod_{j=1}^{n-i}\frac{t_j}{t_{n-i+1}}
=\displaystyle 
C(r,s).
\end{equation}
Equations \eqref{eq:lbound} and \eqref{eq:lsbound} yield the desired result.
\end{proof}

We are now ready to prove the main result of this section.

\bigskip

\noindent {\bf Proof of Theorem \ref{thm:mainB}:} Let $L=(L_1,\ldots,L_r)$ be a tuple such that $0<L_1\leq L_2\leq \cdots\leq L_r$. Then, by Lemma \ref{lem:litj}, Proposition \ref{prop:lilj} and the definition of $T'$, we see that for there to exist
a lattice $\Lambda\in\Sigma(L,s)$ such that
$\#(Y\D\cap s^{-1}(S(\Lambda))>0$, 
we must have
$$Y^{-\Theta_1} \ll L_1\leq\cdots\leq L_r\ll Y^{\Theta_2}\qquad\mbox{and}\qquad L_1\cdots L_r\ll Y^{r/2}$$
for some absolute constants $\Theta_1,\Theta_2>0$.
For any such $\Lambda$, Proposition \ref{prop:Brefined} states that
\begin{equation*}
\#(Y\D\cap s^{-1}S(\Lambda))\ll
\frac{Y^{r(r+1)/2}}{(L_1\ldots L_r)^{r+1}}
\prod_{\substack{1\leq i<j\leq r\\i+j\leq r+1}}\frac{L_j}{L_i}.
\end{equation*}
Thus
\begin{equation*}
N_r(Y,s)\ll \sum_L \#\Sigma(L,s)
\frac{Y^{r(r+1)/2}}{(L_1\ldots L_r)^{r+1}}
\prod_{\substack{1\leq i<j\leq r\\i+j\leq r+1}}\frac{L_j}{L_i},
\end{equation*}
where the sum is over $r$-tuples $L=(L_1,\ldots,L_r)$ with $L_1\leq
L_2\leq \cdots\leq L_n$ that partition the region
$\{(\mu_1,\ldots,\mu_r)\in
[Y^{-\Theta},Y^{\Theta'}]^r:\mu_1\leq\ldots\leq\mu_r\}$ into dyadic
ranges.  The sum over $L$ has length $O(\log^r
Y)$. Using the upper bound on $\#\Sigma(L,s)$ in Proposition
\ref{prop:numL}, we obtain 
\begin{equation*}
\begin{array}{rcl}
N_r(Y,s)
&\ll&\displaystyle
\sum_L C(r,s)(L_1\ldots L_r)^{n}\frac{Y^{r(r+1)/2}}{(L_1\ldots L_r)^{r+1}}
\bigg(\prod_{\substack{1\leq i<j\leq r\\i+j\leq r+1}}\frac{L_j}{L_i}\bigg)
\bigg(\prod_{1\leq i < j\leq r}\frac{L_i}{L_j}\bigg)
\\[.025in]&\ll&\displaystyle
\sum_LC(r,s)Y^{(n-r-1)r/2}Y^{r(r+1)/2}
\\[.2in]&\ll&\displaystyle
C(r,s)Y^{nr/2}\log^r Y.
\end{array}
\end{equation*}
This concludes the proof of Theorem \ref{thm:mainB}. $\Box$

\section{A uniformity estimate for even degree polynomials}\label{sec:moniceven}

We fix an even integer $n=2g+2$ with $g\geq
1$. Our goal is to prove Theorem~\ref{thm:mainestimate}(c) by
obtaining a bound on the number of integral binary $n$-ic forms having
bounded height having  discriminant weakly divisible by the square
of a large squarefree integer.

Throughout this section, we write $V:=V_n$ and $W:=W_{n+1}$. Let $m>0$ be an odd squarefree integer, and let $\W_m^\w:=\W_{m,n}^\w$. We also define the following
auxiliary sets:
\begin{eqnarray}
V(\Z)^\red&:=&
\{f\in V(\Z):\Gal(f(x,1))\neq S_n\}, \\
V(\Z)^{\Delta\text{\,small}}&:=&
\{f\in V(\Z):\Delta(f)\leq H(f)^{2n-2-\kappa}\},\\
\W_{m}^{(1\#)}&:=&
\{f\in \W_{m}^{(2)}:m\mid f(0,1)\},\\
\W_{m}^\gen&:=&
\{f\in \W_{m}^{(2)}:\gcd(m,f(0,1))=1\text{ and } f\notin V(\Z)^\red\cup  V(\Z)^{\Delta\text{\,small}}\},
\end{eqnarray}
where $\kappa>0$ is a small constant (whose exact value will be optimized later)
and $\Gal$ denotes the Galois group. Then, for any $M>0$, we have the following containment:
\begin{equation}\label{eq:evencontainment}
  \bigcup_{\substack{m>M\\{\rm squarefree}}}
  \W_{m}^{(2)}\;\subset\; V(\Z)^\red\cup V(\Z)^{\Delta\text{\,small}}
  \cup\bigcup_{\substack{m>\sqrt{M}\\{\rm squarefree}}}
  \W_{m}^{(1\#)}\cup\bigcup_{\substack{m>\sqrt{M}\\{\rm squarefree}}}
  \W_{m}^{\gen}.
\end{equation}
The number of
elements in $V(\Z)^\red$ having height less than $X$ was bounded by
$O(X^n$) 
in~\cite{vdw}.
We next prove a bound on
the number of elements in $V(\Z)^{\Delta\text{\,small}}$ of bounded height.

\begin{lemma}\label{lem:discsmall}
The number of integral binary
$n$-ic forms with height less than $X$ and absolute discriminant less than
$X^{2n-2-\kappa}$ is $O(X^{n+1 - \frac{\kappa}{2n-2}})$.
\end{lemma}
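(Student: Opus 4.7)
The plan is to prove the lemma by fibering over the coefficients $a_0,a_1,\ldots,a_{n-1}$ of $f(x,y)=a_0x^n+a_1x^{n-1}y+\cdots+a_ny^n$, and on each fiber bounding the number of integer $a_n$ for which the discriminant is small. The key algebraic input is the classical fact that $\Delta(f)$, viewed as a polynomial $P(a_n)$ in $a_n$ with the other $a_i$'s fixed, has degree exactly $n-1$ with leading coefficient $\pm n^n a_0^{n-1}$. (For $n=2$, $\Delta=b^2-4a_0 a_n$; for $n=3$, $\Delta=-27a_0^2 a_n^2+(18a_0a_1a_2-4a_1^3)a_n+\cdots$. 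In general this follows from $a_0\Delta(f)=\pm\mathrm{Res}(f(x,1),\partial_x f(x,1))$, since $\partial_x f(x,1)$ does not involve $a_n$.)

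Forms with $a_0=0$ contribute only $O(X^n)$ to the count, which is negligible against the target bound. For $a_0\neq 0$, factor $P(t)=\pm n^n a_0^{n-1}\prod_{i=1}^{n-1}(t-\alpha_i)$ over $\mathbb{C}$. The inequality $|P(t)|\leq D$ then forces $|t-\alpha_i|\leq (D/|a_0|^{n-1})^{1/(n-1)}=D^{1/(n-1)}/|a_0|$ for some $i$, so the set $\{t\in\R:|P(t)|\leq D\}$ is contained in at most $n-1$ real intervals of total length $O(D^{1/(n-1)}/|a_0|)$ and contains at most $O(D^{1/(n-1)}/|a_0|+1)$ integers.

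Taking $D=X^{2n-2-\kappa}$, so that $D^{1/(n-1)}=X^{2-\kappa/(n-1)}$, and summing over $0<|a_0|\leq X$ together with $(a_1,\ldots,a_{n-1})\in[-X,X]^{n-1}\cap\Z^{n-1}$ gives
\begin{equation*}
\sum_{0<|a_0|\leq X}X^{n-1}\!\left(\frac{X^{2-\kappa/(n-1)}}{|a_0|}+O(1)\right)\;\ll\;X^{n+1-\kappa/(n-1)}\log X + X^n.
\end{equation*}
Since $X^{-\kappa/(2(n-1))}\log X\to 0$ as $X\to\infty$, this is $O(X^{n+1-\kappa/(2n-2)})$, yielding the claimed bound. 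I do not anticipate any real obstacle: the only nontrivial input is the leading-coefficient formula for $\Delta$ as a polynomial in $a_n$, which is classical, and there is slack in the final estimate (our exponent $\kappa/(n-1)$ is already twice the one stated), so no delicate optimization is needed.
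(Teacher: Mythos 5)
Your proof is correct and takes essentially the same approach as the paper: fiber over $a_0,\dots,a_{n-1}$, use the fact that $\Delta$, as a polynomial in $a_n$, has degree $n-1$ with leading coefficient a nonzero constant times $a_0^{n-1}$, and conclude that the number of admissible integers $a_n$ on each fiber is small. The only difference is cosmetic bookkeeping: the paper splits into the ranges $|a_0|\leq X^{1-\kappa/(2n-2)}$ and $|a_0|>X^{1-\kappa/(2n-2)}$ to avoid summing, while you retain the $1/|a_0|$-dependence and sum over $a_0$, which gives a harmless $\log X$ but a slightly sharper exponent.
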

\begin{proof}
Set $\eta := \kappa/(2n-2)$. The number of integral binary $n$-ic
forms $a_0x^n + \cdots + a_ny^n$ with height less than $X$ such that
$|a_0|\leq X^{1-\eta}$ is $O(X^{n+1-\eta})$. Hence we assume  $|a_0|>X^{1-\eta}.$

Now fix integers $a_0,\ldots,a_{n-1}$ with $|a_i|\leq X$ and
$|a_0|>X^{1-\eta}$. The discriminant of $a_0x^n + \cdots + a_ny^n$ is
a polynomial $F(a_n)$ in $a_n$ of degree $n-1$ with leading
coefficient $C_na_0^{n-1}$ for some nonzero constant $C_n$. Let
$r_1,\ldots,r_{n-1}\in\C$ be the $n-1$ roots of $F(x)$. Then $$F(a_n) = C_na_0^{n-1}(a_n-r_1)\cdots(a_n-r_{n-1}).$$ Since
$|F(a_n)| < X^{2n-2-\kappa}$, we have $(a_n-r_1)\cdots(a_n-r_{n-1})\ll
X^{n-1-(n-1)\eta}$. Hence $|a_n - r_i| \ll X^{1-\eta}$ for some
$i=1,\ldots,n-1$. The number of such integers $a_n$ is
$O(X^{1-\eta})$. Since there are $O(X^n)$ choices for
$a_0,\ldots,a_{n-1}$, we obtain the desired bound.
\end{proof}

\noindent 
A direct application of a quantitative version of the Ekedahl sieve as in 
\cite[Theorem~3.3]{geosieve} implies the following bound on the number
of elements of bounded height belonging to \smash{$\W^{(1\#)}_{m}$} for
large $m$.
\begin{lemma}\label{lem:geoeven}
We have \, 
$\displaystyle
  \#\!\!\bigcup_{\substack{m>\sqrt{M}\\ m\;\rm{squarefree}}}
  \!\!\{f\in \W_{m}^{(1\#)}:H(f)<X\} = O\Big(\frac{X^{n+1}}{\sqrt{M}} + X^n\Big).
$
\end{lemma}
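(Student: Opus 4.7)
My plan is to apply the quantitative Ekedahl geometric sieve from \cite[Theorem 3.3]{geosieve} to the codimension-$2$ subscheme $Y\subset\A^{n+1}_\Z$ cut out by the two polynomials $a_n$ and $\Delta$ in the coordinates $(a_0,\ldots,a_n)$ on $V$.

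The first step is the purely set-theoretic observation that if $f\in \W^{(1\#)}_m$ for a squarefree $m>\sqrt M$, then $m\mid a_n=f(0,1)$ by definition of $\W^{(1\#)}_m$, while for each prime $p\mid m$ the divisibility $p^2\mid\Delta(f)$ trivially gives $p\mid\Delta(f)$; since $m$ is squarefree these prime divisibilities assemble by CRT to $m\mid\Delta(f)$. Hence the coefficient vector of $f$ reduces modulo $m$ to a point of $Y(\Z/m\Z)$, and the union on the left-hand side of the lemma is contained in the set of $f\in V(\Z)$ with $H(f)<X$ whose reduction modulo some squarefree $m>\sqrt M$ lies in $Y$.

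The second step is to verify that $Y$ has codimension $2$ in $\A^{n+1}_\Q$, and hence in $\A^{n+1}_{\F_p}$ for all but finitely many $p$. Since $\{\Delta=0\}$ is an irreducible hypersurface, it is enough to check that $a_n\nmid\Delta$. When $a_n=0$ the form factors as $f=x\cdot g(x,y)$ with $g=a_0x^{n-1}+a_1x^{n-2}y+\cdots+a_{n-1}y^{n-1}$, and the standard factorization formula for discriminants yields $\Delta(f)=\pm\,\mathrm{Res}(x,g)^{2}\,\Delta(g)=\pm\,a_{n-1}^{2}\,\Delta(g)$, a nonzero polynomial in $a_0,\ldots,a_{n-1}$. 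Invoking \cite[Theorem 3.3]{geosieve} with threshold $\sqrt M$ applied to this codimension-$2$ subscheme then yields the desired bound $O(X^{n+1}/\sqrt M+X^n)$. There is no substantive obstacle; the entire argument is essentially a one-line appeal to the geometric sieve, modulo the easy codimension verification above.
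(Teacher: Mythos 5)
Your proof is correct and follows essentially the same route as the paper, which simply says the lemma is ``a direct application of a quantitative version of the Ekedahl sieve as in \cite[Theorem~3.3]{geosieve}.'' Your supporting computation $\Delta(f)|_{a_n=0}=\pm a_{n-1}^2\Delta(g)$, showing $a_n\nmid\Delta$ and hence that $\{a_n=\Delta=0\}$ has codimension $2$, is exactly the codimension check implicit in that appeal to the geometric sieve.
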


To prove Theorem \ref{thm:mainestimate}(c), it thus remains
to obtain an upper bound for
\begin{equation}\label{eq:WgenM}
\#\bigcup_{\substack{m>\sqrt{M}\\ m\;\rm{squarefree}}} \{f\in
\W_{m}^{\gen}:H(f)<X\}.
\end{equation}
In \S\ref{sembedeven},
we defined a map $\sigma_m$ from the set of elements \smash{$f\in\W_m^\w$} with $\gcd(m,f(0,1))=1$ to $W(\Z)$
such that $f_{\sigma_{m}(f)} = xf$ and $ |q|(\sigma_{m}(f)) =
m.$ 
For 
any $M>0$, define the
set \smash{$\LLM$} by 
\begin{equation*}
\LLM :=\bigcup_{\substack{m>M\\m\;{\rm squarefree}}}
\SL_{n+1}(\Z)\cdot \sigma_{m}(\W_{m}^{\gen}).
\end{equation*}
Then \eqref{eq:WgenM} is $\ll$
\begin{equation}\label{eq:evenbreakup}
\begin{array}{rcl}
\displaystyle
\#\Bigl(\SL_{n+1}(\Z)\backslash \{f\in \LLM \colon H(f) < X\}\Bigr)
&\ll& \displaystyle
\cI_X(\LLM )
\end{array}
\end{equation}
where
\begin{equation*}
  \cI_X(\LLM )=\int_{s\in T'} \#\bigl(s(Y\B)
  \cap\LLM \bigr)\,\delta(s)\,d^\times s
\end{equation*}
is as defined
immediately after \eqref{eq:avg}, where $Y$ is now taken to be $X^{1/(n+1)}$ throughout this section. Moreover, exactly as in the paragraph leading up to (\ref{eq:intpartition}), we break up \smash{$\cI_X(\LLM)$} into three parts---corresponding to the main body, the shallow cusp, and the deep cusp---and again write  $$\cI_X(\LLM)=\cI^\main_X(\LLM)+\cI^\scusp_X(\LLM)+\cI^\dcusp_X(\LLM).$$

The rest of this section is dedicated to obtaining an upper bound on
\smash{$\cI_X(\LLM )$}. Every element $(A,B)\in\LLM $ satisfies $\det(B)=0$ since
$f_{A,B}$ is divisible by $x$. In \S4, we used vanishing conditions
on the coefficients $\{a_{ij},b_{ij}\}$ of $W$ to estimate the
number of integral pairs $(A,B)$ in skewed domains of
$W(\R)$. Now, since we also need to impose the
condition that $B$ has determinant~$0$, we use the setup of \S\ref{secSEK} to
count the number of such $B$'s in skewed bounded domains by fibering
over the row space of $B$.

In \S\ref{sec:setupsplit}, we thus  further break up the three parts of $\cI_X(\LLM)$ into sums over row spaces of the singular matrix $B$. We also obtain some
preliminary bounds on $\cI_X(\LLM)$, and give some conditions that ensure that a pair $(A,B)$ has discriminant $0$.  In \S\ref{sec:evenmainbody},
\S\ref{sec:evencusp}, and \S\ref{sec:evendistcup}, we then prove the desired upper
bounds on 
\,$\cI^\main_X(\LLM)$, \,$\cI^\scusp_X(\LLM)$, \,and $\cI^\dcusp_X(\LLM)$, 
respectively. In conjunction with
\eqref{eq:evencontainment}, \eqref{eq:evenbreakup}, and Lemmas
\ref{lem:discsmall}--\ref{lem:geoeven} this will yield
Theorem \ref{thm:mainestimate}(c).

\subsection{Setup and preliminary bounds}\label{sec:setupsplit}

\subsubsection*{Coordinate systems, weight functions, and summing over row spaces}

Let $S(\Z)$ denote the set of $(n+1)\times(n+1)$ integral symmetric matrices. For
any primitive lattice $\Lambda$ of $\Z^{n+1}$, let $S(\Lambda)$ denote
the sublattice of $S(\Z)$ consisting of elements $B\in S(\Z)$ with row
space 
contained in $\Lambda$. For
$L=(L_1,\ldots,L_n)$ with $L_i\in\R$ and $L_1\leq L_2\leq \cdots\leq
L_n$ and $s\in T'$, let $\Sigma(L,s)$ denote the set of primitive
lattices $\Lambda\subset\Z^{n+1}$ of rank $n$ such that the successive
minima $\mu_1,\ldots,\mu_n$ of $s^{-1}\Lambda$ satisfy
$L_1\leq\mu_i\leq 2L_i$ for each $i$. We define $\cS(L,s)\subset
S(\Z)$ by
\begin{equation*}
\cS(L,s):=\bigcup_{\Lambda\in\Sigma(L,s)}S(\Lambda).
\end{equation*}

We next introduce coordinate systems and weight functions. Let
\begin{equation*}
\cM:=\{\ell_{ij}:1\leq i\leq n,\;1\leq j\leq n+1\}
\end{equation*}
denote the set of coordinates of $n$-tuples of vectors in $\R^{n+1}$.
We define
\begin{equation*}
w_L(\ell_{ij}):=L_it_j^{-1}.
\end{equation*}
The significance of $w_L$ is the following. Let
$\Lambda\in\Sigma(L,s)$ be a lattice with an integral basis
$\{\ell_1,\ldots,\ell_n\}$ such that
$\{s^{-1}\ell_1,\ldots,s^{-1}\ell_n\}$ is a Minkowski-reduced basis
for $s^{-1}\Lambda$. Then the $j$th coefficient of $\ell_i$ is $\ll
L_it_j^{-1}=w_L(\ell_{ij})$. In particular, for the absolute value of
the $j$th coefficient of $\ell_i$ to be nonzero, we must have
$w_L(\ell_{ij})\gg 1$. When $L$ is implicit, we will write $w$ in
place of $w_L$.  

Let $\cK$ denote the set of coefficients
$\{a_{ij}:1\leq i\leq j\leq n+1\}$, and recall the weight function
\begin{equation*}
w(a_{ij})=t_i^{-1}t_j^{-1}.
\end{equation*}
Define a partial order on $\cK$ by setting $a_{ij}\lesssim a_{i'j'}$
if $i\leq i'$ and $j\leq j'$, and on $\cM$ by setting
$\ell_{ij}\lesssim \ell_{i'j'}$ if $i\leq i'$ and $j\leq j'$. The
significance of this partial order is that if $\alpha,\beta\in\cK$
with $\alpha\lesssim\beta$ and $s\in T'$, then $w(\alpha)\ll w(\beta)$
and similarly $w_L(\alpha)\ll w_L(\beta)$ if $\alpha,\beta\in\cM.$ 

We say that a subset
$\cZ$ of $\cK\cup\cM$ is {\it saturated} if for any $\alpha\in \cZ$,
all the $\alpha'\in\cK\cup\cM$ with $\alpha'\lesssim\alpha$ are also
contained in $\cZ$. 

 Let $\D\subset
S(\R)$ be a bounded domain such that $\B_1\subset\D\times\D$. We pick positive constants $c_{ij}$ for $1\leq i\leq j\leq n+1$ and $c_i'$ for $1\leq i\leq n$ such that:
\begin{enumerate}[(a)]
\item if $|Yw(a_{ij})|<c_{ij}$, then the $a_{ij}$--coordinate of any integral element in $s(Y\D)$ is $0$;
\item if $|w_L(\ell_{ij})|<c_j'$, then the $j$th coefficient of $\ell_i$ for any lattice $\Lambda\in\Sigma(L,s)$ is $0$;
\item $c_i'<c'$ for all $i=1,\ldots,n$, where $c'$ is the constant in Lemma \ref{lem:litj};
\item $c_1c_{g+1,g+1}\leq c_{g+1}'^{\,\,2}$, where $c_1$ is the constant in Proposition \ref{prop:lilj};
\item for any $i\leq i'$ and $j\leq j'$, we have $w(a_{ij})/c_{ij} \leq w(a_{i'j'})/c_{i'j'}$ and $w(\ell_{ij})/c_j' \leq w(\ell_{i'j'})/c_{j'}'.$
\end{enumerate} 
More explicitly, we choose $c_{n+1,n+1}$ and $c_n'$ to be sufficiently small and take
\begin{eqnarray*}
\nonumber c_{ij} &=& \Bigl(\sup_{s\in T'} \frac{w(a_{ij})}{w(a_{n+1,n+1})}\Bigr) c_{n+1,n+1}\quad\mbox{for }i\leq j\leq n+1;\\
\nonumber c_i' &=&  \Bigl(\sup_{s\in T'} \frac{t_i^{-1}}{t_n^{-1}}\Bigr) c_n'\quad\mbox{for }i\leq n.
\end{eqnarray*}

For any nondecreasing $n$-tuple $L$ of positive real numbers, and a saturated
subset $\cZ$ of $\cK\cup\cM$, we define the following subset
$T_\cZ(L,Y)$ of $T'$:
\begin{equation}\label{eqTZ}
T_\cZ(L,Y) := \left\{s\in T'\left| 
\begin{array}{l} 
s_i\ll X^\Theta\;\; \forall i\in\{1,\ldots,n\}\\[.1in]
{\rm for}\;a_{ij}\in\cK,\;|Yw(a_{ij})|<c_{ij} \;{\rm{iff}}\;a_{ij}\in \cZ\cap\cK\\[.1in]
{\rm for}\;\ell_{ij}\in\cM,\;|w_L(\ell_{ij})|<c_j' \;{\rm{iff}}\;\ell_{ij}\in \cZ\cap\cM
\end{array}
\right.\right\},
\end{equation}
where $\Theta$ is the absolute constant from Lemma \ref{lemsbound}.

For $X$, $Y$, $L$, $\cZ$ as above and any subset $\L$ of
$W(\Z)$, we define the quantity
\begin{equation}\label{eq:defNLLZX}
N(\L,L,\cZ,X) := \int_{T_\cZ(L,Y)} \#\{(A,B)\in
(s(Y\D)\times s(Y\D))\cap \L\mid B\in \cS(L,s)\}
\,\delta(s)\,d^\times s.
\end{equation}
In the proof of Theorem \ref{thm:mainB}, we showed that unless
$Y^{-\Theta_1}<L_1$ and $Y^{\Theta_2}>L_n$ for some absolute positive constants
$\Theta_1$ and $\Theta_2$, we have $\cS(L,s)=\emptyset$, which implies
that $N(\LLM ,L,\cZ,X)=0$.  Therefore, 
\begin{equation*}
\cI_X(\LLM )\ll\displaystyle
\sum_L\sum_{\cZ}N(\LLM ,L,\cZ,X),
\end{equation*}
where the inner sum is over saturated subsets $\cZ$ of $\cK\cup\cM$,
and the outer sum is over $n$-tuples $L=(L_1,\ldots,L_n)$ with
$L_1\leq L_2\leq \cdots\leq L_n$ that partition the region
$\{(\mu_1,\ldots,\mu_n)\in
[Y^{-\Theta_1},Y^{\Theta_2}]^n:\mu_1\leq\ldots\leq\mu_n\}$ into dyadic
ranges.  

We may therefore bound the main-body, the shallow-cusp, and the
deep-cusp parts of $\cI_X(\LLM )$ in terms of
sums over $N(\LLM ,L,\cZ,X)$. We have 
\begin{equation}\label{eq:evenMCDC}
\begin{array}{rcl}
\cI_X^\main(\LLM )&\ll&\displaystyle
\sum_L\sum_{\cZ:a_{11}\not\in\cZ}N(\LLM ,L,\cZ,X),
\\[.225in]
\cI_X^\scusp(\LLM )&\ll&\displaystyle
\sum_L\sum_{\substack{\cZ:a_{11}\in\cZ\\a_{g+1,g+1}\not\in\cZ}}N(\LLM ,L,\cZ,X),
\\[.325in]
\cI_X^\dcusp(\LLM )&\ll&\displaystyle
\sum_L\sum_{\cZ:a_{g+1,g+1}\in\cZ}N(\LLM ,L,\cZ,X).
\end{array}
\end{equation}

\subsubsection*{A preliminary upper bound}

We now prove some preliminary results on $N(\LL1 ,L,\cZ,X))$.  We start with an upper bound on
$N(\LL1 ,L,\cZ,X)$, which also bounds $N(\LLM ,L,\cZ,X)$ by directly counting the number of possible $A$'s and then using the results of \S\ref{secSEK} to count
$B$'s. For a saturated subset $\cZ$ of $\cK\cup\cM$,  define
\begin{equation*}
w_L(\cZ):=\Bigl(\prod_{\alpha\in\cZ\cap\cK}w(\alpha)\Bigr)
\Bigl(\prod_{\alpha\in \cZ\cap\cM}w_L(\alpha)\Bigr).
\end{equation*}
In what follows, the $n$-tuple $L$ will be clear from the context, and
we simply write $w$ in place of~$w_L$.

\begin{proposition}\label{prop:NUX}
Suppose that $\cZ$ is a saturated subset of $\cK\cup\cM$.
Then  
\begin{equation}\label{eq:NUX}
N(\LL1 ,L,\cZ,X) \ll
X^{n+1}\,\int_{T_\cZ(L,Y)}
Y^{-\#(\cZ\cap\cK)}w(\cZ)^{-1}
\Big(\prod_{\substack{1\leq i< j\leq n\\ i+j> n+1}}\frac{L_i}{L_j}\Big)
\,\delta(s)d^\times s.
\end{equation}
\end{proposition}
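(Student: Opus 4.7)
The approach is to factor the count $\#\{(A,B)\}$ as $\#\{A\}\cdot\#\{B\}$ and bound each factor separately, then integrate against $\delta(s)\,d^\times s$.

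For the matrices $A$, I apply Davenport's lemma (Proposition \ref{prop-davenport}) to the region $s(Y\D)\cap S(\Z)$. For $s\in T_\cZ(L,Y)$, the condition defining $c_{ij}$ forces the coordinates $a_{ij}$ with $a_{ij}\in\cZ\cap\cK$ to vanish, while each remaining coordinate contributes a factor $\ll Yw(a_{ij})$. Using the identity $\prod_{\alpha\in\cK}w(\alpha)=1$---which follows because each index $k\in\{1,\ldots,n+1\}$ appears with total multiplicity $n+2$ among the pairs $i\leq j$, and $\prod_k t_k=1$---this yields
\[
\#\{A\in s(Y\D)\cap S(\Z)\}\;\ll\; Y^{|\cK|-\#(\cZ\cap\cK)}\prod_{\alpha\in\cZ\cap\cK}w(\alpha)^{-1}.
\]

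For the matrices $B\in\cS(L,s)\cap s(Y\D)$, I fiber over $\Lambda\in\Sigma(L,s)$. Proposition \ref{prop:Brefined}, applied with ambient dimension $n+1$ and rank $r=n$ (the generic rank of $B$ in $\LL1$, since $\det B=0$ but $\Delta(A,B)\neq 0$), gives
\[
\#(s(Y\D)\cap S(\Lambda))\;\ll\;\frac{Y^{n(n+1)/2}}{(L_1\cdots L_n)^{n+1}}\prod_{\substack{1\leq i<j\leq n\\ i+j\leq n+1}}\frac{L_j}{L_i}.
\]
For $\#\Sigma(L,s)$, I refine the proof of Proposition \ref{prop:numL}: for $s\in T_\cZ(L,Y)$, the choice of $c_j'$ forces the $j$th coordinate of a reduced basis vector $\ell_i$ to vanish precisely when $\ell_{ij}\in\cZ\cap\cM$, while for the remaining $j$ one has $w_L(\ell_{ij})\geq c_j'$, whence $\max(w_L(\ell_{ij}),1)\ll w_L(\ell_{ij})$. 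Davenport's lemma then gives $\#\{\ell_i\}\ll L_i^{n+1}\prod_{\ell_{ij}\in\cZ\cap\cM}w_L(\ell_{ij})^{-1}$ using $\prod_j w_L(\ell_{ij})=L_i^{n+1}$, and combining this with the reducedness factor $\prod_{1\leq i<j\leq n}L_i/L_j$ from the proof of Proposition \ref{prop:numL} yields
\[
\#\Sigma(L,s)\;\ll\;(L_1\cdots L_n)^{n+1}\Bigl(\prod_{1\leq i<j\leq n}\frac{L_i}{L_j}\Bigr)\prod_{\alpha\in\cZ\cap\cM}w_L(\alpha)^{-1}.
\]

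Multiplying all three bounds, the $(L_1\cdots L_n)^{n+1}$ factors cancel; the product $\prod_{1\leq i<j\leq n}L_i/L_j\cdot\prod_{i+j\leq n+1}L_j/L_i$ collapses to $\prod_{i+j>n+1}L_i/L_j$; the powers of $Y$ combine to $|\cK|+n(n+1)/2-\#(\cZ\cap\cK)=(n+1)^2-\#(\cZ\cap\cK)$, which equals $X^{n+1}Y^{-\#(\cZ\cap\cK)}$ since $Y=X^{1/(n+1)}$; and the two weight products consolidate into $w(\cZ)^{-1}$. Integrating against $\delta(s)\,d^\times s$ over $T_\cZ(L,Y)$ then yields the claimed bound. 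The main technical obstacle is the refinement of the lattice count: one must carefully use conditions (b) and (c) on the $c_j'$ to simultaneously enforce the vanishing of $\cZ\cap\cM$-coordinates and ensure $\max(w_L(\ell_{ij}),1)\ll w_L(\ell_{ij})$ for the remaining coordinates. Once this is in place, the rest is bookkeeping of exponents and telescoping products.
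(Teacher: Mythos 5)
Your proof is correct and follows essentially the same route as the paper: Davenport's lemma for the count of $A$'s (using $\prod_{\alpha\in\cK}w(\alpha)=1$ to get the stated form), Proposition \ref{prop:Brefined} with $\gamma=s^{-1}$, $r=n$, ambient dimension $n+1$ for each $\#(s(Y\D)\cap S(\Lambda))$, and the argument behind equation \eqref{eq:lbound} for $\#\Sigma(L,s)$, followed by the same telescoping of $L$-ratios and $Y$-exponent bookkeeping. The exponent identity $(n+1)(n+2)/2+n(n+1)/2=(n+1)^2$ and the cancellation $\prod_{i<j}\frac{L_i}{L_j}\cdot\prod_{i+j\le n+1}\frac{L_j}{L_i}=\prod_{i+j>n+1}\frac{L_i}{L_j}$ are both correct.
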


\begin{proof} 
By Proposition \ref{prop-davenport}, the number of elements $A\in
s(Y\D)\cap S(\Z)$ is
\begin{equation}\label{eq:Abound}
\ll Y^{(n+1)(n+2)/2}\prod_{a_{ij}\in \cZ\cap\cK}(Yw(a_{ij}))^{-1}
\ll Y^{(n+1)(n+2)/2-\#(\cZ\cap\cK)} w(\cZ\cap\cK)^{-1}.
\end{equation}
By the definition of $T_\cZ(L,Y)$, it follows from \eqref{eq:lbound}
that for every $s\in T_\cZ(L,Y)$, we have 
\begin{equation}\label{eq:Lambda}
\#\Sigma(L,s)
\ll
(L_1\cdots L_n)^{n+1} w(\cZ\cap\cM)^{-1}
\prod_{1\leq i<j\leq n}\frac{L_i}{L_j}.
\end{equation}
For each $\Lambda\in\Sigma(L,s)$, Proposition \ref{prop:Brefined}
implies that the number of integral symmetric matrices $B\in s(Y\D)$ 
whose row space is contained in $\Lambda$ is
\begin{equation}\label{eq:refine}
   \ll \frac{Y^{n(n+1)/2}}{(L_1\cdots L_n)^{n+1}}
    \prod_{\substack{1\leq i< j\leq n\\i+j\leq n+1}}
    \frac{L_j}{L_i}.
\end{equation}
Combining \eqref{eq:Abound}, \eqref{eq:Lambda}, and \eqref{eq:refine},
and recalling that $X = Y^{n+1}$,  gives \eqref{eq:NUX}.
\end{proof}

\vspace{-.1in}

\subsubsection*{Conditions for vanishing discriminant}

Next, we give some conditions on $\cZ$ that ensure
$N(\LL1 ,L,\cZ,X) = 0$. We start with the following
algebraic result that gives sufficient conditions on a pair $(A,B)\in
W(\C)$ that ensure it has discriminant $0$.
\begin{lemma}\label{lem:disc} 
Suppose that $(A,B)$ is an element of $W(\C)$ such that one of
the following three conditions are satisfied:
\begin{itemize}
\item[{\rm (a)}] The kernel of $B$ has dimension at least $2$.
\item[{\rm (b)}] There is a nonzero vector $v\in \C^{n+1}$ that is in
  the kernel of $B$ and isotropic with respect to $A$.
\item[{\rm (c)}] There exists $k\in\{ 1,\ldots,g+1\}$ such that
  $a_{ij} = b_{ij} = 0$ for all $1\leq i\leq k$ and all $1\leq j\leq
  n+1-k$.
\end{itemize}
Then $\Delta(A,B) = 0$.
\end{lemma}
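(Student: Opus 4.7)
The plan is to show that in each of the three cases the polynomial $f_{A,B}(x,y) = (-1)^{n(n-1)/2}\det(Ax-By)$ acquires a repeated linear factor over $\C$ (or vanishes identically), which in turn forces $\Delta(A,B)=0$ since $\Delta$ detects precisely that a binary form has distinct linear factors over $\overline{\C}$. Write $M := Ax-By$; the strategy in each case is to perform a $\GL_{n+1}(\C)$ change of basis that exposes a square factor of $\det M$.

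Cases (a) and (b) are short. For (a), I would choose a basis in which $e_1,e_2 \in \ker B$, so that columns $1$ and $2$ of $M$ are $xAe_1$ and $xAe_2$; factoring $x$ from each gives $x^2 \mid \det M$. For (b), I would change basis so that $v = e_1$. Then $Be_1 = 0$ forces the first row and first column of $B$ to vanish, and $v^{t}Av = a_{11} = 0$. In the Leibniz expansion of $\det M$, any permutation $\sigma$ with $\sigma(1)=1$ contributes $0$ because $M_{11}=0$, while for $\sigma(1) = j \neq 1$ both $M_{1,j} = a_{1j}x$ and $M_{\sigma^{-1}(1),1} = a_{\sigma^{-1}(1),1}x$ are two distinct factors divisible by $x$. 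Hence $x^2 \mid \det M$ in this case as well.

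Case (c) is the delicate one. Using both the hypothesis and the symmetry of $A,B$, the matrix $M$ takes the block shape
\[
M \;=\; \begin{pmatrix} 0 & 0 & M_{13} \\ 0 & M_{22} & M_{23} \\ M_{13}^{t} & M_{23}^{t} & M_{33} \end{pmatrix},
\]
with block row and column sizes $(k,\;n+1-2k,\;k)$; the assumption $k \leq g+1$ ensures $n+1-2k \geq 0$. Swapping the first $k$ columns with the last $k$ columns (a column permutation whose sign is determined but irrelevant) turns $M$ into a block lower-triangular matrix with diagonal blocks $M_{13}$, $M_{22}$, $M_{13}^{t}$, so $\det M = \pm(\det M_{13})^{2}\det M_{22}$. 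Since the entries of $M_{13}$ are linear in $x,y$, the polynomial $\det M_{13}$ is either identically zero or homogeneous of degree $k \geq 1$, in which case it splits into linear factors over $\C$; either way, $\det M$ acquires a repeated factor.

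The hard part will be the verification in (c) that the block structure really forces the factorization $\det M = \pm(\det M_{13})^{2}\det M_{22}$: one must check that every nonzero term in the Leibniz sum comes from a permutation carrying $\{1,\ldots,k\}$ bijectively onto $\{n+1-k+1,\ldots,n+1\}$. This is exactly where both the given vanishing pattern and its mirror image (forced by $A=A^{t}$, $B=B^{t}$) enter essentially; once this has been checked, the column swap argument and the conclusion $\Delta(A,B)=0$ are immediate.
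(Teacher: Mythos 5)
Your proof is correct, and it is worth flagging where it diverges from the paper's own argument. For case (c) you have simply unpacked, via the column swap and the block‑lower‑triangular factorization $\det M = \pm(\det M_{13})^2 \det M_{22}$, the verification that the paper leaves as ``one checks'' in Lemma~\ref{lemD0} (to which case (c) is deferred). For case (b), your Leibniz‑expansion argument that every term of $\det(Ax-By)$ picks up two factors of $x$ is the same in substance as the paper's proof, which shows separately that the $y^{n+1}$‑ and $xy^{n}$‑coefficients of $f_{A,B}$ vanish; both establish $x^2 \mid \det(Ax-By)$ after normalizing $v = e_1$. Where you genuinely depart from the paper is case (a): you handle it directly by factoring $x$ out of the two columns $xAe_1$ and $xAe_2$, which immediately gives $x^2 \mid \det M$. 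The paper instead reduces (a) to (b) by using that any quadratic form over $\C$ on a space of dimension at least $2$ admits a nonzero isotropic vector, so that $A$ restricted to $\ker B$ already produces the $v$ required for (b). Your route for (a) is more elementary (it does not use the algebraic closedness of $\C$) and avoids the logical dependence of (a) on (b); the paper's route is perhaps more in keeping with the geometry‑of‑pencils‑of‑quadrics viewpoint it alludes to. Both are fine.
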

\begin{proof}
This is a standard result in the algebraic geometric theory of pencils
of quadrics. We give another proof using the explicit formula for
$f(x,y) = f_{A,B}(x,y).$ The claim regarding Condition (c) is
Lemma \ref{lemD0}. If the kernel of $B$ has dimension at least $2$,
then the quadratic form defined by~$A$ restricted to the kernel of $B$ admits a
nonzero isotropic vector in $\C^{n+1}$. Thus  Condition (a) implies
Condition~(b). Suppose now that Condition (b) is satisfied. Then the
$y^{n+1}$-coefficient of $f(x,y)$ is~$0$ since $B$ is singular. The
$xy^n$-coefficient of $f(x,y)$ equals, up to sign, the alternating sum
of the determinants of the matrices obtained by replacing the $i$-th
column of $B$ by the $i$-th column of $A$.  By translating the vector
$v$ to $(1,0,0,\ldots,0)$ using an element of $\SL_{n+1}(\C)$, we may assume that the first column (and row) of $B$ is~$0$
and the $(1,1)$-entry of $A$ is $0$. It is then easy to see that the
determinant of the matrix obtained by replacing the $i$-th column of
$B$ by the $i$-th column of $A$ is $0$ for any $i$. Hence $\Delta(A,B)
= \Delta(f) = 0$.
\end{proof}

We now translate these conditions into the vanishing of
$N(\LL1 ,L,\cZ,X)$ for certain sets~$\cZ$.  To this end,
define the set $\cZ_1\subset \cK\cup\cM$ by
\begin{equation*}
\cZ_1 := \{a_{ij}\mid i\leq j,\,i+j\leq n\} \cup 
\{\ell_{ij}\mid i+j\leq n+1\}. 
\end{equation*}
\vspace{-.25in}
\begin{lemma}\label{lem:NUX}
Let $\cZ$ be a saturated subset of $\cK\cup\cM$ satisfying one of 
the following two conditions:
\begin{itemize}
\item[{\rm (a)}] The set $\cZ$ is not contained in $\cZ_1$.
\item[{\rm (b)}] There exists $k\in\{1,\ldots,g+1\}$ such that
  $a_{kk}\in \cZ$ and $\ell_{n+1-k,k}\in \cZ$.
\end{itemize}
Then $N(\LL1 ,L,\cZ,X) = 0$.
\end{lemma}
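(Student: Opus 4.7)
The plan is to show that for any saturated $\cZ$ satisfying (a) or (b), every pair $(A,B)\in W(\Z)$ counted by $N(\LL1,L,\cZ,X)$ must have $\Delta(A,B)=0$. Since every element of $\LL1$ has nonzero discriminant---because for $f\in\W_m^{\gen}$ with $m>1$, we have $\Delta(xf)=\Delta(f)f(0,1)^2\ne 0$---this forces $N(\LL1,L,\cZ,X)=0$. The argument is a case-by-case application of Lemma \ref{lem:disc}, translating the combinatorial hypotheses on $\cZ$ into structural vanishing statements for the symmetric matrices $A$, $B$ and the basis vectors of $\Lambda$.

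For Case (a), I would split according to whether the witness in $\cZ\setminus\cZ_1$ lies in $\cK$ or in $\cM$. If $a_{i_0j_0}\in\cZ\setminus\cZ_1$ with $i_0\le j_0$ and $i_0+j_0\ge n+1$, set $k=n+1-j_0$; the two inequalities force $j_0\ge g+2$, hence $k\in\{1,\ldots,g+1\}$ and $k\le\min(i_0,j_0)$. Saturation of $\cZ$ within $\cK$ together with the choice of the constants $c_{ij}$ makes the $(i,j)$-entry of both $A$ and $B$ vanish for every $a_{ij}\in\cZ$; combined with the symmetry of $A$ and $B$, a short verification using $k\le i_0$ and $k\le j_0$ shows that the entire upper-left $k\times(n+1-k)$ block of both $A$ and $B$ is zero, so Lemma \ref{lem:disc}(c) yields $\Delta(A,B)=0$. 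If instead $\ell_{i_0j_0}\in\cZ\setminus\cZ_1$ with $i_0+j_0\ge n+2$, saturation within $\cM$ together with the constants $c_j'$ forces, for every $\Lambda\in\Sigma(L,s)$ and any reduced basis $\ell_1,\ldots,\ell_n$ of $s^{-1}\Lambda$, the vanishing $(\ell_i)_j=0$ for all $i\le i_0$, $j\le j_0$. Then $\ell_1,\ldots,\ell_{i_0}$ would be $i_0$ linearly independent vectors inside the $(n+1-j_0)$-dimensional subspace $\{v:v_1=\cdots=v_{j_0}=0\}$, which is impossible since $i_0\ge n+2-j_0>n+1-j_0$. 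Hence $\Sigma(L,s)=\emptyset$, so $\cS(L,s)=\emptyset$ and $N(\LL1,L,\cZ,X)=0$.

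For Case (b), saturation within $\cK$ starting from $a_{kk}$ forces the upper-left $k\times k$ block of both $A$ and $B$ to vanish, so $\mathrm{Span}_\R(e_1,\ldots,e_k)$ is totally isotropic for the quadratic form defined by $A$. Saturation within $\cM$ starting from $\ell_{n+1-k,k}$ forces the first $k$ coordinates of $\ell_1,\ldots,\ell_{n+1-k}$ to vanish in a reduced basis of $s^{-1}\Lambda$ for any $\Lambda\in\Sigma(L,s)$, so these $n+1-k$ linearly independent vectors span the $(n+1-k)$-dimensional subspace $V_k:=\{v:v_1=\cdots=v_k=0\}$. Consequently $V_k\subseteq\Lambda\otimes\R$, and the one-dimensional orthogonal complement $(\Lambda\otimes\R)^\perp$ sits inside $V_k^\perp=\mathrm{Span}_\R(e_1,\ldots,e_k)$. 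Since the row space of $B$ lies in $\Lambda\otimes\R$, any nonzero $v\in(\Lambda\otimes\R)^\perp$ belongs to $\ker B$; and since $v\in\mathrm{Span}_\R(e_1,\ldots,e_k)$, it is also isotropic for $A$. Lemma \ref{lem:disc}(b) then yields $\Delta(A,B)=0$.

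The only real difficulty is bookkeeping: translating each saturation hypothesis on the abstract index set $\cZ$ into the correct rectangular vanishing block of the symmetric matrices $A$ and $B$, or into the correct dimensional collapse of the reduced basis of $s^{-1}\Lambda$, while tracking the dimension inequalities implied by $i_0+j_0\ge n+1$, $i_0+j_0\ge n+2$, and $k\le g+1$. Once these translations are made, the conclusion in each case reduces to an immediate invocation of the appropriate part of Lemma \ref{lem:disc}.
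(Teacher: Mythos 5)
Your proof is correct and follows essentially the same route as the paper: case (a) with an $\ell$-witness reduces to the emptiness of $\Sigma(L,s)$ (the paper cites Lemma~\ref{lem:litj}, while you re-derive the relevant linear-dependence contradiction), case (a) with an $a$-witness applies Lemma~\ref{lem:disc}(c), and case (b) exhibits a vector in $\ker B$ that is isotropic for $A$ and applies Lemma~\ref{lem:disc}(b), your orthogonality argument via $V_k\subseteq\Lambda\otimes\R$ and $(\Lambda\otimes\R)^\perp\subseteq\mathrm{Span}(e_1,\ldots,e_k)$ making explicit what the paper states tersely. One trivial index point: if the $a$-witness has $j_0=n+1$, your formula $k=n+1-j_0$ gives $k=0$, outside $\{1,\ldots,g+1\}$; but then by saturation $a_{i_0,n}\in\cZ$ as well, so one may take $j_0=n$ and $k=1$.
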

\begin{proof}
If $\cZ$ contains some $\ell_{ij}\notin \cZ_1$, then for every $s\in
T_\cZ(L,Y)$, the set $\Sigma(L,s)$ (and hence $\cS(L,s)$) is empty by
Lemma \ref{lem:litj}. This implies that
$N(\LL1 ,L,\cZ,X)=0$. If $\cZ$ contains some $a_{ij}\notin
\cZ_1$, then every integral $(A,B)\in s(Y\D\times s(Y\D)$ has
discriminant $0$ by Condition (c) of Lemma~\ref{lem:disc}. Once again,
this implies that $N(\LL1 ,L,\cZ,X)=0$.

Let $k$ be an integer satisfying Condition (b) of the lemma, and let
$s\in T_\cZ(L,Y)$. Let $(A,B)$ be such that $A\in s(Y\D)$ and $B\in
\cS(L,s)$. Since $\ell_{n+1-k,k}\in \cZ$, it follows that there exists
a nonzero vector $v\in\C^{n+1}$ of the form
$(v_1,\ldots,v_k,0,\ldots,0)$ that is in the kernel of $B$. Since
$a_{kk}\in \cZ$, it follows that $v$ is isotropic with respect to
$A$. By Condition (b) of Lemma \ref{lem:disc}, it follows that
$\Delta(A,B) = 0$, implying that $N(\LL1 ,L,\cZ,X)=0$, as
desired.
\end{proof}

\vspace{-.1in}

\subsection{Bounding the number of distinguished elements
  in the main body}\label{sec:evenmainbody}

In this subsection, we  bound the number of
distinguished elements in the main body:
\begin{theorem}\label{th:evenmainbody}
We have $\cI_X^{\main}(\LL1 )=O\bigl(X^{n+1-1/(10n)}\bigr)$.
\end{theorem}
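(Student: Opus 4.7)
The plan is to adapt the Selberg sieve strategy of Proposition~\ref{prop:oddMB} to the even-degree setting, with one crucial modification. Because $B$ must be singular and Theorem~\ref{thm:mainB} supplies only an upper bound with logarithmic losses for counting $B$ in skewed regions, we cannot apply the sieve directly to the pair $(A,B)$ in the style of \cite{ST}. Instead, we fiber over $B$ and apply the Selberg sieve to the $A$-coordinate alone, where a power-saving count in arbitrary skewed boxes is available from Davenport's lemma (Proposition~\ref{prop-davenport}).

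Concretely, starting from the decomposition
\[
\cI_X^{\main}(\LL1)\;\ll\;\sum_L\sum_{\cZ:\,a_{11}\notin\cZ} N(\LL1,L,\cZ,X)
\]
in \eqref{eq:evenMCDC}, and using Lemma~\ref{lem:NUX} to restrict to saturated $\cZ\subset\cZ_1$ of admissible shape, it suffices to improve Proposition~\ref{prop:NUX} by a power of $X$ uniformly in $L$ and~$\cZ$, since the number of pairs $(L,\cZ)$ is polylogarithmic. For $s\in T_\cZ(L,Y)$ I would retain the bounds on $\#\Sigma(L,s)$ and on the number of $B\in S(\Lambda)\cap s(Y\D)$ coming from \eqref{eq:Lambda} and \eqref{eq:refine}, but replace the trivial bound \eqref{eq:Abound} on the $A$-count by a Selberg-sieve upper bound of the form
\[
\#\bigl\{A\in s(Y\D)\cap S(\Z):(A,B)\in\LL1\bigr\}\;\ll\;X^{-\eta}\cdot\#\bigl(s(Y\D)\cap S(\Z)\bigr),
\]
valid uniformly in $B$ outside a negligible exceptional set, for some absolute $\eta>0$. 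The constant $\eta=1/(10n)$ should emerge from optimizing the sieving parameter; the halving relative to the odd-degree exponent $1/(5n)$ reflects that only the $A$-variables (half of the coordinates of $W$) are being sieved.

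The key local input is a \emph{uniform fiberwise density estimate}: there exist absolute $c>0$ and $p_0$ such that for every prime $p\geq p_0$ and for all but a proportion $O(1/p)$ of symmetric $B\in S(\F_p)$ of corank one, the proportion of $A\in S(\F_p)$ such that $(A,B)$ is $\F_p$-distinguished is at most $1-c$. Granting this, sieving $A$ by squarefree moduli supported on primes in a dyadic window $[X^{\delta},2X^{\delta}]$, exactly as in \cite{ST}, together with Davenport's lemma applied to the $A$-variables only, yields the desired fiberwise power saving. Inserting this into $N(\LL1,L,\cZ,X)$, integrating over $s$, and summing over $L$ and $\cZ$ gives the claimed bound $O(X^{n+1-1/(10n)})$.

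The main obstacle is the uniform fiberwise density estimate itself. The corresponding statement in \cite[Proposition~4.6]{HSV} places no constraint on $B$, and its proof exploits the density of the full pair $(A,B)$ in $W(\F_p)$; here one must fix a corank-one singular $B$ and still produce a positive density of $A$'s that are not $\F_p$-distinguished with that particular $B$. I would establish this by analyzing common isotropic $(g+1)$-dimensional subspaces of the pencil $Ax-By$: once $B$ is fixed of corank one, the locus of $A$'s sharing any prescribed common isotropic subspace with $B$ is a proper closed subvariety of $S$ whose number of $\F_p$-points is controlled by $|J_{f_{A,B}}[2](\F_p)|$, and a dimension count shows that the complement retains positive density independent of $p$ and of the generic choice of $B$. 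The genuinely degenerate $B$'s (of corank greater than one) form a proportion $O(1/p)$ of singular $B$'s by a separate geometric count and can be absorbed into the exceptional set.
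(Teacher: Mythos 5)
Your high-level strategy matches the paper's: fiber over the singular $B$, then apply the Selberg sieve to the $A$-coordinate alone, using a fiberwise mod-$p$ density estimate (this is exactly Lemma~\ref{lemndist} and its Corollary~\ref{corndist}). However, there are two genuine gaps.

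First, the claimed uniform bound
\[
\#\bigl\{A\in s(Y\D)\cap S(\Z):(A,B)\in\LL1\bigr\}\;\ll\;X^{-\eta}\cdot\#\bigl(s(Y\D)\cap S(\Z)\bigr)
\]
is false near the cusp, and this is precisely where the difficulty lies. The Selberg sieve (as in \cite{ST}) saves a fixed power of the \emph{smallest coordinate range} of the box, not a power of $X$; in the part of the main body approaching the shallow cusp, the range of $a_{11}$ shrinks to $O(1)$ and the sieve saving degenerates to nothing. The paper's proof handles this by two devices that your proposal omits: (i) it fibers additionally over $a_{11}$ (Corollary~\ref{corndist}) so that the sieve acts on $a_{12}$, whose range is controlled by the constraint $Yw(a_{11})\gg 1$ $\Rightarrow$ $s_1\ll Y^{1/(2n)}$; and (ii) for the remaining $\cZ$ it splits the torus integral into $T_1=\{s_n\geq Y^{\delta}\}$, where the Haar measure decays fast enough that the trivial bound on $\#(s(Y\D)^2\cap W(\Z))$ suffices, and $T_2=\{1\ll s_n<Y^{\delta}\}$, where the sieve saving $Y^{-1/5}w(a_{11})^{-1/5}$ combined with a weight-comparison map $\pi$ gives a negative-exponent integrand; optimizing $\delta$ produces the exponent. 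Without this case analysis the integral over $s$ does not close up, and the exponent $1/(10n)$ does not ``emerge from optimizing the sieving parameter'' in the way you suggest (it comes from the constraint $s_1\ll Y^{1/(2n)}$ in Proposition~\ref{propMBSel1}, not from halving the number of sieved coordinates).

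Second, your sketch of the local density estimate is not coherent as stated. You propose a dimension count over ``the locus of $A$'s sharing any prescribed common isotropic subspace with $B$,'' controlled by $|J_{f_{A,B}}[2](\F_p)|$ — but that cardinality depends on the very $A$ being varied, so the argument is circular, and the union over $(g+1)$-dimensional $\Lambda$ isotropic for $B$ has $\asymp p^{g(g+1)/2}$ terms, which is not small. The paper instead proves the stronger statement that for \emph{every} corank-one $B_0\in S(\F_p)$ a positive proportion of $A$'s are nondistinguished, and it does so constructively: starting from any completely-split $f$ with $\Delta(xf)\neq0$, it exhibits $\SL_n(\F_p)$-inequivalent preimages in $W_n(\F_p)$ (using that $V_f(\F_p)$ is nonempty over finite fields, from \S\ref{sec:2.1}), appends a row and column to land in $W_{n+1}(\F_p)$ with the prescribed $B_0$, and then runs an orbit--stabilizer count against $\#G_{B_0}(\F_p)=\#\mathrm{O}(b_0)(\F_p)$ and $\#J_{xf}[2](\F_p)=O_n(1)$. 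Your weaker ``$O(1/p)$ exceptional $B$'' form could in principle be fed into the sieve, but you should not claim the estimate follows from a dimension count; you would need an argument of comparable substance. Finally, the saturated sets $\cZ$ with $a_{11}\notin\cZ$ necessarily satisfy $\cZ\subset\cM$, and the case $\ell_{k,1}\in\cZ$ for all $k$ (i.e., the first row of $B$ is forced to vanish) must be treated separately, as in Proposition~\ref{propMBSel1}.
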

As $\LLM \subset \LL1 $ for $M\geq 1$, it follows that $\cI_X^\main(\LLM )$
satisfies the same bound.

We will use the Selberg sieve to show that distinsuished elements are
negligible in number in the main body. However, applying the Selberg
sieve requires asymptotics along with a power saving error term. Our
methods in \S\ref{secSEK} do not yield such results. 

Hence we will
instead fiber over $B\in s(Y\D)\cap S(\Z)$ having determinant $0$,
apply the Selberg sieve to prove that there are negligibly many $A\in
s(Y\D)\cap S(\Z)$ such that $(A,B)$ is distinguished, and then bound
the number of possible $B$'s using the results of Section 5. To carry
out the middle step, we require the following lower bound on the
number of nondistinguished elements modulo primes~$p$ that is  independent of $p$ and $B$.

\begin{lemma}\label{lemndist}
Let $B_0$ be an element in $S(\F_p)$ with $\F_p$-rank $n$. Let  $S_{B_0}^{\ndist}(\F_p)$ denote the
set of elements $A\in S(\F_p)$ such that $(A,B)$ has nonzero
discriminant and $A$ and $B$ do not have a common isotropic
$(g+1)$-dimensional subspace. Then 
\begin{equation*}
\frac{\#S_{B_0}^{\ndist}(\F_p)}{\#S(\F_p)}\gg_n 1.
\end{equation*}
\end{lemma}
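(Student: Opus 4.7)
The plan exploits the observation that for $B_0$ of rank exactly $n$, every invariant binary form $f_{A, B_0}$ is automatically divisible by $x$ (the $y^{n+1}$-coefficient $\pm\det B_0$ vanishes), so $f_{A, B_0} = x\cdot h$ with $\deg h = n$. Whenever $h$ further factors nontrivially over $\F_p$, the 2-torsion group $J_f[2](\F_p)$ is large enough to force multiple $\SL_{n+1}(\F_p)$-orbits above $f$, which supplies the positive proportion of non-distinguished $A$.

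First I would reduce to a canonical form. Since the distinguished-ness condition is invariant under the simultaneous $\SL_{n+1}(\F_p)$-action on $(A,B_0)$, and since rank-$n$ symmetric matrices form at most two $\SL_{n+1}(\F_p)$-orbits (classified by the discriminant class of the induced nondegenerate form on $\F_p^{n+1}/\ker B_0$), I may take $B_0 = \diag(B_0', 0)$ with $B_0' \in \Sym_2(n)(\F_p)$ nondegenerate. Writing $A = \bigl(\begin{smallmatrix}A' & \alpha\\ \alpha^t & a\end{smallmatrix}\bigr)$, the block Schur-complement identity
$$f_{A, B_0}(x, y) = \pm a\, x\, \det\bigl(\tilde A'\, x - B_0'\, y\bigr), \qquad \tilde A' := A' - a^{-1}\alpha \alpha^t \quad (a\ne 0),$$
displays $A \mapsto f_{A, B_0}$ (restricted to $a\ne 0$) as a surjection onto the set of binary $(n+1)$-ic forms of shape $xh$ with $h(0,1) \ne 0$, with fibers of uniform size $p^{n(n+1)/2}$. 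The excluded slice $\{a = 0\}$ contributes only $|S(\F_p)|/p$, negligible.

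Next I would invoke the parametrization of Section~3.1 in odd degree $N = n+1 = 2g+3$: for every $f \in V_N(\F_p)$ with nonzero discriminant, the $\SL_N(\F_p)$-orbits on $W_N(\F_p)$ with invariant $f$ inject into $H^1(\F_p, J_f[2])$ with common stabilizer $J_f[2](\F_p)$, and the distinguished orbit corresponds to the identity. An orbit-stabilizer count on the fiber $\{A : f_{A, B_0} = f\}$ gives exactly $|J_f[2](\F_p)|/2$ distinct $\Stab_{\SL_{n+1}(\F_p)}(B_0)$-orbits there, where the denominator $2$ comes from $|\Stab(B_0)| = |O(B_0')(\F_p)|\cdot p^n \sim 2\,p^{n(n+1)/2}$ (the factor $|O(B_0')/\SO(B_0')| = 2$). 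At most one of these $\Stab(B_0)$-orbits is the intersection of the distinguished $\SL_N$-orbit with $\{B = B_0\}$, so the number of non-distinguished $\Stab(B_0)$-orbits in the fiber is at least $|J_f[2](\F_p)|/2 - 1$. Now, $f = xh$ has at least two irreducible factors, so $|J_f[2](\F_p)| \geq 2$, and whenever $h$ is further reducible over $\F_p$ we have $|J_f[2](\F_p)| \geq 4$, forcing at least one non-distinguished orbit in the fiber and hence a fractional contribution of $\geq 1/2$ of that fiber to $S^{\ndist}_{B_0}(\F_p)$.

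Since a positive proportion (of density $\geq 1 - 1/n + O(1/p)$) of the binary forms $h$ arising as $\tilde A'$ varies in $\Sym_2(n)(\F_p)$ are reducible over $\F_p$, the density of non-distinguished $A$'s with $a \ne 0$ is at least $(1-1/n)(1/2) \geq 1/4$ for $n \geq 2$, up to an $O_n(1/p)$ error absorbed from the zero-discriminant locus and the $\{a = 0\}$ slice (each of size $O_n(p^{\dim S - 1})$). This yields the claimed uniform bound $\#S_{B_0}^{\ndist}(\F_p)/\#S(\F_p)\gg_n 1$ for $p$ larger than an $n$-dependent threshold; the finitely many smaller primes are handled by a direct finite check, noting that the bound $|J_f[2]|/2 - 1 \geq 1$ supplies at least one non-distinguished $\Stab(B_0)$-orbit for every $p$ and every admissible $B_0$. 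The main obstacle is justifying the orbit-stabilizer count of $|J_f[2]|/2$ in the fiber, which requires careful accounting of the factor $2$ between $O(B_0')$ and $\SO(B_0')$ together with tracking how the two discriminant classes of rank-$n$ $B$'s partition the $\SL_N$-orbits.
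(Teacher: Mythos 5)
Your strategy is genuinely different from the paper's: instead of explicitly constructing nondistinguished pairs (the paper lifts two of the $2^{n-1}$ inequivalent $\SL_n(\F_p)$-orbits over a completely split form $\alpha f$ to $(n+1)\times(n+1)$ pairs with a zero last row and column, and transports them onto the given $B_0$ by matching discriminant classes), you attempt to count, for each form $xh$, how many $\SL_{n+1}(\F_p)$-orbits over $xh$ meet the slice $\{B=B_0\}$ and to argue that when $h$ is reducible at least one of them is nondistinguished. The reduction to $B_0=\diag(B_0',0)$, the observation that each $\SL_{n+1}(\F_p)$-orbit meets that slice in at most one $\Stab(B_0)$-orbit, and the relation $\#J_{xh}[2](\F_p)=2^{r-1}$ are all fine.

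The gap is the assertion that the fibers of $A\mapsto f_{A,B_0}$ over forms $xh$ with $h(0,1)\neq0$ have \emph{uniform} size $p^{n(n+1)/2}$: the entire downstream argument (exactly $\#J_{xh}[2](\F_p)/2$ many $\Stab(B_0)$-orbits per fiber, hence a nondistinguished one whenever $\#J_{xh}[2](\F_p)\geq 4$) rests on this, and it is not proved. Your own Schur-complement identity reduces the fiber over $xh$ to $p^{n}\cdot\#\{\tilde A'\in\Sym_2(n)(\F_p):\det(\tilde A'x-B_0'y)=g\}$ for a fixed binary $n$-ic form $g$ proportional to $h$, and this inner count equals a sum of $\#\SO(B_0')(\F_p)/\#(\mbox{stabilizer})$ over the $\SL_n(\F_p)$-orbits above $g$ containing $B_0'$ as second component; both the number of such orbits (the number of even-degree factorizations of $g$, per \S3.1) and the stabilizer orders vary with the factorization type of $g$, so uniformity is a nontrivial mass-formula statement for the even-degree representation $2\otimes\Sym_2(n)$ with the $B$-component pinned, not a consequence of surjectivity. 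Equivalently, you need to know how the $\#J_{xh}[2](\F_p)$ orbits over $xh$ distribute between the two $\SL_{n+1}(\F_p)$-classes of rank-$n$ second components; absent that, the number of orbits meeting $\{B=B_0\}$ could a priori be $1$ --- namely the distinguished one --- and the lower bound evaporates. This is precisely the difficulty the paper's construction is designed to avoid: it never needs the exact orbit count, only the existence of two inequivalent orbits over $xf$ realizing the prescribed discriminant class $d(B_0)$, which it obtains for completely split $f$ by the explicit lift-and-rescale. You flag this point yourself as ``the main obstacle,'' and it is in fact the whole content of the lemma; as written the proof is incomplete.
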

\begin{proof}
For an element $B\in S(\F_p)$ with $\F_p$-rank $n$ and kernel spanned by $v$,
let $d(B)$ denote the discriminant of the corresponding quadratic form
on $\F_p^{n+1}/(\F_p v)$. If $B_1,B_2\in S(\F_p)$
have $\F_p$-rank $n$ and $d(B_1)/d(B_2)\in\F_p^{\times 2}$, then $B_1$
and $B_2$ are $\SL_{n+1}(\F_p)$-equivalent. Indeed, by using
$\SL_{n+1}(\F_p)$ transformations, we may assume the last row and
columns of $B_1$ and $B_2$ are all $0$. The nondegenerate forms
defined by the top left $n\times n$ blocks of $B_1$ and $B_2$ have
discriminants $d(B_1)$ and $d(B_2)$, which are in the same quadratic
residue class. Hence they are equivalent via an element
$\gamma\in\GL_n(\F_p)$. Expanding $\gamma$ to an element in
$\SL_{n+1}(\F_p)$ by appending an additional row and column whose
entries are all $0$, except for the $(n+1,n+1)$-entry which is
$\det\gamma^{-1}$,  gives an element in $\SL_{n+1}(\F_p)$ that
takes $B_1$ to $B_2$.

Let $B_0\in S(\F_p)$ have $\F_p$-rank $n$. For each binary
$n$-ic form $f(x,y) = a_0x^n + \cdots + a_ny^n$ over $\F_p$ that
splits completely over $\F_p$ such that $\Delta(xf(x,y))\neq 0$ and
$a_0\neq 0$, we construct a nondistinguished element $(A_0,B_0)$ with
$f_{A_0,B_0}=xf(x,y)$.  Let $f$ be such a form. Then $a_n\neq0$. Let  $\alpha =
d(B_0)/a_n$. As noted in  \S\ref{sec:2.1}, there exist at
least two (in fact $2^{n-1}$) $\SL_n(\F_p)$-orbits of $(A,B)\in
W_n(\F_p)$ such that $f_{A,B} = \alpha f(x,y)$. Pick two
inequivalent representatives $(A_1,B_1)$ and $(A_2,B_2)$. Let $A_1'$
and $A_2'$ be the $(n+1)$-ary quadratic forms obtained from $A_1$ and
$A_2$, respectively, by appending an additional row and column whose
entries are all $0$ except for the $(n+1,n+1)$-entry which is
$\alpha^{-1}$. Let $B_1'$ and $B_2'$ be the $(n+1)$-ary quadratic
forms obtained from $B_1$ and $B_2$, respectively, by appending an
additional row and column whose entries are all $0$. Then
$f_{A_1',B_1'} = f_{A_2',B_2'} = xf(x,y).$ Since
$(A_1,B_1)$ and $(A_2,B_2)$ are $\SL_n(\F_p)$-inequivalent, it follows
that $(A_1',B_1')$ and $(A_2',B_2')$ are
$\SL_{n+1}(\F_p)$-inequivalent. Hence, without loss of generality, we
may assume that $(A_1',B_1')$ is nondistinguished. Now
$d(B_1')=\alpha a_n = d(B_0)$, and so there exists
$\gamma\in\SL_{n+1}(\F_p)$ such that $\gamma B_1'\gamma^t = B_0$. Then
$A_0 = \gamma A_1'\gamma^t$ does the job.

We complete the proof of the lemma via the orbit-stabilizer
theorem. By the above construction, there are $\gg_n
p^{n+1}$ binary $(n+1)$-ic forms $xf(x,y)$, with $\Delta(xf(x,y))\neq
0$ and $a_0\neq 0$, such that there exists an element $A\in S(\F_p)$
with $f_{A,B_0}=xf(x,y)$ and $(A,B_0)$ nondistinguished. The
group $G_{B_0}(\F_p) = \{\gamma\in \SL_{n+1}(\F_p)\colon \gamma
B_0\gamma^t = B_0\}$ acts on the set of such $A$ with stabilizer of size
$\# J_{xf}[2](\F_p)$, where $J_{xf}$ is the Jacobian of the hyperelliptic
curve defined by $z^2 = xf(x,y)y$. Any element of $\gamma\in
G_{B_0}(\F_p)$ preserves the kernel $\F_p v$ of $B_0$ and
stabilizes the nondegenerate form $b_0$ on $\F_p^{n+1}/(\F_p v)$
induced by $B_0$. The determinant $1$ condition then gives
\begin{equation*}
  \# G_{B_0}(\F_p) = \# \text{O}(b_0)(\F_p) =
  2p^{\frac{n^2+n}{2}}\Bigl(1 - \frac{O(1)}{p^2}\Bigr).
\end{equation*}
Finally, since $\#J_{xf}[2](\F_p)\ll_n 1$, we have
\begin{equation*}
\#S_{B_0}^{\ndist}(\F_p)\gg_n p^{n+1}p^{n(n+1)/2}=p^{(n+1)(n+2)/2} = \#S(\F_p),
\end{equation*}
as desired.
\end{proof}

\vspace{-.1in}

\begin{corollary}\label{corndist}
  Fix $a\in\F_p^\times$ and $B_0\in S(\F_p)$ with rank $n$. Let
  $S_{B_0}^{\ndist}(\F_p)_{a_{11}=a}$ denote the set of all elements
  $A\in S_{B}^{\ndist}(\F_p)$ with $a_{11}=a$. Then
\begin{equation*}
\frac{\#S_{B}^{\ndist}(\F_p)_{a_{11}=a}}{\#S(\F_p)/p}\gg_n 1.
\end{equation*}
\end{corollary}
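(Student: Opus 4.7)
My plan is to deduce Corollary~\ref{corndist} from Lemma~\ref{lemndist} via a scaling symmetry of $S_{B_0}^{\ndist}(\F_p)$. The key observation is that for any $\lambda \in \F_p^\times$, the map $A \mapsto \lambda A$ defines a bijection of $S_{B_0}^{\ndist}(\F_p)$ to itself. To see this, note first that a subspace of $\F_p^{n+1}$ is isotropic with respect to $\lambda A$ if and only if it is isotropic with respect to $A$ (since $\lambda \neq 0$), so the sets of common isotropic $(g+1)$-subspaces of $(\lambda A, B_0)$ and of $(A, B_0)$ agree; and $f_{\lambda A, B_0}(x,y) = f_{A, B_0}(\lambda x, y)$, which has nonzero discriminant precisely when $f_{A, B_0}$ does, since substituting $x \mapsto \lambda x$ merely rescales the roots of the binary form.

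Since $(\lambda A)_{11} = \lambda A_{11}$, this scaling carries the hyperplane $\{A_{11} = a\}$ onto $\{A_{11} = \lambda a\}$. Choosing $\lambda = a'/a$ produces a bijection from $S_{B_0}^{\ndist}(\F_p)_{a_{11}=a}$ to $S_{B_0}^{\ndist}(\F_p)_{a_{11}=a'}$ for any pair $a, a' \in \F_p^\times$, so the cardinality $N := \#S_{B_0}^{\ndist}(\F_p)_{a_{11}=a}$ is independent of the choice of $a \in \F_p^\times$.

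Finally, summing over all fibers of the map $A \mapsto A_{11}$ gives
\[
(p-1)\, N + \#S_{B_0}^{\ndist}(\F_p)_{a_{11}=0} = \#S_{B_0}^{\ndist}(\F_p).
\]
Combining Lemma~\ref{lemndist}, which supplies $\#S_{B_0}^{\ndist}(\F_p) \gg_n p^{(n+1)(n+2)/2}$, with the trivial bound $\#S_{B_0}^{\ndist}(\F_p)_{a_{11}=0} \leq \#S(\F_p)/p = p^{(n+1)(n+2)/2-1}$ will then yield $N \gg_n p^{(n+1)(n+2)/2-1} = \#S(\F_p)/p$ once $p$ exceeds a constant depending on $n$; the finitely many remaining small primes can be absorbed into the implied constant. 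The only nontrivial step, and the one I would take care to verify explicitly, is that the scaling $A \mapsto \lambda A$ genuinely preserves the nondistinguished property including the nonzero-discriminant condition, but as noted above this follows immediately from the two identities relating isotropy and the invariant binary form under scaling.
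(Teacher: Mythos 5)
Your proposal is correct and takes essentially the same route as the paper: the paper's own proof of Corollary~\ref{corndist} is a one-line appeal to exactly this $\F_p^\times$-scaling symmetry of $S_{B_0}^{\ndist}(\F_p)$ combined with Lemma~\ref{lemndist}, and you have simply written out the fiber count that the paper leaves implicit. The only caveat --- present equally in the paper's version --- is that ``absorbing the small primes into the implied constant'' tacitly requires the fiber to be nonempty for those finitely many $p$; for the intended Selberg-sieve application this is harmless, since one may discard any bounded set of primes from the sieve.
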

\begin{proof}
Since the property of $(A,B)$ being nondistinguished is preserved
when $A$ is multiplied by an element of $\F_p^\times$, the claim
follows immediately from Lemma \ref{lemndist}.
\end{proof}

We now bound the number of pairs $(A,B)$ in the main body where
the first row and column of $B$ are zero.
\begin{proposition}\label{propMBSel1}
We have
\begin{equation}\label{eqMBSel1}
\int_{\substack{s\in T'\\Yw(a_{11})\gg 1}}\#\bigl\{(A,B)\in
(s(Y\D)\times s(Y\D))\cap \LL1 : b_{1i}=0\,\forall i\}
\,\delta(s)d^\times s
\ll X^{n+1-1/(10n)}.
\end{equation}
\end{proposition}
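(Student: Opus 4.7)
The plan is to fiber over the matrix $B$ and then apply a Selberg sieve to bound the number of $A$'s such that $(A,B)\in\LL1$ for each fixed $B$. Since $b_{1i}=0$ for all $i$ forces $e_1\in\ker B$, Lemma \ref{lem:disc}(a) tells us that any $(A,B)\in\LL1$ must have $\ker B=\R e_1$ exactly; equivalently, the bottom-right $n\times n$ block $B'$ of $B$ is nonsingular. The admissible $B$'s are thus parametrized by the $n(n+1)/2$ entries of $B'$, so the general row-space machinery of Section \ref{secSEK} is not needed here.

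First I would fix $s\in T'$ with $Yw(a_{11})\gg 1$. A direct application of Proposition \ref{prop-davenport} bounds the number of admissible $B\in s(Y\D)\cap S(\Z)$ by the expected Davenport volume
\[
\ll Y^{n(n+1)/2}\!\!\prod_{2\leq i\leq j\leq n+1}w(a_{ij})
\]
(up to lower-dimensional projections, which are  controlled by the $s_i\ll X^\Theta$ constraint from Lemma \ref{lemsbound}). For each such $B$, I would pick a small prime $p\asymp Y^{\delta}$ (for a suitably small $\delta>0$ to be chosen) with $p\nmid\det B'$, so that $B\bmod p$ still has $\F_p$-rank $n$; by standard arguments, the $B$'s failing this condition contribute negligibly. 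For every residue $a\in\F_p^\times$, Corollary \ref{corndist} provides, uniformly in $p$ and $B$, a constant lower bound on the density of $A\in S(\F_p)$ with $a_{11}\equiv a\pmod p$ for which $(A,B)$ is nondistinguished modulo $p$.

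This uniform local density, combined with a Selberg sieve carried out within the box $s(Y\D)$ fiber-wise over $B$, in exactly the spirit of \cite{ST} and of the proof of Proposition \ref{prop:oddMB}, yields a count of those $A\in s(Y\D)\cap S(\Z)$ with $(A,B)\in\LL1$ saving a factor of $Y^{-\eta}$ over the Davenport-type main term for some absolute $\eta>0$. Multiplying the resulting $A$- and $B$-counts, integrating the product against $\delta(s)\,d^{\times}s$ over the main-body region $Yw(a_{11})\gg 1$, and substituting $X=Y^{n+1}$, one arrives at a total bound of order $X^{n+1-\eta'}$, where the exponent $\eta'=1/(10n)$ is recovered after a standard optimization of the sieve level $\delta$ and parameters.

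The main obstacle is making the Selberg sieve bound uniform both in the skewness of the box $s(Y\D)$ and in the choice of $B$: at each level of the sieve one must apply Proposition \ref{prop-davenport} with explicit control over every coordinate projection of the skewed region, and check that the power saving persists uniformly across the fiber and after integration in $s$. Once the uniform local density from Corollary \ref{corndist} is in hand, tracking the precise exponent $1/(10n)$ is essentially bookkeeping.
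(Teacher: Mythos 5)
Your high-level strategy is the same as the paper's: fiber over $B$ (with row/column $1$ zeroed out, so Section~\ref{secSEK} is indeed not needed), bound the number of admissible $B$'s by Davenport, and for each $B$ of full rank apply the Selberg sieve as in \cite{ST}, using Corollary~\ref{corndist} (i.e., fibering over $a_{11}$) to control the nondistinguished density mod $p$. However, there is a genuine quantitative gap in how you set up and close the sieve.

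The claim that the fiber-wise Selberg sieve saves ``a factor of $Y^{-\eta}$ over the Davenport-type main term for some absolute $\eta>0$'' is false near the cusp. The ST-style Selberg sieve saves a $1/5$th power of the \emph{smallest remaining coordinate range}, and after fibering over $a_{11}$ that range is $Yw(a_{12})=Yt_1^{-1}t_2^{-1}$. In the part of the main body just before the shallow cusp (where $Yw(a_{11})\asymp 1$), $Yw(a_{12})$ can be $\asymp 1$ as well, so the sieve saving degenerates to $O(1)$. The saving is therefore $(Yw(a_{12}))^{-1/5}$, an $s$-dependent quantity, not an absolute power of $Y$. The paper even flags this explicitly in a remark after the proposition: the whole point of fibering over $a_{11}$ before sieving is that otherwise the smallest range is $Yw(a_{11})\ll 1$ and the sieve gives nothing, and even after that extra fibering one must argue that either the $a_{12}$-range is large or the $(A,B)$-count is already small. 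Consequently, the final exponent $1/(10n)$ is not recovered by ``optimizing the sieve level $\delta$''; it emerges from balancing the $s$-dependent sieve saving $w(a_{12})^{-1/5}=(t_1t_2)^{1/5}$ against the Haar weight $t_1^{n+1}\delta(s)$ and the cutoff $s_1\ll Y^{1/(2n)}$ forced by $Yw(a_{11})\gg 1$: one checks that the exponents of $s_2,\ldots,s_n$ are negative while the exponent of $s_1$ is $(n-1)/5>0$, so the integral over $1\ll s_1\ll Y^{1/(2n)}$ contributes $Y^{(n-1)/(10n)}$, which combined with the raw $Y^{-1/5}$ gives exactly the exponent $(n+1)/(10n)$ in $Y$ (hence $1/(10n)$ in $X$). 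Your proposal, as written, would need this extra computation to go from ``some power saving'' to the stated exponent, and the premise that the per-fiber sieve saving is an absolute $Y^{-\eta}$ is not correct and cannot be fixed by tweaking the level.

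A smaller point: ``pick a small prime $p\asymp Y^{\delta}$ with $p\nmid\det B'$'' is not how the Selberg sieve operates; it sieves over all primes up to a level $z$ (with $z$ a power of the smallest coordinate range), and the handful of primes dividing $\det B'$ are simply excluded or absorbed into the error, since $\det B'\neq 0$ for rank-$n$ $B$.
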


\begin{proof}
Let $s\in T'$ be an element with $Yw(a_{11})\gg 1$. Then 
\begin{equation*}
\begin{array}{rcl}
  \displaystyle\#\bigl\{A\in s(Y\D)\cap S(\Z)\bigr\}&\ll&
  \displaystyle Y^{(n+1)(n+2)/2};
\\[.025in]\displaystyle
\#\bigl\{B\in s(Y\D)\cap S(\Z):b_{1i}=0\,\forall i\bigr\} &\ll&
\displaystyle Y^{n(n+1)/2}\prod_{i=1}^{n+1}w(b_{1i})^{-1}.
\end{array}
\end{equation*}
For each $B\in s(Y\D)\cap S(\Z)$ having rank $n$, we bound the number
of $A\in s(Y\D)\cap S(\Z)$ such that $(A,B)$ is
distinguished. Indeed, after additionally fibering over the
coefficient $a_{11}$, Corollary \ref{corndist} in conjunction with an
application of the Selberg sieve, used as in \cite{ST}, saves a
$1/5$th power of the smallest coefficient range among the remaining
$a_{ij}$. That is, we obtain a saving of $(Yw(a_{12}))^{-1/5}$.

Therefore, the left hand side of \eqref{eqMBSel1} is
\begin{equation}\label{eqSelsieve1temp}
\begin{array}{rcl}
&\ll & \displaystyle
Y^{(n+1)^2-1/5}\int_{\substack{s\in T'\\Yw(a_{11})\gg 1}}
w(a_{12})^{-1/5}s_1^{n(n+1)}s_2^{(n-1)(n+1)}\cdots s_n^{n+1}
\delta(s)d^\times s
\\[.175in]&\ll&\displaystyle
Y^{(n+1)^2-1/5}\int_{\substack{s\in T'\\Yw(a_{11})\gg 1}}
s_1^{(n-1)/5}\prod_{j=2}^{n}s_j^{(2n+2-2j)/5-(n+1)(n+1-j)(j-1)}d^\times s.
\end{array}
\end{equation}
In particular, the power of $s_i$ above is negative for all
$j\in\{2,\ldots,n\}$, and hence the integral over $s_2,\ldots,s_n$ is
absolutely bounded. The condition that $Yw(a_{11})\gg 1$ on the integrand
implies that we have $s_1\ll Y^{1/2n}$. Therefore the terms in
\eqref{eqSelsieve1temp} are 
\begin{equation*}
  \ll Y^{(n+1)^2-1/5}\int_{1\ll s_1\ll Y^{1/(2n)}}s_1^{(n-1)/5}d^\times s_1
  \ll Y^{(n+1)^2-1/5+(n-1)/(10n)} = Y^{(n+1)^2 - (n+1)/(10n)}.
\end{equation*}
Since $Y=X^{1/(n+1)}$, we obtain the result.
\end{proof}

\vspace{-.1in}

\begin{remark}
{\em Our use of the Selberg sieve saves a power of the smallest range of any coordinate. In the above proof, we fiber over $a_{11}$ because in the region of the main body close to the cusp, just before we enter the shallow cusp, the range of $a_{11}$ has size $\ll 1$. In this case, the Selberg sieve gives no saving at all. Once we fiber over $a_{11}$, the next smallest range is that of $a_{12}$. Implicit in our proof is an argument that either the range of $a_{12}$ is large, in which case the Selberg sieve gives the desired saving, or the number of pairs $(A,B)$ is automatically small.}
\end{remark}

\smallskip

\noindent {\bf Proof of Theorem \ref{th:evenmainbody}:} Recall from
\eqref{eq:evenMCDC} that we have
\begin{equation*}
  \cI_X^\main(\LL1 )\ll
  \sum_L\sum_{\cZ:a_{11}\notin\cZ}N(\LL1 ,L,\cZ,X),
\end{equation*}
where the second sum is over all saturated $\cZ$. Since $\cZ$ is
saturated and $a_{11}\not\in\cZ$, we have $\cZ\subset \cM$. If
$\ell_{k,1}=0$ for every $k=1,\ldots,n$, then $(1,0,\ldots,0)$ is in
the kernel of $B$ implying that the top row of $B$ is zero. The number
of such pairs $(A,B)$ has already been bounded in Proposition
\ref{propMBSel1}, and hence we may assume that $\ell_{n,1}\not\in
\cZ$. Fix a nondecreasing $n$-tuple $L$ of positive real numbers, and
a saturated $\cZ\subset\cM$ with $\ell_{n,1}\not\in\cZ$ such that
$N(\LL1 ,L,\cZ,X)\neq 0$.
We partition the integrand $T_\cZ(L,Y)$ into two parts: let $T_1$
denote the subset of $T_\cZ(L,Y)$ consisting of elements $s$ for
$s=(s_i)_i$ with $s_n\geq Y^\delta$, and let $T_2$ denote the subset
of elements $s$ with $1\ll s_n< Y^\delta$, where $\delta$
is a positive constant to be optimized later.

We first bound the contribution to $N(\LL1 ,L,\cZ,X)$ from $T_1$.
Since $Yw(a_{11})\gg 1$, we have
\begin{equation*}
  \#\bigl((s(Y\D)\times s(Y\D))\cap
  \LL1 \bigr)\leq \#\bigl((s(Y\D)\times s(Y\D))
  \cap W_{n+1}(\Z)\bigr)\ll Y^{(n+1)(n+2)}
\end{equation*}
for $s\in T_1$. Integrating over $T_1$ gives the bound
\begin{equation}\label{eq:intT1}
\begin{array}{rcl}
\displaystyle \int_{s\in T_1} \#\bigl((s(Y\D)\times s(Y\D))\cap
  \LL1 \bigr)\,\delta(s)d^\times s&\ll&
\displaystyle Y^{(n+1)(n+2)}\int_{s_1,\ldots,s_{n-1}\gg 1}
\int_{s_n\geq Y^{\delta}}\delta(s)d^\times s
\\[.1in]&\ll&\displaystyle
Y^{(n+1)(n+2)}\int_{s_n\geq Y^{\delta}}s_n^{-n(n+1)}d^\times s_n
\\[.1in] &\ll&\displaystyle
Y^{n+1-n(n+1)\delta}X^{n+1}.
\end{array}
\end{equation}

Next, we consider the contribution from $T_2$. Define the map
$\pi:\cZ_1\cap\cM\to\cM$ by 
\begin{equation*}
\pi(\ell_{ij}) = \left\{
  \begin{array}{rl}
    \ell_{n1}&\;{\rm if}\; j=1\mbox{ and }i\geq 2,\\
    \ell_{i,n+2-i} &\;{\rm otherwise}.
  \end{array}\right.
\end{equation*}
Since we have assumed that $N(\LL1 ,L,\cZ,X)\neq 0$, Lemma
\ref{lem:NUX} implies that $\cZ\subset\cZ_1$ and so the image of $\pi$
lies in $\cM\backslash\cZ$. Then for any $\alpha\in\cZ_1\cap \cM$ and
any $s\in T_\cZ(L,Y)$, we have $w_L(\pi(\alpha))\gg w_L(\alpha)$ and
$w_L(\pi(\alpha))\gg 1$. These inequalities along with
\eqref{eq:Lambda} and \eqref{eq:refine} imply that for any $s\in
T_\cZ(L,Y)$, the number $\# (s(Y\D) \cap \cS(L,s))$ of possible $B$'s
is
\begin{equation*}
\ll Y^{n(n+1)/2}w(\cZ\cap\cM)^{-1}\Bigl(\prod_{\substack{1\leq i< j\leq
    n\\i+j>n+1}} \frac{L_i}{L_j}\Bigr)\\
    \ll
    Y^{n(n+1)/2}\Bigl(\prod_{\substack{\ell\in \cZ_1\cap\cM\\\ell\neq \ell_{n1}}}
    \frac{w(\pi(\ell))}{w(\ell)}\Bigr)
\Bigl(\prod_{\substack{1\leq i< j\leq
    n\\i+j>n+1}} \frac{L_i}{L_j}\Bigr).
\end{equation*}
For each possible $B$, applying the Selberg sieve using Lemma
\ref{lemndist} gives us a bound of
\begin{equation*}
\ll Y^{(n+1)(n+2)/2}Y^{-1/5}w(a_{11})^{-1/5}
\end{equation*}
for the number of possible
choices for $A$.
Therefore, 
\begin{equation*}
  \#\bigl((s(Y\D)\times s(Y\D))\cap \LL1 \bigr)
  \ll Y^{-1/5}X^{n+1}w(a_{11})^{-1/5}
  \Bigl(\prod_{\substack{\ell\in \cZ_1\cap\cM\\\ell\neq \ell_{n1}}}
  \frac{w(\pi(\ell))}{w(\ell)}\Bigr)
\Bigl(\prod_{\substack{1\leq i < j\leq n\\i+j>n+1}}
\frac{L_i}{L_j}\Bigr),
\end{equation*}
for $s\in T_2$.  We compute the ratio of these weights: For any $i\geq
2$ and $j=1$, we have
\begin{equation*}
  \frac{w(\pi(\ell_{i1}))}{w(\ell_{i1})}=\frac{w(\ell_{n1})}{w(\ell_{i1})}
  =\frac{L_n}{L_i}.
\end{equation*}
For any other $i,j$, we have
\begin{equation*}
  \frac{w(\pi(\ell_{ij}))}{w(\ell_{ij})}=\frac{w(\ell_{i,n+2-i})}{w(\ell_{ij})}
  =\frac{t_j}{t_{n+2-i}}.
\end{equation*}
As the $L_i$ are nondecreasing and positive, we multiply by
the Haar measure character $\delta(s)$ to~obtain
\begin{equation*}
\begin{array}{rcl}
&&\displaystyle
w(a_{11})^{-1/5}\Bigl(\prod_{\substack{\ell\in \cZ_1\cap\cM\\\ell\neq \ell_{n1}}}
\frac{w(\pi(\ell))}{w(\ell)}\Bigr)
\Bigl(\prod_{\substack{1\leq i < j\leq n\\i+j>n+1}}
\frac{L_i}{L_j}\Bigr)\,\delta(s)\\[.2in]
&\leq&\displaystyle w(a_{11})^{-1/5}
\Bigl(\prod_{\substack{i,j\geq 1\\i+j\leq n+1}}\frac{t_j}{t_{n+2-i}}\Bigr)\Bigl(
\prod_{i=2}^{n}\frac{t_{n+2-i}}{t_1}\Bigr)
\Bigl(\prod_{1\leq j < i \leq n+1}\frac{t_i}{t_j}\Bigr)
\\[.2in]&=&\displaystyle w(a_{11})^{-1/5}
\prod_{i=2}^{n}\frac{t_{n+2-i}}{t_1}
\\[.2in]&=&\displaystyle
w(a_{11})^{-1/5}s_1^{-(n+1)(n-1)}s_2^{-(n+1)(n-2)}\cdots s_{n-1}^{-(n+1)}.
\end{array}
\end{equation*}
The powers of $s_i$ in the above expression are negative for $1\leq
i\leq n-1$, while the power of $s_n$ is~$2/5$.
Integrating over $T_2$ now gives the bound
\begin{equation}\label{eq:intT2}
\begin{array}{rcl}
\displaystyle \int_{s\in T_1} \#\bigl((s(Y\D)\times s(Y\D))\cap
  \LL1 \bigr)\,\delta(s)d^\times s&\ll&
\displaystyle Y^{-1/5}X^{n+1}\int_{1\ll s_n\ll Y^\delta}s_n^{2/5}d^\times s_n
\\[.175in]&\ll&\displaystyle
Y^{-1/5+(2\delta)/5}X^{n+1}. 
\end{array}
\end{equation}

Combining \eqref{eq:intT1} and \eqref{eq:intT2} and choosing 
$\delta=\frac{5n+6}{5n^2 + 5n+2}$ yields
\begin{equation*}
N(\LL1 ,L,\cZ,X)\ll X^{n+1- \frac{n-2}{5n^2 + 5n+2}}.
\end{equation*}
The summation of this bound over the $O(1)$ different possible $\cZ$'s and the
$O(Y^\epsilon)$ different possible~$L$'s, in conjunction with the bound in
Proposition \ref{propMBSel1}, implies Theorem
\ref{th:evenmainbody}. $\Box$

\subsection{Bounding the number of distinguished elements
  in the shallow cusp}\label{sec:evencusp}

In this subsection, we bound the number of distinguished elements
having large $q$-invariant that lie in the shallow cusp of the
fundamental domain.
\begin{theorem}\label{th:evenmaincusp}
Let $\eta>0$ be any real number. Assume that $M>X^{\eta}$. Then 
\begin{equation*}
{\cI_X^{\scusp}(\LLM )=O\bigl(X^{n+1-\min(\eta,1)/(22n^6)}\bigr)}.
\end{equation*}
\end{theorem}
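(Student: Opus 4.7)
The plan is to bound $\cI_X^\scusp(\LLM)$ via the decomposition in \eqref{eq:evenMCDC}, which expresses it as a sum of $N(\LLM, L, \cZ, X)$ over nondecreasing tuples $L = (L_1, \ldots, L_n)$ and saturated subsets $\cZ \subset \cK \cup \cM$ with $a_{11} \in \cZ$ but $a_{g+1,g+1} \notin \cZ$. By Lemma \ref{lem:NUX}, I may restrict to $\cZ \subset \cZ_1$ for which there is no $k$ with both $a_{kk} \in \cZ$ and $\ell_{n+1-k, k} \in \cZ$. The number of admissible pairs $(L, \cZ)$ contributing nontrivially is $O(Y^\epsilon)$, so it suffices to prove the target bound $N(\LLM, L, \cZ, X) \ll X^{n+1-\min(\eta,1)/(22n^6)+\epsilon}$ for each such pair individually.

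I split the treatment according to the location of $\cZ$ within the shallow cusp. In the portion adjacent to the main body, the saturated set $\cZ$ contains relatively few coordinates, and applying Proposition \ref{prop:NUX} to $N(\LL1, L, \cZ, X) \geq N(\LLM, L, \cZ, X)$ yields an integrand whose exponents in every $s_i$ are strictly negative above some threshold, by essentially the same torus-weight computation used to prove Proposition \ref{proplargeQbound} in the odd-degree case. The extra ratio $\prod_{i<j,\, i+j>n+1} L_i/L_j$ arising from the singularity of $B$ combines with $\delta(s)$ to give a rapidly decaying integrand, yielding a direct power saving for these $\cZ$.

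The delicate case is when $\cZ$ lies closer to the deep cusp and the direct estimate from Proposition \ref{prop:NUX} no longer beats $X^{n+1}$. Here I invoke the hypothesis $|q(A, B)| > M > X^\eta$ satisfied by every $(A, B) \in \LLM$. Using \eqref{eq:qinv} together with \eqref{eqQweight} applied with $g$ replaced by $g+1$, the polynomial $q$ on $W_1$ has degree $(g+1)^2$ and weight $w(q) = \prod_{j=g+3}^{n+1} t_j$; so for $(A, B)$ sitting in the Siegel fundamental region and adapted (via an $\SL_{n+1}(\Z)$ change of basis) to its isotropic lattices $\Lambda \subset \Lambda'$, the polynomial bound $|q(A, B)| \ll Y^{(g+1)^2} w(q)$ combined with $|q| > M$ forces
\[ \prod_{j=g+3}^{n+1} t_j \;\gg\; M \cdot Y^{-(g+1)^2}. \]
Since $t_1 \geq \cdots \geq t_{n+1}$ in $T'$ and $\prod_i t_i = 1$, this excises a substantial part of $T_\cZ(L, Y)$; on the complement, the bound from Proposition \ref{prop:NUX} already begins to save. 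Integrating over the restricted region and optimizing the resulting exponents produces a saving of order $\min(\eta, 1)/(22n^6)$ in the exponent of $X$, uniformly in $(L, \cZ)$.

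The hardest step, and the source of the very small exponent $1/(22n^6)$, is verifying that $|q| \ll Y^{(g+1)^2} w(q)$ in every relevant $\cZ$-stratum. Since $q$ is invariant only under $G_1$ and not the full $\SL_{n+1}$, establishing this bound requires selecting an almost-reduced basis of $\Lambda \subset \Lambda'$ compatible with the Siegel decomposition so that the change of basis bringing $(A, B)$ into $W_1(\Z)$ does not inflate the weights beyond an absolute constant. Once this is achieved, the remainder of the argument is combinatorial bookkeeping, balancing the new torus constraint $\prod_{j \geq g+3} t_j \gg M Y^{-(g+1)^2}$ against $\prod_i t_i = 1$ and the $L_i/L_j$ ratios across the many $(L, \cZ)$ subcases, where the polynomial $22n^6$ emerges from keeping track of the worst-case subcase.
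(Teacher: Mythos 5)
Your first step---the decomposition of $\cI_X^{\scusp}(\LLM)$ into $\sum_{L,\cZ}N(\LLM,L,\cZ,X)$, the restriction to $\cZ\subset\cZ_1$ via Lemma \ref{lem:NUX}, and the observation that the strata near the main body can be killed by the torus-weight bookkeeping of Proposition \ref{prop:NUX}---matches the paper's Lemmas \ref{lem:reducetoK1}--\ref{lem:ZK1boundnosing}, although even there one cannot get a power saving from the weights alone and must split further according to whether $s_n\gg Y^{1/2-\delta}$. The genuine gap is in your treatment of the critical strata (in the paper's notation, $\cK_1\subset\cZ$ with $s$ constrained by \eqref{eq:siconditions} and \eqref{eq:s_iwtf}). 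Your plan rests on the bound $|q(A,B)|\ll Y^{(g+1)^2}w(q)$ with $w(q)=\prod_{j= g+3}^{n+1}t_j$, from which $|q|>M$ would excise a torus region. This bound is false in the shallow cusp, and the strategy built on it cannot succeed: in the critical region one has $t_j\gg Y^{g+2-O(\delta)}$ for $j\leq g$ and $t_{g+1},t_{g+2}\gg Y^{1/2-O(\delta)}$, so $Y^{(g+1)^2}\prod_{j\geq g+3}t_j=Y^{(g+1)^2}\prod_{j\leq g+2}t_j^{-1}\ll Y^{O(\delta)}\ll M$; your excision would therefore declare this region empty of elements of $\LLM$, yet $\LLM$ genuinely meets it (these are exactly the elements the paper must work hardest to count). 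The underlying error is that in the shallow cusp $a_{g+1,g+1}\notin\cZ$, so $(A,B)$ does not lie in $W_1(\Z)$ in the Siegel coordinates: the entries $a_{g+1,g+1},a_{g+1,g+2},a_{g+2,g+2}$ need not vanish and $\ker(B)$ is not aligned with the standard basis. Hence $q$ is not a $T$-eigenpolynomial on the ambient space and has no ``weight''; evaluating it requires passing to a basis adapted to $\Lambda\subset\Lambda'$, obtained by solving $a_{g+1,g+1}x^2+a_{g+1,g+2}xy+a_{g+2,g+2}y^2=0$, and the size of $q$ in that basis is governed by the entries of $(A,B)$ rather than by $s$.

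The paper's actual mechanism in this regime is a dichotomy that your proposal does not contain. If $b_{g+1,g+1}=b_{g+1,g+2}=b_{g+2,g+2}=0$, then Lemma \ref{lem:nond} --- which crucially uses that $f$ has Galois group $S_n$, so that $J[2](K)=J[2](\Q)$ for the relevant quadratic field $K$ and the common isotropic subspace is rational with small coordinates --- shows $|q|(A,B)\ll X^{1355g^6\delta}<M$, a contradiction. Otherwise, $\det(B)=0$ is a \emph{nonzero} polynomial in the remaining coefficients $b_{ij}$ ($g+1\leq i\leq n+1$, $g+3\leq j\leq n+1$), each of which has range $\gg Y^{(n+1)/2+43g^2\delta}\prod_{i\geq g+3}s_i^{-(n+1)}$, and imposing this one equation saves a factor of roughly $Y^{(n+1)/2}$, which is where the power saving actually comes from. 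Without either the Galois-theoretic control of $q$ or the exploitation of $\det(B)=0$ as a nontrivial equation on large-range coordinates, the hardest case of the theorem is not addressed.
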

We will take $\eta = 1/4$ when we prove Theorem \ref{thm:mainestimate} in \S\ref{sec:6.5}.

\subsubsection{A preliminary bound of
  $O_\epsilon(X^{n+1+\epsilon})$}\label{sec:6.3.1}

We again use \eqref{eq:evenMCDC} to write
\begin{equation*}
\cI_X^\scusp(\LLM )\ll\sum_{L,\cZ}N(\LLM ,L,\cZ,X),
\end{equation*}
where the sum is over nondecreasing $n$-tuples $L=(L_1,\ldots,L_n)$
of positive real numbers that partition the region
\smash{$\{(\mu_1,\ldots,\mu_n)\in[Y^{-\Theta_1},Y^{\Theta_2}]^n:\mu_1\leq \mu_2\leq
\ldots\leq \mu_n\}$} into dyadic ranges, and over saturated
$\cZ\subset\cK\cup\cM$ such that $a_{11}\in\cZ$ and
$a_{g+1,g+1}\not\in\cZ$. By Lemma \ref{lem:NUX}, we have 
\smash{$N(\LLM ,L,\cZ,X)>0$ only when $\cZ\subset\cZ_1$}, which we
henceforth assume. 

For $k\in\{0,\ldots,g\}$, define the map
$\pi_k:\cZ_1\to\cK\cup\cM$ by
\begin{equation*}
\pi_k(a_{ij}) = a_{n+1-j,j},\qquad
\pi_k(\ell_{ij}) = \left\{
  \begin{array}{rl}
    \ell_{n+1-j,j}&\;{\rm if}\; i>j\mbox{ and }j\leq k,\\
    \ell_{i,n+2-i} &\;{\rm otherwise}.
  \end{array}\right.
\end{equation*}
We define the auxiliary set $\cZ^*$ by
\begin{equation*}
\cZ^*\,=\,
\{a_{ij}\mid i\leq j, \,i+j\leq n\}\cup 
\{\ell_{ij}\mid i\leq j,\,i+j\leq n+1\} \,=\,
\cZ_1\backslash
\{\ell_{ij}\mid i> j,\,i+j\leq n+1\}.
\end{equation*}
Then, when restricted to $\cZ^*\subset\cZ_1$, the functions $\pi_k$
are equal for every $k$.

\begin{lemma}\label{lem:pitrick1}
For any $k\in\{0,\ldots,g\}$, we have
\begin{equation}\label{eq:pitrick1}
\Big(\prod_{\alpha\in \cZ^*}
\frac{w(\pi_k(\alpha))}{w(\alpha)} \Big)\,\delta(s)=1.
\end{equation}
\end{lemma}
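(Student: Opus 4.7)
The plan is to reduce everything to a direct weight bookkeeping. First, note that on $\cZ^*$ the map $\pi_k$ is independent of $k$: every $\ell_{ij}\in \cZ^*$ satisfies $i\leq j$, so the branch ``$i>j$ and $j\leq k$'' in the definition of $\pi_k$ never triggers. Hence $\pi_k|_{\cZ^*}=\pi_0|_{\cZ^*}$, and it suffices to prove the identity for $k=0$.

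Next, I would compute the two weight ratios explicitly. Using $w(a_{ij})=t_i^{-1}t_j^{-1}$ (which is well-defined for all indices $i,j$ by the symmetry of the matrix) and $w_L(\ell_{ij})=L_it_j^{-1}$, a direct calculation yields
\[
\frac{w(\pi_0(a_{ij}))}{w(a_{ij})}=\frac{t_i}{t_{n+1-j}},\qquad \frac{w_L(\pi_0(\ell_{ij}))}{w_L(\ell_{ij})}=\frac{t_j}{t_{n+2-i}}.
\]
In particular the factors of $L_i$ cancel, so the left-hand side of \eqref{eq:pitrick1} depends only on $s$.

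The heart of the proof is a pair of changes of variables that exhibit the product as exactly $\delta(s)^{-1}$. Setting $k:=n+1-j$ gives a bijection between $\{(i,j)\colon i\leq j,\; i+j\leq n\}$ and $\{(i,k)\colon 1\leq i<k\leq n,\; i+k\leq n+1\}$, converting the contribution from $\cZ^*\cap\cK$ to $\prod t_i/t_k$ over the latter index set; since $k=n+1$ together with $i+k\leq n+1$ would force $i\leq 0$, I may freely relax the bound $k\leq n$ to $k\leq n+1$. Similarly, setting $k:=n+2-i$ gives a bijection between $\{(i,j)\colon 1\leq i\leq n,\; i\leq j,\; i+j\leq n+1\}$ and $\{(j,k)\colon 1\leq j<k\leq n+1,\; j+k\geq n+2\}$, converting the contribution from $\cZ^*\cap\cM$ to $\prod t_j/t_k$ over the latter set.

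The two resulting index sets then partition $\{(i,k)\colon 1\leq i<k\leq n+1\}$: they are disjoint because one imposes $i+k\leq n+1$ and the other $i+k\geq n+2$, and they are jointly exhaustive because $i+k$ is an integer. Therefore the combined product equals
\[
\prod_{1\leq i<k\leq n+1}\frac{t_i}{t_k}=\delta(s)^{-1},
\]
and multiplying by $\delta(s)$ gives $1$, as desired. There is no substantive analytic obstacle here; the only care needed is in tracking the index ranges through the two substitutions, which is precisely why the sets $\cZ^*$ and $\pi_k$ were defined to make this symmetric partition emerge cleanly.
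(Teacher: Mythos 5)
Your proof is correct and follows essentially the same route as the paper: compute $w(\pi_k(a_{ij}))/w(a_{ij})=t_i/t_{n+1-j}$ and $w_L(\pi_k(\ell_{ij}))/w_L(\ell_{ij})=t_j/t_{n+2-i}$, reindex by $r=n+1-j$ (resp.\ $r=n+2-i$), and observe that the two resulting index sets $\{i<r,\ i+r\le n+1\}$ and $\{j<r,\ j+r\ge n+2\}$ partition $\{(i,r):1\le i<r\le n+1\}$ to give $\delta(s)^{-1}$. Your additional remarks (independence of $k$ on $\cZ^*$, cancellation of the $L_i$ factors) are the same observations the paper makes just before stating the lemma.
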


\begin{proof}
We directly compute
\begin{equation*}
\begin{array}{rcl}
\displaystyle \prod_{\alpha\in \cZ^*}
\frac{w(\pi_k(\alpha))}{w(\alpha)}
&=&\displaystyle
\Bigl(\prod_{\substack{i\leq j\\i+j<n+1}}\frac{w(a_{n+1-j,j})}{w(a_{ij})}\Bigr)
\Bigl(\prod_{\substack{i\leq j\\i+j<n+2}}\frac{w(\ell_{i,n+2-i})}{w(\ell_{ij})}\Bigr)
\\[.1in]&=&\displaystyle
\Bigl(\prod_{\substack{i\leq j\\i+j<n+1}}\frac{t_i}{t_{n+1-j}}\Bigr)
\Bigl(\prod_{\substack{i\leq j\\i+j<n+2}}\frac{t_j}{t_{n+2-i}}\Bigr)
\\[.1in]&=&\displaystyle
\Bigl(\prod_{\substack{i<r\\i+r\leq n+1}}\frac{t_i}{t_{r}}\Bigr)
\Bigl(\prod_{\substack{j< r\\j+r\geq n+2}}\frac{t_j}{t_{r}}\Bigr),
\end{array}
\end{equation*}
which is $\delta(s)^{-1}$.
\end{proof}

Fix a saturated set $\cZ\subset\cZ_1$ such that $a_{11}\in\cZ$, $a_{g+1,g+1}\notin\cZ$ and $N(\LLM ,L,\cZ,X)>0$. Let $k\in\{1,\ldots,g\}$ be the largest integer such
that $a_{kk}\in\cZ$. 
Then we have the following results.
\begin{lemma}\label{lem:piknotinZ}
Let $\cZ$ and $k$ be as above. Then for every $\alpha\in \cZ$, we have
$\pi_k(\alpha)\notin \cZ$. In particular, for any $s\in T_\cZ(L,Y)$, we have $Yw(\pi_k(\alpha))\gg 1.$
\end{lemma}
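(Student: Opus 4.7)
The plan is to verify $\pi_k(\alpha)\notin\cZ$ for every $\alpha\in\cZ$ by case analysis on the type of $\alpha$ and the relevant branch of $\pi_k$'s definition. The ``in particular'' consequence is then immediate: if $\pi_k(\alpha)\notin\cZ$, then by the defining inequalities \eqref{eqTZ} of $T_\cZ(L,Y)$ we have either $|Yw(\pi_k(\alpha))|\ge c_{pq}$ (when $\pi_k(\alpha)=a_{pq}$) or $w_L(\pi_k(\alpha))\ge c'_q$ (when $\pi_k(\alpha)=\ell_{pq}$), and since $Y\ge 1$ both give $Yw(\pi_k(\alpha))\gg 1$.

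Two of the three cases are routine index-counting, using the standing containment $\cZ\subset\cZ_1$ (already in force because $N(\LLM,L,\cZ,X)>0$ combined with Lemma \ref{lem:NUX}(a)). For $\alpha=a_{ij}\in\cZ$, the containment in $\cZ_1$ forces $i+j\le n$, so $\pi_k(\alpha)=a_{n+1-j,j}$ has index-sum $n+1>n$ and lies outside $\cZ_1\supset\cZ$. For $\alpha=\ell_{ij}\in\cZ$ falling in the ``otherwise'' branch of $\pi_k$ (i.e., $i\le j$ or $j>k$), we have $\pi_k(\alpha)=\ell_{i,n+2-i}$, whose indices sum to $n+2>n+1$, again forcing it outside $\cZ_1$.

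The one subtle case, which I expect to be the main obstacle, is $\alpha=\ell_{ij}\in\cZ$ with $i>j$ and $j\le k$, where $\pi_k(\alpha)=\ell_{n+1-j,j}$ has index-sum exactly $n+1$ and so could a priori lie in $\cZ_1$. I plan to rule out $\ell_{n+1-j,j}\in\cZ$ by contradiction via Lemma \ref{lem:NUX}(b): since $j\le k$ and $a_{kk}\in\cZ$, saturation of $\cZ$ applied to $a_{jj}\lesssim a_{kk}$ yields $a_{jj}\in\cZ$. If additionally $\ell_{n+1-j,j}\in\cZ$, then $k':=j\in\{1,\dots,g+1\}$ (since $j\le k\le g$) witnesses condition (b) of Lemma \ref{lem:NUX}, forcing $N(\LL1,L,\cZ,X)=0$. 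Combined with the obvious containment $\LLM\subset\LL1$, this contradicts the standing hypothesis $N(\LLM,L,\cZ,X)>0$, completing the case.
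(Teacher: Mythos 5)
Your proof is correct and follows essentially the same route as the paper's: the $a$-case and the ``otherwise'' $\ell$-case are dispatched by index-sum arithmetic placing $\pi_k(\alpha)$ outside $\cZ_1$, and the $i>j$, $j\le k$ case is ruled out by combining $a_{jj}\in\cZ$ (by saturation from $a_{kk}\in\cZ$) with Lemma~\ref{lem:NUX}(b) and $N(\LLM,L,\cZ,X)>0$. You also spell out the ``in particular'' deduction from the defining inequalities of $T_\cZ(L,Y)$, which the paper leaves implicit.
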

\begin{proof}
Since $a_{n+1-j,j}\not\in\cZ_1$ for any $j$ and $\cZ\subset\cZ_1$, we have
$\pi_k(a_{ij})\not\in\cZ$ for any
$a_{ij}\in\cZ$. Moreover, since $a_{jj}\in\cZ$ for every $j\leq k$, it
follows from Lemma \ref{lem:NUX} that $\ell_{n+1-j,j}\not\in\cZ$. Furthermore,
$\ell_{i,n+2-i}\not\in\cZ_1$. Hence $\pi_k(\ell_{ij})\not\in\cZ$ for any
$\ell_{ij}\in\cZ$.
\end{proof}

\vspace{-.1in}
\begin{lemma}\label{lem:pitrick}
Let $\cZ$ and $k$ be as above. Then, uniformly for $s\in T_\cZ(L,Y)$,
 we have
\begin{equation}\label{eq:pitrick}
\Bigl(\prod_{\alpha\in \cZ} \frac{w(\pi_k(\alpha))}{w(\alpha)}\Bigr)
  \Big(\prod_{\substack{1\leq i < j\leq n\\ i+j> n+1}}\frac{L_i}{L_j}\Big)
  \,\delta(s) \ll 1.
\end{equation}
\end{lemma}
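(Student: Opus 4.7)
The plan is to use Lemma \ref{lem:pitrick1} to reduce the claim to a product identity, then bound the residual terms via two carefully chosen injections. Writing $\delta(s)=\prod_{\alpha\in\cZ^*}w(\alpha)/w(\pi_k(\alpha))$ by Lemma \ref{lem:pitrick1} and cancelling the factors indexed by $\cZ\cap\cZ^*$, the left-hand side of \eqref{eq:pitrick} becomes
\begin{equation*}
\prod_{\alpha\in\cZ\setminus\cZ^*}\frac{w(\pi_k(\alpha))}{w(\alpha)}\cdot\prod_{\alpha\in\cZ^*\setminus\cZ}\frac{w(\alpha)}{w(\pi_k(\alpha))}\cdot\prod_{\substack{1\leq i<j\leq n\\i+j>n+1}}\frac{L_i}{L_j}.
\end{equation*}
Since we may assume $\cZ\subset\cZ_1$ (else $N(\LLM,L,\cZ,X)=0$ by Lemma \ref{lem:NUX}), we have $\cZ\setminus\cZ^*=\{\ell_{ij}\in\cZ:i>j\}$, which I will split as $A_k\sqcup B_k$ according to whether $j\leq k$ or $j>k$; the weight ratios arising from the two branches in the definition of $\pi_k$ are $L_{n+1-j}/L_i$ and $t_j/t_{n+2-i}$, respectively.

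For the $A_k$ part I would define $\phi(\ell_{ij})=(i,n+1-j)$ on the subset where $i+j<n+1$, the boundary case $i+j=n+1$ contributing the trivial ratio $L_i/L_i=1$. The conditions $i>j$ and $i+j<n+1$ translate to $i+(n+1-j)>n+1$ and $i<n+1-j$, so $\phi$ injects into the index set of the third product, and its image contributes factors $L_i/L_{n+1-j}$ that exactly cancel the corresponding $A_k$ ratios. The third-product factors outside the image of $\phi$ are each $\leq 1$.

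For the $B_k$ part I would define $\mu(\ell_{ij})=a_{j,i-1}$, which lies in $\cZ^*\cap\cK$ since $j<i$ and $j+(i-1)\leq n$. The essential point is that $a_{j,i-1}\notin\cZ$: if it were, saturation of $\cZ\cap\cK$ in $\cK$ would force $a_{jj}\in\cZ$, contradicting the maximality of $k$ because $j>k$. A direct computation yields $w(a_{j,i-1})/w(\pi_k(a_{j,i-1}))=t_{n+2-i}/t_j$, the reciprocal of the $B_k$ ratio, so the second-product factor at $\mu(\ell_{ij})$ cancels it. The residual second-product factors, indexed by $(\cZ^*\setminus\cZ)\setminus\mu(B_k)$, are each $\ll 1$ since $\alpha\lesssim\pi_k(\alpha)$ for every $\alpha\in\cZ^*$. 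Multiplying the two bounded residuals proves the claim. I expect the main effort to lie in verifying that $\phi$ and $\mu$ produce the exact weight cancellations claimed; this requires attention to the two branches in the definition of $\pi_k$ and to the interplay between the saturation of $\cZ$ and the maximality of $k$.
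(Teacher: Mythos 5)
Your proof is correct and follows essentially the same route as the paper's: both rest on Lemma \ref{lem:pitrick1}, the matching $\ell_{ij}\mapsto a_{j,i-1}$ for the indices with $j>k$, and the re-indexing $(i,j)\mapsto(i,n+1-j)$ that absorbs the $j\leq k$ ratios into the $L$-product. The only organizational difference is that the paper first enlarges $\cZ$ to a maximal saturated set (using $w(\pi_k(\alpha))/w(\alpha)\gg 1$) to get a clean identity, whereas you work with the symmetric difference $\cZ\triangle\cZ^*$ and bound the leftover factors by $O(1)$; both are fine.
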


\begin{proof}
Since we have
\begin{equation*}
\frac{w(a_{n+1-j,j})}{w(a_{ij})}=\frac{t_i}{t_{n+1-j}},\quad
\frac{w(\ell_{i,n+2-i})}{w(\ell_{ij})}=\frac{t_j}{t_{n+2-i}},\quad
\frac{w(\ell_{n+1-j,j})}{w(\ell_{ij})}=\frac{L_{n+1-j}}{L_i},
\end{equation*}
it follows that $w(\pi_k(\alpha))/w(\alpha)\gg 1$ for every $k$,
$\alpha\in\cZ_1$, and $s\in T_\cZ(L,Y)$. Thus, by adding elements in $\cZ_1$
to $\cZ$, if necessary, we can assume that $\cZ$ is equal to
\begin{equation*}
  \begin{array}{rcl}
    \cZ_0&=&\bigl\{a_{ij}:i\leq j,\,i\leq k,\,i+j\leq n\bigr\}\cup
    \bigl\{\ell_{ij}:i>j> k,\,i+j\leq n+1\bigr\}
    \\[.05in]&&
    \cup\,\bigl\{\ell_{ij}:i>j,\,j\leq k,\,i+j\leq n+1\bigr\}
    \cup\bigl\{\ell_{ij}:i\leq j,\,i+j\leq n+1\bigr\}.
  \end{array}
\end{equation*}
Denote the four sets on the right hand side of the above equation as
$S_1$, $S_2$, $S_3$, and $S_4$, respectively. For an element
$\ell_{ij}\in S_2$, we have
\begin{equation*}
  \frac{w(\pi_k(\ell_{ij}))}{w(\ell_{ij})}=\frac{w(\ell_{i,n+2-i})}{w(\ell_{ij})}
  =\frac{t_j}{t_{n+2-i}}=\frac{w(\pi_k(a_{j,i-1}))}{w(a_{j,i-1})}.
\end{equation*}
Therefore, 
\begin{equation*}
\begin{array}{rcl}
\displaystyle
\Bigl(\prod_{\alpha\in \cZ_0} \frac{w(\pi_k(\alpha))}{w(\alpha)}\Bigr)
\Big(\prod_{\substack{1\leq i < j\leq n\\ i+j> n+1}}\frac{L_i}{L_j}\Big)
\,\delta(s)  
&=&\displaystyle
\Bigl(\prod_{\alpha\in \cZ^*} \frac{w(\pi_k(\alpha))}{w(\alpha)}\Bigr)
\Bigl(\prod_{\alpha\in S_3} \frac{w(\pi_k(\alpha))}{w(\alpha)}\Bigr)
\Big(\prod_{\substack{1\leq i < j\leq n\\ i+j> n+1}}\frac{L_i}{L_j}\Big)
\,\delta(s)
\\[.025in]&=&\displaystyle
\Bigl(\prod_{\substack{i>j,\,j\leq k\\i+j\leq n+1}}\frac{L_{n+1-j}}{L_i}\Bigr)
\Big(\prod_{\substack{1\leq i < j\leq n\\ i+j> n+1}}\frac{L_i}{L_j}\Big)
\\[.25in]&=&\displaystyle
\prod_{\substack{1\leq i < j\leq n\\ i+j> n+1\\ j< n +1 - k}}\frac{L_i}{L_j}
\\[.25in]&\leq &1,
\end{array}
\end{equation*}
where the second equality follows from Lemma \ref{lem:pitrick1}, and the last inequality follows because the $L_i$'s are nondecreasing. 
\end{proof}

Proposition \ref{prop:NUX} and Lemmas \ref{lem:piknotinZ}
and \ref{lem:pitrick} thus yield the bound $$N(\LLM ,L,\cZ,X) \ll X^{n+1}\int_{1\ll s_1,\ldots,s_n\ll X^\Theta }d^\times s \ll_\epsilon X^{n+1+\epsilon}.$$  
We now work towards obtaining a power saving. 

\subsubsection{Strategy towards a power saving} 
In light of Proposition \ref{prop:NUX}, it is enough to have a bound of the form
\begin{equation}\label{eq:pitrickgood}
Y^{-\#\cZ}w(\cZ)^{-1}
\Bigl(\prod_{\substack{1\leq i < j\leq n\\ i+j> n+1}} \frac{L_i}{L_j}\Bigr)
\,\delta(s) \ll X^{-\delta}
\end{equation}
for some $\delta > 0$, for all $s\in T_\cZ(L,Y).$ By modifying $\pi_k$ on a certain subset of $\cZ$, we are able to obtain \eqref{eq:pitrickgood} except for some $s\in T_\cZ(L,Y)$ satisfying some special conditions. We then consider the contribution from these special $s$ using a different count for $\#\bigl((s(Y\D)\times s(Y\D))\cap \LLM \bigr)$.

More precisely, let $\cK_1 := \{a_{1j}: 1\leq j \leq g+2\}$. Then $\cK_1$ consists
exactly of those $\alpha\in \cK$ such that the exponent of every $s_i$
is negative in $w(\alpha)$. As such, one expects that the hardest case is when $\cZ = \cK_1$. We show first in Lemma \ref{lem:reducetoK1} how to reduce to considering only $\cZ\cap \cK_1$.

\begin{lemma}\label{lem:reducetoK1}
Let $\cZ\subset\cZ_1$ be saturated with $a_{11}\in\cZ$
, $a_{g+1,g+1}\not\in\cZ$ and $N(\LLM ,L,\cZ,X) > 0$. For any $\cZ'\subset\cK_1$ and any $s\in T'$, we write $$I(\cZ',s) = Y^{-\#\cZ'}
w(\cZ')^{-1}\prod_{i=1}^{g} s_i^{-(n+1)(g+2)}
\prod_{i=g+1}^{n-1} s_i^{-(n+1)(n-i)}.$$
Then for any $s\in T_\cZ(L,Y)$, we have 
$$Y^{-\#\cZ}w(\cZ)^{-1}
\Bigl(\prod_{\substack{1\leq i < j\leq n\\ i+j> n+1}} \frac{L_i}{L_j}\Bigr)
\,\delta(s)\ll  I(\cZ\cap\cK_1,s).$$
\end{lemma}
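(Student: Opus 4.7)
The plan is to imitate the proof of Lemma~\ref{lem:pitrick}, but first isolating the contribution of $\cZ\cap\cK_1$ before applying the map $\pi_k$. I take $k\in\{1,\ldots,g\}$ to be the largest integer with $a_{kk}\in\cZ$; such a $k$ exists because $a_{11}\in\cZ$ while $a_{g+1,g+1}\notin\cZ$.

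My first step is to apply Lemma~\ref{lem:piknotinZ} to each $\alpha\in\cZ\setminus\cK_1$: the bound $Yw(\pi_k(\alpha))\gg 1$ rearranges to $Y^{-1}w(\alpha)^{-1}\ll w(\pi_k(\alpha))/w(\alpha)$. Taking the product over $\alpha\in\cZ\setminus\cK_1$ gives
\begin{equation*}
Y^{-\#\cZ}\,w(\cZ)^{-1}\;\ll\;Y^{-\#(\cZ\cap\cK_1)}\,w(\cZ\cap\cK_1)^{-1}\prod_{\alpha\in\cZ\setminus\cK_1}\frac{w(\pi_k(\alpha))}{w(\alpha)}.
\end{equation*}
The first two factors on the right are precisely the $s$-independent piece of $I(\cZ\cap\cK_1,s)$, so the claim reduces to establishing
\begin{equation*}
\prod_{\alpha\in\cZ\setminus\cK_1}\frac{w(\pi_k(\alpha))}{w(\alpha)}\cdot\prod_{\substack{1\leq i<j\leq n\\i+j>n+1}}\frac{L_i}{L_j}\cdot\delta(s)\;\ll\;\prod_{i=1}^{g}s_i^{-(n+1)(g+2)}\prod_{i=g+1}^{n-1}s_i^{-(n+1)(n-i)}.
\end{equation*}

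My next step is to observe that every ratio $w(\pi_k(\alpha))/w(\alpha)$ for $\alpha\in\cZ_1$ is bounded below by a positive constant on $T'$: the ratio equals $t_i/t_{n+1-j}\geq 1$ for $\alpha=a_{ij}$; equals $t_j/t_{n+2-i}\geq 1$ for $\alpha=\ell_{ij}$ with $i\leq j$ or $j>k$; and equals $L_{n+1-j}/L_i\geq 1$ for $\alpha=\ell_{ij}$ with $i>j$ and $j\leq k$, by the monotonicity of the $L_i$ and the constraint $i+j\leq n+1$. Consequently I may enlarge $\cZ\setminus\cK_1$ to the full set $\cZ_1\setminus\cK_1$ in the displayed product without decreasing the left hand side (up to a constant).

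Finally, the reduced inequality for $\cZ\setminus\cK_1=\cZ_1\setminus\cK_1$ is verified by a direct computation following the proofs of Lemmas~\ref{lem:pitrick1} and~\ref{lem:pitrick}. Applying Lemma~\ref{lem:pitrick1} to the $\cK$-part of $\cZ_1$ together with the $i\leq j$ portion of $\cM$ in $\cZ_1$, the $t$-factors combine with $\delta(s)$ to give $1$, except that removing the $\cK_1$-contribution introduces an extra factor $\prod_{j=1}^{g+2}t_{n+1-j}/t_1$. The remaining $\ell_{ij}$-contributions with $i>j$ combine with $\prod_{i<j,\, i+j>n+1}L_i/L_j$ to yield a product of $L_i/L_j$ factors with $i\leq j$, which is $\leq 1$ by monotonicity and can be dropped. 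Converting the residual factor $\prod_{j=1}^{g+2}t_{n+1-j}/t_1$ to $s_i$-coordinates via $s_i^{-(n+1)}=t_{i+1}/t_i$ then produces exactly the target exponents: $-(n+1)(g+2)$ for $1\leq i\leq g$ and $-(n+1)(n-i)$ for $g+1\leq i\leq n-1$. The main technical obstacle lies in this last exponent verification, which is algebraic but requires careful accounting of the powers of $s_i$ arising from $t_1^{g+2}$ versus those from $t_{g+1}\cdots t_n$.
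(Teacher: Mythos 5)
Your proposal follows the paper's high-level strategy (pass the $\cZ\setminus\cK_1$ contribution through $\pi_k$ via Lemma~\ref{lem:piknotinZ}, isolate the $\cK_1$-part, then invoke the $\delta(s)^{-1}$ identity from Lemma~\ref{lem:pitrick1}), and your first reduction and the enlargement \emph{inequality} $\prod_{\alpha\in\cZ\setminus\cK_1}\ll\prod_{\alpha\in\cZ_1\setminus\cK_1}$ are both correct. The gap is in the final computation over $\cZ_1\setminus\cK_1$.

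Enlarging all the way to $\cZ_1\setminus\cK_1$ double-counts. The paper avoids this by applying Lemma~\ref{lem:pitrick} to $\cZ\cup\cK_1$, which internally enlarges only to $\cZ_0=S_1\cup S_2\cup S_3\cup S_4$ with $\cK$-part $S_1=\{a_{ij}:i\leq j,\,i\leq k,\,i+j\leq n\}$; the $S_2$ contributions ($\ell_{ij}$ with $k<j<i$, $i+j\leq n+1$), which are $t$-ratios $t_j/t_{n+2-i}$, are then rewritten as the $\cK$-ratios $\tfrac{w(\pi_k(a_{j,i-1}))}{w(a_{j,i-1})}$, exactly filling the gap $\{a_{ij}:k<i\leq j,\,i+j\leq n\}=\cZ^*_\cK\setminus S_1$. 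After this conversion the product runs over $\cZ^*\cup S_3$ and Lemma~\ref{lem:pitrick1} plus the $L$-ratio identity closes the argument. Your set $\cZ_1\setminus\cK_1$, by contrast, already contains all of $\cZ^*_\cK\setminus\cK_1\supset\cZ^*_\cK\setminus S_1$ \emph{and} all of $S_2$, and these produce two equal copies of a factor that is $\gg 1$.

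Concretely, after applying Lemma~\ref{lem:pitrick1} as you describe, the leftover is not just the $S_3$ factors but
\begin{equation*}
\Bigl(\prod_{\ell_{ij}\in S_2}\frac{t_j}{t_{n+2-i}}\Bigr)\Bigl(\prod_{\ell_{ij}\in S_3}\frac{L_{n+1-j}}{L_i}\Bigr)\Bigl(\prod_{\substack{1\leq i<j\leq n\\ i+j>n+1}}\frac{L_i}{L_j}\Bigr),
\end{equation*}
and only the last two factors combine (into $\prod_{i<j,\,i+j>n+1,\,j<n+1-k}L_i/L_j\leq 1$). Your assertion that \emph{all} the $\ell_{ij}$-contributions with $i>j$ ``combine with $\prod L_i/L_j$ to yield a product of $L_i/L_j$ factors'' is false for $S_2$: those are $t$-ratios, not $L$-ratios, each equal to $\prod_{m=j}^{n+1-i}s_m^{n+1}\gg 1$. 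Taking $L_1=\cdots=L_n$ (allowed) makes the $S_3$/$L$-ratio product exactly $1$, so your claimed inequality then fails by the uncancelled factor $\prod_{S_2}(t_j/t_{n+2-i})$, which grows without bound as the $s_m$ grow within $T_\cZ(L,Y)$. To fix the argument, enlarge only to $\cZ_0\setminus\cK_1$ and perform the $S_2\to\cK$ conversion before invoking Lemma~\ref{lem:pitrick1} (or simply invoke Lemma~\ref{lem:pitrick} directly on $\cZ\cup\cK_1$ as the paper does).
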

We then prove in Lemma \ref{lem:ZK1bound} the following bound for $I(\cZ\cap \cK_1)$ when $\cZ\cap\cK_1$ is a proper subset of $\cK_1$, which gives a bound of the form \eqref{eq:pitrickgood} when $s_n\ll Y^{1/2-\delta}$.

\begin{lemma}\label{lem:ZK1bound}
Let $\cZ\subset\cZ_1$ be saturated with $a_{11}\in\cZ$
, $a_{g+1,g+1}\not\in\cZ$ and $N(\LLM ,L,\cZ,X) > 0$. Suppose $\cZ\cap\cK_1\neq \cK_1$. For any $s\in T_\cZ(L,Y)$, if $I(\cZ\cap\cK_1,s) \gg Y^{-2\delta}$, then
$s_n\gg Y^{1/2 - \delta}.$
\end{lemma}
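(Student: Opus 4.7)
The plan is to isolate the $s_n$-dependence of $I(\cZ \cap \cK_1, s)$ and reduce the statement to a bound on a residual factor independent of $s_n$. Since $a_{11} \in \cZ$ but $\cZ \cap \cK_1 \neq \cK_1$, there is a minimal integer $k \in \{2, \ldots, g+2\}$ with $a_{1k} \notin \cZ$, and saturation of $\cZ$ under $\lesssim$ forces $\cZ \cap \cK_1 = \{a_{1,1}, \ldots, a_{1,k-1}\}$; in particular $\#(\cZ \cap \cK_1) = k-1$ and $w(\cZ \cap \cK_1)^{-1} = \prod_{j=1}^{k-1}(t_1 t_j)$.

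For $1 \leq j \leq k-1 \leq g+1 < n$, the formula $t_j = \prod_{i=1}^{j-1} s_i^{-i} \prod_{i=j}^{n} s_i^{n+1-i}$ shows $t_j = s_n \cdot u_j$, where $u_j$ is a Laurent monomial in $s_1, \ldots, s_{n-1}$; similarly $t_1 = s_n \cdot u_1$. Hence $\prod_{j=1}^{k-1}(t_1 t_j) = s_n^{2(k-1)} \cdot u_1^{k-1} \prod_j u_j$, giving the clean factorization
$$
I(\cZ \cap \cK_1, s) \;=\; \Bigl(\frac{s_n^2}{Y}\Bigr)^{k-1} \cdot C,
$$
where $C := u_1^{k-1} \prod_{j=1}^{k-1} u_j \cdot \prod_{i=1}^{g} s_i^{-(n+1)(g+2)} \prod_{i=g+1}^{n-1} s_i^{-(n+1)(n-i)}$ is independent of $s_n$. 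Granted the claim that $C \ll 1$ uniformly for $s \in T_\cZ(L, Y)$, the lemma follows at once: $I(\cZ \cap \cK_1, s) \gg Y^{-2\delta}$ forces $(s_n^2/Y)^{k-1} \gg Y^{-2\delta}$, and hence $s_n \gg Y^{1/2 - \delta/(k-1)} \geq Y^{1/2 - \delta}$, since $k-1 \geq 1$.

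It remains to show $C \ll 1$. A direct computation gives the exponent $e_i$ of $s_i$ in $C$, which naturally splits into three regimes: $1 \leq i \leq k-2$, $\max(1,k-1) \leq i \leq g$, and $g+1 \leq i \leq n-1$. In the first two regimes, a short algebraic optimization over $i$ (maximizing the linear-in-$i$ expressions and checking at the endpoints) shows $e_i \leq 0$, so $s_i^{e_i} \ll 1$ follows from $s_i \geq c > 0$. In the third regime, however, $e_i = (n+1)(2k-n-2) + (n+3-2k) i$ is strictly positive for certain pairs $(k,i)$, most dramatically when $k = g+2$, where $e_i = i$ for $g+1 \leq i \leq n-1$. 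To dispose of these positive contributions, we use the full saturation of $\cZ$: the hypotheses $a_{1k} \notin \cZ$ and $a_{g+1,g+1} \notin \cZ$ together force $a_{ij} \notin \cZ$, and hence $t_i t_j \ll Y$, for every $(i,j)$ with $i \leq j$ and either $j \geq k$ or $i,j \geq g+1$. Each such inequality is a monomial bound in the $s_\ell$'s, and a suitable product combination of these bounds dominates the positive-exponent part of $C$, yielding $C \ll 1$.

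The main obstacle is precisely this last step: one must identify, for each admissible pair $(k,g)$, the correct combination of saturation inequalities whose monomial exponents exactly cover the positive $e_i$'s arising in Case 3, and verify that the resulting bound on $C$ is uniform over $s \in T_\cZ(L,Y)$. Once this bookkeeping is carried out, the factorization immediately delivers the desired lower bound $s_n \gg Y^{1/2-\delta}$.
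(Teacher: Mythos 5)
Your factorization $I(\cZ\cap\cK_1,s)=(s_n^2/Y)^{k-1}\cdot C$ with $C$ independent of $s_n$ is correct algebra, and the reduction of the lemma to the claim $C\ll 1$ is sound logic; the claim even holds when $\#(\cZ\cap\cK_1)=1$. But for $\#(\cZ\cap\cK_1)=k-1\geq 2$ the claim $C\ll 1$ is simply false on the relevant domain, and this is a genuine gap rather than deferred bookkeeping. In the extreme case $k-1=g+1$ one finds $I(\cZ\cap\cK_1,s)=Y^{-(g+1)}t_1^{-1}t_{g+1}t_{n+1}^{-1}$, so that $C=\prod_{i=1}^{g}s_i^{\,i-n-1}\prod_{i=g+1}^{n-1}s_i^{\,i}$ --- exactly the positive exponents $e_i=i$ you flag in your third regime. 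These cannot be absorbed by the inequalities you propose: the only upper bounds available on monomials in $s_{g+1},\ldots,s_{n-1}$ come from $a_{ij}\notin\cZ$, i.e.\ $t_it_j\ll Y$, and each such inequality carries a factor of $Y$ and a nontrivial power of $s_n$ ($s_n^2$ when $i,j\leq n$), which destroys the $Y$- and $s_n$-independence of $C$; the only $\gg Y$ inequalities available to cancel those $Y$'s are $t_1t_j\gg Y$ for $j\leq k-1$, and every $Y$-free ratio $t_it_j/(t_1t_{j'})\ll 1$ one can form from the two families is a product of ratios $t_{\mathrm{larger\ index}}/t_{\mathrm{smaller\ index}}$, already implied by $s\in T'$ and therefore useless for bounding $\prod_{i>g}s_i^{\,i}$ from above. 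Concretely, for $g=1$, $n=4$, the point $t_1=Y^{0.6}$, $t_2=Y^{0.45}$, $t_3=Y^{0.4}$, $t_4=Y^{-0.7}$, $t_5=Y^{-0.75}$ satisfies $t_1^2, t_1t_2\gg Y$ and $t_1t_3, t_2^2\ll Y$ (so $\cZ\cap\cK_1=\{a_{11},a_{12}\}$, $k-1=g+1=2$) together with $s_i\geq 1$, yet gives $C=s_1^{-4}s_2^2s_3^3\asymp Y^{0.56}$.

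The repair is to abandon the requirement that the residual factor be free of $Y$ and $s_n$. The paper multiplies $I(\cZ\cap\cK_1,s)$ by the quantities $(Yw(a_{1,k}))^{k-2}\gg 1$, and in the top case $k-1=g+1$ also by $Yw(a_{g+1,g+1})\cdot(Yw(a_{1,g+2}))^{g-1}\gg 1$ --- all legitimate since $a_{1,k}$ and $a_{g+1,g+1}$ lie outside $\cZ$. The resulting product is a single monomial (in the top case $Y^{-1}t_1^{-g}t_{g+1}^{-1}t_{g+2}^{-(g-1)}t_{n+1}^{-1}$) whose $s_i$-exponents are nonpositive for all $i<n$ and whose $s_n$-exponent is exactly $2$. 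This yields only the weaker bound $I(\cZ\cap\cK_1,s)\ll Y^{-1}s_n^2$ rather than your $(s_n^2/Y)^{k-1}$, but it is exactly enough: $I\gg Y^{-2\delta}$ then forces $s_n^2\gg Y^{1-2\delta}$, i.e.\ $s_n\gg Y^{1/2-\delta}$.
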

In the case where $s_n\gg Y^{1/2-\delta}$, the Haar measure turns out to be very small so we may simply ignore the singularity condition of $B$ and prove the following bound.

\begin{lemma}\label{lem:ZK1boundnosing}
Let $\cZ\subset\cZ_1$ be saturated with $a_{11}\in\cZ$
, $a_{g+1,g+1}\not\in\cZ$ and $N(\LLM ,L,\cZ,X) > 0$. Suppose $\cZ\cap\cK_1\neq \cK_1$. 
Then for any $s\in T_\cZ(L,Y)$ with $s_n\gg Y^{1/2-\delta}$,
$$\#\bigl((s(Y\D)\times s(Y\D))\cap W(\Z)\bigr)\,\delta(s) \ll Y^{(\frac{n}{2}+2)(n+1)+n(2n^2+9n+9)\delta}.$$
\end{lemma}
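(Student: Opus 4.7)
The plan is to bound the counts $\#(s(Y\D)\cap S(\Z))$ for both $A$ and $B$ directly via Davenport's lemma (Proposition~\ref{prop-davenport}), then multiply by the explicit form of $\delta(s)$. Writing $\tau_j = \log_Y t_j$ and $u_{ij} = 1 - \tau_i - \tau_j$, Davenport gives $\#(s(Y\D)\cap S(\Z)) \ll \prod_{i\leq j}\max(Yw(a_{ij}),1) = Y^{\sum_{i\leq j}u_{ij}^+}$. Using $\sum_j \tau_j = 0$ yields $\sum_{i\leq j}u_{ij} = (n+1)(n+2)/2$, so the count equals $Y^{(n+1)(n+2)/2 + V}$ where $V:=\sum_{u_{ij}<0}|u_{ij}|$ captures the savings from vanishing. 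Since $w(a_{ij})=w(b_{ij})$, the same bound applies to $\#B$, giving $\#A\cdot\#B \ll Y^{(n+1)(n+2)+2V}$.

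Next, I invoke the Iwasawa factorization $\delta(s) = \prod_{k=1}^n s_k^{-(n+1)k(n+1-k)}$, in which $s_n$ appears with exponent $-n(n+1)$. The hypothesis $s_n \gg Y^{1/2-\delta}$ gives $s_n^{-n(n+1)} \ll Y^{-n(n+1)/2+n(n+1)\delta}$, while the remaining factors $s_k^{-(n+1)k(n+1-k)}$ with $k<n$ are all $\ll 1$ since $s_k\gg 1$ on $T'$. Substituting,
\begin{equation*}
\#A\cdot\#B\cdot\delta(s)\ll Y^{(n+1)(n+2)-n(n+1)/2 + 2V + n(n+1)\delta} = Y^{(n/2+2)(n+1) + 2V + n(n+1)\delta}.
\end{equation*}
It therefore suffices to establish $2V \leq 2n(n+2)^2\delta$ under all the stated hypotheses, since $n(2n^2+9n+9) = n(n+1) + 2n(n+2)^2$.

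The main obstacle is controlling $V$. Since $\tau_{g+1}<1/2$ (from $a_{g+1,g+1}\notin\cZ$) and the $\tau_j$ are nonincreasing, any pair $(i,j)$ with $u_{ij}<0$ (i.e., $\tau_i+\tau_j>1$) must have $i\leq g$ and $j\leq g+1$; hence at most $g(g+1)$ pairs can contribute to $V$. The saturated condition on $\cZ$ together with $\cZ\cap\cK_1\neq\cK_1$ gives $\tau_1+\tau_{g+2}\leq 1$, which forces $\tau_{g+2}\leq 1-\tau_1<1/2$ and (by the ordering $\tau_j\geq\tau_{g+2}$ for $j\leq g+1$) caps each contribution $\tau_i+\tau_j-1$ in terms of the gap $\tau_j-\tau_{g+2}$. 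The key input is then the hypothesis $\tau_n-\tau_{n+1}\geq(n+1)(1/2-\delta)$: combined with $\sum\tau_k = 0$, the ordering, and $\tau_j<1/2$ for $j\geq g+1$, this pins $\tau_{n+1}$ very negative and constrains the gaps $\tau_j-\tau_{g+2}$ for $j\leq g+1$ to be $O(\delta)$. Summing these $O(n^2)$ bounded contributions gives $V=O(n^2\delta)$, which yields the required bound after absorbing all constants into the coefficient $2n(n+2)^2$, and completes the proof.
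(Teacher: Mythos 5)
Your proposal is correct and follows the same skeleton as the paper's proof — Davenport's lemma for the lattice-point count, the explicit Iwasawa factor $\delta(s)=\prod_k s_k^{-(n+1)k(n+1-k)}$, and the hypothesis $s_n\gg Y^{1/2-\delta}$ feeding the factor $s_n^{-n(n+1)}$ — but it organizes the box-volume estimate differently. The paper derives pointwise upper bounds $t_j^{-1}\ll Y^{-1/2+(n+1)\delta}$ for $j\leq n$ and $t_{n+1}^{-1}\ll Y^{n/2+n\delta}$ from the single product inequality $1\ll Y^{g+1}w(a_{1,n})\cdots w(a_{g+1,g+2})=Y^{g+1}\prod_i s_i^{-i}$ (the antidiagonal coordinates lie outside $\cZ_1$), and then multiplies the resulting bounds on each $Yw(a_{ij})$; notably it never uses $a_{g+1,g+1}\notin\cZ$ or $\cZ\cap\cK_1\neq\cK_1$ in this lemma. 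You instead compute $\prod_{i\leq j}Yw(a_{ij})=Y^{(n+1)(n+2)/2}$ exactly from $\prod_j t_j=1$ and bound the excess $V$ coming from coordinates whose range drops below $1$, which forces you to invoke $\tau_{g+1}\leq\frac12$ and $\tau_1+\tau_{g+2}\leq1$ to control the contributing pairs. Both routes reduce to the same fact — that all $\tau_j$ for $j\leq n$ are pinned within $O(n\delta)$ of $\frac12$ — and your constants check out: $2V\leq n(n+1)\cdot(2n+2)\delta=2n(n+1)^2\delta\leq 2n(n+2)^2\delta$, consistent with $n(2n^2+9n+9)=n(n+1)+2n(n+2)^2$.

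Two caveats on the write-up rather than the mathematics. First, the claim that contributing pairs have $j\leq g+1$ is justified only by $\tau_1+\tau_{g+2}\leq 1$, which you introduce in the following sentence; as stated ($\tau_{g+1}<\frac12$ plus monotonicity) it only gives $i\leq g$. Second, the crux — that $\tau_{n+1}$ is forced down to $\approx -n/2$ and hence $\tau_n\geq\frac12-(n+1)\delta$, so that each gap $\tau_j-\tau_{g+2}$ is $\leq(2n+2)\delta$ — is asserted rather than derived; it does follow (from $\sum_{k\leq n}\tau_k\leq(g+1)(\tau_1+\tau_{g+2})\leq n/2$ combined with $2\tau_n+\sum_{k<n}\tau_k\geq(n+1)(\frac12-\delta)$), but this chain is the entire content of the lemma and should be written out. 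Also note the gaps are $O(n\delta)$, not $O(\delta)$, so $V=O(n^3\delta)$ rather than $O(n^2\delta)$; this is harmless since the target coefficient $2n(n+2)^2$ is of order $n^3$, but the displayed constant must actually be verified rather than ``absorbed.''
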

Therefore, by taking $\delta = (n-2)/(4n^2 + 14n + 4)$, we obtain the following result from Proposition~\ref{prop:NUX} and Lemmas \ref{lem:reducetoK1}, \ref{lem:ZK1bound}, and \ref{lem:ZK1boundnosing}:

\begin{proposition}\label{prop:cZnotcK}
Let $\cZ\subset\cZ_1$ be saturated with $a_{11}\in\cZ$
, $a_{g+1,g+1}\not\in\cZ$ and $N(\LLM ,L,\cZ,X) > 0$.  Suppose $\cZ\cap\cK_1\neq \cK_1$. Then 
\begin{equation*}
N(\LLM ,L,\cZ,X)\ll X^{n+1-\frac{n-2}{2(n+1)(n^2 + 7n + 7)}}.
\end{equation*}
\end{proposition}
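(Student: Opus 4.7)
The plan is to synthesize Proposition~\ref{prop:NUX} with the three preceding lemmas via a dichotomy on $T_\cZ(L,Y)$. Since $\LLM \subset \LL1$, Proposition~\ref{prop:NUX} gives
$$N(\LLM, L, \cZ, X) \ll X^{n+1} \int_{T_\cZ(L,Y)} Y^{-\#(\cZ \cap \cK)} w(\cZ)^{-1} \prod_{\substack{1\leq i<j\leq n\\ i+j>n+1}} \frac{L_i}{L_j}\, \delta(s)\, d^\times s,$$
and Lemma~\ref{lem:reducetoK1} then bounds the integrand pointwise by $I(\cZ \cap \cK_1, s)$. Because $\cZ \cap \cK_1$ is a proper subset of $\cK_1$ by hypothesis, Lemmas~\ref{lem:ZK1bound} and~\ref{lem:ZK1boundnosing} are both available.

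Fix $\delta := (n-2)/(4n^2+14n+4)$ and partition $T_\cZ(L,Y)$ according to whether $I(\cZ \cap \cK_1, s) \ll Y^{-2\delta}$ or $I(\cZ \cap \cK_1, s) \gg Y^{-2\delta}$. On the first subregion, the integrand from Proposition~\ref{prop:NUX} is pointwise at most $Y^{-2\delta}$; by Lemma~\ref{lemsbound} each $s_i$ is confined to $[c, X^\Theta]$, so the $d^\times s$-volume of $T_\cZ(L,Y)$ is $O((\log X)^n) = O(X^\epsilon)$, yielding a contribution of $O(X^{n+1 - 2\delta/(n+1) + \epsilon})$. On the second subregion, Lemma~\ref{lem:ZK1bound} forces $s_n \gg Y^{1/2-\delta}$, and there I would discard the singularity count used by Proposition~\ref{prop:NUX} and instead invoke Lemma~\ref{lem:ZK1boundnosing} directly, which bounds $\#\bigl((s(Y\D) \times s(Y\D)) \cap W(\Z)\bigr)\, \delta(s)$ by $Y^{(n/2+2)(n+1) + n(2n^2+9n+9)\delta}$; integrating this against $d^\times s$ over the remaining log-box produces $O(Y^{(n/2+2)(n+1) + n(2n^2+9n+9)\delta + \epsilon})$.

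It then remains to check that both bounds fit below the target $X^{n+1 - (n-2)/(2(n+1)(n^2+7n+7))}$. Converting to the $Y$-exponent and using the identity $(n+1)^2 - (n/2+2)(n+1) = (n+1)(n-2)/2$, the second-subregion estimate reduces to the polynomial inequality
$$n(2n^2+9n+9)(n^2+7n+7) \leq (n^3+8n^2+14n+6)(2n^2+7n+2),$$
whose difference equals $7n+12 \geq 0$; meanwhile the first subregion is not binding, since it produces a strictly larger saving. The main obstacle is therefore the calibration of $\delta$: smaller values weaken the saving in the first subregion, larger values inflate the loss in the second, and the displayed polynomial identity is what makes the balance feasible and pins down the stated value of $\delta$.
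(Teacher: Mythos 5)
Your proof is correct and follows essentially the same route as the paper's: after reducing to $I(\cZ\cap\cK_1,s)$ via Proposition~\ref{prop:NUX} and Lemma~\ref{lem:reducetoK1}, one splits $T_\cZ(L,Y)$ according to the size of $I(\cZ\cap\cK_1,s)$, invokes Lemma~\ref{lem:ZK1bound} to localize the bad part to $s_n\gg Y^{1/2-\delta}$, and applies Lemma~\ref{lem:ZK1boundnosing} there; the choice $\delta=(n-2)/(4n^2+14n+4)$ and the polynomial inequality you display (with remainder $7n+12$) are exactly what makes the two halves close. One small inaccuracy: with $\delta=(n-2)/(4n^2+14n+4)$ the two subregions contribute the \emph{same} exponent saving $\frac{n-2}{(n+1)(2n^2+7n+2)}$ (that is precisely the balancing that pins down $\delta$: one checks $(n+1)(2n^2+7n+2)-n(2n^2+9n+9)=2$), so the first subregion is not "strictly" less binding than the second; rather, both produce a saving that strictly exceeds the target $\frac{n-2}{2(n+1)(n^2+7n+7)}$, which is what absorbs the $X^\epsilon$ from the $O(\log^n X)$ volume. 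This does not affect the validity of the argument.
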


We next handle the case $\cK_1\subset\cZ$. We give necessary conditions in Lemma \ref{lem:sisn-iclose} on $s$ so that a bound of the form \eqref{eq:pitrickgood} does not hold.
\begin{lemma}\label{lem:sisn-iclose}
Let $\cZ\subset\cZ_1$ be saturated with $a_{11}\in\cZ$
, $a_{g+1,g+1}\not\in\cZ$ and $N(\LLM ,L,\cZ,X) > 0$. Suppose $\cK_1\subset\cZ$. 
For any $s\in T_\cZ(L,Y)$, if $I(\cK_1,s)\gg X^{-\delta}$, then
\begin{equation}\label{eq:siconditions}
\begin{array}{rcccl}
\displaystyle Y^{-\delta} &\ll&\displaystyle   \frac{s_i}{s_{n-i}} &\ll& Y^{\delta},\quad\mbox{for }\quad i=1,\ldots,g-1
\\[.05in]\displaystyle Y^{1/2-(g/2)         \delta}\RR^{-1}&\ll&s_g&\ll& Y^{1/2+3g\delta}\RR^{-1}
\\[.075in]\displaystyle
1&\ll&s_{g+1}&\ll&Y^{\delta}
\\[.05in]\displaystyle Y^{1/2-\delta}\RR^{-1}&\ll&s_{g+2}&\ll& Y^{1/2+g\delta}\RR^{-1}
\end{array}
\end{equation}
where $$\RR = \prod_{i=g+3}^n s_i \ll Y^{1/2+3g\delta}.$$
\end{lemma}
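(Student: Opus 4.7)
The plan is to pass to logarithmic variables $\tau_i := \log_Y t_i$ and $\sigma_i := \log_Y s_i$, and exploit a natural pairing of the indices $\{1,\ldots,n\}$. My first step would be to rewrite $I(\cK_1,s)$ in closed form. Using $w(a_{1j})^{-1} = t_1 t_j$ together with the fact that $\prod_i s_i^{-(n+1)a_i}$ (where $a_i = g+2$ for $i \leq g$, $a_i = n-i$ for $g+1 \leq i \leq n-1$, and $a_n = 0$) telescopes to $t_1^{-(g+2)}\prod_{i=g+1}^{n} t_i$, and recalling $\prod_{i=1}^{n+1} t_i = 1$, a direct calculation gives
$$I(\cK_1, s) \;=\; \frac{t_{g+1}\,t_{g+2}}{Y^{g+2}\,t_{n+1}}, \qquad\text{so}\qquad \log_Y I(\cK_1,s) \;=\; \tau_{g+1} + \tau_{g+2} - \tau_{n+1} - (g+2).$$
I would then extract from $s \in T_\cZ(L,Y)$ the following asymptotic constraints on the $\tau_i$: from $\cK_1 \subset \cZ$, that $\tau_1 + \tau_j \geq 1$ for $j = 1,\ldots,g+2$; from $\cZ \subset \cZ_1$ together with saturation, that $\tau_i + \tau_j \leq 1$ whenever $i \leq j$ and $i+j > n$ (since such $a_{ij}$ lies outside $\cZ_1$, hence outside $\cZ$); from the explicit hypothesis $a_{g+1,g+1} \notin \cZ$, that $\tau_{g+1} \leq 1/2$; together with monotonicity $\tau_1 \geq \cdots \geq \tau_{n+1}$ and $\sum_i \tau_i = 0$.

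The key observation is the pairing $\{i,\,n+1-i\}$ for $i=1,\ldots,g+1$, which partitions $\{1,\ldots,n\}$ into exactly $g+1$ pairs while leaving $\tau_{n+1}$ unpaired. Setting $\epsilon_i := 1 - (\tau_i + \tau_{n+1-i}) \geq 0$ to be the slack in the $i$-th pair, the sum-zero relation yields $-\tau_{n+1} = (g+1) - \sum_{i=1}^{g+1}\epsilon_i$. Substituting into the closed-form expression for $\log_Y I$ produces the clean identity
$$\log_Y I(\cK_1, s) \;=\; -\sum_{i=1}^{g}\epsilon_i \;-\; 2\,\epsilon_{g+1},$$
where the double weight on the middle pair $(g+1,g+2)$ reflects the fact that $\tau_{g+1}+\tau_{g+2}$ appears directly in $\log_Y I$. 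Under the hypothesis $\log_Y I \geq -(n+1)\delta$ (equivalent to $I(\cK_1,s)\gg X^{-\delta}$), this immediately forces $\sum_{i=1}^g \epsilon_i + 2\epsilon_{g+1} \leq (n+1)\delta$, so every individual pair slack is $O(\delta)$.

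From these slack estimates, each conclusion in \eqref{eq:siconditions} falls out by a telescoping calculation. Expanding $\sigma_k = (\tau_k - \tau_{k+1})/(n+1)$ and applying the pair relations for indices $i$ and $i+1$ yields $\sigma_i - \sigma_{n-i} = (\epsilon_{i+1}-\epsilon_i)/(n+1) = O(\delta)$, the symmetry claim for $i=1,\ldots,g-1$. For the bound on $s_{g+1}$, one combines $\tau_{g+1} \leq 1/2$ with $\tau_{g+1} \geq (1-\epsilon_{g+1})/2$ (from monotonicity applied within pair $g+1$) to obtain $\sigma_{g+1} = (\tau_{g+1}-\tau_{g+2})/(n+1) \leq \epsilon_{g+1}/(n+1) = O(\delta)$. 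The telescopes $\sigma_g + \log_Y \RR = (\tau_g + \tau_{g+3} - \tau_{g+1} - \tau_{n+1})/(n+1)$ and $\sigma_{g+2} + \log_Y \RR = (\tau_{g+2} - \tau_{n+1})/(n+1)$, into which one substitutes the pair identities $\tau_g + \tau_{g+3} = 1-\epsilon_g$ and $\tau_{g+1}+\tau_{g+2}=1-\epsilon_{g+1}$ along with the already-derived bounds on $\tau_{g+1}$ and $\tau_{n+1}$, yield the two-sided bounds on $s_g\RR$ and $s_{g+2}\RR$; the upper bound on $\RR$ itself drops out of $\log_Y \RR = (\tau_{g+3}-\tau_{n+1})/(n+1)$ together with $\tau_{g+3} \leq \tau_{g+2} \leq 1/2$ and $\tau_{n+1} \geq -(g+1)$. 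The main obstacle is purely combinatorial bookkeeping: tracking which pair each index belongs to, handling the asymmetric role of the unpaired $\tau_{n+1}$, isolating the middle pair because of the extra constraint $\tau_{g+1} \leq 1/2$, and verifying that the various numerical slack constants can be absorbed into the stated exponents $(g/2)\delta$, $3g\delta$, etc., by adjusting the implicit constant in ``$\gg$''.
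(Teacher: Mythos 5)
Your reformulation in logarithmic coordinates $\tau_i = \log_Y t_i$, $\sigma_i = \log_Y s_i$ with the pair-slack variables $\epsilon_i := 1 - (\tau_i + \tau_{n+1-i})$ is a genuinely different and in fact cleaner route than the paper's. The paper works by multiplying $I(\cK_1,s)$ by various powers of $Yw(a_{g+1,g+1})$ and of the products $J_k$, then peeling off individual $s_i$'s through a chain of inequalities (which for $s_g$ also requires first establishing the lower bounds on $s_{g+1}$ and $s_{g+2}$ and the upper bound on $s_g^g s_{g+1}^{g+1} s_{g+2}^{g+2}$). Your identity $\log_Y I(\cK_1,s) = -\sum_{i=1}^g\epsilon_i - 2\epsilon_{g+1}$ packages all the constraints into a single linear relation, and every conclusion then falls out of telescoping. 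Each step you sketch is correct: $a_{i,n+1-i}\notin\cZ_1\supset\cZ$ gives $\epsilon_i\geq 0 - o(1)$; the sum-zero relation gives $-\tau_{n+1}=(g+1)-\sum\epsilon_i$; the identity follows; and the hypothesis $I(\cK_1,s)\gg X^{-\delta}=Y^{-(n+1)\delta}$ then forces $\sum_{i=1}^g\epsilon_i + 2\epsilon_{g+1}\leq (n+1)\delta + o(1)$. The telescopes $\sigma_i-\sigma_{n-i}=(\epsilon_{i+1}-\epsilon_i)/(n+1)$, $\sigma_{g+1}\leq\epsilon_{g+1}/(n+1)$, $\sigma_{g+2}+\log_Y\RR=(\tau_{g+2}-\tau_{n+1})/(n+1)$, and $\sigma_g+\log_Y\RR=((\tau_g+\tau_{g+3})-\tau_{g+1}-\tau_{n+1})/(n+1)$ all check out and give the two-sided estimates; the $o(1)$ errors are absorbed into the implied constants via $Y^{O(1/\log Y)}=O(1)$.

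Two things worth noting. First, on several bounds your coefficients are actually sharper than the paper's: you get $\sigma_{g+1}\leq\delta/2$ and $\sigma_g+\log_Y\RR\leq 1/2+\delta/4$, whereas the paper's chain of substitutions yields $s_{g+1}\ll Y^{c\delta}$ with $c>1$ for small $g$ (so the stated exponent $\delta$ is not actually reached by the paper's own proof either). Second, for the lower bound on $s_g$ your method gives $\sigma_g+\log_Y\RR\geq 1/2-2\delta$, uniformly in $g$; the paper's own manipulation gives $1/2-(g/2+1/2+1/g)\delta$, which is weaker than yours for every $g$; and neither matches the stated $1/2-(g/2)\delta$ when $g\leq 3$. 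Since the authors explicitly remark that the $\delta$-coefficients were "chosen to make the formula look nice" and are not optimal, this appears to be an imprecision in the statement rather than a gap in your argument; still, if you want to present this proof you should either adjust the stated coefficients or note that your proof yields the (slightly different, and mostly stronger) bounds you actually derive.
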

Note that the coefficients of $\delta$ in the exponents in the above bounds are not optimal and are simply chosen to make the formula look nice. The optimal coefficients can be obtained from the proof.

When $s$ satisfies \eqref{eq:siconditions}, we give further conditions in Lemma \ref{lem:crazyg} on $s$ so that simply using the Haar measure and ignoring the singularity condition by counting all symmetric matrices is not enough for a power saving. 

\begin{lemma}\label{lem:crazyg}
Let $\cZ\subset\cZ_1$ be saturated with $a_{11}\in\cZ$
, $a_{g+1,g+1}\not\in\cZ$ and $N(\LLM ,L,\cZ,X) > 0$. Suppose $\cK_1\subset\cZ$. For any $s\in T_\cZ(L,Y)$, if $$I(\cK_1,s)\gg X^{-\delta},\qquad \mbox{and} \qquad \#\bigl((s(Y\D)\times s(Y\D))\cap W(\Z)\bigr)\,\delta(s)\gg X^{n+1-\delta},$$
then 
\begin{equation}\label{eq:s_iwtf}
    s_i \ll Y^{258g^3\delta}\qquad\mbox{ for }\quad i = g+3,\ldots,n.
\end{equation}
\end{lemma}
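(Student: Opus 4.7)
The plan is to extract from hypothesis 2, via Proposition \ref{prop-davenport}, an additional polynomial inequality among the $s_i$'s, which when combined with the values of $s_1,\ldots,s_{g+2}$ already supplied by Lemma \ref{lem:sisn-iclose} (from hypothesis 1), will force upper bounds on each individual $s_i$ for $i\geq g+3$.

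First, since pairs $(A,B)$ in $W(\Z)=S(\Z)\times S(\Z)$ range independently over $s(Y\D)\cap S(\Z)$, applying Proposition \ref{prop-davenport} to each factor gives
$$\#\bigl((s(Y\D)\times s(Y\D))\cap W(\Z)\bigr)\;\ll\;\Bigl(\prod_{\alpha\in\cK\setminus(\cZ\cap\cK)}Yw(\alpha)\Bigr)^{2}\;=\;Y^{(n+1)(n+2)}\prod_{\alpha\in\cZ\cap\cK}(Yw(\alpha))^{-2},$$
where the equality uses the identity $\prod_{\alpha\in\cK}Yw(\alpha)=Y^{(n+1)(n+2)/2}$, itself a consequence of $\prod_i t_i=1$. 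Substituting this upper bound into hypothesis 2 rearranges into
$$\delta(s)\prod_{\alpha\in\cZ\cap\cK}(Yw(\alpha))^{-2}\;\gg\;Y^{-(n+1)-(n+1)\delta}.$$
Expanding the $\cK_1$-contribution of the product on the left using $w(a_{1j})=t_1^{-1}t_j^{-1}$ together with $\prod_j t_j=1$ (valid since $\cK_1\subset\cZ$), and writing out $\delta(s)=\prod_k t_k^{2(k-g-2)}$ (valid for $n=2g+2$), the inequality becomes an explicit polynomial inequality among the $t_k$'s (equivalently, among the $s_i$'s).

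Next, I substitute the values of $s_1,\ldots,s_{g+2}$ from \eqref{eq:siconditions}---namely $s_g,s_{g+2}\asymp Y^{1/2}\RR^{-1}$ up to $Y^{O(g\delta)}$, $s_{g+1}\ll Y^{\delta}$, and $s_i\asymp s_{n-i}$ for $1\leq i\leq g-1$ up to $Y^{\delta}$---so that the inequality reduces to a constraint involving $\RR=\prod_{i=g+3}^n s_i$ and the individual $s_i$ for $i\geq g+3$, with compounded tolerance $Y^{O(g^2\delta)}$. Together with $\RR\ll Y^{1/2+3g\delta}$ from Lemma \ref{lem:sisn-iclose} and $s_i\geq 1$ for each $i$, this forces each $s_i$ for $i\geq g+3$ to satisfy $s_i\ll Y^{Cg^3\delta}$. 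The main obstacle is the careful bookkeeping of the many exponents and tolerance factors: the exponents of individual $s_i$ in $\delta(s)$ scale like $O(g^2)$, and the $Y^{O(g\delta)}$ tolerance of each substitution from \eqref{eq:siconditions} compounds multiplicatively across the $O(g)$ substitutions, yielding an $O(g^3)$ coefficient in the exponent. Tracing through the combinatorics produces the explicit constant $C=258$.
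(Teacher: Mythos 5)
Your overall strategy---bound the lattice-point count via Davenport, multiply by $\delta(s)$, feed in the constraints from Lemma \ref{lem:sisn-iclose}, and read off individual bounds on $s_{g+3},\ldots,s_n$ from negative exponents---is the paper's strategy. But the decisive verifications are deferred, and one step as written goes in the wrong direction. After rearranging to $\delta(s)\prod_{\alpha\in\cZ\cap\cK}(Yw(\alpha))^{-2}\gg Y^{-(n+1)-(n+1)\delta}$, you propose to ``expand the $\cK_1$-contribution.'' The factors $(Yw(\alpha))^{-2}$ for $\alpha\in(\cZ\cap\cK)\setminus\cK_1$ are each $\gg1$, so they cannot be dropped from a lower-bound inequality; and they are not harmless $Y^{O(\delta)}$ factors. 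For instance $(Yw(a_{11}))^{-2}=Y^{-2}t_1^{4}$, and under \eqref{eq:siconditions} one has $t_1\asymp Y^{g+2}s_n^{-(n+1)}$ up to $Y^{O(g^2)\delta}$, so this single factor is of size $Y^{4g+6}s_n^{-4(n+1)}$. Controlling all such factors requires exactly the two-sided bounds \eqref{eq:tjboundddd} on every $t_j$ in terms of $Y$, $\RR$, and the $s_i$ with $i\geq g+3$, which your sketch compresses into ``substitution.''

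More importantly, the final inference is not justified as stated: a ``constraint involving $\RR$ and the individual $s_i$,'' together with $\RR\ll Y^{1/2+3g\delta}$ and $s_i\gg1$, forces nothing unless one verifies that after all cancellation the constraint carries a \emph{strictly positive} exponent on each $s_{g+2+i}$ separately. That sign check is the crux of the lemma. The paper establishes it by computing the count bound in the form $Y^{(n+1)(g+1)(g+4)+O(g^4)\delta}\prod_{i=1}^{g}s_{g+2+i}^{-2i(i+3)(n+1)}$ and the Haar factor as $\delta(s)\ll Y^{-(g^2+3g+1)(n+1)+O(g^4)\delta}\prod_{i=1}^{g}s_{g+2+i}^{2i(i+2)(n+1)}$; their product leaves $Y^{(n+1)^2+O(g^4)\delta}\prod_{i=1}^{g}s_{g+2+i}^{-2i(n+1)}$, and only because each exponent $-2i(n+1)$ is strictly negative does $s_j\gg1$ yield individual bounds (the division by $2i(n+1)\gg g$ is also what converts the $O(g^4)\delta$ into $258g^3\delta$; the $\RR$ bound is used earlier, in deriving \eqref{eq:tjboundddd}, not here). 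Your proposal asserts this outcome rather than deriving it, so as it stands the proof is incomplete at its essential step.
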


To obtain a further saving, we need to use the $|q|$-invariant! 
\begin{lemma}\label{lem:crazyq}
Suppose $M>X^\eta$ where $\eta>0$ is some fixed constant. Let $\cZ\subset\cZ_1$ be saturated with $a_{11}\in\cZ$
, $a_{g+1,g+1}\not\in\cZ$ and $N(\LLM ,L,\cZ,X) > 0$. Suppose $\cK_1\subset\cZ$. Then for $\delta < \min(\eta,1)/(1355g^6)$ and any $s\in T_\cZ(L,Y)$ such that \eqref{eq:siconditions} and \eqref{eq:s_iwtf} hold, we have
$$\#\bigl((s(Y\D)\times s(Y\D))\cap \LLM \bigr)\,\delta(s) \ll X^{n+1+514g^3\delta - 1/2}.$$
\end{lemma}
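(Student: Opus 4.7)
The plan is to leverage the fact that the invariant $q$, viewed as a polynomial on $W_1$, has a very small weight under the constraints \eqref{eq:siconditions} and \eqref{eq:s_iwtf}, so that elements of $s(Y\D)^2\cap W_1(\Z)$ cannot satisfy $|q|>M$; the remaining contribution comes from non-standard isotropic subspaces, which must be treated by a fibering argument.

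First I will compute $w(q)=\prod_{i=1}^{g+2}t_i^{-1}$ under the given constraints. The bounds in \eqref{eq:siconditions} and \eqref{eq:s_iwtf} imply $t_1,\ldots,t_g\asymp Y^{g+2+O(g\delta)}$, $t_{g+1},t_{g+2}\asymp Y^{1/2+O(g\delta)}$, and $t_{g+3},\ldots,t_{n+1}\asymp Y^{-(g+1)+O(g\delta)}$, so that $w(q)=Y^{-(g+1)^2+O(g^3\delta)}$. Since $q$ is homogeneous of degree $(g+1)^2$ on $W_1$, the bound $|q(A,B)|\ll Y^{(g+1)^2}w(q)\ll Y^{O(g^3\delta)}$ holds for every $(A,B)\in s(Y\D)^2\cap W_1(\Z)$. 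Because $M>X^\eta=Y^{\eta(2g+3)}$ and $\delta<\min(\eta,1)/(1355g^6)$, this upper bound is strictly less than $M$, and hence $s(Y\D)^2\cap W_1(\Z)\cap\LLM=\emptyset$.

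Every $(A,B)\in s(Y\D)^2\cap\LLM$ is thus distinguished with a common isotropic $(g+1)$-dimensional primitive lattice $\Lambda\subset\Z^{n+1}$, and the preceding paragraph shows $\Lambda\neq\mathrm{Span}_\Z(e_1,\ldots,e_{g+1})$. I will fiber the count over pairs $(\Lambda,\Lambda')$, where $\Lambda\subset\Lambda'$ has rank $g+2$ and is isotropic for $B$. For each such pair, choosing a Minkowski-reduced basis of $s^{-1}\Lambda$ with successive minima $L_1\leq\cdots\leq L_{g+1}$ and the corresponding integral basis change $\gamma\in\SL_{n+1}(\Z)$ conjugates $(A,B)$ into $(A'',B'')\in W_1(\Z)$ with $|q(A'',B'')|=|q|(A,B)>M$. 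Theorem~\ref{thm:EK1} bounds the coordinates of $(A'',B'')$ in terms of $(L_1,\ldots,L_{g+1})$ and the original $(s,Y)$, producing a polynomial bound on $|q(A'',B'')|$ in these quantities. The constraint $|q(A'',B'')|>M$ therefore restricts the successive minima $L_i$ to lie in a codimension-one subregion of the dyadic parameter space.

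The total count is then obtained by summing over all permissible pairs $(\Lambda,\Lambda')$ using Proposition~\ref{prop:numL} (to count lattices with prescribed successive minima) and over dyadic ranges of $(L_1,\ldots,L_{g+1})$, combined with a Selberg-sieve estimate for the number of $A$ making $(A,B)$ distinguished for each fixed singular $B$ (via Corollary~\ref{corndist}, as in the proof of Theorem~\ref{th:evenmainbody}). The main technical obstacle lies in precisely tracking how the weight $w(q)$ and the polynomial bound on $|q|$ transform under the basis change adapted to a non-standard $\Lambda$, and verifying that the resulting restriction, after integration in $s$, yields the stated $X^{1/2+O(g^3\delta)}$ saving uniformly across the region of $s$ satisfying \eqref{eq:siconditions} and \eqref{eq:s_iwtf}.
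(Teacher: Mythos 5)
Your opening step is correct: on $W_1$ the invariant $q$ is a relative invariant of degree $(g+1)^2$ and torus weight $\prod_{i=1}^{g+2}t_i^{-1}$, and under \eqref{eq:siconditions} and \eqref{eq:s_iwtf} this weight is $Y^{-(g+1)^2+O(g^6\delta)}$, so any integral point of $s(Y\D)\times s(Y\D)$ lying in $W_1(\Z)$ has $|q|\ll Y^{O(g^6\delta)}<M$. This is essentially a special case of the paper's Lemma \ref{lem:nond}. But from there your argument has a genuine gap: the entire $X^{-1/2}$ saving is deferred to the fibering over non-standard pairs $(\Lambda,\Lambda')$, and that step as described cannot work. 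The constraint $|q|(A,B)>M$, translated into a restriction on the successive minima $(L_1,\ldots,L_{g+1})$ of $s^{-1}\Lambda$, confines the dyadic parameters to a ``codimension-one subregion'' of the dyadic parameter space --- but restricting dyadic ranges by one condition saves only logarithms, never a power of $X$. (That mechanism, via Proposition \ref{prop:qsize}, is what drives the \emph{deep-cusp} bound of Theorem \ref{thm:distcusp}, where the saving is $1/M$; here, with $a_{g+1,g+1}\notin\cZ$, one needs a fixed power saving $X^{-1/2}$ and the hypotheses of that machinery fail.) Moreover the deduction ``hence $\Lambda\neq\mathrm{Span}_\Z(e_1,\ldots,e_{g+1})$'' does not follow from $(A,B)\notin W_1(\Z)$: membership in $W_1(\Z)$ also requires $\Lambda'=\mathrm{Span}_\Z(e_1,\ldots,e_{g+2})$ and a specific position of $\ker B$.

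The actual source of the saving, which your proposal never touches, is the singularity of $B$. In the region cut out by \eqref{eq:siconditions} and \eqref{eq:s_iwtf} the top-left $g\times(g+2)$ blocks of $A$ and $B$ vanish, and the two rational $(g+1)$-dimensional isotropic subspaces of a distinguished $(A,B)$ are obtained from $\mathrm{Span}(e_1,\ldots,e_g)$ by adjoining a root of $a_{g+1,g+1}x^2+a_{g+1,g+2}xy+a_{g+2,g+2}y^2$; the $S_n$-Galois condition built into $\W_m^{\gen}$ forces this root to be rational with small height. Computing $|q|$ in the resulting integral basis shows that if $b_{g+1,g+1}=b_{g+1,g+2}=b_{g+2,g+2}=0$ then $|q|(A,B)\ll X^{1355g^6\delta}<M$. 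So for points of $\LLM$ this $2\times2$ block of $B$ is nonzero, and then $\det(B)=0$ is a \emph{nonzero} polynomial condition in the coefficients $b_{ij}$ with $g+1\le i\le n+1$, $g+3\le j\le n+1$ --- all of which have ranges at least $Y^{(n+1)/2+43g^2\delta}\prod_{i=g+3}^{n}s_i^{-(n+1)}$. Fibering over the remaining coefficients, the vanishing of $\det(B)$ therefore saves a factor $Y^{-(n+1)/2}=X^{-1/2}$ (up to $Y^{O(g^3\delta)}$), which combined with \eqref{eq:sYDgsi} gives the stated bound. Without this use of $\det(B)=0$, no argument of the shape you propose can produce a power saving in this range of $s$.
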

Therefore, by taking $\delta = 64\min(\eta,1)/(1355n^6)$, we obtain the following result from Proposition~\ref{prop:NUX} and Lemmas \ref{lem:reducetoK1}, \ref{lem:sisn-iclose}, \ref{lem:crazyg},  and \ref{lem:crazyq}.

\begin{proposition}\label{prop:cZcK}
Suppose $M>X^\eta$ where $\eta>0$ is some fixed constant. Let $\cZ\subset\cZ_1$ be saturated with $a_{11}\in\cZ$, $a_{g+1,g+1}\not\in\cZ$ and $N(\LLM ,L,\cZ,X) > 0$. Suppose $\cK_1\subset\cZ$. Then 
\begin{equation*}
N(\LLM ,L,\cZ,X)\ll X^{n+1-64\min(\eta,1)/(1355n^6)}.
\end{equation*}
\end{proposition}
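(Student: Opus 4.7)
The plan is to execute the three-way dichotomy set up by Lemmas \ref{lem:reducetoK1}--\ref{lem:crazyq}, partitioning the domain $T_\cZ(L,Y)$ in the integral defining $N(\LLM,L,\cZ,X)$ and bounding each piece separately. Throughout, set $\delta=64\min(\eta,1)/(1355n^6)$, and note that the constraint $\delta<\min(\eta,1)/(1355g^6)$ required by Lemma \ref{lem:crazyq} holds since $g\le n/2-1$, so $64g^6<n^6$. Write $f(s)=\#\{(A,B)\in(s(Y\D))^2\cap\LLM:B\in\cS(L,s)\}$ for the integrand, so that $N(\LLM,L,\cZ,X)=\int f(s)\,\delta(s)\,d^\times s$.

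First, on the region where $I(\cK_1,s)\ll X^{-\delta}$, Proposition \ref{prop:NUX} (applied to $\LLM\subset\LL1$) together with Lemma \ref{lem:reducetoK1} (whose hypothesis $\cK_1\subset\cZ$ holds) yield the pointwise bound $f(s)\,\delta(s)\ll X^{n+1}\,I(\cK_1,s)\ll X^{n+1-\delta}$, and integrating against $d^\times s$ over the polylogarithmically bounded box $1\ll s_i\ll X^\Theta$ gives $O_\epsilon(X^{n+1-\delta+\epsilon})$. On the complement, $I(\cK_1,s)\gg X^{-\delta}$, so Lemma \ref{lem:sisn-iclose} confines $s$ to the region \eqref{eq:siconditions} of measure $O((\log X)^{O(1)})$ with respect to $d^\times s$, and I would split this further by the size of the unrestricted matrix count $N(s)=\#((s(Y\D))^2\cap W(\Z))\,\delta(s)$. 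When $N(s)\ll X^{n+1-\delta}$, the trivial inequality $f(s)\,\delta(s)\le N(s)$ gives the desired pointwise bound, and integration over the polylogarithmic region handles the contribution. When $N(s)\gg X^{n+1-\delta}$, Lemma \ref{lem:crazyg} imposes the further constraint \eqref{eq:s_iwtf}, so Lemma \ref{lem:crazyq} becomes applicable and yields $f(s)\,\delta(s)\le\#((s(Y\D))^2\cap\LLM)\,\delta(s)\ll X^{n+1-1/2+514g^3\delta}$; a numerical check that $514g^3\delta\le 1/2-\delta$ for $g\le n/2-1$ and our choice of $\delta$ reduces this to $X^{n+1-\delta}$, and integration over the residual region again contributes at most polylogarithmically.

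The main obstacle is precisely this last regime: neither the weight-function geometry encoded in $I(\cK_1,s)$ nor the crude symmetric-matrix count provides any saving, and the required power of $X$ must come entirely from the arithmetic hypothesis $|q|\ge M>X^\eta$ via Lemma \ref{lem:crazyq}. The somewhat awkward constant $64/1355$ in the definition of $\delta$ is chosen precisely to balance the $514g^3\delta$ exponent loss incurred by Lemma \ref{lem:crazyq} against the $1/2$ saving provided by the $|q|$-condition, while simultaneously respecting the compatibility constraint $\delta<\min(\eta,1)/(1355g^6)$. Summing the contributions from the three regimes then yields the stated bound $N(\LLM,L,\cZ,X)\ll X^{n+1-64\min(\eta,1)/(1355n^6)}$.
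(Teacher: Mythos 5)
Your proof is correct and is essentially the paper's own argument: the paper derives this proposition in one line by combining Proposition \ref{prop:NUX} with Lemmas \ref{lem:reducetoK1}, \ref{lem:sisn-iclose}, \ref{lem:crazyg}, and \ref{lem:crazyq} for $\delta=64\min(\eta,1)/(1355n^6)$, and your three-regime partition (small $I(\cK_1,s)$; large $I(\cK_1,s)$ with small trivial count; the residual regime where the $|q|>M$ condition via Lemma \ref{lem:crazyq} supplies the saving) is exactly the intended way those lemmas fit together, with the numerical checks $\delta<\min(\eta,1)/(1355g^6)$ and $514g^3\delta\le 1/2-\delta$ verified correctly.
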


Theorem \ref{th:evenmaincusp} then follows immediately from \eqref{eq:evenMCDC}, Proposition \ref{prop:cZnotcK},  Proposition \ref{prop:cZcK} and summing over the $O(1)$ different possible $\cZ$'s and the
$O(Y^\epsilon)$ different possible $L$'s.

\subsubsection{Proofs of Lemmas \ref{lem:reducetoK1}, \ref{lem:ZK1bound}, \ref{lem:ZK1boundnosing}, \ref{lem:sisn-iclose}, \ref{lem:crazyg} and \ref{lem:crazyq}.}

We fix a saturated \smash{$\cZ\subset\cZ_1$} with $a_{11}\in\cZ$, $a_{g+1,g+1}\not\in\cZ$ and \smash{$N(\LLM ,L,\cZ,X) > 0$}.

\bigskip

\noindent\textbf{Proof of Lemma \ref{lem:reducetoK1}}: Recall that $\cK_1 := \{a_{1j}: 1\leq j \leq g+2\}$. Let $k\in\{1,\ldots,g\}$ be the largest integer such that
$a_{kk}\notin \cZ$. Then, applying Lemma \ref{lem:pitrick} to the
saturated set $\cZ\cup\cK_1$, we have
\begin{equation*}
\Bigl(\prod_{\alpha\in \cZ\backslash \cK_1} \frac{w(\pi_k(\alpha))}{w(\alpha)}\Bigr)
\Bigl(\prod_{\substack{1\leq i < j\leq n\\ i+j> n+1}} \frac{L_i}{L_j}\Bigr)
\,\delta(s)
\ll \prod_{\alpha \in \cK_1} \frac{w(\alpha)}{w(\pi_k(\alpha))}
=\prod_{i=1}^{g} s_i^{-(n+1)(g+2)}\prod_{i=g+1}^{n-1} s_i^{-(n+1)(n-i)}.
\end{equation*}
Hence, by Lemma \ref{lem:piknotinZ}, we obtain for any $s\in T_\cZ(L,Y)$, 
\begin{equation*}\label{eqNLprelimbound}
\begin{array}{rcl}
\displaystyle Y^{-\#\cZ}w(\cZ)^{-1}
\Bigl(\prod_{\substack{1\leq i < j\leq n\\ i+j> n+1}} \frac{L_i}{L_j}\Bigr)
\,\delta(s)&\ll&\displaystyle
Y^{-\#\cZ}w(\cZ)^{-1}\Bigl(\prod_{\alpha\in\cZ\backslash\cK_1}
Yw(\pi_k(\alpha))\Bigr)
\Bigl(\prod_{\substack{1\leq i < j\leq n\\ i+j> n+1}} \frac{L_i}{L_j}\Bigr)
\,\delta(s)
\\[.2in] &\ll&\displaystyle
Y^{-\#(\cZ\cap\cK_1)}w(\cZ\cap \cK_1)^{-1}\Bigl(\prod_{\alpha\in\cZ\backslash\cK_1}
\frac{w(\pi_k(\alpha))}{w(\alpha)}\Bigr)
\Bigl(\prod_{\substack{1\leq i < j\leq n\\ i+j> n+1}} \frac{L_i}{L_j}\Bigr)
\,\delta(s)
\\[.1in]&=&\displaystyle
Y^{-\#(\cZ\cap\cK_1)}
w(\cZ\cap\cK_1)^{-1}\prod_{i=1}^{g} s_i^{-(n+1)(g+2)}
\prod_{i=g+1}^{n-1} s_i^{-(n+1)(n-i)}\\[.2in]&=&\displaystyle I(\cZ\cap\cK_1,s),
\end{array}
\end{equation*} as desired. $\Box$

\vspace{.1in}

Note that a direct computation yields
\begin{equation}\label{eqIcK1}
I(\cK_1,s) =\displaystyle 
  Y^{-(g+2)}\prod_{j=1}^{g+2}t_{n+1-j}t_j=
Y^{-(g+2)}\frac{t_{g+1}t_{g+2}}{t_{n+1}}.
\end{equation}

\medskip

\noindent\textbf{Proof of Lemma \ref{lem:ZK1bound}}: Since $\cZ$ is saturated and $\cZ\cap\cK_1\neq\cK_1$, we have $\cZ\cap \cK_1 =
\{a_{11},\ldots,a_{1j}\}$ for  some $j=1,\ldots,g+1$. Since
$a_{g+1,g+1}$ and $a_{1,g+2}$ do not belong to $\cZ$, we have for
$s\in T_\cZ(L,Y)$:
\begin{equation*}
\begin{array}{rcl}
I(\{a_{11},\ldots,a_{1,g+1}\},s)&=&YI(\cK_1,s)w(a_{1,g+2})
\\[.05in]&\ll&
Y^{g+1}I(\cK_1,s)w(a_{g+1,g+1})w(a_{1,g+2})^{g}
\\[.025in]&\ll&\displaystyle
Y^{-1}\frac{1}{t_1^gt_{g+1}t_{g+2}^{g-1}t_{n+1}}
\\[.15in]&\ll&\displaystyle
Y^{-1}s_n^2,
\end{array}
\end{equation*}
since the powers of the $s_i$'s in the third line are negative for
$i<n$.

Similarly, for any $j=1,\ldots,g$, we compute
\begin{equation*}
I(\{a_{11},\ldots,a_{1j}\},s)(Yw(a_{1,j+1}))^{j-1}\ll Y^{-1}s_n^2,
\end{equation*}
as desired. $\Box$

\bigskip

\noindent\textbf{Proof of Lemma \ref{lem:ZK1boundnosing}}: Suppose now $s_n\gg Y^{1/2-\delta}$. 
First note, that the inequality
\begin{equation}\label{eq:tn+1}
1\ll Y^{g+1}w(a_{1,n})w(a_{2,n-1})\cdots w(a_{g+1,g+2}) = Y^{g+1}\prod_{i=1}^n s_i^{-i}
\end{equation}
implies that we have
\begin{equation}\label{eq:prodsii}
\prod_{j=1}^{n-1} s_j^{j}\ll Y^{g+1}s_n^{-n}\ll Y^{n\delta}.
\end{equation}
Since each $s_i\gg 1$, we also have $s_n\ll Y^{1/2}$ by \eqref{eq:prodsii}. Hence 
\begin{equation*}
t_1^{-1}\ll t_2^{-1}\ll\ldots\ll t_{n}^{-1} = s_n^{-1}\prod_{j=1}^{n-1} s_j^{j}\ll Y^{-1/2+(n+1)\delta};\quad
t_{n+1}^{-1}=s_n^n\prod_{j=1}^{n-1} s_j^{j}\ll Y^{n/2 + n\delta}.
\end{equation*}
Thus
\begin{equation*}
Yw(a_{ij})=Yw(b_{ij})=\frac{Y}{t_it_j}\,\ll\,
\begin{cases}
  Y^{(2n+2)\delta}&\mbox{ if }i\leq j\leq n,\\
  Y^{(n+1)/2+(2n+1)\delta}&\mbox{ if }i\leq n,\,j= n+1,\\
  Y^{n+1+2n\delta}&\mbox{ if }i=j=n+1.
\end{cases}
\end{equation*}
Multiplying these weights together and applying Proposition
\ref{prop-davenport} gives the estimate
\begin{equation}\label{eq:sYDWnZ}
  \#\bigl(s(Y\D)\times s(Y\D)\cap W_{n+1}(\Z)\bigr)
  \ll Y^{(n+2)(n+1) + 2n(n+2)^2\delta}.
\end{equation}
Meanwhile, in this region where $s_n\geq X^{1/2-\delta}$, the quantity
$\delta(s)$ satisfies
\begin{equation}\label{eq:deltassss}
  \delta(s)=\prod_{k=1}^{n}s_k^{-(n+1)k(n+1-k)} \ll s_n^{-n(n+1)}\ll
  Y^{-n(n+1)/2 + n(n+1)\delta}.
\end{equation}
Multiplying the bounds in \eqref{eq:sYDWnZ} and \eqref{eq:deltassss} together yields
$$\#\bigl((s(Y\D)\times s(Y\D))\cap W_{n+1}(\Z)\bigr)\,\delta(s) \ll Y^{(\frac{n}{2}+2)(n+1)+n(2n^2+9n+9)\delta},$$
as desired. $\Box$

\bigskip

\noindent\textbf{Proof of Lemma \ref{lem:sisn-iclose}}: Suppose now $\cK_1\subset \cZ\subset \cZ_1$ and $I(\cK_1,s) \gg X^{-\delta}$ for some $s\in T_\cZ(L,Y)$. We prove first that for any $i = 1,\ldots,g-1$, we have 
\begin{equation}\label{eq:sisn-i}
Y^{-\delta}\ll\frac{s_i}{s_{n-i}}\ll Y^{\delta}.
\end{equation}
Indeed, since $a_{j,n+1-j}\notin\cZ$ for all $j$, we have from \eqref{eqIcK1} that, for any $k = 1,\ldots,g$,
\begin{equation*}
\begin{array}{rcl}
I(\cK_1,s)&\ll& I(\cK_1,s) Y^{g+2}w(a_{k,n+1-k})^gw(a_{g+1,g+2})^2
\\[.1in]&\ll&\displaystyle
\frac{t_{g+1}t_{g+2}}{t_{n+1}}\frac{1}{t_j^{g}t_{g+1}^2t_{g+2}^2t_{n-k+1}^g}
\\[.175in]&=&\displaystyle
\frac{t_1\cdots t_g}{t_k^g}\frac{t_n\cdots t_{g+3}}{t_{n-k+1}^g}
\\[.175in]&=&\displaystyle
\frac{t_1}{t_g}\cdots \frac{t_{g-1}}{t_g}\left(\frac{t_g}{t_k}\right)^g\frac{t_{g+4}}{t_{g+3}}\cdots \frac{t_n}{t_{g+3}}\left(\frac{t_{g+3}}{t_{n-k+1}}\right)^g.
\end{array}
\end{equation*}
Hence 
\begin{equation*}
\begin{array}{rcl}
I(\cK_1,s)&\ll& \left(s_1s_2^2\cdots s_{g-1}^{g-1}(s_ks_{k+1}\cdots s_{g-1})^{-g}s_{n-1}^{-1}s_{n-2}^{-2}\cdots s_{g+3}^{-(g-1)}(s_{g+3}s_{g+4}\cdots s_{n-k})^g)\right)^{n+1}
\\[.15in]&=&\displaystyle
\left(\prod_{i=1}^{g-1}\left(\frac{s_i}{s_{n-i}}\right)^{i(n+1)}\right)\left(\prod_{i=k}^{g-1}\left(\frac{s_i}{s_{n-i}}\right)^{-g(n+1)}\right).
\end{array}
\end{equation*}
Denote the product of the two factors in the final line by $J_k$. Then
$$
  \prod_{k=1}^g J_k= 1\qquad\mbox{and}\qquad
  \frac{J_{i+1}}{J_{i}}= \left(\frac{s_i}{s_{n-i}}\right)^{g(n+1)}\,\mbox{ for }i=1,\ldots,g-1.
$$
Since, by assumption,  $I(\cK_1,s)\gg X^{-\delta}$, we have $J_k\gg Y^{-(n+1)\delta}$ for every $k=1,\ldots,g$.
Therefore, for every $i=1,\ldots,g-1$, we have 
\begin{equation*}
\begin{array}{rcccccl}\displaystyle
  \frac{s_i}{s_{n-i}}&=&\displaystyle \left(\frac{J_{i+1}}{J_i}\right)^{\frac{1}{g(n+1)}}&=& \left(J_1\cdots J_{i-1}\cdot J_{i+1}^2\cdot J_{i+2}\cdots J_g\right)^{\frac{1}{g(n+1)}}&\gg&Y^{-\delta};
  \\[.1in]
  \displaystyle
  \frac{s_i}{s_{n-i}}&=&\displaystyle \left(\frac{J_{i}}{J_{i+1}}\right)^{-\frac{1}{g(n+1)}}&=& \left(J_1\cdots J_{i-1}\cdot J_{i}^2\cdot J_{i+2}\cdots J_g\right)^{-\frac{1}{g(n+1)}}&\ll&Y^{\delta}.
\end{array}
\end{equation*}
The claimed bound \eqref{eq:sisn-i} follows.

By \eqref{eq:tn+1} and 
\eqref{eq:sisn-i}, we have
\begin{equation}\label{eq:sgsggsggg}
s_g^gs_{g+1}^{g+1}s_{g+2}^{g+2} \ll Y^{g+1}\prod_{i=1}^{g-1}\left(\frac{s_i}{s_{n-i}}\right)^{-i}\prod_{i=g+3}^ns_i^{-n} \ll Y^{g+1+\frac{g(g-1)}{2}\delta}\cdot \RR^{-n}
\end{equation}
where $$\RR=\prod_{i=g+3}^ns_i.$$
We next prove the desired lower bounds:
\begin{equation}\label{eq:desiredlower}
s_g\gg Y^{1/2-(g/2)\delta}\RR^{-1};\quad s_{g+1}\gg 1;\quad
s_{g+2}\gg Y^{1/2-\delta}\RR^{-1}.
\end{equation}
The bound $s_{g+1}\gg1$ follows from the definition of $T'$. For the bounds on $s_g$ and $s_{g+2}$, we use the assumption that $a_{g+1,g+1}\notin\cZ$ and the computation of $I(\cK_1,s)$ in \eqref{eqIcK1} to obtain
\begin{equation*}
\begin{array}{rcl}
I(\cK_1,s) &\ll& I(\cK_1,s)Y^{1/2}w(a_{g+1,g+1})^{1/2}
\\[.05in]&=&\displaystyle Y^{-(n+1)/2}\frac{t_{g+2}}{t_{n+1}}
\\[.15in]&=&\displaystyle
Y^{-(n+1)/2}s_{g+2}^{n+1}\,\RR^{n+1},
\end{array}
\end{equation*}
which along with $I(\cK_1,s)\gg Y^{-(n+1)\delta}$ implies the desired lower bounds on $s_{g+2}$; 
and
\begin{equation*}
\begin{array}{rcl}
I(\cK_1,s)&\ll& \displaystyle I(\cK_1,s)Y^2w(a_{g+1,g+1})^2
\\[.025in]&=&
\displaystyle Y^{-g}\frac{t_{g+2}}{t_{g+1}^3t_{n+1}}
\\[.1in]&=&\displaystyle
Y^{-g}\Big(\prod_{i=1}^g s_i^{3i}\Big)\, s_{g+1}^{-3(g+2)}\Big(\prod_{i=g+2}^{n}s_i^{-2n-2+3i}\Big)
\\[.05in]&\ll&\displaystyle
Y^{-g+\frac{3g(g-1)}{2}\delta}\Big(\prod_{i=1}^{g-1} s_{n-i}^{3i}\Big) s_g^{3g}s_{g+1}^{-3(g+2)}s_{g+2}^{-g}
\Big(\prod_{i=g+3}^{n-1}s_i^{-2n-2+3i}\Big) s_n^{2g}
\\[.1in]&\ll&\displaystyle
Y^{-g+\frac{3g(g-1)}{2}\delta}s_g^{3g}s_{g+1}^{-3(g+2)}s_{g+2}^{-g}\,\RR^{2g}
\\[.1in]&\ll&\displaystyle
Y^{-\frac{3g}{2} + \frac{g(3g-1)}{2}\delta}s_g^{3g}\,\RR^{3g},
\end{array}
\end{equation*}
implying the desired lower bound on $s_g$, where in the last inequality we used the already-established lower bounds on  $s_{g+1}$ and $s_{g+2}$.

The desired lower bounds for $s_g$, $s_{g+1}$, $s_{g+2}$ then follow by combining the upper bound on \smash{$s_g^gs_{g+1}^{g+1}s_{g+2}^{g+2}$} in \eqref{eq:sgsggsggg} and the individual lower bounds on  $s_g$, $s_{g+1}$, $s_{g+2}$ in \eqref{eq:desiredlower}. The desired upper bound on $\RR$ follows by  comparing the upper bound on $s_g$ and the trivial lower bound $s_g\gg 1$. 
$\Box$

\bigskip

\noindent\textbf{Proof of Lemma \ref{lem:crazyg}}: Suppose $\cK_1\subset\cZ$ and $s\in T_\cZ(L,Y)$ satisfies \eqref{eq:siconditions}. 
Then 
\begin{equation}\label{eq:tjboundddd}
t_j^{-1} \ll\begin{cases} \displaystyle Y^{-(g+2)+5g^2\delta}\prod_{i=n-j+1}^n s_i^{n+1}&\mbox{ for }j=1,\ldots,g,\\
 Y^{-1/2+ 20g^2\delta}&\mbox{ for }j=g+1,g+2,\\
  \displaystyle Y^{g+1+23g^2\delta}\prod_{i=j}^n s_i^{-(n+1)}&\mbox{ for }j = g+3,\ldots,n+1,
 \end{cases}
\end{equation}
where the upper bound on $\RR$ also gives $t_j^{-1} \ll Y^{-1/2+20g^2\delta}$ for $j = 1,\ldots,g$.
For $i,j\leq g+2$, we have \smash{$Yt_i^{-1}t_j^{-1}\ll Y^{40g^2\delta}$}. For $j\geq g+3$ and $i\leq n-j+1$, we have \smash{$Yt_i^{-1}t_j^{-1}\ll Y^{28g^2\delta}$}. Using \eqref{eq:tjboundddd} for the rest of the coordinates gives
$$\#\bigl((s(Y\D)\times s(Y\D))\cap W(\Z)\bigr) \ll Y^{(n+1)(g+1)(g+4) + 972g^4\delta}\left( \prod_{i=1}^{g} s_{g+2+i}^{-2i(i+3)(n+1)}\right).$$
The Haar measure satisfies the following bound:
$$\delta(s)=\prod_{k=1}^{n}s_k^{-(n+1)k(n+1-k)} \ll Y^{-(g^2 + 3g + 1)(n+1) + 55g^4\delta}\left(\prod_{i=1}^{g}  s_{g+2+i}^{2i(i+2)(n+1)}\right).$$
Hence
\begin{equation}\label{eq:sYDgsi}
\#\bigl((s(Y\D)\times s(Y\D))\cap W(\Z)\bigr)\,\delta(s) \ll Y^{(n+1)^2 + 1027g^4\delta} \left(\prod_{i=1}^{g}s_{g+2+i}^{-2i(n+1)}\right).
\end{equation}
Suppose now $\#\bigl((s(Y\D)\times s(Y\D))\cap W(\Z)\bigr)\,\delta(s)\gg X^{n+1-\delta}$. Then, for any $i = g+3,\ldots,n$,
$$s_i \ll Y^{(1027g^4 + 2g+3)\delta/(2(i-g-2)(2g+3))} \ll Y^{(1032g^4/(4g))\delta} = Y^{258g^3\delta},$$
as desired. $\Box$

\bigskip

\noindent\textbf{Proof of Lemma \ref{lem:crazyq}}: 
Suppose $M>X^\eta$ where $\eta>0$ is some fixed constant. Suppose $\delta < \max(\eta,1)/1355g^6$. Suppose $\cK_1\subset\cZ$ and $s\in T_\cZ(L,Y)$ satisfies \eqref{eq:siconditions} and \eqref{eq:s_iwtf}. 
We now impose the conditions $\det(B) = 0$ and $|q|(A,B)>M$ for any $(A,B)\in\LLM $ to obtain a further saving for $\#\bigl((s(Y\D)\times s(Y\D))\cap \LLM \bigr)\,\delta(s)$.

The bound \eqref{eq:s_iwtf} on $s_{g+3},\ldots,s_n$ gives $\RR\ll Y^{258g^4\delta}.$ Hence
\begin{equation}\label{eq:tjboundddd2}
t_j^{-1} \ll\begin{cases} \displaystyle Y^{-(g+2)+1295g^5\delta},&\mbox{ for }j=1,\ldots,g,\\
 Y^{-1/2+ 20g^2\delta},&\mbox{ for }j=g+1,g+2,\\
  \displaystyle Y^{g+1+23g^2\delta},&\mbox{ for }j = g+3,\ldots,n+1,
 \end{cases}
\end{equation}
thus improving \eqref{eq:tjboundddd}. 
In this case, 
$$
Yt_g^{-1}t_{g+2}^{-1}\ll Y^{-(n+1)/2+1315g^5\delta},\qquad
Yt_g^{-1}t_{n+1}^{-1} \ll Y^{1318g^5\delta},
\vspace{-.05in}$$
$$Yt_{g+2}^{-1}t_{n+1}^{-1}\ll Y^{(n+1)/2 + 43g^2\delta},\qquad Yt_{n+1}^{-2}\ll
Y^{n+1+46g^2\delta}.$$

 \pagebreak 
\noindent
Since $\delta < 1/(1315g^4)$, we may assume that every $(A,B)\in
(s(Y\D)\times s(Y\D))\cap W(\Z)$ satisfies:
\begin{itemize}
\item[{\rm (a)}] The top left $g\times (g+2)$-blocks of $A$ and
$B$ are $0$.
\item[{\rm (b)}] The entries of the top right $g\times (g+1)$
  blocks of $A$ and $B$ are $O(Y^{1315g^5\delta})$.
\item[{\rm (c)}] The entries $a_{g+1,g+1}$, $a_{g+1,g+2}$,
  $a_{g+2,g+2}$, $b_{g+1,g+1}$, $b_{g+1,g+2}$, and $b_{g+2,g+2}$ are
   $O(Y^{40g^2\delta})$.
\item[{\rm (d)}] The entries $a_{g+1,j}$, $a_{g+2,j}$,
  $b_{g+1,j}$ and $b_{g+2,j}$ are $O(Y^{(n+1)/2+43g^2\delta})$
  for $g+3\leq j\leq n+1$.
\item[{\rm (e)}] The entries $a_{ij}$ and $b_{ij}$ are  $O(Y^{n+1+46g^2\delta})$ for $g+3\leq i,j\leq n+1$.
\end{itemize}

Suppose now that  $(A,B)$ is an element of $(s(Y\D)\times s(Y\D))\cap\LLM $. Then $f_{A,B} = xg(x,y)$, where $g(x,1)$ is a degree $n$ polynomial with
Galois group $S_n$. 
\begin{lemma}\label{lem:nond}
Let $(A,B)$ be as above. If $b_{g+1,g+1}=b_{g+1,g+2}=b_{g+2,g+2}=0$, then $$|q|(A,B)\ll X^{1355g^6\delta}.$$
\end{lemma}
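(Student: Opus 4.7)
The plan is to compute $|q|(A,B)$ directly by identifying a quadruple $(A,B,\Lambda,\Lambda')$ close to the standard coordinate flag, then bounding the resulting explicit formula. First, the three vanishing hypotheses together with condition (a) force the top-left $(g+2)\times(g+2)$ block of $B$ to vanish, so $\Lambda':=\mathrm{span}_\Z(e_1,\ldots,e_{g+2})$ is a primitive $B$-isotropic lattice of maximal rank containing $\ker B$. Within $\Lambda'_\Q$, condition (a) implies $\mathrm{span}(e_1,\ldots,e_g)$ is contained in the radical of $A$, so any common isotropic $(g+1)$-dim subspace of $\Lambda'_\Q$ has the form $\mathrm{span}(e_1,\ldots,e_g,v)$ with $v=\alpha e_{g+1}+\beta e_{g+2}$ satisfying $\alpha^2 a_{g+1,g+1}+2\alpha\beta a_{g+1,g+2}+\beta^2 a_{g+2,g+2}=0$.

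Since $(A,B)\in\LLM$ is distinguished with irreducible $f_{A,B}/x$, such a rational $v$ exists. Using that the entries of the $2\times 2$ submatrix $A_{2\times 2}$ of $A$ on coordinates $g+1,g+2$ are $O(Y^{40g^2\delta})$ by (c), the rational root theorem produces a primitive integer solution of size $|\alpha|,|\beta|\ll Y^{40g^2\delta}$, together with a Bezout complement $v'=\alpha'e_{g+1}+\beta'e_{g+2}$ of the same size. These assemble into $\gamma\in\SL_{n+1}(\Z)$ acting as the identity outside coordinates $g+1,g+2$ and sending $e_{g+1}\mapsto v$, $e_{g+2}\mapsto v'$. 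A direct verification shows $(\tilde A,\tilde B):=\gamma\cdot(A,B)\in W_1(\Z)$, and the $\SL_{n+1}(\Z)$-invariance of $|q|$ established in Proposition~\ref{propuniqueQeven} yields $|q|(A,B)=|Q(\tilde A^\top,\tilde B^\top)/\det\tilde B'|$.

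The key structural simplifications are that the $j=g+2$ column of $\tilde B^\top$ is identically zero (since $B|_{\Lambda'_\Q}\equiv 0$), while the corresponding column of $\tilde A^\top$ vanishes except for the bottom entry $\tilde a_{g+1,g+2}=v^TAv'$. Consequently, for $i\ge 2$ the binary form $f_i(x,y)=(-1)^{i+1}\det(\tilde A_i^\top x-\tilde B_i^\top y)$ factors as $(-1)^{i+g+1}x\,\tilde a_{g+1,g+2}\,M^{(i)}(x,y)$, where $M^{(i)}$ is the $g\times g$ minor formed from the top $g$ rows and columns $g+3,\ldots,2g+3$; and the $y^{g+1}$-coefficient of $f_1=\det(\tilde A'x-\tilde B'y)$ equals $(-1)^{g+1}\det\tilde B'$. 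Expanding the $(g+2)\times(g+2)$ matrix $C$ defining $Q$ along column $j=g+2$ (which has only $C_{1,g+2}$ nonzero), then extracting the common factor $\tilde a_{g+1,g+2}$ from rows $2,\ldots,g+2$ of the remaining $(g+1)\times(g+1)$ minor, yields $|q|(A,B)=|\tilde a_{g+1,g+2}|^{g+1}\cdot|\det\tilde M|$ for a $(g+1)\times(g+1)$ matrix $\tilde M$ of coefficients of the $M^{(i)}$.

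To conclude, I would bound $|\tilde a_{g+1,g+2}|=|v^TAv'|\ll Y^{120g^2\delta}$ from (c) and the size bounds on $v,v'$, and bound each entry of $\tilde M$ by $O(Y^{1315g^6\delta})$, since each is a coefficient of a $g$-ic form built from a $g\times g$ determinant whose entries are $O(Y^{1315g^5\delta})$ by (b) (the top $g$ rows being unchanged by $\gamma$). Multiplying gives $|q|(A,B)\ll Y^{(g+1)(120g^2+1315g^6)\delta}\ll X^{1355g^6\delta}$, using $X=Y^{2g+3}$ and the elementary inequality $(g+1)(120g^2+1315g^6)\le(2g+3)\cdot 1355g^6$ for $g\ge 1$. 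The main obstacle is guaranteeing the existence of the rational isotropic $v$ for $A_{2\times 2}$ within $\Lambda'_\Q$: this is automatic when $A_{2\times 2}$ is degenerate (take $v=e_{g+1}$ or $e_{g+2}$) and follows from the distinguished structure in the generic case, though the residual case in which neither common isotropic subspace lies in $\mathrm{span}(e_1,\ldots,e_{g+2})$ would require using a different maximal $B$-isotropic $\Lambda'$ to which the same rational-root argument applies to control the entries of $\gamma$.
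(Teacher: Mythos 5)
The overall computational architecture you propose does mirror the paper's: choose a common isotropic $(g+1)$-plane of the form $\text{span}(e_1,\ldots,e_g,v)$ with $v=\alpha e_{g+1}+\beta e_{g+2}$, extend to a good integral basis for $\Z^{n+1}$ whose first $g+2$ vectors span the $B$-isotropic $(g+2)$-plane, and bound $|q|$ by writing out the $(g+2)\times(g+2)$ determinant defining $Q$ in this basis and multiplying entry-size estimates coming from (a)--(e). Your exponent bookkeeping also works out.

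However, there is a genuine gap at the central step, which you yourself flag but do not close: why does the binary quadratic $a_{g+1,g+1}x^2+a_{g+1,g+2}xy+a_{g+2,g+2}y^2$ admit a nonzero \emph{rational} solution? You write that "such a rational $v$ exists" because $(A,B)\in\LLM$ is distinguished with $f_{A,B}/x$ irreducible, and later that this "follows from the distinguished structure in the generic case." But being distinguished over $\Q$ only guarantees the existence of \emph{some} rational common isotropic $(g+1)$-plane $X_1$; it does not by itself imply that $X_1$ contains $\text{Span}(e_1,\ldots,e_g)$, nor that $X_1\subset\text{Span}(e_1,\ldots,e_{g+2})$. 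So one cannot directly read off a rational root of the quadratic from distinguishedness. The paper closes this gap with a Galois-descent argument that your sketch omits entirely: over the quadratic extension $K$ where the isotropic vector $v$ exists, $\text{Span}_K\{e_1,\ldots,e_g,v\}$ is a common isotropic $(g+1)$-plane, hence corresponds to an element of $J[2](K)$. Since $g(x,1)$ has Galois group $S_n$, it stays irreducible over every quadratic extension $K/\Q$, so $xg(x,1)$ factors over $K$ exactly as it does over $\Q$, whence $J[2](K)=J[2](\Q)$. Therefore every common isotropic $(g+1)$-plane over $K$ is already defined over $\Q$ --- in particular the one just constructed --- and the discriminant $a_{g+1,g+2}^2-4a_{g+1,g+1}a_{g+2,g+2}$ is a perfect square in $\Z$. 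With this in hand, the "residual case" you worry about (neither common isotropic subspace lying inside $\text{Span}(e_1,\ldots,e_{g+2})$) simply does not arise, so the hedge at the end of your sketch is aimed at the wrong difficulty.

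A smaller point: the identity $|q|(A,B)=|\tilde a_{g+1,g+2}|^{g+1}\cdot|\det\tilde M|$ implicitly divides by $\tilde a_{g+1,g+2}$ and so presumes it is nonzero, which requires a separate argument. The paper sidesteps this by not factoring: after expanding along the last column of the $(g+2)\times(g+2)$ matrix $M_1$ (which has only the top entry $\pm\det(B')$ nonzero), it directly obtains $|q|=|\det(M_1')|$ with $M_1'$ the bottom-left $(g+1)\times(g+1)$ block, and bounds this determinant by its entries. This is cleaner and avoids any nonvanishing hypothesis.
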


\begin{proof}
Let $v = x_0e_{g+1} + y_0e_{g+2}$ where $x_0,y_0\in K$ are not both $0$, where $K$ is some quadratic extension of $\Q$, and where $(x_0,y_0)$ is a solution to \begin{equation}\label{eq:qAmid}
a_{g+1,g+1}x^2+a_{g+1,g+2}xy+a_{g+2,g+2}y^2 = 0.
\end{equation}
We claim first that $v$ can be chosen in $\Q^{n+1}$. Indeed, $\text{Span}_K\{e_1,\ldots,e_g,v\}$
is a $(g+1)$-dimensional subspace isotropic with respect to $A$ and
$B$. Since $(A,B)$ is distinguished over $\Q$, the set of $(g+1)$-dimensional common isotropic subspaces defined
over any number field $L$ is in bijection with $J[2](L)$, where $J$ is
the Jacobian of the hyperelliptic curve $y^2 = xg(x,1)$ (which has a
rational Weierstrass point at infinity), and $J[2](L)$ is in bijection with the factorizations of $xg(x,1)$ over~$L$.  Since $g(x,1)$ has
Galois group $S_n$, it does not admit any factorization over any
quadratic extension of $\Q$. Therefore,  $J[2](K) = J[2](\Q)$ with the only nontrivial element corresponding to the factorization $x\cdot g(x,1)$. In other words, $\text{Span}_K\{e_1,\ldots,e_g,v\}$ is defined over $\Q$ and we may choose $v$ to be in~$\Q^{n+1}$. In particular, the discriminant $a_{g+1,g+2}^2 - 4a_{g+1,g+1}a_{g+2,g+2}\in\Z$ is a square.

If $a_{g+1,g+1}\neq 0$, let
$$x_1 = -a_{g+1,g+2} + \sqrt{a_{g+1,g+2}^2 - 4a_{g+1,g+1}a_{g+2,g+2}},\qquad y_1 = 2a_{g+1,g+1}.$$ If $a_{g+1,g+1} = 0$, let $x_1 = 1,\,y_1 = 0$. Then
$x_1,y_1$ are integers $\ll Y^{40g^2\delta}$, not both zero, and are solutions to \eqref{eq:qAmid}. Let $x_0 = x_1/\gcd(x_1,y_1)$ and $y_0 = y_1/\gcd(x_1,y_1)$. There then exist integers $x_2,y_2\ll Y^{40g^2\delta}$ such that
$$\{e_1,\ldots,e_g,x_0e_{g+1}+y_0e_{g+2}, x_2e_{g+1}+y_2e_{g+2},e_{g+3},\ldots,e_n\}$$ forms an integral basis for $\Z^{n+1}$ such that the first $g+1$ vectors generate a primitive lattice isotropic with respect to $A$ and $B$, and the first $g+2$ vectors generate a primitive lattice isotropic with respect to $B$. That is, we compute the $|q|$-invariant of $(A,B)$ using this basis. When so expressed, the top
right $(g+1)\times (g+2)$ blocks of the Gram matrices of $A$ and $B$ have the 
form
\begin{equation*}
  A^\top = \begin{pmatrix}0 & \flat & \cdots & \flat\\
    \vdots &\vdots &\ddots &\vdots\\ 0 & \flat & \cdots & \flat\\
    \flat & * & \cdots & *  \end{pmatrix},\qquad B^\top = \begin{pmatrix}
    0 & \flat & \cdots & \flat\\ \vdots &\vdots &\ddots &\vdots\\
    0 & \flat & \cdots & \flat\\ 0 & * & \cdots & *  \end{pmatrix},
\end{equation*}
where entries labeled ``$0$'' are $0$, entries labeled 
``$\flat$'' are $O(Y^{1355g^5\delta})$, and entries labeled 
``$*$'' are $O(Y^{(n+1)/2 + 83g^2\delta})$. Let $M_1$ denote the
$(g+2)\times(g+2)$ matrix whose $i$th row consists of the coefficients
of $\det(A_ix - B_iy)$, where $A_i$ and $B_i$ are the $(g+1)\times(g+1)$
matrices formed by removing the $i$-th columns from $A^\top$ and
$B^\top$, respectively. Then $M_1$ is of the form
\begin{equation*}M_1=\begin{pmatrix}
    * & \cdots & * & *\\ \sharp & \cdots & \sharp & 0\\ \vdots &
    \ddots &\vdots & \vdots \\ \sharp & \cdots & \sharp & 0\end{pmatrix},
\end{equation*}
where entries labeled ``$0$'' are $0$, entries labeled 
``$\sharp$'' are $O(Y^{2710g^6\delta})$, and entries labeled
``$*$'' are $O(Y^{(n+1)/2+1438g^6\delta})$, where the top right coefficient $m'$ of $M_1$ is the determinant of the top right $(g+1)\times(g+1)$ block $B'$ of $B^\top$, up to sign. Thus
\begin{equation*}
|q|(A,B)=\frac{|Q|(A,B)}{|\det(B')|}=\frac{|\det(M_1)|}{|m'|}=|\det(M_1')|,
\end{equation*}
where $M_1'$ is the bottom left $(g+1)\times(g+1)$ block of
$M_1$. Since the coefficients of $M_1'$ are $\ll
Y^{2710g^6\delta}$, it follows that $|q|(A,B)\ll X^{2710g^6(g+1)\delta/(n+1)} \ll X^{1355g^6\delta}$.
\end{proof}

We now return to the proof of Lemma \ref{lem:crazyq}. For any $(A,B)\in (s(Y\D)\times s(Y\D))\cap\LLM $, since $|q|(A,B) > M > X^\eta$, we may assume that $b_{g+1,g+1}$, $b_{g+1,g+2}$, and $b_{g+2,g+2}$ are not all $0$ since $\delta < \eta/(1355g^6)$.

We now fix $b_{ij}$ for $1\leq i\leq g,$ $g+3\leq j\leq n+1$, and $i = g+1,g+2$, $j=g+1,g+2$. We consider the number of pairs $(A,B)\in (s(Y\D)\times s(Y\D))\cap\LLM $ with these prescribed coefficients by viewing $\det(B)$ as a polynomial $F$ in $b_{ij}$ for $g+1\leq i\leq n+1$ and $g+3\leq j\leq n+1$. Note that all of these remaining coefficients have range at least $$Y^{(n+1)/2 + 43g^2\delta}\prod_{i=g+3}^n s_i^{-(n+1)}.$$ Hence, to complete the proof of Lemma \ref{lem:crazyq}, it remains to prove that $F$ is a nonzero polynomial, for then we would have, using \eqref{eq:sYDgsi}, that
\begin{eqnarray*}
\#\bigl((s(Y\D)\times s(Y\D))\cap \LLM \bigr)\,\delta(s) &\ll& Y^{(n+1)^2 + 1027g^4\delta - (n+1)/2 - 43g^2\delta}\, \prod_{i=1}^{g}s_{g+2+i}^{-(2i-1)(n+1)}\\
&\ll& X^{n+1+514g^3\delta - 1/2}.
\end{eqnarray*}

We may assume that the top right $g\times (g+1)$ block of $B$ has full rank, for  otherwise the kernel of $B$ would be isotropic with respect to $A$ forcing  $\Delta(A,B) = 0$ by Lemma \ref{lem:disc}. Hence we may also assume that the top right $g\times (g+1)$ block of $B$ equals $(I_g\,\, 0)$, where $I_g$ denotes the $g\times g$ identity matrix. Then 
$$\det(B) = \det\begin{pmatrix} b_{g+1,g+1}& b_{g+1,g+2} & b_{g+1,n+1}\\ b_{g+2,g+1} & b_{g+2,g+2} & b_{g+2,n+2} \\ b_{n+1,g+1} & b_{n+1,g+2} & b_{n+1,n+1}\end{pmatrix}.$$
Since $$\begin{pmatrix} b_{g+1,g+1}& b_{g+1,g+2}\\b_{g+2,g+1} & b_{g+2,g+2}\end{pmatrix}\neq 0,$$
we see that $\det(B)$ is a nonzero polynomial in $b_{g+1,n+1}$, $b_{g+2,n+1}$, and $b_{n+1,n+1}$. $\Box$

\subsection{Bounding the number of distinguished elements in the deep cusp}\label{sec:evendistcup}

In this subsection, we bound the number of elements with large
$q$-invariant that lie in the deep~cusp.
\begin{theorem}\label{thm:distcusp}
We have\,
$\displaystyle
\cI_X^\dcusp(\LLM )=O\Bigl(\frac{X^{n+1+\frac12\kappa}}{M}\log^{2n}X\Bigr).
$
\end{theorem}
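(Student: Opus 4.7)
The plan is to adapt the argument of Proposition \ref{proplargeQbound} from the odd-degree case, with two new ingredients needed to handle the singularity of $B$ and the more delicate, non-polynomial nature of the $q$-invariant. First, if $\cZ$ is a saturated set containing $a_{g+1,g+1}$, then by saturation every $a_{ij}$ with $i\leq j\leq g+1$ lies in $\cZ$, which forces the top-left $(g+1)\times(g+1)$ blocks of both $A$ and $B$ to vanish on integer points $(A,B)\in s(Y\B)$. Hence integer points in the deep cusp lie in the analogue of $W_0$ for the odd-degree space $W_{n+1}=W_{2(g+1)+1}$, on which the $Q$-invariant of \S\ref{sQ} is a relative polynomial of degree $(g+1)(g+2)$ with weight $w(Q)=\prod_{k=1}^{g+1}t_k^{-1}$; in particular $|Q(A,B)|\ll Y^{(g+1)(g+2)}w(Q)$.

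For $(A,B)\in\LLM$ in the deep cusp, after choosing integral bases adapted to the common isotropic $(g+1)$- and $(g+2)$-dimensional subspaces, we have $|Q(A,B)|=|q(A,B)|\cdot|\det B'|$ (cf.\ \eqref{eq:qinv}), where $B'$ is the top-right $(g+1)\times(g+1)$ block of $B$. The condition $|q|>M$ then forces $|\det B'|\ll Y^{(g+1)(g+2)}w(Q)/M$, which is the source of the factor-of-$M$ saving. We partition the deep cusp integral by saturated sets $\cZ\ni a_{g+1,g+1}$ and by dyadic ranges of the successive minima $L$; for each pair $(\cZ,L)$, we count $A$'s with the prescribed vanishing conditions via Davenport's lemma and count singular $B$'s with the additional constraint on $|\det B'|$ using Theorem \ref{thm:mainB} with $r=n$ (whose $\log^n Y$ factor, combined with the $\log^n Y$ sum over $L$, will account for the $\log^{2n} X$ in the final bound). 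A weight computation paralleling \eqref{eq:oddwow}--\eqref{eq:oddwow2} then yields the contribution $O(X^{n+1}\log^{2n}X/M)$ from the non-exceptional regions.

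The main obstacle, as flagged in the introduction, is that $q$ is not a polynomial in the coordinates of $W_0$ but rather the minimum of several values of $Q$. Consequently there are regions of the deep cusp in which neither $|Q|$ nor $|\det B'|$ admits a bound good enough to extract an honest $1/M$ saving by the simple argument above. These exceptional regions correspond precisely to elements $(A,B)$ whose invariant binary form $f$ has unusually small discriminant $|\Delta(f)|\ll H(f)^{2n-2-\kappa}$. Since $\LLM=\bigcup_{m>M}\SL_{n+1}(\Z)\cdot\sigma_m(\W_m^\gen)$ and $\W_m^\gen$ excludes $V(\Z)^{\Delta\text{\,small}}$ by construction, these regions contribute no elements of $\LLM$; propagating the resulting archimedean lower bound $|\Delta(A,B)|=|\Delta(f)|\cdot f(0,1)^2\geq X^{2n-2-\kappa}$ through the weight integrals in the exceptional regions accounts for the $X^{\kappa/2}$ loss in the exponent. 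Summing the main and exceptional contributions over the $O(1)$ saturated $\cZ$ and $O(\log^n X)$ dyadic tuples $L$ yields the stated bound.
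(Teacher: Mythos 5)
Your high-level architecture matches the paper's: decompose $\cI_X^\dcusp(\LLM)$ over saturated sets $\cZ\ni a_{g+1,g+1}$ and dyadic tuples $L$, exploit $|q|>M$ for the $1/M$ saving, and use the exclusion of $V(\Z)^{\Delta\,\text{small}}$ from $\W_m^{\gen}$ to control the regions where $q$ misbehaves, at the cost of $X^{\kappa/2}$. But the central quantitative mechanism you propose does not work as stated. You write $|Q|=|q|\cdot|\det B'|$, deduce $|\det B'|\ll Y^{(g+1)(g+2)}w(Q)/M$ from $|q|>M$, and then claim to "count singular $B$'s with the additional constraint on $|\det B'|$ using Theorem \ref{thm:mainB} with $r=n$." Theorem \ref{thm:mainB} has no provision for a constraint on the determinant of a sub-block, and no tool in the paper (or standard) converts the upper bound $|\det B'|\leq T$ into a density saving proportional to $T$ for integral symmetric singular matrices in a skew box. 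Worse, $B'$ is the top-right block of the Gram matrix only after passing to an integral basis adapted to the pair $(\Lambda,\Lambda')$ of isotropic lattices, so it is not a fixed coordinate block of $B$ to which any box-counting lemma could be applied directly.

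The paper runs the inequality in the opposite direction. Using Proposition \ref{propBsubdiv} (the polynomial divisibility $\det(B^\top(B^\top)^t)\mid\Delta$) together with $|\Delta(A,B)|>X^{2n-2-\kappa}$, Lemma \ref{lem:usingkappa} gives a \emph{lower} bound on $\det(B_0^\top(B_0^\top)^t)$ for $B_0=s^{-1}B$; a lattice computation (identifying $|\det B''|=d(\Lambda_2)/|w_1|=|\det C|$ and bounding $\det(C)^2$ below via \eqref{eq:detClower}) then converts this into the upper bound $|q|(A,B)\ll Y^{(g+1)^2+\frac{n+1}{2}\kappa}L_1\cdots L_{g+1}$ of Proposition \ref{prop:qsize}. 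The hypothesis $|q|>M$ thus becomes a constraint $M\ll Y^{(g+1)^2+\frac{n+1}{2}\kappa}L_1\cdots L_{g+1}$ on the admissible dyadic tuples $L$, and the $1/M$ saving is realized through the factor $(L_1\cdots L_{g+1})^{-1}$ appearing in the refined count \eqref{eq:Bcounttt} of $B\in s(Y\D)\cap\cS(L,s)$. That refined count is itself not Theorem \ref{thm:mainB}: it requires the observation that, because $\ell_{g+1,g+1}\in\cZ$ forces the top-left $(g+1)\times(g+1)$ block of $B$ to vanish, the set $Y\D\cap s^{-1}S(\Lambda)$ lies in the span of the $\ell_i\ast\ell_j$ with $i\leq g+1$ only, yielding \eqref{eq:Bcountdist1}. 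Without Proposition \ref{prop:qsize} and this refined $B$-count, neither the $1/M$ factor nor the correct power of $X$ is attainable; indeed, replacing $q$ by $Q$ and arguing as in the odd case (which is essentially what your $|Q|\ll Y^{(g+1)(g+2)}w(Q)$ step reduces to once $|\det B'|\geq 1$) is exactly the approach the paper explains does not yield sufficient savings.
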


Recall from \eqref{eq:evenMCDC} that
\begin{equation*}
\cI_X^\dcusp(\LLM )\ll\displaystyle
\sum_L\sum_{\cZ:a_{g+1,g+1}\in\cZ}N(\LLM ,L,\cZ,X);
\end{equation*}
here the first sum is over $r$-tuples $L=(L_1,\ldots,L_r)$ with
$L_1\leq L_2\leq \cdots\leq L_n$ that partition the region
$\{(\mu_1,\ldots,\mu_r)\in
[Y^{-\Theta_1},Y^{\Theta_2}]^r:\mu_1\leq\ldots\leq\mu_r\}$ into dyadic
ranges, and the second sum is over saturated subsets $\cZ$ of
$\cK\cup\cM$, where
\begin{equation*}
N(\LLM ,L,\cZ,X)=\int_{T_\cZ(L,Y)}\#\bigl\{
(A,B)\in (s(Y\D)\times s(Y\D))\cap\LLM :B\in\cS(L,s)\bigr\}
\,\delta(s)d^\times s.
\end{equation*}
The set $\cS(L,s)$ is the union over $\Lambda\in
\Sigma(L,s)$ of $S(\Lambda)$, where $S(\Lambda)$ denotes the lattice
of integral symmetric matrices whose row space is contained in
$\Lambda\otimes\R$, and $\Sigma(L,s)$ denotes the set of primitive
lattices $\Lambda\in\Z^{n+1}$ of rank $n$ such that the successive
minima $\mu_1,\ldots,\mu_n$ of $s^{-1}(\Lambda)$ satisfy
$L_i\leq\mu_i<2L_i$ for each $i\in\{1,\ldots,n\}$. Finally recall from \S\ref{sec:setupsplit} and Proposition \ref{prop:lilj} that
$$w(\ell_{g+1,g+1})^2=L_{g+1}^2t_{g+1}^{-2}\leq c_1Yt_{g+1}^{-2}=c_1Yw(a_{g+1,g+1})<c_1c_{g+1,g+1}<c_{g+1}'^{\,\,2}$$
for every $s\in T_\cZ(Y,L)$. Hence we may assume that
$\ell_{g+1,g+1}\in\cZ$. 

The deep cusp contains $\asymp X^{n+1}$ elements, and we
obtain a saving because the elements we are counting have
$q$-invariant greater than $M$. To make use of this condition, we require 
an upper bound on the size of the $|q|$-invariant of elements in
$(s(Y\D)\times s(Y\D)) \cap \LL1 $. To accomplish this, we
have the following preliminary result.

\begin{lemma}\label{lem:usingkappa}
Let $(A,B)\in (Y\D\times Y\D)\cap W_{0}(\R)$ be such that
$\Delta(A,B)> X^{2n-2-\kappa}$. Denote the top right $(g+1)\times
(g+2)$ block of $B$ by $B^\top$. Then 
\begin{equation*}
\det(B^\top (B^\top)^t)\gg Y^{2(g+1)-(n+1)\kappa}.
\end{equation*}
\end{lemma}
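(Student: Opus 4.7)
My plan is to derive the lower bound on $\det(B^\top(B^\top)^t)$ from the polynomial divisibility established in Proposition \ref{propBsubdiv}, together with a bi-degree bookkeeping. By that proposition, $\det(A^\top(A^\top)^t)\det(B^\top(B^\top)^t)$ divides $\Delta$ in $\Z[W_0]$, so there exists a polynomial $R \in \Z[W_0]$ with
\[
\Delta \;=\; \det(A^\top(A^\top)^t) \cdot \det(B^\top(B^\top)^t) \cdot R.
\]
Rearranging and applying the hypothesis $\Delta(A,B) > X^{2n-2-\kappa}$ gives
\[
\det(B^\top(B^\top)^t) \;\gg\; \frac{X^{2n-2-\kappa}}{\det(A^\top(A^\top)^t)\,|R(A,B)|},
\]
so it suffices to produce upper bounds on the two factors in the denominator when $(A,B) \in Y\D \times Y\D$.

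The first factor is immediate: $A^\top$ is a $(g+1)\times(g+2)$ matrix with entries $O(Y)$, so $\det(A^\top(A^\top)^t)$, being homogeneous of degree $2(g+1)$ in the entries of $A^\top$, is $O(Y^{2(g+1)})$. For the second factor, I would determine the bi-degree of $R$ via the bi-homogeneity of $\Delta$ in $(A,B)$: expanding $f_{A,B} = (-1)^{n(n-1)/2}\det(Ax - By)$ shows that the coefficient of $x^{n+1-j}y^j$ in $f_{A,B}$ has bidegree $(n+1-j,\,j)$ in the entries of $(A,B)$, and since $\Delta$ (the discriminant of $f_{A,B}$) has weight $n(n+1)$ in each of $x$ and $y$, it is bi-homogeneous of bidegrees $(n(n+1),\,n(n+1))$. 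Subtracting the bidegrees $(2(g+1),0)$ and $(0,2(g+1))$ of the two determinant factors gives the bidegree of $R$ and hence a bound $|R(A,B)| \ll Y^{d_R}$ on $Y\D \times Y\D$. Substituting both bounds into the inequality above and using $X = Y^{n+1}$ gives the claimed lower bound $\det(B^\top(B^\top)^t) \gg Y^{2(g+1)-(n+1)\kappa}$.

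The main algebraic content is Proposition \ref{propBsubdiv}; once that divisibility is in hand, the rest is a direct degree count. The only point requiring care is verifying that the bi-degree bookkeeping produces the exponent $2(g+1)-(n+1)\kappa$ precisely, but this is essentially mechanical once the bidegrees of $\Delta$ and of the two determinant factors are correctly identified.
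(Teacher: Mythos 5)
Your proposal is correct and takes essentially the same route as the paper: both arguments rest on the divisibility from Proposition \ref{propBsubdiv} combined with homogeneity and the boundedness of the cofactor on the fixed region $\D\times\D$ (the paper rescales to $(A',B')=Y^{-1}(A,B)\in\D\times\D$ and bounds the single cofactor $P$ in $\Delta=P\cdot\det(B^\top(B^\top)^t)$ by $O(1)$, which is your bidegree bookkeeping in disguise). One caveat worth noting: if one actually carries out the "mechanical" count, the hypothesis $\Delta(A,B)>X^{2n-2-\kappa}$ yields $\det(B^\top(B^\top)^t)\gg Y^{2(g+1)-(n+1)(\kappa+2)}$ rather than the stated exponent, since $\Delta$ is homogeneous of degree $2n(n+1)$ so that $|\Delta(A',B')|>X^{-2-\kappa}$ (not $X^{-\kappa}$ as the paper asserts); this factor of $X^{-2}$ is a slip shared with the paper's own proof, not a defect of your approach relative to theirs.
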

\begin{proof}
Let $(A',B')=Y^{-1}(A,B)\in (\D\times\D)\cap
W_{0}(\R)$. Then it suffices to prove that $$ \det(B'^{\,\top} (B'^{\,\top})^t)\gg Y^{-(n+1)\kappa}.$$ Since
$|\Delta(A,B)|>X^{2n-2-\kappa}$, we have
$|\Delta(A',B')|>X^{-\kappa}$.  By Proposition \ref{propBsubdiv},
there is a polynomial $P\in\Z[W_{0}]$ such that $$\Delta(A'',B'')
= P(A'',B'')\det(B''^{\,\top} (B''^{\,\top})^t)$$ for any $(A'',B'')\in
W_{0}(\R)$. Since $(A',B')\in\D\times\D$, which is an
absolutely bounded region, we have $|P(A',B')|\ll 1$. Hence $$\det(B'^{\,\top} (B'^{\,\top})^t) = \frac{\Delta(A',B')}{P(A',B')}\gg
X^{-\kappa}$$ as desired.
\end{proof}

Next, we have the following upper bound on the $|q|$-invariant.

\begin{proposition}\label{prop:qsize}
Let $\cZ\subset\cZ_1$ be a saturated set containing $a_{g+1,g+1}$ and
$\ell_{g+1,g+1}$. Let $L=(L_1,\ldots,L_n)$ be a sequence of
nondecreasing positive real numbers. Then for any $s\in T_\cZ(L,Y)$
and $(A,B)\in (s(Y\D)\times s(Y\D))\cap \LL1 $, we have 
\begin{equation}\label{eq:qsize}
|q|(A,B) \ll Y^{(g+1)^2+\frac{n+1}{2}\kappa}\prod_{i=1}^{g+1} L_i.
\end{equation}
\end{proposition}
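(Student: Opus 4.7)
The plan is to compute $|q|(A,B)$ after transforming $(A,B)$ via some $\gamma \in \SL_{n+1}(\Z)$ into a pair $(A',B')\in W_1(\Z)$, and then to bound the resulting expression $|q|(A',B') = |Q((A')^\top,(B')^\top)|/|\det B'|$ using size estimates on the entries of the transformed pair.

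Since $(A,B) \in \LL1$ satisfies $f_{A,B} = xf$ for some irreducible $f \in \W_{m}^{\gen}$, the pair $(A,B)$ is distinguished with $B$ singular; there is a common isotropic $(g+1)$-dimensional $\Q$-subspace $X$, and a unique $(g+2)$-dimensional $B$-isotropic subspace $X' = X + \ker(B)$. I will pick an integral basis $v_1,\ldots,v_{n+1}$ of $\Z^{n+1}$ in which $\{v_1,\ldots,v_{g+1}\}$ is a Minkowski-reduced basis of $\Lambda_X := X\cap\Z^{n+1}$ in the skew norm $|s^{-1}\cdot|_2$, in which $v_{g+2}\in\Lambda_{X'}$ spans $\Lambda_{X'}/\Lambda_X$, and in which $v_{g+3},\ldots,v_{n+1}$ are chosen to minimize $|s^{-1}v_k|$ modulo the previously chosen vectors. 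Set $\mu_j := |s^{-1}v_j|$; by Minkowski reduction, $\mu_1 \le \cdots \le \mu_{g+1}$ are up to absolute constants the successive minima of $s^{-1}\Lambda_X$.

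Next, Cauchy--Schwarz applied to $v_j^t A v_k = (s^{-1}v_j)^t(sAs^t)(s^{-1}v_k)$ together with the elementary operator-norm bound $\|sAs^t\|_{\mathrm{op}}, \|sBs^t\|_{\mathrm{op}} \ll Y$ (which holds because $(A,B) \in s(Y\D)\times s(Y\D)$) gives $|(A')^\top_{jk}|, |(B')^\top_{jk}| \ll Y\mu_j\mu_k$ for the top-right $(g+1)\times(g+2)$ blocks. A careful Hadamard-type estimate applied to the $(g+2)\times(g+2)$ determinant defining $Q$ then yields an upper bound on $|Q((A')^\top,(B')^\top)|$ polynomial in $Y$, $\mu_1,\ldots,\mu_{g+1}$, and $\mu_{g+2},\ldots,\mu_{n+1}$. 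Simultaneously, a lower bound on $|\det B'|$ will be obtained by combining the polynomial identity $\Delta = P\cdot(\det B')^2$ from Proposition \ref{propBsubdiv} with the archimedean input $\Delta(A,B) = \Delta(f)f(0,1)^2 > H(f_{A,B})^{2n-2-\kappa}$, available because $f\in\W_{m}^{\gen}$ is not in $V(\Z)^{\Delta\text{\,small}}$; tracking the skewing analogously to the proof of Lemma \ref{lem:usingkappa} then translates this into $|\det B'| \gg Y^{g+1-(n+1)\kappa/2}$ times an appropriate monomial in the $\mu_k$'s for $k \ge g+3$.

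Taking the ratio, the monomial factors involving $\mu_k$ for $k \ge g+3$ cancel between numerator and denominator, leaving a bound of the shape $|q|(A,B) \ll Y^{(g+1)^2 + (n+1)\kappa/2}\prod_{j=1}^{g+1}\mu_j$. The final step, which I expect to be the main obstacle, is the comparison $\prod_{j=1}^{g+1}\mu_j \ll \prod_{i=1}^{g+1} L_i$: this will use the fact that among the two common isotropic $(g+1)$-dimensional subspaces available in the distinguished-with-singular-$B$ case, at least one admits $\Lambda_X$ sitting inside the row space of $B$, so that the successive minima of $s^{-1}\Lambda_X$ are dominated by those of the row space and hence by $L_1,\ldots,L_{g+1}$. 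The hypothesis that $\cZ$ contains both $a_{g+1,g+1}$ and $\ell_{g+1,g+1}$ is crucial here to align the isotropic geometry of $(A,B)$ with the skew structure in such a way that this containment is available.
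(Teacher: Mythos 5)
Your coarse skeleton---write $|q|=|Q|/|\det(\cdot)|$, bound the numerator from above, and bound the denominator from below using Proposition \ref{propBsubdiv} together with the archimedean condition $|\Delta(A,B)|>X^{2n-2-\kappa}$ as in Lemma \ref{lem:usingkappa}---is the same as the paper's. But the mechanism by which the factor $\prod_{i=1}^{g+1}L_i$ enters is wrong, and your final step fails. In the actual argument the $L_i$ come from the \emph{denominator}: since $\ell_{g+1,g+1}\in\cZ$, the first $g+1$ reduced basis vectors $s\ell_1,\ldots,s\ell_{g+1}$ of the row space of $B$ are supported on the last $g+2$ coordinates and span $C(B)\cap\text{Span}_\R\{e_{g+2},\ldots,e_{n+1}\}$, so the top right $(g+1)\times(g+2)$ block of $B$ factors as $B^\top=C\,M^\top A_2$ with $C$ integral and $|\det C|$ equal to the determinant $|\det(B'')|$ appearing in $|q|$; since the rows of $M^\top$ have lengths $<2L_i$, one gets $\det(B'')^2\gg (t_1\cdots t_{g+1})^{-2}(L_1\cdots L_{g+1})^{-2}\det(B_0^\top(B_0^\top)^t)$, and Lemma \ref{lem:usingkappa} finishes. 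The numerator needs no Hadamard estimate: the change of basis lies in $G_0(\Z)$, so $|Q|(A',B')=|Q|(A,B)\ll Y^{(g+1)(g+2)}\prod_{k\le g+1}t_k^{-1}$, and the factor $\prod_{k\le g+1}t_k^{-1}$ (not any monomial in $\mu_k$ for $k\ge g+3$) is what cancels against the denominator.

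Your route instead tries to establish $\prod_{j=1}^{g+1}\mu_j(s^{-1}\Lambda_X)\ll\prod_{i=1}^{g+1}L_i$ for the common isotropic lattice $\Lambda_X$, and this is where the proof breaks. First, $\Lambda_X$ is the wrong lattice: because $a_{g+1,g+1}\in\cZ$ forces $(A,B)\in W_0(\Z)$, one choice of $\Lambda_X$ is simply $\text{Span}_\Z\{e_1,\ldots,e_{g+1}\}$, whose $s^{-1}$-minima are $\asymp t_1,\ldots,t_{g+1}$ and bear no relation to the $L_i$, which are the minima of the $n$-dimensional row-space lattice of $B$; moreover $\Lambda_X$ need not be contained in the row space of $B$ at all, since the kernel vector of $B$ (which is only known not to \emph{lie in} $\text{Span}\{e_1,\ldots,e_{g+1}\}$) can have nonzero components in the first $g+1$ coordinates. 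Second, even for a primitive rank-$(g+1)$ sublattice that genuinely sits inside the row space, successive minima of a sublattice are bounded \emph{below}, not above, by those of the ambient lattice (consider $\Z\cdot(1,N)\subset\Z^2$), so the claimed domination by $L_1,\ldots,L_{g+1}$ is false as a general principle. The role of the hypothesis $\ell_{g+1,g+1}\in\cZ$ is not to "align" the isotropic subspace with the row space, but to force the coordinate vanishing that makes the factorization $B^\top=C\,M^\top A_2$ through a rank-$(g+1)$ lattice of covolume $\ll L_1\cdots L_{g+1}$ available.
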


\begin{proof}
Suppose $(A,B)\in (s(Y\D)\times s(Y\D))\cap \LL1 $. Since
$a_{g+1,g+1}\in\cZ$, we have $(A,B)\in W_{0}(\Z)$. By Lemma
\ref{lem:disc}, $\ker(B)$ is $1$-dimensional and does not lie inside
$\text{Span}\{e_1,\ldots,e_{g+1}\}$ as this $(g+1)$-plane is isotropic
with respect to $A$. Let
$w_1\in\text{Span}_\Z\{e_{g+2},\ldots,e_{n+1}\}$ be a primitive vector
so that $\{e_1,\ldots,e_{g+1},w_1\}$ forms a basis for the primitive
lattice in $\text{Span}_\R\{e_1,\ldots,e_{g+1}\} + \ker(B).$ Complete
$w_1$ to an integral basis $\{w_1,\ldots,w_{g+2}\}$ for
$\text{Span}_\Z\{e_{g+2},\ldots,e_{n+1}\}$. We can now use the integral~basis
\begin{equation}\label{eq:newbasisforq}
  \{e_1,\ldots,e_{g+1},w_1,\ldots,w_{g+2}\}
\end{equation}
of $\Z^{n+1}$ to compute the $|q|$-invariant of $(A,B)$, as the first
$g+1$ vectors generate a primitive lattice isotropic with respect to
$A$ and $B$, and the first $g+2$ vectors generate a primitive lattice
isotropic with respect to $B$. Note also that with respect to the
standard inner product on $\R^{n+1}$, since
$w_1\in\text{Span}_\R\{e_1,\ldots,e_{g+1}\} + \ker(B)$, we have
\begin{equation}\label{eq:w1ortho}
w_1 \perp \bigl(\text{Span}_\R\{e_{g+2},\ldots,e_{n+1}\}\cap C(B)\bigr)
\end{equation}
where $C(B)$ denotes the column space of $B$.

Let $A'$ and $B'$ be the Gram matrices of the quadratic
forms defined by $A$ and $B$ with respect to this new basis
\eqref{eq:newbasisforq}. Since the first $g+1$ vectors of this basis are
part of the standard basis, we see that $(A,B)$ and $(A',B')$ are
$G_0(\Z)$-equivalent, where $G_0$ is defined in \S\ref{sec:2.1}. Hence
\begin{equation*}
  |Q|(A',B') = |Q|(A,B) \ll Y^{(g+1)(g+2)}\prod_{k=1}^{g+1} t_k^{-1}.
\end{equation*}
Let $B''$ denote the top right $(g+1)\times(g+1)$ block of $B'$. Then,
by the definition of $q$, we have
\begin{equation}\label{eq:qupbd1}
|q|(A,B) = |q|(A',B') = 
\frac{|Q|(A',B')}{|\det(B'')|} \ll
\frac{1}{|\det(B'')|}Y^{(g+1)(g+2)}\prod_{k=1}^{g+1} t_k^{-1}.
\end{equation}

We now work towards proving a lower bound on $|\det(B'')|$.  Let $p_1$
(resp., $p_2$) denote the projection of $\R^{n+1}$ onto the first
$g+1$ coefficients (resp., the last $g+2$ coefficients).  Let
$B^{\top}$ (resp., $B'^\top$) denote the top right $(g+1)\times (g+2)$
block of $B$ (resp., $B'$). Then by \eqref{eq:w1ortho}, we have
$B^\top p_2(w_1) = 0$. Consider the following two $(g+2)\times(g+2)$
matrices in block form:
\begin{equation*}
  B^* = \begin{pmatrix}B^\top\\ p_2(w_1)^t\end{pmatrix},\qquad
    \gamma = \begin{pmatrix}p_2(w_1)&\cdots&p_2(w_{g+2})\end{pmatrix}.
\end{equation*}
Then
\begin{equation*}
  B^*\gamma = \begin{pmatrix}0&B''\\ |w_1|^2&*\end{pmatrix}.
\end{equation*}
Let $\Lambda_2$ denote the rank $g+1$ lattice in $\Z^{g+2}$ spanned by
the rows of $B^{\top}$. Then $|\det(B^*)| = d(\Lambda_2)|w_1|.$ Since
$\{p_2(w_1),\ldots,p_2(w_{g+2})\}$ is an integral basis for
$\Z^{g+2}$, we have $\det\gamma = \pm1$ and so
\begin{equation*}
  |\det(B'')| = \frac{|\det(B^*)\det\gamma|}{|w_1|^2} =
  \frac{d(\Lambda_2)|w_1|\cdot 1}{|w_1|^2} =
  \frac{d(\Lambda_2)}{|w_1|}.
\end{equation*}

We now use the fact that $B\in\cS(L,s)$. This means that the
row span of $B$ lies in an  $n$-dimensional
primitive lattice $\Lambda\subset\Z^{n+1}$ with basis of the
form $\{s\ell_1,\ldots,s\ell_n\}$ where $L_i\leq |\ell_i|<2L_i$ and
$\{\ell_1,\ldots,\ell_n\}$ are reduced. By assumption, 
$\ell_{g+1,g+1}\in\cZ$, and hence $\ell_{i,j}\in\cZ$ for all $i\leq
g+1$ and $j\leq g+1$. Thus the first $g+1$ coefficients
of $s\ell_1,\ldots,s\ell_{g+1}$ are all $0$, and  
$\{s\ell_1,\ldots,s\ell_{g+1}\}$ forms an integral basis of a
primitive lattice $\Lambda_1$ of rank $g+1$ in
$\text{Span}_\R\{e_{g+2},\ldots,e_{n+1}\}$. By \eqref{eq:w1ortho}, $w_1$ is a primitive vector in
$\text{Span}_\R\{e_{g+2},\ldots,e_{n+1}\}$ orthogonal to~$\Lambda_1$. Hence $$|w_1| = d(\Lambda_1).$$ By
\eqref{eq:w1ortho}, we have
$\text{Span}_\R\{e_{g+2},\ldots,e_{n+1}\}\cap
C(B)\neq\text{Span}_\R\{e_{g+2},\ldots,e_{n+1}\}$, and
so
\begin{equation*}
\text{Span}_\R\{e_{g+2},\ldots,e_{n+1}\}\cap
C(B)=\Lambda_1\otimes\R.
\end{equation*}
In particular, since $\Lambda_1$ is primitive, the first $g+1$ columns
of $B$ belong to $\Lambda_1$. That is, there is a
$(g+1)\times(g+1)$ matrix $C$ (with integer coefficients) such that
\begin{equation*}
  B^\top = C\begin{pmatrix}p_2(s\ell_1)^t\\
  \vdots\\ p_2(s\ell_{g+1})\end{pmatrix}
\end{equation*}
and so 
\begin{equation}\label{eq:detCdetB}
  |\det(C)| = \frac{d(\Lambda_2)}{d(p_2(\Lambda_1))} =
  \frac{d(\Lambda_2)}{|w_1|} = |\det(B'')|.
\end{equation}

To obtain a lower bound on $|\det(C)|$, we write
\begin{equation*}
  s = \begin{pmatrix} A_1&0\\0&A_2\end{pmatrix}
    \qquad \mbox{with} \qquad A_1 =
    \begin{pmatrix}t_1^{-1}& &\\ &\ddots&\\&&t_{g+1}^{-1}\end{pmatrix},
    \quad A_2 = \begin{pmatrix}
     t_{g+2}^{-1}& &\\ &\ddots&\\&&t_{n+1}^{-1}\end{pmatrix}.
\end{equation*}
Let $M^\top$ denote the $(g+1)\times (g+2)$ matrix with rows
$p_2(\ell_1)^t,\ldots,p_2(\ell_{g+1})^t$. Then  $$CM^\top
A_2=B^{\top}.$$ Consider the pair $(A_0,B_0):=s^{-1}(A,B)\in
(Y\D\times Y\D)\cap W_{0,n+1}(\R)$
satisfying 
$$|\Delta(A_0,B_0)| =
|\Delta(A,B)|>X^{2n-2-\kappa}$$ since $(A,B)\in\LL1 $. The
top right $(g+1)\times(g+2)$ block $B_{0}^\top$ of $B_0$
satisfies $$A_1B_{0}^\top A_2=B^\top,$$ and so  $$CM^\top=A_1B_{0}^\top.$$ The rows of $M^\top$ form a reduced
basis for a lattice $\Lambda_3\subset \Z^{g+2}$ with $L_i\leq
|p_2(\ell_i)|<2L_i$. Thus 
\begin{equation}\label{eq:detClower}
  \det(B'')^2=\det(C)^2=
  \frac{\det(A_1B_{0}^\top (B_{0}^\top)^tA_1)}{\det(M^\top (M^\top)^t)}\gg 
\frac{t_1^{-2}\cdots t_{g+1}^{-2}}{L_1^2\cdots L_{g+1}^2} \det(B_{0}^\top (B_{0}^\top)^t).
\end{equation}
By Equations \eqref{eq:qupbd1} and \eqref{eq:qupbd1},
\begin{equation*}
  |q|(A,B)\ll Y^{(g+1)(g+2)}
  \frac{L_1\cdots L_g}{\sqrt{\det(B_{0}^\top (B_{0}^\top)^t)}}.
\end{equation*}
The result now follows from Lemma \ref{lem:usingkappa}.
\end{proof}

\smallskip

\noindent{\bf Proof of Theorem \ref{thm:distcusp}:} We write
\begin{equation}\label{eq:thmdistcusp1}
  \cI_X^\dcusp(\LLM )\ll \sum_{L}
  \sum_{\substack{\cZ\\a_{g+1,g+1}\in\cZ\\\ell_{g+1,g+1}\in\cZ}}
  N(\LLM ,L,\cZ,X),
\end{equation}
and obtain upper bounds on $N(\LLM ,L,\cZ,X)$ for each
$\cZ\subset\cZ_1$ with $a_{g+1,g+1},\ell_{g+1,g+1}\in\cZ$. Fix such a
set $\cZ$ with $N(\LLM ,L,\cZ,X)>0$ and an element $s\in
T_\cZ(L,Y)$. Then
\begin{equation*}
  (s(Y\D)\times s(Y\D))\cap \LLM 
  \subset (s(Y\D)\cap S(\Z)) \times (s(Y\D)\cap \cS(L,s).
\end{equation*}

We begin by bounding the number of elements in $\#(s(Y\D)\cap
S(\Z))$. Let $\cK_\dist := \{a_{ij}\mid 1\leq i\leq j\leq g+1\}$. By
assumption, $\cK_\dist$ is a subset of $\cZ\cap\cK$. Define $\pi_\cK:
\cZ_1\cap\cK\rightarrow\cK\backslash\cZ_1$ by $$\pi_\cK(a_{ij}) :=
a_{n+1-j,j}.$$ This agrees with the $\pi_k$
as defined in \S\ref{sec:6.3.1} when  restricted to $\cK$. For any
$\alpha\in \cZ_1\cap\cK$, we have $Yw(\pi_\cK(\alpha))\gg1$ and
$w(\pi_\cK(\alpha))\gg w(\alpha)$. For any $a_{ij}\in
(\cZ_1\cap\cK)\backslash \cK_\dist$, we have $i< g+1<j$. Thus 
\begin{equation*}
  \prod_{\alpha\in (\cZ\cap\cK)\backslash \cK_\dist}
  \frac{w(\pi_\cK(\alpha))}{w(\alpha)} \ll
  \prod_{\alpha\in (\cZ_1\cap\cK)\backslash \cK_\dist}
  \frac{w(\pi_\cK(\alpha))}{w(\alpha)}=\prod_{\substack{1\leq i < g+1 < j \\ i + j \leq n}}\frac{t_i}{t_{n+1-j}}=\prod_{1\leq i\leq j\leq g+1}\frac{t_i}{t_j}.
\end{equation*}
Therefore, 
\begin{eqnarray}
\nonumber\#\bigl(s(Y\D)\cap S(\Z)\bigr) &\ll &  Y^{(n+1)(n+2)/2-\#(\cZ\cap\cK)}\prod_{\alpha\in \cZ\cap\cK}\frac{1}{w(\alpha)}\\
\nonumber&\ll&  Y^{(n+1)(n+2)/2-\#(\cZ\cap\cK)}\Big(\prod_{\alpha\in \cK_\dist}\frac{1}{w(\alpha)}\Big)\Big(\prod_{\alpha\in (\cZ\cap\cK)\backslash \cK_\dist} \frac{Yw(\pi_q(\alpha))}{w(\alpha)}\Big)\\
\nonumber&\ll&  \frac{Y^{(n+1)(n+2)/2}}{Y^{\#\cK_\dist}}\Big(\prod_{1\leq i\leq j \leq g+1} t_it_j\Big)\Big(\prod_{1\leq i\leq j\leq g+1}\frac{t_i}{t_j}\Big)\\
\label{eq:wow}&=&  \frac{Y^{(n+1)(n+2)/2}}{Y^{(g+1)(g+2)/2}}(t_1\cdots t_{g+1})^{g+2} \Big(\prod_{1\leq i\leq j\leq g+1}\frac{t_i}{t_j}\Big).
\end{eqnarray}

We now obtain an upper bound on  $\#(s(Y\D)\cap\cS(L,s))$. Recall that 
\begin{equation}\label{eq:distBcount1}
\#\bigl(s(Y\D)\cap\cS(L,s)\bigr)=\sum_{\Lambda\in\Sigma(L,s)}\#\big(Y\D \cap s^{-1}S(\Lambda)\big).
\end{equation}
Let $\Lambda\in\Sigma(L,s)$ be a lattice such that $s^{-1}(\Lambda)$ has reduced basis $\{\ell_1,\ldots,\ell_n\}$ with $L_i\leq |\ell_i| < 2L_i$ for each $i=1,\ldots,n$. Suppose there exists $(A,B)\in (s(Y\D)\times s(Y\D))\cap \LLM $ with $B\in s(Y\D)\cap S(\Lambda)$. By Proposition \ref{prop:qsize}, 
\begin{equation}\label{eq:qMweight}
M\ll Y^{(g+1)^2+\frac{n+1}{2}\kappa}\prod_{i=1}^{g+1} L_i.
\end{equation} 
Recall also from the proof of Proposition \ref{prop:qsize} that
$$\text{Span}_\R\{e_{g+2},\ldots,e_{n+1}\}\cap C(B)=\text{Span}_\R\{s\ell_1,\ldots,s\ell_{g+1}\}.$$
Hence 
$$\text{Span}_\R\{e_{g+2},\ldots,e_{n+1}\}\cap \text{Span}_\R\{s\ell_{g+2},\ldots,s\ell_{n}\}=\{0\}.$$
It follows that the set $\{p_1(s\ell_{g+2}),\ldots,p_1(s\ell_n)\}$, and thus the set $\{p_1(\ell_{g+2}),\ldots,p_1(\ell_n)\}$, are both linearly independent. There then exist vectors $v_{g+2},\ldots,v_n\in\text{Span}_\R\{e_1,\ldots,e_{g+1}\}$ such that $$\begin{pmatrix}v_{g+2}&\cdots&v_n\end{pmatrix}^t\begin{pmatrix}\ell_{g+2}&\cdots&\ell_n\end{pmatrix} = I_{g+1}$$
is the identity matrix.
Let $B'\in s(Y\D)\cap S(\Lambda)$ be any element and write $$s^{-1}B' = \sum_{1\leq i\leq j\leq n} \beta_{ij}\ell_i\ast\ell_j,$$
where $\ell_i\ast \ell_j$ is as defined in (\ref{stardef}). 
Then for $g+2\leq i\leq j\leq n$, since $v_i,v_j\perp \ell_1,\ldots,\ell_{g+2}$, we have $$v_i^t (s^{-1}B') v_j = \begin{cases} 2\beta_{ij} &\mbox{if }i\neq j,\\
\beta_{ii}&\mbox{if }i=j\end{cases}. $$
Since the top left $(g+1)\times (g+1)$ block of $B'\in s(Y\D)\cap S(\Lambda)$ is $0$, the same is true for $s^{-1}B'$. Hence $\beta_{ij} = 0$ whenever $g+2\leq i\leq j\leq n$. In other words, $$Y\D \cap s^{-1}S(\Lambda)\subset \mbox{Span}_\Z \{\ell_i\ast\ell_j\mid 1\leq i\leq j\leq n\mbox{ and }i\leq g+1\}.$$
By Proposition \ref{prop:sch}, we have
\begin{eqnarray}
\nonumber\#(Y\D \cap s^{-1}S(\Lambda))&\ll&
\displaystyle\prod_{\substack{1\leq i \leq j\leq n\\i\leq g+1\\L_iL_j\ll Y}}\frac{Y}{L_iL_j}\\
\nonumber&\ll&\displaystyle
\Bigl(\prod_{\substack{1\leq i \leq j\leq n\\i\leq g+1}}\frac{Y}{L_iL_j}\Bigr)\prod_{\substack{1\leq i\leq j\leq n\\i\leq g+1\\L_iL_j\gg Y}}\Bigl(\frac{L_iL_j}{Y}\frac{Y}{L_{n+1-j}{L_j}}\Bigr)\\
\nonumber&\ll&\displaystyle
\frac{Y^{n(n+1)/2}}{(L_1\cdots L_n)^{n+1}}\frac{(L_{g+2}\cdots L_n)^{g+2}}{Y^{(g+1)(g+2)/2}}\prod_{\substack{1\leq i\leq j\leq n\\i\leq g+1}}\frac{L_i}{L_{n+1-j}}\\
\label{eq:Bcountdist1}&\ll&\frac{Y^{n(n+1)/2}}{(L_1\cdots L_n)^{n+1}}\frac{(L_{g+2}\cdots L_n)^{g+2}}{Y^{(g+1)(g+2)/2}}\prod_{1\leq i< j\leq g+1}\frac{L_j}{L_i},
\end{eqnarray}
where the second bound follows since $L_{n+1-j}L_j\ll Y$ for all $j$ by Proposition \ref{prop:lilj}; and the last bound follows because the map from $\{(i,j):1\leq i\leq j\leq n\mbox{ and }i\leq g+1\}$ to $\{(k,\ell)\}$ sending $(i,j)$ to $(n+1-j,i)$ is one-to-one with its image contained within the set of pairs $(k,\ell)$ with $k<\ell\leq g+1$, and because the $L_i$'s are nondecreasing. 

To obtain a bound on the size of $\Sigma(L,s)$, we use \eqref{eq:Lambda}:
\begin{equation*}
\#\Sigma(L,s)\ll (L_1L_2\cdots L_n)^{n+1}
\Bigl(\prod_{1\leq i<j\leq n}\frac{L_i}{L_j}\Bigr)
\Bigl(\prod_{\alpha\in\cZ\cap\cM}
\frac{1}{w(\alpha)}\Bigr).
\end{equation*}
Let $\cM_\dist = \{\ell_{i,j}\mid 1\leq i\leq g+1,\,1\leq j\leq g+1\}$. Recall that elements $\ell_{i,j}\in\cZ_1$ satisfy $i+j\leq n+1$. Hence for any $\ell_{ij}\in (\cZ_1\cap\cM)\backslash \cM_\dist$, exactly one of $i$ and $j$ is $\leq g+1$. Define
\begin{equation*}
\begin{array}{rcl}
\pi_\cM&:&(\cZ_1\cap\cM)\backslash \cM_\dist\to\cM\\[.1in]
\pi_\cM(\ell_{i,j}) &=& \displaystyle\left\{
  \begin{array}{rl}
    \ell_{i,n+2-i}&\;{\rm if}\; i\leq g+1;\\
    \ell_{n+1-j,j} &\;{\rm if}\; j \leq g+1.
  \end{array}\right.
\end{array}
\end{equation*}
We claim that the image of $\pi_\cM$ is disjoint from $\cZ$. Indeed, when $i\leq g+1$, we have $\pi_\cM(\ell_{i,j})\not\in\cZ_1$, and when $j\leq g+1$, we have $\pi_\cM(\ell_{i,j})\not\in\cZ$ by Lemma \ref{lem:NUX} and the fact that $a_{g+1,g+1}\in\cZ$. Thus $w(\pi_\cM(\alpha))\gg 1$ and $w(\pi_\cM(\alpha))\gg w(\alpha)$ for every $\alpha\in(\cZ_1\cap\cM)\backslash \cM_\dist$. It follows that
\begin{equation*}
\begin{array}{rcl}
\displaystyle \prod_{\alpha\in\cZ_\cM}\frac{1}{w(\alpha)}
&\ll&\displaystyle
\Bigl(\prod_{\ell\in \cM_\dist}\frac{1}{w(\ell)}
\Bigr)\Bigl(\prod_{(\cZ_1\cap\cM)\backslash\cM_\dist}\frac{w(\pi(\ell))}{w(\ell)}\Bigr)
\\[.2in]&\ll&\displaystyle
\frac{(t_1\cdots t_{g+1})^{g+1}}{(L_1\cdots L_{g+1})^{g+1}}
\Bigl(\prod_{g+2\leq i< j\leq n+1}\frac{t_i}{t_j}\Bigr)
\Bigl(\prod_{g+2\leq i< j\leq n+1}\frac{L_j}{L_i}\Bigr),
\end{array}
\end{equation*}
so that
\begin{equation}\label{eq:lbounddist}
\#\Sigma(L,s) \ll (L_1\cdots L_n)^{n+1}\frac{(t_1\cdots t_{g+1})^{g+1}}{(L_1\cdots L_{g+1})^{g+1}}\Bigl(\prod_{g+2\leq i< j\leq n+1}\frac{t_i}{t_j}\Bigr)\Bigl(\prod_{i=1}^{g+1}\prod_{j=i}^n \frac{L_i}{L_j}\Bigr) .
\end{equation}
Combining \eqref{eq:distBcount1}, \eqref{eq:Bcountdist1}, and \eqref{eq:lbounddist} and the identity 
\begin{equation*}
\Bigl(\prod_{1\leq i< j\leq g+1}\frac{L_j}{L_i}\Bigr)\Bigl(\prod_{i=1}^{g+1}\prod_{j=i}^n \frac{L_i}{L_j}\Bigr) = \frac{(L_1\cdots L_{g+1})^{g+1}}{(L_{g+2}\cdots L_n)^{g+1}}
\end{equation*}
now yields
\begin{eqnarray}
\nonumber\#\bigl(s(Y\D)\cup\cS(L,s)\bigr)
&\ll& \frac{Y^{n(n+1)/2}}{(L_1\cdots L_n)^{n+1}}
\frac{(L_{g+2}\cdots L_n)^{g+2}}{Y^{(g+1)(g+2)/2}}
\Bigl(\prod_{1\leq i< j\leq g+1}\frac{L_j}{L_i}\Bigr)\\ 
\nonumber&&\cdot \,\,(L_1\cdots L_n)^{n+1}
\frac{(t_1\cdots t_{g+1})^{g+1}}{(L_1\cdots L_{g+1})^{g+1}}
\Bigl(\prod_{g+2\leq i< j\leq n+1}\frac{t_i}{t_j}\Bigr)
\Bigl(\prod_{i=1}^{g+1}\prod_{j=i}^n \frac{L_i}{L_j}\Bigr)\\
\nonumber&=& \frac{Y^{n(n+1)/2}}{Y^{(g+1)(g+2)/2}}(t_1\cdots t_{g+1})^{g+1}(L_{g+2}\cdots L_n)
\prod_{g+2\leq i< j\leq n+1}\frac{t_i}{t_j}\\
\nonumber&\ll& \frac{Y^{n(n+1)/2}}{Y^{(g+1)(g+2)/2}}(t_1\cdots t_{g+1})^{g+1}(L_{g+2}\cdots L_n)
\Bigl(\prod_{g+2\leq i< j\leq n+1}\frac{t_i}{t_j}\Bigr)
\Bigl(\prod_{j=g+2}^n\frac{Y}{L_jL_{n+1-j}}\Bigr)\\
\label{eq:Bcounttt}&=&
 \frac{Y^{n(n+1)/2+g+1}}{Y^{(g+1)(g+2)/2}}\frac{(t_1\cdots t_{g+1})^{g+1}}{L_1\cdots L_{g+1}}\prod_{g+2\leq i< j\leq n+1}\frac{t_i}{t_j},
\end{eqnarray}
where the fourth line follows since $L_{n+1-j}L_j\ll Y$ for all $j$ by Proposition \ref{prop:lilj}. Finally, note that 
\begin{equation*}
\begin{array}{rcl}
\displaystyle(t_1\cdots t_{g+1})^{2g+3}
\prod_{1\leq i< j\leq g+1}\frac{t_i}{t_j}
\prod_{g+2\leq i< j\leq n+1}\frac{t_i}{t_j}
&=&\displaystyle\frac{(t_1\cdots t_{g+1})^{g+2}}{(t_{g+2}\cdots t_{n+1})^{g+1}}
\prod_{1\leq i< j\leq g+1}\frac{t_i}{t_j}
\prod_{g+2\leq i< j\leq n+1}\frac{t_i}{t_j}
\\[.2in]&=&\displaystyle
\prod_{1\leq i< j\leq n+1}\frac{t_i}{t_j} = \delta(s)^{-1}.
\end{array}
\end{equation*}
Therefore, combining \eqref{eq:wow}, \eqref{eq:qMweight} and \eqref{eq:Bcounttt} gives
\begin{equation*}
\begin{array}{rcl}
N(\LLM ,L,\cZ,X)&\ll&\displaystyle
\int_{T_\cZ(L,Y)}\#\bigl(s(Y\D)\cap S(\Z)\bigr)\cdot\#\bigl( s(Y\D)\cap\cS(L,s)\bigr)\,
\delta(s)d^\times s
\\[.2in]&\ll&\displaystyle
\frac{Y^{(n+1)^2-(g+1)^2}}{L_1\cdots L_{g+1}}\int_{T_\cZ(L,Y)}d^\times s
\\[.2in]&\ll&\displaystyle
\frac{X^{(n+1)+\frac12\kappa}}{M}\log^n Y.
\end{array}
\end{equation*}
Theorem \ref{thm:distcusp} now follows immediately from \eqref{eq:thmdistcusp1} by summing over the $O(1)$ different possible $\cZ$'s and the
$O(\log^n Y)$ different possible $L$'s. $\Box$

\subsection{Proof of the main uniformity estimates} \label{sec:6.5}
\smallskip 

\noindent{\bf Proof of Theorem \ref{thm:mainestimate}:} 
Case (a) of Theorem~\ref{thm:mainestimate} follows from an application of the quantitative version of the Ekedahl geometric sieve developed in \cite[Theorem
  3.3]{geosieve}. Case (b) follows from \eqref{eq:odddegreeprelim}, \eqref{eq:avg}, and Theorem \ref{thm:distirrodd}. 
  
  We now use the results of this section to prove the most intricate case, namely, Case (c).
  For $m\leq X^{1/2}$, we have the immediate bound
$$\#\{f\in\W_m:H(f)<X\}\ll X^{n+1}/m^2.$$ Using this bound for $m\leq X^{1/2}$, we may assume that $M>X^{1/2}$. We first note that from \eqref{eq:evencontainment}, \eqref{eq:evenbreakup}, and Lemmas \ref{lem:discsmall} and \ref{lem:geoeven}, we have
\begin{equation*}
\#\bigcup_{\substack{m>M\\
m\;\mathrm{ squarefree} }}\{f\in\U_m^\w:H(f)<X\}
\ll \cI_X(\L(\sqrt{M}))+\frac{X^{n+1}}{\sqrt{M}}+X^{n}+X^{n+1-\frac{\kappa}{2n-2}}.
\end{equation*}
Applying Theorems \ref{th:evenmainbody}, \ref{th:evenmaincusp}, and \ref{thm:distcusp}, we obtain
\begin{equation*}
\begin{array}{rcl}
\displaystyle\cI_X(\L(\sqrt{M}))&=&\displaystyle\cI_X^\main(\L(\sqrt{M}))+\cI_X^\scusp(\L(\sqrt{M}))+\cI_X^\dcusp(\L(\sqrt{M}))
\\[.05in]&\ll&\displaystyle
X^{n+1-1/(10n)}+X^{n+1-1/(88n^6)}+\frac{X^{n+1+\frac12\kappa}}{\sqrt{M}}\log^{2n}X.
\end{array}
\end{equation*}
Setting $\kappa = (2n-2)/(88n^6)$ 
yields the desired result. 
$\Box$.

\medskip

Theorem \ref{thm:mainestimate} has the following immediate consequence. For a positive squarefree integer $m$, let $\W_m:=\W_m^\s\cup\W_m^\w$.
\begin{corollary}\label{cor:unifWm}
For a positive integer $N\geq 3$, and positive real numbers $M$ and
$X$, we have
\begin{equation*}
\sum_{\substack{m>M\\m{\rm squarefree}}}\#\{f\in \W_{m}:H(f)<X\}
\ll_\epsilon \frac{X^{n+1+\xi_n+\epsilon}}{M^{\delta_n}}+X^{n+1-{\eta_n}+\epsilon},
\end{equation*}
where $\delta_n=1/2,\,\xi_n = 0,\,\eta_n=1/(5n)$ when $n$ is odd and
$\delta_n=1/3,\,\xi_n = 1/(88n^5),\,\eta_n=1/(88n^6)$ when $n$ is even.
\end{corollary}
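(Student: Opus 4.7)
The plan is to deduce the sum estimate from the union estimate of Theorem \ref{thm:mainestimate} by a standard divisor-counting reduction. I will switch the order of summation, writing
\begin{equation*}
\sum_{\substack{m>M\\m \text{ sq-free}}} \#\{f\in\W_m : H(f)<X\} = \sum_{f:\, H(f)<X}\#\{m>M : m \text{ sq-free},\, f\in\W_m\},
\end{equation*}
so the problem reduces to bounding, for each $f$, the number of squarefree $m > M$ such that $f\in\W_m$.

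For each such $f$, I introduce two auxiliary squarefree integers: $m_f^\s$ defined as the product of primes $p$ such that $p^2$ strongly divides $\Delta(f)$, and similarly $m_f^\w$ for weak divisibility. Since $\W_m = \W_m^\s \cup \W_m^\w$, the condition $f\in\W_m$ forces either $m \mid m_f^\s$ or $m \mid m_f^\w$. Because each prime dividing $m_f^\s$ or $m_f^\w$ contributes a square factor to $|\Delta(f)|\ll X^{2n-2}$, both auxiliary integers are $\ll X^{n-1}$, so the standard divisor bound $\tau(N)\ll_\epsilon N^\epsilon$ yields
\begin{equation*}
\#\{m>M : m \text{ sq-free},\, f\in\W_m\} \;\leq\; \tau(m_f^\s)\mathbf{1}[m_f^\s>M] + \tau(m_f^\w)\mathbf{1}[m_f^\w>M] \;\ll_\epsilon\; X^\epsilon,
\end{equation*}
whenever at least one indicator is nonzero.

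Combining these two steps, the full sum is controlled by $X^\epsilon$ times the cardinalities of $\{f : m_f^\s>M,\, H(f)<X\}$ and $\{f : m_f^\w>M,\, H(f)<X\}$. The first set coincides with $\bigcup_{m>M}\{f\in\W_m^\s : H(f)<X\}$, since $m_f^\s>M$ is equivalent to $m_f^\s$ itself being an admissible $m$ in that union; the analogous identification holds for $\w$. I then apply Theorem \ref{thm:mainestimate}(a) to the $\s$-union, and Theorem \ref{thm:mainestimate}(b) or (c) to the $\w$-union according to the parity of $n$, adding the bounds and absorbing the factor $X^\epsilon$ into a renamed $\epsilon$.

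The only nontrivial check will be confirming that the exponents $\xi_n,\delta_n,\eta_n$ produced by this procedure dominate the values claimed—this amounts to comparing $X^{n+1+\epsilon}/M$ (coming from the $\s$-part and from the $\w$-part when $n$ is odd) with $X^{n+1+1/(88n^5)}/\sqrt M$ (coming from the $\w$-part when $n$ is even) against the stated $X^{n+1+\xi_n+\epsilon}/M^{\delta_n}$. I do not expect any obstacle: the argument in fact produces $\delta_n=1$ for odd $n$ and $\delta_n=1/2$ for even $n$, so the values $1/2$ and $1/3$ in the corollary are conservative choices that subsume the sharper bounds, and the lower-order $X^n$ term from Theorem \ref{thm:mainestimate}(a) is absorbed into $X^{n+1-\eta_n+\epsilon}$ without difficulty.
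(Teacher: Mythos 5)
Your reduction---swap the order of summation, bound the multiplicity in $m$ by a divisor bound, and feed the resulting unions into Theorem \ref{thm:mainestimate}---has the same skeleton as the paper's one-line proof, and it is valid for the definition $\W_m=\W_m^\s\cup\W_m^\w$ as literally printed. (One small caveat you should state: the bound $m_f^\s,m_f^\w\ll X^{n-1}$, and hence the divisor bound, requires $\Delta(f)\neq 0$; forms of vanishing discriminant must be excluded by convention, since for them the sum over $m$ diverges.)

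The substantive issue is that the corollary is not really about the set $\W_m^\s\cup\W_m^\w$: in \S\ref{latticearg} it is applied to $\W_m(\Sigma)=\{f:m^2\mid\Delta(f),\ldots\}$, and the paper's own proof---``$f\in\W_m$ for some $m>M$ implies $f\in\W_{m_1}^\s$ or $f\in\W_{m_1}^\w$ for some $m_1>\sqrt M$'' for odd $n$, resp.\ ``$m_2>M^{1/3}$ or $m_3>M^{2/3}$'' for even $n$---only makes sense under the reading $m^2\mid\Delta(f)$. Under that reading your key step fails: $f\in\W_m$ gives only $m\mid m_f^\s m_f^\w$, not $m\mid m_f^\s$ or $m\mid m_f^\w$, because the primes of $m$ may split between the strongly and weakly divisible types. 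The repair is exactly the paper's threshold split: $m_f^\s m_f^\w>M$ forces $m_f^\s>M^{1/2}$ or $m_f^\w>M^{1/2}$ when $n$ is odd (where parts (a) and (b) of Theorem \ref{thm:mainestimate} both decay like $1/M'$), and forces $m_f^\s>M^{1/3}$ or $m_f^\w>M^{2/3}$ when $n$ is even (balancing the $1/M'$ decay of part (a) against the $1/\sqrt{M'}$ decay of part (c)); the divisor bound still controls the multiplicity since $m\mid m_f^\s m_f^\w\ll X^{2n-2}$. This is precisely where $\delta_n=1/2$ and $1/3$ come from. Your closing observation that you obtain the ``better'' exponents $\delta_n=1$ and $1/2$, and that the stated values are merely conservative, is the tell-tale sign that you are bounding a smaller set than the one the corollary is meant to control.
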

\begin{proof}
Suppose $f\in \W_{m}$ for some $m>M$. When $n$ is odd, this implies
that $f$ belongs to either $\W_{m_1}^{(1)}$ or $\W_{m_1}^{(2)}$ for
some $m_1>\sqrt{M}$. When $n$ is even, $f$ belongs either to
$\W_{m_2}^{(1)}$ for $m_2>M^{1/3}$ or to $\W_{m_3}^{(2)}$ for
$m_3>M^{2/3}$. A direct application of Theorem \ref{thm:mainestimate}
now yields the result.
\end{proof}

\section{Proofs of the main results}\label{latticearg}

We begin by proving a more general form of Theorem
\ref{thm:powersave}. Let $N$ be a positive squarefree integer, and for
each $p\mid N$, let $\Sigma_p\subset V_n(\Z/p^2\Z)$ be a nonempty
subset. Denote the collection $(\Sigma_p)_{p\mid N}$ by~$\Sigma$. Let $V_n(\Sigma)$ be the set of all $f\in V_n(\Z)$ such that the
reduction of $f$ modulo $p^2$ lies in $\Sigma_p$ for all~$p\mid
N$. For $p\mid N$, let $\alpha_n(\Sigma,p)$
(resp., $\beta_n(\Sigma,p)$) denote the density of elements $f\in
V_n(\Z)$ such that $p^2\nmid\Delta(f)$ (resp., $R_f$ is maximal at $p$)
and such that the reduction of $f$ modulo $p^2$ lies in
$\Sigma_p$. For $p\nmid N$, simply set
$\alpha_n(\Sigma,p)=\alpha_n(p)$ and
$\beta_n(\Sigma,p)=\beta_n(p)$. Finally, define
\begin{equation*}
  \alpha_n(\Sigma)=\prod_p\alpha_n(\Sigma,p);
  \quad\quad\beta_n(\Sigma)=\prod_p\beta_n(\Sigma,p).
\end{equation*}

We are now ready to carry out our sieve.

\begin{theorem}\label{th:sqfreesieve}
We have,  
\begin{equation*}
  \begin{array}{ccl} \displaystyle \#\{f\in V_n(\Sigma): H(f)<X\mbox{ and
      $\Delta(f)$ squarefree}\}&\!\!=\!\!&
    \alpha_n(\Sigma)(2X)^{n+1} +
    O_\epsilon(\mathcal{E}(X,N,\epsilon)),
    \\[.115in] \displaystyle \#\{f\in V_n(\Sigma): H(f)<X\mbox{ and
      $R_f$ maximal}\}&\!\!=\!\!&
    \beta_n(\Sigma)(2X)^{n+1} + O_\epsilon(\mathcal{E}(X,N,\epsilon))
\end{array}
\end{equation*}
where the error term is given by
$$
  \mathcal{E}(X,N,\epsilon):=
  X^{n+1-\eta_n+\epsilon}+N^2X^{n+3(\eta_n+\xi_n)+\epsilon}+N^{2n+2}X^{(6n+3)(\eta_n+\xi_n)+\epsilon},
$$
where $\eta_n = 1/(5n),\, \xi_n = 0$ when $n$ is odd, and $\eta_n = 1/(88n^6),\,\xi_n = 1/(88n^5)$ when $n$ is even.
\end{theorem}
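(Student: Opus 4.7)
The plan is to perform a standard squarefree sieve, with the uniformity estimates of Theorem~\ref{thm:mainestimate} (packaged as Corollary~\ref{cor:unifWm}) controlling the tail, and Davenport-style lattice point counting controlling the main term. I will discuss the squarefree discriminant case; the maximal order case is formally identical with $\alpha_n$ replaced by $\beta_n$, since both ``$p^2 \nmid \Delta(f)$'' and ``$R_f$ maximal at $p$'' are congruence conditions modulo $p^2$ definable by the same kind of square-divisibility. Write
\begin{equation*}
\#\{f \in V_n(\Sigma) : H(f) < X,\; \Delta(f)\text{ squarefree}\}
\;=\; \sum_{d\;\mathrm{sqfree}} \mu(d)\, N_\Sigma(d,X),
\end{equation*}
where $N_\Sigma(d,X) := \#\{f \in V_n(\Sigma) : H(f) < X,\; d^2 \mid \Delta(f)\}$, and split at a cutoff $M$ to be chosen.

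\medskip

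\noindent\emph{Tail.} For the sum over $d > M$, Corollary~\ref{cor:unifWm} gives immediately
\begin{equation*}
\sum_{\substack{d > M\\ d\;\mathrm{sqfree}}} N_\Sigma(d,X) \;\ll_\epsilon\; \frac{X^{n+1+\xi_n+\epsilon}}{M^{\delta_n}} + X^{n+1-\eta_n+\epsilon},
\end{equation*}
with the relevant splitting into strongly and weakly divisible parts absorbed into that corollary.

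\medskip

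\noindent\emph{Main term.} For a squarefree $d \leq M$, the condition ``$f \in V_n(\Sigma)$ and $d^2 \mid \Delta(f)$'' is a union of residue classes modulo $q := \mathrm{lcm}(N^2, d^2) \leq N^2 d^2$. Using a standard box-counting estimate (Proposition~\ref{prop-davenport} applied to the box $[-X,X]^{n+1}$ intersected with each residue class), one obtains
\begin{equation*}
N_\Sigma(d,X) \;=\; \alpha_\Sigma(d)\,(2X)^{n+1} + O\bigl(\alpha_\Sigma(d)\, N^2 d^2\, X^n\bigr),
\end{equation*}
where $\alpha_\Sigma(d)$ denotes the associated local density. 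A direct local computation gives the bound $\alpha_\Sigma(d) \ll_\Sigma d^{-2}$ for $d$ squarefree, so the per-$d$ error is $O(N^2 X^n)$, and summing over $d \leq M$ yields a total main-term error of $O(N^2 M X^n)$. The Möbius sum itself converges absolutely, with tail
\begin{equation*}
\Bigl|\alpha_n(\Sigma) - \sum_{d \leq M} \mu(d)\,\alpha_\Sigma(d)\Bigr| \;\ll\; \frac{1}{M},
\end{equation*}
contributing $O(X^{n+1}/M)$ to the completed main term.

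\medskip

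\noindent\emph{Optimization and the third error term.} Choosing $M = X^{3(\eta_n+\xi_n)}$ makes the tail contribution $\ll X^{n+1-\eta_n+\epsilon}$ (using $\delta_n \geq 1/3$) and the main-term counting error $\ll N^2 X^{n+3(\eta_n+\xi_n)}$, accounting for the first two summands in $\mathcal{E}(X,N,\epsilon)$. The third summand $N^{2n+2} X^{(6n+3)(\eta_n+\xi_n)+\epsilon}$ arises from the regime where $N$ is large enough (specifically $N^2 d^2 \gtrsim X$) that the linear-error box count above is no longer valid: in this regime one must use the trivial bound that the union of at most $\alpha_\Sigma(d) q^{n+1}$ residue classes modulo $q \leq N^2 d^2$ contributes $\ll \alpha_\Sigma(d)(q+X)^{n+1} \ll (Nd)^{2(n+1)}$ in the worst case, which summed over $d \leq M = X^{3(\eta_n+\xi_n)}$ and with the constraint that the full power of $N$ may appear yields the $N^{2n+2} X^{(6n+3)(\eta_n+\xi_n)+\epsilon}$ contribution.

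\medskip

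\noindent\emph{Main obstacle.} The bookkeeping itself is routine; the serious content is entirely loaded into Corollary~\ref{cor:unifWm} (and thus into Theorem~\ref{thm:mainestimate}). The only genuinely delicate point in the sieve step is verifying that the local density bound $\alpha_\Sigma(d) \ll 1/d^2$ is uniform in $\Sigma$ up to an absolute constant, including for primes $p \mid \gcd(N,d)$ where $\Sigma_p$ can interact nontrivially with the square-divisibility condition; this reduces to the statement that among elements of $V_n(\F_p)$, the locus of $f$ with $\Delta(f) \equiv 0 \pmod{p^2}$ has codimension $\geq 2$ in $V_n(\Z/p^2\Z)$, which is essentially the geometric content already used in the Ekedahl sieve (Case (a) of Theorem~\ref{thm:mainestimate}). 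The maximal-order version of the theorem follows by replacing ``$p^2 \mid \Delta(f)$'' throughout by ``$R_f$ non-maximal at $p$,'' which is a locally closed condition of the same codimension and density behavior.
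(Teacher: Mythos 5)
Your proposal is correct and follows essentially the same route as the paper's proof: M\"obius inversion over squarefree moduli with cutoff $M=X^{3(\eta_n+\xi_n)}$, the tail over $d>M$ controlled by Corollary~\ref{cor:unifWm}, and Davenport-style counting of the $O(d^{-2}(N^2d^2)^{n+1})$ residue classes modulo $N^2d^2$ for the main term, yielding the same three error summands. One small bookkeeping correction: in your third error term you should retain the density factor $\alpha_\Sigma(d)\ll d^{-2}$ rather than passing to the worst case $(Nd)^{2(n+1)}$, since the latter sums over $d\le M$ to $N^{2n+2}M^{2n+3}$, i.e.\ exponent $(6n+9)(\eta_n+\xi_n)$ rather than the stated $(6n+3)(\eta_n+\xi_n)$, whereas keeping the factor gives $N^{2n+2}d^{2n}$ per modulus and hence $N^{2n+2}M^{2n+1}$, exactly as in the paper.
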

\begin{proof}
For any
squarefree integer $m$ that is relatively prime to $N$, let
$\W_{m}(\Sigma)$ denote the set of elements $f\in V(\Z)$ such that
$m^2\mid\Delta(f)$, and such that the reduction of
$f$ modulo $p^2$ belongs to $\Sigma_p$ for every $p\mid N$. Note that $\W_m(\Sigma)$ is a union of $$\gamma(\Sigma,N,m) := N^{2n+2}m^{2n+2}\prod_{p\mid m}\Bigl(\frac{\#\Sigma_p}{p^{2n+2}} - \alpha_n(\Sigma,p)\Bigr) = O_\epsilon(N^{2n+2}m^{2n+\epsilon})$$ translates of $m^2N^2V(\Z)$. By inclusion-exclusion and Corollary \ref{cor:unifWm}, we have for any $M>0$,
\begin{eqnarray*}
\nonumber&&\#\{f\in V_n(\Sigma)\colon H(f)<X\mbox{ and
      $\Delta(f)$ squarefree}\}\\[.025in]
\nonumber  &=&\sum_{\substack{(m,N)=1\\m\leq M}}
\mu(m)\#\{f\in \W_m(\Sigma)\colon H(f)<X\} + O_\epsilon\Bigl(\frac{X^{n+1+\xi_n+\epsilon}}{M^{\delta_n}}+X^{n+1-{\eta_n}+\epsilon}\Bigr)\\[-.025in]
\nonumber&=&\sum_{\substack{(m,N)=1\\m\leq M}}
\mu(m)\gamma(\Sigma,N,m)\Bigl(\frac{2X}{N^{2}m^{2}} + O(1)\Bigr)^{n+1}+ O_\epsilon\Bigl(\frac{X^{n+1+\xi_n+\epsilon}}{M^{\delta_n}}+X^{n+1-{\eta_n}+\epsilon}\Bigr)\\[-.025in]
\nonumber&=&\sum_{\substack{(m,N)=1\\m\leq M}}\Biggl(
(2X)^{n+1}\mu(m)\prod_{p\mid m}\Bigl(\frac{\#\Sigma_p}{p^{2n+2}} - \alpha_n(\Sigma,p)\Bigr) + O\Bigl(N^2X^{n}m^\epsilon+N^{2n+2}m^{2n+\epsilon}\Bigr)\Biggr)\\&& \nonumber+\,O_\epsilon\Bigl(\frac{X^{n+1+\xi_n+\epsilon}}{M^{\delta_n}}+X^{n+1-{\eta_n}+\epsilon}\Bigr)\\
\nonumber&=& (2X)^{n+1}\alpha_n(\Sigma) + O\Bigl(\frac{X^{n+1}}{M^{1-\epsilon}} + N^2M^{1+\epsilon}X^n + N^{2n+2}M^{2n+1+\epsilon} + \frac{X^{n+1+\xi_n+\epsilon}}{M^{\delta_n}}+X^{n+1-{\eta_n}+\epsilon}\Bigr).
\end{eqnarray*}
Recalling that $\delta_n = 1/2$ or $1/3$, we may take $M = X^{3\eta_n+3\xi_n}$ to obtain the first claim in Theorem~\ref{th:sqfreesieve}. 
The second claim follows identically.
\end{proof}
\pagebreak 

Taking $N=1$ in Theorem~\ref{th:sqfreesieve} yields Theorem
\ref{thm:powersave}. Theorems \ref{polydisc2} and
\ref{polydiscmax2} are then immediate consequences of Theorem
\ref{thm:powersave}.

Next, we prove lower bounds on the number of $S_n$-fields
having bounded discriminant. Let $f(x,y) = a_0x^n + a_1x^{n-1}y + \cdots + a_ny^n$ be a real binary $n$-ic form with $a_0\neq 0$ and nonzero discriminant. 
Let~$\theta$ be the image of $x$ in $\R[x]/(f(x,1))$, and write $R_f$ for the lattice spanned by 
$$1,\quad\zeta_1=a_0\theta,\quad \zeta_2 = a_0\theta^2 + a_1\theta,\quad\ldots,\quad\zeta_{n-1}=a_0\theta^{n-1} + \cdots + a_{n-1}$$ in $\R[x]/(f(x,1))$. Here we identify $\R[x]/(f(x,1))$ with $\R^n$  via its real and complex embeddings and by identifying $\C=\R\oplus i\R$ with~$\R^2$.

We say that $f(x,y)$ is
\textit{Minkowski-reduced} if the basis
$\{1,\zeta_1,\ldots,\zeta_{n-1}\}$ of $R_f$ is
Minkowski-reduced. We say that $f(x,y)$, or its 
$\SL_2(\Z)$-orbit, is \textit{quasi-reduced} if there exists
$\gamma\in\SL_2(\Z)$ such that $\gamma.f$ is Minkowski-reduced. We add
the prefix ``strongly'' if the relevant lattice has a unique
Minkowski-reduced basis. The relevance of being strongly quasi-reduced
is contained in the following lemma.

\begin{lemma}\label{lem:semireduced}
Let $n\geq 3$ and let $f(x,y)$ and $f^*(x,y)$ be strongly
quasi-reduced integral binary $n$-ic forms. Suppose the corresponding 
rank-$n$ rings $R_f$ and $R_{f^*}$ are isomorphic. Then $f(x,y)$ and
$f^*(x,y)$ are $\SL_2(\Z)$-equivalent.
\end{lemma}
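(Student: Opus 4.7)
The plan is to exploit the rigidity supplied by the strong quasi-reducedness hypothesis: the Minkowski-reduced basis of each of $R_f$ and $R_{f^*}$ is unique (up to signs), so the ring isomorphism $\phi\colon R_f \to R_{f^*}$ must identify the canonical bases. This in turn pins down the coefficients of $f$ in terms of those of $f^*$, forcing $\SL_2(\Z)$-equivalence.

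First, using the quasi-reducedness hypothesis, I replace $f$ and $f^*$ with $\SL_2(\Z)$-equivalent forms so that each is itself Minkowski-reduced; then $(1, \zeta_1, \ldots, \zeta_{n-1})$ is the unique (up to signs) Minkowski-reduced basis of $R_f$, and similarly $(1, \zeta_1^*, \ldots, \zeta_{n-1}^*)$ for $R_{f^*}$. Next, I extend $\phi$ to an $\R$-algebra isomorphism $\phi_\R \colon R_f \otimes \R \to R_{f^*} \otimes \R$. Both sides decompose as products of copies of $\R$ and $\C$ indexed by the real and complex places of the generic fiber, and $\phi_\R$ acts by a permutation of isomorphic factors together with possible complex conjugation on the $\C$-factors. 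Since all such operations are isometries of the Euclidean inner product on $\R^n$ pulled back via the Minkowski embedding, $\phi$ is a lattice isometry. It therefore sends Minkowski-reduced bases to Minkowski-reduced bases, and by the strong uniqueness hypothesis (combined with $\phi(1) = 1$ and preservation of successive minima), I obtain $\phi(\zeta_i) = \epsilon_i \zeta_i^*$ for every $i$, with signs $\epsilon_i \in \{\pm 1\}$.

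Writing $\phi(\theta) = \eta\,\theta^*$ with $\eta \in \{\pm 1\}$ determined by $\epsilon_1$ and $\epsilon_2$ (via $\phi(\zeta_1) = a_0\phi(\theta)$ and $\phi(\zeta_2) = a_0 \phi(\theta)^2 + a_1 \phi(\theta)$), a coefficient-by-coefficient comparison in the expansion $\phi(\zeta_i) = \sum_{j < i} a_j \phi(\theta)^{i-j}$ against $\epsilon_i \zeta_i^*$ yields $a_j^* = \epsilon_2\, \eta^j\, a_j$ for every $j$; equivalently, $f^*(x,y) = \epsilon_2\, f(x, \eta y)$. For $\eta = 1,\,\epsilon_2 = 1$ this is trivial, and for $\eta = 1,\,\epsilon_2 = -1$ the element $-I \in \SL_2(\Z)$ supplies the equivalence when $n$ is odd. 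The main obstacle will be handling the case $\eta = -1$ (and, for $n$ even, $\epsilon_2 = -1$), where the required transformation $(x,y)\mapsto (x,-y)$ has determinant $-1$ and so only lies in $\GL_2(\Z)$. I expect to rule this case out by arguing that such a $\phi$ would produce a second distinct Minkowski-reduced basis of $R_{f^*}$, contradicting strong uniqueness, unless the sign ambiguity can be absorbed by a nontrivial automorphism of $R_{f^*}$ whose existence itself produces the required $\SL_2(\Z)$-element. Making this precise, while carefully pinning down the sign conventions implicit in the definition of Minkowski reduction, is the technical heart of the proof.
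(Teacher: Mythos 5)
Your plan is structurally the same as the paper's: reduce to Minkowski-reduced representatives, use uniqueness of the reduced basis to pin down $\phi$ on $1, \zeta_1, \dots, \zeta_{n-1}$, and compare coefficients to conclude $f = f^*$. The step where you justify that a ring isomorphism is automatically an isometry of the Minkowski embedding (via the $\R$-algebra decomposition into copies of $\R$ and $\C$ and the permutation/conjugation structure of any $\R$-algebra isomorphism) is a point the paper leaves implicit, and your version of it is sound.

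The genuine gap is that your argument is explicitly unfinished. You interpret strong quasi-reducedness as uniqueness of the Minkowski-reduced basis only up to signs, introduce indeterminacies $\epsilon_i$ and $\eta$, correctly derive $f^*(x,y) = \epsilon_2 f(x,\eta y)$, and then concede that the case $\eta = -1$ (and, for $n$ even, $\epsilon_2 = -1$) is unsettled, calling it ``the technical heart of the proof'' that you ``expect to rule out.'' That is a plan, not a proof; until the sign cases are eliminated or shown to correspond to an element of $\SL_2(\Z)$, the claimed equivalence is not established, and the troublesome transformation $(x,y)\mapsto(x,-y)$ indeed has determinant $-1$. The paper's proof never meets these cases: it takes ``unique Minkowski-reduced basis'' literally (no sign ambiguity), so $\phi(\zeta_1) = \zeta_1^*$ and $\phi(\zeta_2) = \zeta_2^*$ hold on the nose, giving $a_0 = a_0^*$, $a_1 = a_1^*$, and $\phi(\theta) = \theta^*$ immediately, after which $\phi(\zeta_{n-1}) = \zeta_{n-1}^*$ and $\phi(\theta\zeta_{n-1}) = \theta^*\zeta_{n-1}^*$ produce the remaining coefficients. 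Either adopt the paper's convention (literal uniqueness, with whatever sign normalization makes it meaningful) so that the signs never arise, or actually carry out the sign analysis you postpone; as written the proof does not close.
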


\begin{proof}
It suffices to assume $f(x,y)=a_0x^n + \cdots + a_ny^n$ and $f^*(x,y)
= a_0^*x^n + \cdots + a_n^*y^n$ are strongly Minkowski-reduced with
$R_f\simeq R_{f^*}$. We show $f(x,y) = f^*(x,y)$. Let
$\phi:R_f\rightarrow R_{f^*}$ be a ring isomorphism. By the
uniqueness of Minkowski-reduced bases, $\phi$ must map the basis
elements $1,\zeta_1,\ldots,\zeta_{n-1}$ for $R_f$ to the corresponding
basis elements $1,\zeta^*_1,\ldots,\zeta^*_{n-1}$ for $R_{f^*}$. Let
$\theta$ denote the image of $x$ in $\Q[x]/(f(x,1))$ and 
$\theta^*$ the image of $x$ in $\Q[x]/(f^*(x,1))$. Then
$\phi(a_0\theta) = a_0^*\theta^*$ and $$a_0^*\theta^{*2} + a_1^*\theta^* =\phi(a_0\theta^2 + a_1\theta)
=(a_0^{*2}/a_0)\theta^{*2} + (a_1a_0^*/a_0)\theta^*.$$ 
Since $\theta^*$ and $\theta^{*2}$
are linearly independent, we have $a_0 = a_0^*$, $a_1 = a_1^*$, and $\phi(\theta) = \theta^*$, where we extend $\phi$ naturally to $R_f\otimes\Q = \Q[x]/(f(x,1))$. Then
since $\phi(\zeta_{n-1}) = \zeta^*_{n-1}$, we have $a_i=a_i^*$ for
$i=0,\ldots,n-2$. Finally $\phi(-a_{n-1}\theta - a_n) =
\phi(\theta\zeta_{n-1}) =\phi(\theta^*\zeta_{n-1}^*)=
-a_{n-1}^*\theta^* - a_n^*$. Hence $a_{n-1} = a^*_{n-1}$ and $a_n =
a^*_n$.
\end{proof}

\smallskip

\noindent {\bf Proof of Theorem \ref{monogenic}:} The condition of
being strongly quasi-reduced is open in $V_n(\R)$. Therefore, given a
strongly quasi-reduced element $f\in V_n(\R)$, there exists an open
neighbourhood $\B$ of $f$ in which every element is strongly
quasi-reduced. Moreover, since the action of $\SL_2(\Z)$ on $V_n(\R)$ is
discrete, we may ensure that no two elements of $\B$ are
$\SL_2(\Z)$-equivalent. We may further scale $\B$ in order to assume
that every element in $\B$ has discriminant bounded by $1$.

Consider the set $\B_X:=X^{1/(2n-2)}\cdot \B$. No two elements in it
are $\SL_2(\Z)$-equivalent and every element in it is strongly
quasi-reduced. Therefore the rings corresponding to any two elements
in $\B_X$ are nonisomorphic. On the other hand, applying Theorem
\ref{th:sqfreesieve}, we see that $\gg X^{(n+1)/(2n-2)}$ integral
elements in $\B_X$ have discriminant less than $X$ and correspond to
maximal orders in degree-$n$ number fields. Since these rings are
pairwise nonisomorphic, so are their fields of fractions. Hence we
have constructed $\gg X^{(n+1)/(2n-2)}$ nonisomorphic degree-$n$
number fields of absolute discriminant less than $X$. Restricting to counting
forms that have squarefree discriminant yields $\gg X^{(n+1)/(2n-2)}$
nonisomorphic $S_n$-number fields. $\Box$

\bigskip

We note that Theorem \ref{th:sqfreesieve} also allows us to construct
$\gg X^{(n+1)/(2n-2)}$ $S_n$-number fields satisfying any finite set
of splitting conditions. 

\appendix

\section{Computations of the local densities $\alpha_n(p),\beta_n(p)$}\label{sec:sieve}

Let $n\geq 2$ be a fixed integer.  For a prime $p$, let $\alpha_n(p)$
denote the density of the set of binary $n$-ic forms having
discriminant indivisible by $p^2$, and let $\beta_n(p)$ denote
the density of binary $n$-ic forms whose associated rank-$n$ rings are
maximal at $p$.  In this section, we compute $\alpha_n(p)$ and
$\beta_n(p)$ for all integers $n\geq 2$ and all primes $p$.

\begin{proposition}
We have $\alpha_2(2) = 1/2$ and $\alpha_n(2) = 3/8$ for $n\geq 3$. For odd primes $p$, we have  
$$\alpha_n(p) = \begin{cases}\displaystyle
\Bigl(1-\frac{1}{p}\Bigr)
\Bigl(1+\frac{1}{p}-\frac{1}{p^3}\Bigr)
&\mbox{ if }n=2,
\\[.15in] \displaystyle
\Bigl(1-\frac{1}{p}\Bigr)^2\Bigl(1+\frac{1}{p}\Bigr)^2
&\mbox{ if }n=3,
\\[.15in] \displaystyle
\Bigl(1-\frac{1}{p}\Bigr)^2
\Bigl(1+\frac{2}{p}-\frac{2}{p^4}+\frac{1}{p^5}\Bigr)
&\mbox{ if }n=4,
\\[.15in] \displaystyle
\Bigl(1-\frac{1}{p}\Bigr)^2
\Bigl(1+\frac{1}{p}\Bigr)\Bigl(1+\frac{1}{p}-\frac{1}{p^2}\Bigr)
&\mbox{ if }n\geq5.\end{cases}$$
\end{proposition}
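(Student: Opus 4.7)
The plan is to compute $\alpha_n(p)$ for each odd prime $p$ by stratifying $V_n(\Z_p)$ according to the factorization type of the reduction $\bar f := f\bmod p$ in $V_n(\F_p)$, and in each stratum applying Hensel's lemma together with a Newton polygon analysis to determine precisely when $v_p(\Delta(f))\leq 1$.

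The decomposition has three main strata: (a) $\bar f$ squarefree as a binary form on $\P^1_{\F_p}$, so that $\Delta(\bar f)\neq 0$ and $v_p(\Delta(f))=0$; (b) $\bar f = c\,\ell^2\,\tilde g$ with $\ell$ a linear form over $\F_p$ and $\tilde g$ squarefree of degree $n-2$ coprime to $\ell$; and (c) all other non-squarefree reductions (two or more squared linear factors, a triple or higher linear factor, or $\bar f\equiv 0$). The density of stratum (a) is $|V_n^{\mathrm{sqf}}(\F_p)|/p^{n+1}$, which I would compute via the zeta-function identity $Z_{\mathrm{sqf}}(\P^1_{\F_p},t)=Z(\P^1_{\F_p},t)/Z(\P^1_{\F_p},t^2) = (1-t^2)(1-pt^2)/((1-t)(1-pt))$. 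For stratum (b), I would use Hensel's lemma on the coprime factorization $\bar f=\ell^2\cdot\tilde g$ to lift to $f=F\cdot G$ over $\Z_p$, with $F$ a binary quadratic reducing to $\ell^2$ and $G$ of degree $n-2$ reducing to $\tilde g$. Then $\Delta(f)=\Delta(F)\Delta(G)\,\mathrm{Res}(F,G)^2$; since $\Delta(G)$ and $\mathrm{Res}(F,G)$ are $p$-adic units, this simplifies to $v_p(\Delta(f))=v_p(\Delta(F))$. Writing $F=\alpha x^2+\beta xy+\gamma y^2$ with $\gamma$ a unit and $\alpha,\beta\in p\Z_p$, the identity $\Delta(F)=\beta^2-4\alpha\gamma$ together with $p$ odd yields $v_p(\Delta(F))=v_p(\alpha)$; reducing via $\GL_2(\F_p)$-symmetry to the case $P=\infty$ identifies this $\alpha$ with the coefficient $a_0$ of $f$, so $v_p(\Delta(f))=1$ iff $v_p(a_0)=1$, an event of conditional probability $(p-1)/p$ inside stratum (b). For stratum (c), a similar but coarser Newton polygon argument shows $v_p(\Delta(f))\geq 2$ always, so (c) contributes nothing.

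Summing the density of (a) with $(p+1)$ times the density of ``stratum (b) at $P=\infty$ with $v_p(a_0)=1$'' (where the factor $(p+1)$ reflects the $\GL_2(\F_p)$-orbit of $\P^1(\F_p)$) then yields the claimed closed-form formulas. The four separate formulas for $n=2,3,4,\geq 5$ are accounted for by the fact that the density of ``squarefree binary $(n-2)$-ic forms $\tilde g$ coprime to a fixed linear form'' has a distinct closed form for each of the degrees $n-2\in\{0,1,2,\geq 3\}$. The case $p=2$ will require a direct census of forms modulo $8$, since the identity $v_p(\Delta(F))=v_p(\alpha)$ breaks down in residue characteristic $2$ where $4$ is not a unit.

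The main obstacle will be the combinatorial bookkeeping in the small-$n$ cases, especially $n=4$, where the residual quadratic $\tilde g$ carries its own discriminant, several different lift classes interact, and one must carefully verify that the elementary Hensel argument captures every contribution to $v_p(\Delta(f))\leq 1$—and in particular that no cancellation in $\Delta(F)\Delta(G)\,\mathrm{Res}(F,G)^2$ produces additional good lifts coming from the non-generic strata.
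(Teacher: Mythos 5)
Your stratification — by the factorization type of $\bar f$ over $\F_p$, with the three strata (separable), (exactly one rational double point, separable coprime complement), and (everything else, which is strongly divisible hence contributes nothing), followed by the Hensel factorization $f=FG$, the identity $\Delta(f)=\Delta(F)\Delta(G)\Res(F,G)^2$, and the criterion $v_p(\Delta(f))=1\iff v_p(a_0)=1$ after moving the double point to $\infty$ — is a genuinely different route from the paper's. The paper instead partitions by the pair $(v_p(a_0)\geq 1?,\,v_p(a_1)\geq 1?)$ and reduces each piece to the monic-polynomial densities $\nu_0,\nu_1$ quoted from Ash--Brakenhoff--Zarrabi, so your method is more self-contained (it recomputes the analogue of $\nu_1$ from scratch) at the cost of the $\GL_2(\F_p)$-orbit bookkeeping over the $p+1$ points of $\P^1(\F_p)$. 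Your local analysis in stratum (b) is correct, and your strata do exhaust all reduction types.

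However, there is a concrete problem you must confront before this can count as a proof of the stated proposition: your method produces the \emph{same} formula for $n=4$ as for $n\geq 5$. The only $n$-dependence in your computation is through the density of separable binary $(n-2)$-ic forms coprime to a fixed linear form, and that density stabilizes at $(1-1/p)^2$ already for $n-2=2$. Carrying out your outline gives
$\alpha_4(p)=\bigl(1-\tfrac1p\bigr)^2\bigl(1+\tfrac1p\bigr)\bigl(1+\tfrac1p-\tfrac1{p^2}\bigr)=\bigl(1-\tfrac1p\bigr)^2\bigl(1+\tfrac2p-\tfrac1{p^3}\bigr)$,
whereas the proposition asserts $\bigl(1-\tfrac1p\bigr)^2\bigl(1+\tfrac2p-\tfrac2{p^4}+\tfrac1{p^5}\bigr)$; these differ by $(1-1/p)^4/p^3$. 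For your method to recover the stated $n=4$ value, the extra contribution would have to come from stratum (c) — but every reduction type there ($\ell^3\mid\bar f$, two squared linear factors, a squared factor of degree $\geq 2$, or $\bar f\equiv 0$) forces $p^2\mid\Delta(f)$, exactly as you argue, so within your framework there is nothing left to find. You therefore cannot wave at ``combinatorial bookkeeping at $n=4$'' as the remaining difficulty: either you locate a genuine additional family of good lifts special to $n=4$ (and explain why the strong-divisibility criterion does not rule it out), or your method is in outright conflict with the stated formula. The discrepancy traces to the value of the monic density $\nu_1(4,p)$ that the paper imports: your Hensel analysis, specialized to monic quartics, yields $\nu_1(4,p)=(p-1)^3/p^4$, while the quoted formula gives $(p-1)^3(p^2+1)/p^6$, and it is exactly this ingredient (which does not telescope away for $n=4$ the way it does for $n\geq 5$) that creates the $n=4$ anomaly. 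You should test a concrete example — e.g.\ $f=x^4+2x^2+3$ over $\Z_3$ has $\Delta=3072=2^{10}\cdot 3$, consistent with your criterion — and resolve which side of the discrepancy is correct before writing this up; as it stands the proposal does not prove the proposition as stated.
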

\begin{proof}
For $j\geq 0$, $n\geq 1$, and $p$ prime, we let $\nu_{j}(n,p)$ denote
the density within monic degree-$n$ integer polynomials of the set of
those whose discriminants have $p$-adic valuation $j$. Then
$\nu_0(n,p)$ and $\nu_1(n,p)$ are computed in \cite[Proposition 6.4 and  
  Theorem 6.8]{ABZ}:
\begin{eqnarray*}
\nonumber  \nu_0(n,p) &=& \begin{cases}1&\mbox{ if }n= 1;\\
    1-p^{-1}&\mbox{ if }n\geq 2.\end{cases}\\[.035in]
  \nonumber\nu_1(n,p) &=& \begin{cases}0&\mbox{ if }p=2
    \mbox{ or }n= 1;\\ p^{-1}(1-p^{-1})&\mbox{ if }n=2,p\neq2;\\
    p^{-1}(1-p^{-1})^2&\mbox{ if }n=3,p\neq2;\\
    (1-p^{-1})^2(1-(-p)^{-n})(1+p)^{-1}&\mbox{ if }n\geq4,p\neq2.
\end{cases}
\end{eqnarray*}
To compute the densities $\alpha_n(p)$, we partition the set of
integral binary $n$-ic forms $f(x,y)=a_0x^n+a_1x^{n-1}y+\cdots+a_ny^n$
whose discriminants are not divisible by $p^2$ into three subsets, and
compute each of their densities. For any binary form
$f(x,y)$ in $\Z[x,y]$ or in $(\Z/p^2\Z)[x,y]$, we write $\bar{f}(x,y)$
for its reduction modulo $p$.

\medskip

\noindent {\bf Subset 1:} The set of $f(x,y)$ with $p\nmid a_0$ and
$p^2\nmid \Delta(f)$. Here, for any fixed leading coefficient
$a_0\not\equiv 0\pmod p$, the density of $f(x,y)$ having discriminant 
indivisible by $p^2$ is simply given by
$\nu_0(n,p)+\nu_1(n,p)$. Therefore, the $p$-adic density of this
subset is equal to
\begin{equation*}
    \Bigl(1-\frac{1}{p}\Bigr)\big(\nu_0(n,p)+\nu_1(n,p)\big).
\end{equation*}

\medskip

\noindent {\bf Subset 2:} The set of $f(x,y)$ with $p\mid a_0$,
$p\nmid a_1$, and $p^2\nmid \Delta(f)$. In this case, we begin by
proving that the density of elements $f$ with fixed $a_0$ and $a_1$
and with $p^2\nmid \Delta(f)$ is the same as the density of binary
$(n-1)$-ic forms $g$, with fixed leading coefficient $a_1$ such that
$p^2\nmid\Delta(g)$. Indeed, given any
$(a_2,\ldots,a_n)\in(\Z/p^2\Z)^{n-1}$, we write
$$\begin{array}{ccrcc}
f_{a_2,\ldots,a_n}(x,y) &=& a_0x^n + a_1x^{n-1}y + a_2x^{n-2}y^2 + \cdots + a_ny^n&\;\,\in&(\Z/p^2\Z)[x,y],\\[.025in]
g_{a_2,\ldots,a_n}(x,y) &=& \,\!a_1x^{n-1} \!+\, a_2x^{n-2}y\; + \,\!\cdots\,\! + a_ny^{n-1}\!\!\!\!\!\!&\;\,\in&(\Z/p^2\Z)[x,y].
\end{array}
$$
Define
\begin{eqnarray*}
\nonumber S_f^{(1)} &=& \{(a_2,\ldots,a_n)\in(\Z/p^2\Z)^{n-1} \colon p^2 \mbox{ strongly divides }\Delta(f_{a_2,\ldots,a_n})\},\\
\nonumber S_f^{(2)} &=& \{(a_2,\ldots,a_n)\in(\Z/p^2\Z)^{n-1} \colon p^2 \mbox{ weakly divides }\Delta(f_{a_2,\ldots,a_n})\},\\
\nonumber S_g^{(1)} &=& \{(a_2,\ldots,a_n)\in(\Z/p^2\Z)^{n-1} \colon p^2 \mbox{ strongly divides }\Delta(g_{a_2,\ldots,a_n})\},\\
\nonumber S_g^{(2)} &=& \{(a_2,\ldots,a_n)\in(\Z/p^2\Z)^{n-1} \colon p^2 \mbox{ weakly divides }\Delta(g_{a_2,\ldots,a_n})\}.
\end{eqnarray*}
Recall that $p^2$ strongly divides the discriminant of $f$ if and only
if $\bar{f}(x,y)$ has a factor of the form $h(x,y)^3$ for some linear
form $h$ or a factor of the form $j(x,y)^2$ where $j$ is a binary form
of degree at least 2. Since $\overline{f_{a_2,\ldots,a_n}}(x,y)\equiv
y\,\overline{g_{a_2,\ldots,a_n}}(x,y)$, and since $y$ does not divide
$\overline{g_{a_2,\ldots,a_n}}(x,y)$, we see that
$\overline{f_{a_2,\ldots,a_n}}(x,y)$ admits such a factor if and only
if $\overline{g_{a_2,\ldots,a_n}}(x,y)$ does. Hence  $S_f^{(1)} =
S_g^{(1)}$. On the other hand, we have
\begin{eqnarray*}
  \nonumber\#S_f^{(2)} &=& \#\{(a_2,\ldots,a_n)\in(\Z/p^2\Z)^{n-1}\backslash S_f^{(1)}
  \colon \exists r\in\Z/p\Z, (x-r)^2\mid
  \overline{f_{a_2,\ldots,a_n}}(x,1), p^2\mid f_{a_2,\ldots,a_n}(r,1)\}\\
  \nonumber&=& \textstyle\frac{1}{p}\#\{(a_2,\ldots,a_n)\in(\Z/p^2\Z)^{n-1}\backslash S_f^{(1)}
  \colon \exists r\in\Z/p\Z, (x-r)^2\mid
  \overline{f_{a_2,\ldots,a_n}}(x,1), p\mid f_{a_2,\ldots,a_n}(r,1)\}\\
  \nonumber&=& \textstyle\frac{1}{p}\#\{(a_2,\ldots,a_n)\in(\Z/p^2\Z)^{n-1}\backslash S_g^{(1)}
  \colon \exists r\in\Z/p\Z, (x-r)^2\mid
  \overline{g_{a_2,\ldots,a_n}}(x,1), p\mid g_{a_2,\ldots,a_n}(r,1)\}\\
  \nonumber&=& \#\{(a_2,\ldots,a_n)\in(\Z/p^2\Z)^{n-1}\backslash S_g^{(1)}
  \colon \exists r\in\Z/p\Z, (x-r)^2\mid
  \overline{g_{a_2,\ldots,a_n}}(x,1), p^2\mid g_{a_2,\ldots,a_n}(r,1)\}\\
  &=& \#S_g^{(2)}.
\end{eqnarray*}
The density $(\#S_g^{(1)}+\#S_g^{(2)})/p^{2(n-1)}$ is
$\nu_0(n-1,p)+\nu_1(n-1,p)$. Taking into account that $p\mid a_0$ and
$p\nmid a_1$, we see that the density of this second subset is
\begin{equation*}
    \frac{1}{p}\Bigl(1-\frac{1}{p}\Bigr)\big(\nu_0(n-1,p)+\nu_1(n-1,p)\big).
\end{equation*}

\medskip

\noindent {\bf Subset 3:} The set of $f(x,y)$ with $p\mid a_0$, $p\mid
a_1$, and $p^2\nmid \Delta(f)$. Note that we already have $p\mid\Delta(f)$
in this case. To ensure that $p^2\nmid \Delta(f)$, we must have $p>2$,
$p^2\nmid a_0$, and $p\nmid a_2$. Indeed, if $p=2$, then since $2\mid
\Delta(f)$, we have $4\mid\Delta(f)$; if $p^2\mid a_0$, then $p^2$
(weakly) divides $\Delta(f)$; and if $p\mid a_2$, then $y^3\mid \bar{f}$
and so $p^2$ (strongly) divides $\Delta(f)$. As polynomials in
$a_0,\ldots,a_n$, we have
\begin{equation*}
\Delta(a_0x^n + \cdots + a_n) \equiv
-4a_0a_{2}^3\Delta(a_2x^{n-2} + \cdots + a_n)
\pmod{a_0^2,a_0a_1,a_1^2}.
\end{equation*}
Hence if $p>2$, $p^2\nmid a_0$, and
$p\nmid a_2$, then $p^2\nmid \Delta(f)$ if and only if
$p\nmid\Delta(a_2x^{n-2}+a_3x^{n-3}y+\cdots a_ny^{n-2})$. Hence the density of this third subset is
\begin{equation*}
    \frac{1}{p^2}\Bigl(1-\frac{1}{p}\Bigr)^2\nu_0(n-2,p).\\[.025in]
\end{equation*}
Adding together these three densities yields the proposition.
\end{proof}

Next, we compute the value of $\beta_n(p)$ for integers $n\geq 2$ and
primes $p$.
\begin{proposition}\label{prop:betacomp}
We have
\begin{equation*}
\beta_n(p) =  \begin{cases}\displaystyle
\Bigl(1-\frac{1}{p}\Bigr)\Bigl(1+\frac{1}{p}-\frac{1}{p^3}\Bigr)&\mbox{ if }n=2;
\\[.125in] \displaystyle \Bigl(1-\frac{1}{p^2}\Bigr)\Bigl(1-\frac{1}{p^3}\Bigr)
&\mbox{ if }n\geq3.\end{cases}
\end{equation*}
\end{proposition}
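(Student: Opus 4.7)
The proof naturally splits into the cases $n=2$ and $n\geq 3$, reflecting the different shapes of the formulas.

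For $n=2$, the ring $R_f$ attached to $f=a_0x^2+a_1xy+a_2y^2$ is the quadratic order of discriminant $D=a_1^2-4a_0a_2$, and $R_f$ is $p$-maximal if and only if $D$ is fundamental at $p$. For odd $p$ this reduces to $p^2\nmid D$, so $\beta_2(p)=\alpha_2(p)$ and the stated formula follows from Proposition A.1. For $p=2$, I will split by the parity of $a_1$: when $a_1$ is odd, $D\equiv 1\pmod 4$ is automatically fundamental, contributing density $1/2$; when $a_1=2b$ is even, I will enumerate over $(b,a_0,a_2)\in(\Z/4\Z)^3$ to compute the density of triples with $b^2-a_0a_2\equiv 2$ or $3\pmod 4$, which contributes an additional $3/16$. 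The total $\beta_2(2)=11/16$ matches $(1-\tfrac12)(1+\tfrac12-\tfrac18)$.

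For $n\geq 3$, I will use the geometric interpretation that $R_f$ is $p$-maximal if and only if the closed subscheme $Z_f:=\{f=0\}\subset\P^1_{\Z_p}$ is regular. At each closed point $P\in\P^1_{\F_p}$ of residue degree $k$, regularity of $Z_f$ at $P$ fails precisely when $f\in\mathfrak{m}_P^2$, where $\mathfrak{m}_P=(p,\pi_P)$ is the maximal ideal of the two-dimensional regular local ring $\O_{\P^1_\Z,P}$. Since $\mathfrak{m}_P/\mathfrak{m}_P^2$ is two-dimensional over $k(P)=\F_{p^k}$, a direct coordinate computation (translating $P$ to $[0:1]$ reduces the condition at a rational point to $p^2\mid a_n$ and $p\mid a_{n-1}$) shows that the $p$-adic density of $\{f\in\mathfrak{m}_P^2\}$ equals $p^{-3k}$. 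Independence of these events across distinct closed points then gives, by inclusion-exclusion,
\[
\beta_n(p)=\prod_{P\in(\P^1_{\F_p})^\circ}\bigl(1-p^{-3\deg P}\bigr)=\zeta_{\P^1_{\F_p}}(3)^{-1}=(1-p^{-2})(1-p^{-3}),
\]
using the identity $\zeta_{\P^1_{\F_p}}(s)=(1-p^{-s})^{-1}(1-p^{1-s})^{-1}$.

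The main obstacle will be verifying the independence required in the inclusion-exclusion. For $n\geq 3$, the number of coefficients $n+1\geq 4$ exceeds the codimension of any single-point event, so the joint densities factor up to an error coming from the low-codimension stratum where $f\equiv 0\pmod p$, which has density $p^{-(n+1)}$ and is small enough to be absorbed. For $n=2$, this independence genuinely fails: a quadratic form that is singular at two distinct rational points must already be divisible by $p$, forcing $P(A_{P_1}\cap A_{P_2})=p^{-5}$ rather than the independent value $p^{-6}$, which is precisely why the $n=2$ formula deviates from the Bertini formula and requires the separate direct enumeration above.
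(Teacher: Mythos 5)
Your $n=2$ computation is fine: for odd $p$ the quadratic order of discriminant $\Delta(f)=a_1^2-4a_0a_2$ is maximal at $p$ if and only if $p^2\nmid\Delta(f)$, so $\beta_2(p)=\alpha_2(p)$ there, and your enumeration at $p=2$ does yield $1/2+3/16=11/16$. The case $n\geq 3$, however, has a genuine gap: neither your single-point densities nor your independence claim are correct, and since the goal is an \emph{exact} density there is no ``error small enough to be absorbed.''

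Concretely, the density of $\{f:f\in\mathfrak{m}_P^2\}$ equals $p^{-3\deg P}$ only for $\deg P\leq (n+1)/2$. Writing $k=\deg P$ and $f=\pi_P^2q+\pi_P r+s$ with $\deg s<k$, the event is $p^2\mid s$ and $p\mid r$; once $2k>n+1$ the quotient $q$ is forced to vanish and $r$ has only $n-k+1$ free coefficients, so the true density is $p^{-(n+k+1)}$, which is strictly larger than $p^{-3k}$. For $k>n$ the event degenerates to $p^2\mid f$, of density $p^{-2(n+1)}$ for \emph{every} such $P$, so the events at high-degree points coincide rather than being independent. Independence also fails among low-degree points: for $n=3$ and distinct rational points $P_1,P_2,P_3$, the conditions at $P_1=[0{:}1]$ and $P_2=[1{:}0]$ force $p\mid f$, after which the condition at $P_3$ has conditional density $p^{-1}$ instead of $p^{-3}$. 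That all of these deviations cancel so that $\beta_n(p)=\zeta_{\P^1_{\F_p}}(3)^{-1}$ for every \emph{fixed} $n\geq 3$ is exactly the nontrivial content of the proposition (the paper emphasizes that this is stronger than the arithmetic Bertini prediction, which concerns only the limit $n\to\infty$), and it cannot be deduced by gesturing at the stratum $f\equiv 0\pmod p$. The paper's proof avoids the issue entirely: it factors $f=h_1h_2$ over $\Z_p$ by Hensel's lemma with $h_1\equiv y^k\pmod p$ and $h_2$ monic up to a unit, uses $R_f\otimes\Z_p\cong R_{h_1}\times R_{h_2}$, and invokes the known density $1-p^{-2}$ of maximality for monic polynomials, summing over $k$. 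To rescue your approach you would have to stratify by the squarefull part of $f\bmod p$ and verify the cancellation directly, which amounts to the same computation.
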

\begin{proof}
The density of monic degree-$n$ integer polynomials that are maximal at $p$ was computed in \cite[Proposition
  3.5]{ABZ} to be $1-p^{-2}$  for all $n\geq 2$ and all primes $p$.

We compute $\beta_n(p)$ by working over $\Z_p$. Fix a
binary $n$-ic form $f(x,y)\in V_n(\Z_p)$. Suppose $f(x,y)$ (mod $p$) factors as $y^kg(x,y)$, where $g(x,y)$ is a binary
$(n-k)$-ic form over $\F_p$ with nonzero $x^{n-k}$-term for some
$k\in\{0,\ldots,n\}$. Then, by Hensel's lemma, $f(x,y)$ factors as
$h_1(x,y)h_2(x,y)$ where $h_1(x,y)\in\Z_p[x]$ is a binary $k$-ic form
such that $h_1(x,y)$ (mod $p$) is $y^k$ and $h_2(x,y)\in\Z_p[x]$ is a
binary $(n-k)$-ic such that $h_2(x,y)$ (mod $p$) is $g(x,y)$. By
scaling $h_1$ and $h_2$, we may further assume that the leading
coefficient of $h_2(x,y)$ is $1$.

Since $h_1(x,y)$ and $h_2(x,y)$
share no common factors (mod $p$), the rank-$n$ ring over $\Z_p$
associated to $f(x,y)$ is isomorphic to the product of the rings
associated to $h_1(x,y)$ and $h_2(x,y)$. Since $h_1(x,y)$ reduces to a
unit times $y^k$ modulo $p$, the rank-$k$ ring associated to
$h_1(x,y)$ is always maximal when $k\leq1$ and is maximal when
$k\geq2$ if and only if $p^2$ does not divide the
$x^k$-coefficient. On the other hand, $h_2(x,y)$ is monic, and so the
probability that it is maximal is exactly $1-p^{-2}$ when $n-k\geq 2$,
and $1$ when $n-k=1$. When $k=n$, $f(x,y)$ is a multiple of $p$ and
is automatically nonmaximal. Summing over $k$, we have for $n\geq3$,
\begin{eqnarray*}
  \nonumber\beta_n(p) &=& \sum_{k=0}^1 \frac1{p^k}\Bigl(1-\frac{1}{p}\Bigr)\Bigl(1-\frac{1}{p^2}\Bigr) 
  +
 \sum_{k=2}^{n-2} \frac{1}{p^k}\Bigl(1-\frac{1}{p}\Bigr)^2
  \Bigl(1-\frac{1}{p^2}\Bigr) + \sum_{k=n-1}^n
 \frac{1}{p^{k}}\Bigl(1-\frac{1}{p}\Bigr)^2
 \\
&=& \Bigl(1-\frac{1}{p^2}\Bigr)\Bigl(1-\frac{1}{p^3}\Bigr).
\end{eqnarray*}
When $n=2$, we have
\begin{equation*}
  \beta_2(p) = \Bigl(1-\frac{1}{p}\Bigr)\Bigl(1-\frac{1}{p^2}\Bigr) +
  \frac{1}{p}\Bigl(1-\frac{1}{p}\Bigr) + \frac{1}{p^2}\Bigl(1-\frac{1}{p}\Bigr)^2 = 
  \Bigl(1-\frac{1}{p}\Bigr)\Bigl(1+\frac{1}{p}-\frac{1}{p^3}\Bigr).
\end{equation*}
This concludes the proof of Proposition \ref{prop:betacomp}.
\end{proof}

\vspace{-.1in}

\subsection*{Acknowledgments}

The first-named author was supported by a Simons Investigator Grant
and NSF Grant~DMS-1001828. The second-named author was supported by an
NSERC Discovery Grant and Sloan Research Fellowship. The third-named
author was supported by an NSERC Discovery Grant.

\end{document}